\documentclass[11pt]{amsart}
\usepackage{amssymb, latexsym, mathrsfs,   color, amsmath }
\usepackage[all]{xy}
\usepackage{chngcntr}
\usepackage{amsfonts}
\usepackage{tikz}
\usetikzlibrary{calc}
\usepackage{graphicx}
\usepackage{amsmath}
\usepackage{mathtools}
\usepackage[backend=biber, style=numeric]{biblatex}
\usepackage[mathscr]{eucal}
\counterwithin{figure}{section}
\usepackage[colorlinks=true, pdfstartview=FitV,linkcolor=blue,citecolor=blue,urlcolor=blue]{hyperref}
%\usepackage{showlabels}
%\usepackage{amssymb, latexsym, mathrsfs, hyperref, color}
%\hypersetup{hidelinks}
%
% formatting commands
%
%--------+---------+---------+---------+---------+---------+---------+
\hfuzz 30pt
\vfuzz 30pt

% the following commands are identical to \usepackage{fullpage}

\topmargin 0pt
\advance \topmargin by -\headheight
\advance \topmargin by -\headsep

\textheight 8.8in

\oddsidemargin 0pt
\evensidemargin \oddsidemargin
\marginparwidth 0.5in

\textwidth 6.5in

\setlength{\parskip}{0.3cm}

\setlength{\parskip}{0.2cm}
\newtheorem {theorem}    {Theorem}[section]

\newtheorem {lemma}      [theorem]    {Lemma}
\newtheorem {corollary}  [theorem]    {Corollary}

\numberwithin{equation}{section}

\newcommand{\bb}{\mathbb}
\newcommand{\La}{\Lambda^T}

\numberwithin{equation}{section}

%Numbers
\def\R{\mathrm{R}}

%Algebraic groups
\def\GL{{\mathop{\mathrm{GL}}}}

\def\bK{{\mathbf{K}}}

%Operators
\def\Re{{\mathop{\mathrm{Re}}}}

\def\Tr{{\mathop{\mathrm{Tr}}}}

\def\diag{{\mathop{\mathrm{diag}}}}
\def\Ad{{\mathop{\mathrm{Ad}}}} 
\def\vol{{\mathop{\mathrm{vol}}}}

\begin{document}
	\title{The  trace formula of $\GL(3)$}
	\author{ Xinghua Cui\and  Haoyang Wang\and  Zhifeng Peng*}

	\begin{abstract}
	The trace formula constitutes a fundamental tool in the Langlands program. In  general, Arthur introduced a truncation operator to render both the geometric and spectral sides of the formula convergent. This paper focuses on the case of $\GL(3)$. We first prove that the divergent terms on the geometric and spectral sides are equal, leading to their cancellation. We derive an explicit formula for ramified orbital integrals, showing they are limits of unramified ones and that Arthur's definition yields a universal object, agreeing with that of Hoffmann–Wakatsuki. Finally, on the spectral side, we apply normalized intertwining operators to present the expansion in a form parallel to that of the geometric side.
	\end{abstract}
	\maketitle
	\section{Introduction}
	The Arthur--Selberg trace formula is an important tool for studying
	automorphic representations of a connected reductive group $G$. Selberg \cite{S1}\cite{S2} gave a formula for the trace of a certain operator
	associated with a compact quotient of a semisimple Lie group by a discrete subgroup.
	
	Let $\bb{A}$ be  the adèle ring of $\bb{Q}$, $G(\bb{A})$ the associated locally compact
	topological group, and $G(\bb{Q})$ 	a discrete subgroup of $G(\bb{A})$. An
	automorphic representation is an irreducible constituent in the
	decomposition of the right regular representation $\R$ of $G(\bb{A})$ on the space of $ L^{2}(G(\bb{Q}) \backslash G(\bb{A}))$. For $\phi \in L^{2}(G(\bb{Q}) \backslash G(\bb{A}))$, we have
	\[
	(\R(y)\phi)(x) = \phi(xy),  \, x, y \in G(\bb{A}).
	\]
	Then $ \R $ is a unitary representation of $ G(\bb{A}) $.
	The trace formula arises by considering the convolution operator
	\[\R(f) = \int_{G(\bb{A})} f(y) \R(y) dy,\quad f\in C_c^\infty(G(\bb{A})),\] 
	which acts as
	\begin{align*}
		(\R(f)\phi)(x) &= \int_{G(\bb{A})} f(y)\phi(xy) \, dy \\
		&= \int_{G(\bb{Q}) \backslash G(\bb{A})} \left( \sum_{\gamma \in G(\bb{Q})} f(x^{-1} \gamma y) \right) \phi(y) \, dy,
	\end{align*}
	here,
	\[\sum_{ \gamma \in G(\bb{Q})}f(x^{-1} \gamma y)\] 
	is the kernel of the operator $\R(f)$.
	
	When $G(\bb{Q})\backslash G(\bb{A})$ is compact, there are two natural expansions of the kernel,
	\[K(x,y)=\sum_{\mathfrak{o}\in \mathcal{O}}K_\mathfrak{o}(x,y)= \sum_{\chi\in\mathfrak{X} }K_\chi(x,y),
	\]
	where $\mathcal{O}$ is the set of conjugacy classes in the group $G(\bb{Q})$, $\mathfrak{X}$ is the set of unitary equivalence classes of irreducible representations of $G(\bb{A})$, and the restriction of the regular representation $\R$ to
	the subspace $(L^{2}(G(\bb{Q}) \backslash G(\bb{A})))_\chi$ is equivalent to a finite number of copies of $\chi$. Fix an orthonormal basis
	$\mathcal{B}_\chi$ of $(L^{2}(G(\bb{Q})\backslash G(\bb{A})))_\chi$ for each
	$\chi\in\mathfrak{X}$.  Then
	\[K_\mathfrak{o}(x,y)=\sum_{\gamma\in\mathfrak{o}}f(x^{-1}\gamma y),\quad\mathfrak{o}\in\mathcal{O}, \qquad\quad  K_\chi(x,y)=\sum_{\phi\in\mathcal{B}_\chi}(\R(f)\phi)(x)\cdot\overline{\phi(y)}.\]
	Integrating both kernels along the diagonal gives the Arthur--Selberg trace formula,
	\[	\sum_{\mathfrak{o}\in\mathcal{O}{}}J_{\mathfrak{o}}(f)=\sum_{\chi\in\mathfrak{X}}J_{\chi}(f),\]
	where $J_{\mathfrak{o}}(f) $ is the integral over $x$ in $G(\bb{Q}) \backslash G(\bb{A})$ of $K_{\mathfrak{o}}(x,x)$, and $J_{\chi}(f)$ is the integral over $x$ in  $(G(\bb{Q}) \backslash G(\bb{A}))$ of $K_{\chi}(x,x)$.
	
	If the quotient $G(\bb{Q}) \backslash G(\bb{A}) $  is non-compact, then the  regular representation $\R$ contains continuous spectra  induced from  proper parabolic subgroups $P$ of $G$ over $\bb{Q}$. The intertwining operators are provided by Eisenstein series, thus
	$\mathfrak{X}$ will defined as $\mathcal{T}(G)$ in section \ref{sec 3}, in terms of cuspidal automorphic representations of Levi components of parabolic subgroup $P$ of $G$. And the definition of $\mathcal{O} $ is the set of the equivalence classes composed of those elements in $G(\bb{Q})$ whose semisimple component are $G(\bb{Q})$-conjugate. Then
	we still have an identity
	\begin{equation}\label{1.1}
		\sum_{\mathfrak{o}\in\mathcal{O}} K_{\mathfrak{o}}(x,y) = \sum_{\chi\in\mathfrak{X}} K_{\chi}(x,y),
		\end{equation}
	for the kernel of $\R(f)$.
	
	Since $G(\bb{Q}) \backslash G(\bb{A})$ is non-compact, for some $\mathfrak{o}\in\mathcal{O}$ and $\chi\in\mathfrak{X}$, the integration of $K_{\mathfrak{o}}(x,x)$ and  $K_{\chi}(x,x)$ over $G(\bb{Q}) \backslash G(\bb{A})$ are divergent.
	Arthur \cite{A3} \cite{A4} applied a truncation operator $\Lambda^{T}$ on both sides of the identity
	\begin{equation}
		\sum_{\mathfrak{o}\in\mathcal{O}} K_{P,\mathfrak{o}}(x,x) = \sum_{\chi\in\mathfrak{X}} K_{p,\chi}(x,x),\end{equation}
		where  the kernels are defined as follows (see Section \ref{sec 2} and  \ref{sec 3} for detailed notation): \[K_{P.\mathfrak{o}}(x,y)=\sum_{\gamma\in M_P\cap\mathfrak{o}}\int_{N_P}f(x^{-1}\gamma ny)dn,\] and  
		\[K_{P,\chi}(x,y)= n(A)^{-1} (\frac{1}{2\pi i})^{(\dim A/Z)} \int_{i \mathfrak{a}_G \backslash i\mathfrak{a}_P} \sum_{\alpha, \beta \in \mathcal{B}_{P, \chi}} E_P(\Phi_\alpha, \lambda, x) \overline{E_P(\Phi_\beta, \lambda, y)} d\lambda.\]
	Then he proved that the distributions of \[J^T_{\mathfrak{o}}(f)=\int_{G(\bb{Q}) \backslash G(\bb{A})}\Lambda^{T}K_{P,\mathfrak{o}}(x,x)d x\] and \[J^T_{\chi}(f)=\int_{G(\bb{Q}) \backslash G(\bb{A})}\Lambda^{T}K_{P,\chi}(x,x)d x\] are convergent, then he gave the following identity
	\begin{equation}\label{1.3}
		\sum_{\mathfrak{o}\in\mathcal{O}}J^T_{\mathfrak{o}}(f)=\sum_{\chi\in\mathfrak{X}}J^T_{\chi}(f).
	\end{equation}
	and proved that the summation of each side of the identity of (\ref{1.3}) is convergent. The left side is called geometric side, and the right side is spectral side.

	However a natural question is whether the truncation operator retains all the information of automorphic representations? 	In general, the integrals of both sides of (\ref{1.1}) are diverge, they can be decomposed into  convergent and divergent parts. The key point is how to cancel the divergent terms. For the case of $\GL(3)$, we show how the divergent terms on the geometric and spectral sides cancel. Our first main result is the
	following.
	\begin{theorem}{\textup{(Theorem \ref{thm 9.3})}}
		For any $f\in C_c^\infty(G(\bb{A})^1)$
		\begin{equation} \label{div equation}
			J_{\text{geo}}^{d}(f,x)=J_{\text{spec}}^{d}(f,x).
		\end{equation}	
	\end{theorem}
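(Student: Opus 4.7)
The plan is to exploit the pointwise identity
\[
\sum_{\mathfrak{o}\in\mathcal{O}} K_{\mathfrak{o}}(x,x) = \sum_{\chi\in\mathfrak{X}} K_{\chi}(x,x)
\]
from \eqref{1.1}, together with Arthur's convergent truncated identity \eqref{1.2}. The first identity is an equality of functions on $G(\bb Q)\backslash G(\bb A)^1$, but neither side is integrable; after Arthur's truncation $\Lambda^T$ is applied one gets the equality $\sum_\mathfrak{o} J^T_\mathfrak{o}(f)=\sum_\chi J^T_\chi(f)$ of convergent distributions. I would interpret $J_{\text{geo}}^c(f)$ and $J_{\text{spec}}^c(f)$ as the regularizations of the ``complementary'' integrals $\int(K-\Lambda^T K)\,dx$ on the two sides. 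The theorem should then follow from a subtraction argument.

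First, I would make precise the decompositions $J_{\text{geo}}(f)=J^T_{\text{geo}}(f)+J^c_{\text{geo}}(f)$ and $J_{\text{spec}}(f)=J^T_{\text{spec}}(f)+J^c_{\text{spec}}(f)$, isolating which $\mathfrak{o}\in\mathcal{O}$ and $\chi\in\mathfrak{X}$ actually contribute divergent pieces. For $\GL(3)$ the only proper standard parabolic subgroups have Levi types $(2,1)$ and $(1,1,1)$, so the geometric divergent terms come from ramified orbits whose semisimple part lies in the diagonal torus or in a $(2,1)$-Levi, while the spectral divergent terms come from those $\chi$ attached to proper Levis (the continuous Eisenstein spectrum together with the residual spectrum from the $(2,1)$-parabolics).

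Next, I would compute each side explicitly. On the geometric side the paper's earlier formulas, which express the ramified orbit distributions as limits of unramified-orbit distributions, give a concrete expression for $J^c_{\text{geo}}(f)$ in terms of orbital integrals at the boundary. On the spectral side, $J^c_{\text{spec}}(f)$ unfolds via constant terms of Eisenstein series into an expression involving intertwining operators; for the rank-two group $\GL(3)$ these operators are tractable and the residues of the Eisenstein series can be analyzed directly at the reducibility points.

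Finally, I would match the two explicit expressions. The equality is essentially forced: subtracting \eqref{1.2} from the formal integration of \eqref{1.1} yields precisely \eqref{div equation}, so the actual task is to verify that $J^c_{\text{geo}}(f)$ and $J^c_{\text{spec}}(f)$ are well defined as finite distributions and independent of $T$. The hardest step will be this convergence/regularization: the divergent integrals must be regularized in compatible ways on the two sides, which requires a delicate analysis of the Eisenstein series and intertwining operators for $\GL(3)$ at the boundary of convergence, together with careful treatment of the residual spectrum attached to the $(2,1)$-parabolic. Once both sides are put in the same explicit form, the identity reduces to matching orbital integrals against (residues of) inner products of Eisenstein series, which for $\GL(3)$ can be verified term by term using the Weyl group action on the standard parabolics.
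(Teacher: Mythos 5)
Your plan follows essentially the same route as the paper: identify the orbits and spectral classes responsible for divergence, unfold the kernels via Poisson summation and constant terms of Eisenstein series, and match the divergent contributions term by term. The one point you should tighten is your working definition of $J^c$ as a regularization of $\int(K-\Lambda^T K)\,dx$: that quantity is genuinely divergent, since $K$ is not integrable while $\Lambda^T K$ is, so it cannot itself be either side of the asserted equation. What the paper uses (following Arthur~\cite{A4}) is that $J^T_{\text{geo}}(f)$ and $J^T_{\text{spec}}(f)$ are finite polynomials in $T$; $J^d$ is the $T$-dependent part of that polynomial and $J^c$ is the constant term. With this in hand your subtraction argument becomes rigorous --- identity (\ref{1.2}) holds for every sufficiently regular $T$, so the polynomial coefficients on the two sides agree and both $J^d_{\text{geo}}=J^d_{\text{spec}}$ and $J^c_{\text{geo}}=J^c_{\text{spec}}$ follow at once --- but the substance of the theorem is the explicit identification of the monomials in $T$ on each side, carried out in sections~\ref{sec 8} and~\ref{sec 9}: the geometric second-parabolic contributions such as (\ref{8.10}), (\ref{8.11}), (\ref{9.4}), (\ref{9.10}) are shown to cancel against the $T$-linear and $T$-quadratic parts (\ref{8.12}), (\ref{8.14}), (\ref{9.15}) of the truncated Eisenstein inner products. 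You correctly flag this as the hard step; the paper carries it out exactly as you propose.
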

	Where the left hand side of the identity (\ref{div equation}) is the divergent terms of the geometric side and the right hand side is
	the divergent terms of the spectral side in (\ref{1.1}).
	
	We also recall Arthur’s truncation operator which is defined as a sum of characteristic functions and controls the convergence of the integrals
	of $K_{\mathfrak{o}}(x,x)$.  The integrals associated with this operator
	yield the convergent terms that we will study.
	
	Since the  truncation operators depend  on  a parameter $T$,  one can see $J^T(f)$ is a polynomial in  $T$. Denote $J(f)$ be the value of $J^T(f)$ at $T=T_0$, where $T_0$ satisfy $H_{P}(w_s^{-1})=T_0-s^{-1}T_0$. 	In case of $\GL(n)$, $T_0=0$. 
	The remainder terms on both sides are independent of $T$ and may
	therefore be viewed as the value at $T=0$ of $J^T(f)$.
	
	Furthermore, we will present the convergent part on the geometric side in a more familiar form. 	The collection $\mathcal{O}$ of orbits is subdivided into 5 types based on the reducibility of the characteristic polynomial over $\bb{Q}$ and the multiplicity of its roots. \[\mathcal{O}=\mathfrak{o}_G\cup\mathfrak{o}_{21}\cup  \mathfrak{o}_{111}^0\cup\mathfrak{o}_{111}^2\cup \mathfrak{o}_{111}^3.\]  We shall denote, for example,  $\mathfrak{o}_{111}^0$ both the set and one element  in $\mathfrak{o}_{111}^0$.
	
	Fix one $\mathfrak{o}$,  suppose $M$ is the smallest Levi subgroup such that  there are some  elements in  $\mathfrak{o}$ also lie in  $M$. Choose a semisimple element $\gamma\in\mathfrak{o}\cap M$. Let $M(\gamma)$  be the centralizer of
	$\gamma$ in $M$.The orbit is called \emph{unramified} if
	$G(\gamma)=M(\gamma)$, and \emph{ramified} otherwise. Thus
	$\mathfrak{o}_G$, $\mathfrak{o}_{21}$, and $\mathfrak{o}_{111}^0$ are
	unramified, while $\mathfrak{o}_{111}^2$ and $\mathfrak{o}_{111}^3$ are
	ramified.

	For ramified orbits of $\GL(3)$ we obtain a new formula for the associated distributions.
	\begin{theorem}\label{thm 1.5}\textup{(Theorem \ref{thm 11.1})}
		For ramified orbits, the integrals of the kernel over $G(\bb{Q})\backslash G(\bb{A})^1$ is the sum of the case $\mathfrak{o}=\mathfrak{o}_{111}^3:$
		\[\lim_{\lambda \to 0} \int_{G(\bb{Q})\backslash G(\bb{A})^1}D_\lambda \{ \lambda \mu_{\mathfrak{o}}( \lambda, f,x)\}dx,\]
		and  the case $\mathfrak{o}=\mathfrak{o}_{111}^2:$	\begin{align*}
			&c_{P_{\{\mathfrak{o} \}}} a_{P_{\{\mathfrak{o} \}}} \sum_{\gamma\in M_{t,\{\mathfrak{o} \}}^{\mathfrak{o} }}\tilde{\tau}(\gamma,M)
			\cdot\int_{\bK}\int_{N_{\{\mathfrak{o} \}}(\bb{A})}\\&\int_{M_{\{\mathfrak{o} \}}(\gamma,\bb{A})\backslash M_{\{\mathfrak{o} \}}(\bb{A})}f(k^{-1}n^{-1}m^{-1}\gamma mnk)v_{M_{\{\mathfrak{o} \}}}(m) dm  dn  dk.
		\end{align*}
	\end{theorem}
	
	This formula, however, lacks certain desirable properties.
	If $\gamma$ lies in a  unramified orbit,   the convex hull in orbital integral can be regarded as a weight function in the expression of $J_{\text{unr}}(f)$. Such distributions have good reduction properties and are easier to study.
	If $G(\gamma)=M(\gamma)$, $J_M(\gamma,f)$ defined by\[J_M(\gamma,f)=|D(\gamma)|^{\frac{1}{2}}\int_{G(\gamma,\bb{A}\backslash G(\bb{A})}f(x^{-1}\gamma x)v_M(x)dx,\] where $v_M(x)$ is the weight function.
	If $G(\gamma)\neq M(\gamma)$, Arthur did not give an explicit formula for ramified orbits. 
	Choosing $a
	\in A_M$ such that $G(a\gamma)=M(a\gamma)$. He defined ramified distribution by
	\begin{equation}\label{1.5}
		J_M(\gamma,f)^A=\lim_{a\rightarrow1}\sum_{L\in\mathcal{L}(M)}r_M^L(\gamma,a)J_L(a\gamma,f),
	\end{equation}
	where $J_L(a\gamma,f)$ is the unramified distribution, $\mathcal{L}(M)$ is the set of Levi subgroups which contains $M$, and $r_M^L(\gamma,a)$ is a certain function of $\gamma$ and $a$.   This in fact is  add poles from $r_M^L(\gamma,a)$ to cancel the poles from change of variables  so that the limit (\ref{1.5}) exists. 
	
	We illustrate with  two examples for the case  $S$ contains only Archimedean
	place $v_\infty$  and $\gamma=1$.
	\[J_{M_0}(1,f)^{A}=\int_\mathbf{K}\int_{\bb{R}}\int_{\bb{R}}f(k^{-1}\begin{pmatrix}
		1&v_1&0 \\ &1 &v_3\\ & &1
	\end{pmatrix}k)g(v_1,v_3)dv_1dv_3dk,\]
	where $g(v_1,v_3)=\frac{1}{2}((\log|v_1|)^2+(\log|v_3|)^2)+2\log|v_1|\log|v_3|+3\log2\log|v_1v_3|+\frac{3}{2}\log^22$. 
	\[	J_{M_{21}}(1,f)^{A}=\int_\mathbf{K}\int_{\bb{R}}\int_{\bb{R}}f(k^{-1}\begin{pmatrix}
		1& 0&v_2 \\ &1 &v_3\\ & &1
	\end{pmatrix}k)g_1(v_2,v_3) dv_2 dv_3dk ,\]
	where $g_1(v_2,v_3)=\frac{1}{2}\log4(v_2^2+v_3^2)$.

	For $G=\GL(3)$, the most ramified case corresponds to $\mathfrak{o}_{111}^3$,
	which consists entirely of unipotent elements. Let $\mathcal{U}_G$ denote the closed variety of unipotent elements in $G$, so that $\mathfrak{o} = \mathcal{U}_G(\mathbb{Q})$. Fix a finite set $S$ of places large enough. Given two elements $\gamma_1, \gamma_2 \in \mathcal{U}_G$, we say that $\gamma_1$ and $\gamma_2$ are $(G,S)$-\textit{equivalent} if they are $G(\mathbb{Q}_S)$-conjugate. 
	Let \[n'=\begin{pmatrix}
		1&1&\\&1&\\&&1
	\end{pmatrix},\qquad n''=\begin{pmatrix}
		1&1&\\&1&1\\&&1
	\end{pmatrix}.\]
	Then  \[(\mathcal{U}_{M_{21}}(\bb{Q}))_{M_{21},S}=\{1,n'\}\quad \text{and} \quad(\mathcal{U}_G(\bb{Q}))_{G,S}=\{1,n',n''\}. \]
	
	Arthur has proved in \cite{A16},
	\begin{theorem}\textup{(Arthur)}
		For any $S$ large enough, and for any  $f\in C_c^\infty(G(\bb{Q})^1)$ there are uniquely determined coefficients $a^{M}(S,u)$, where $M\in\mathcal{L}(M_0)$, $u\in(\mathcal{U}_M(\bb{Q}))_{M,S}$ and 
		\[ a^{M(S,1)}=\vol({M(\bb{Q})\backslash M(\bb{A})^1}),  \]such that 
		\begin{equation}\label{1.6}
			J_{\mathfrak{o}}(f)=\sum_{M\in\mathcal{L}(M_0)}|W_0^M||W_0^G|^{-1}\sum_{u\in\left(\mathcal{U}_M(\bb{Q})\right)_{M,S}}a^{M}(S,u)J_M(u,f).
		\end{equation} 
	\end{theorem}
	However,	Arthur only established the existence of these coefficients and the distributions.

	W. Hoffmann and S. Wakatsuki proposed an alternative definition $J_M(\gamma,f)^{HW}$ by directly cancelling the poles\cite{HW}, and they obtained explicit coefficients  for (\ref{1.6})\cite{Fl,Ma,HW}.
	\begin{theorem}\textup{(Hoffmann,  Wakatsuki\cite{HW})}
		Suppose $\gamma_{s}=z\in Z(\bb{Q}^\ast)$, $f\in C_c^\infty(G(\bb{Q}_S))$.  The distribution 
		\begin{equation}\label{1.7}
			\begin{split}
				J_\mathfrak{o}(f)&=\frac{\vol_{M_0}}{6}J_{M_0}(z,f)+\vol_{M_{0}}J_{M_{21}}(z,f)+\frac{\vol_{M_{21}}}{2}\mathfrak{c}_SJ_{M_{21}}(zn',f)\\
				&+\vol_Gf(z)+\vol_{M_{21}}\frac{\frac{\mathrm{d}}{\mathrm{d}s}\zeta^S(s)|_{s=1}}{\zeta^S(2)}J_G(zn',f)+\frac{\vol_{M_0}}{3}\{\mathfrak{c}_S^2+\mathfrak{c}_S'c^S\}J_{G}(zn'',f).
			\end{split}
		\end{equation}
	\end{theorem}
	
	The notations here is defined in section \ref{sec 10}.
	What's the relation between $J_M(\gamma,f)^A$	 and $J_M(\gamma,f)^{HW}$? We prove 
	\begin{theorem}\textup{(Theorem \ref{thm 7.8})}
		If $S$ is large enough,  for  $G=GL(3)$ and $f\in C_c^\infty(G(\bb{Q}_S)^1)$,  then
		\[ 	J_{M}(u,f)^A=J_M(u,f)^{HW} \]
		holds 	for all $u\in(\mathcal{U}_G(\bb{Q}))_{(G,S)}$ and $M\in\mathcal{L}(M_0).$ 
	\end{theorem}  
	Thus we can see $J_M(\gamma,f)$ defined by Arthur is a universal object, and we can apply the coefficients in\cite{HW}.

	For the $\gamma=\sigma u$ by Jordan decomposition with $\sigma$ and u are non-trivial, if $\gamma\in\mathfrak{o}$ then it corresponds to $\mathfrak{o}_{111}^2$, Arthur reduced the problem to the unipotent case of certain Levi subgroups,
	\[ 	J_{\mathfrak{o}}(f) = \int_{G_{\sigma}(\mathbb{A}) \setminus G(\mathbb{A})} \sum_{R \in \mathcal{F}^\sigma} |W_0^{M_R}||W_0^{G{(\sigma)}}|^{-1} J_{\text{unip}}^{M_R}(\Phi_{R,y}^\sigma) dy ,\] 
	where $\Phi_{R,y}^\sigma$ is defined in $(\ref{10.4})$.
	Consequently, it can also be expressed via weighted orbital integrals. A precise statement is 
	\begin{theorem}\label{thm 1.4}\textup{(\ref{thm 10.7})}
		Suppose $\sigma=\diag\{t_1,t_1,t_2\}$, where $t_i\in\bb{Q}^\ast$ and $t_1\neq t_2$ and  $S$ is large enough, then for any  $f \in C_c^\infty\left(G(\bb{Q}_S)^1\right)$, 
		\begin{equation}\label{1.8}
			J_{\mathfrak{o}_{111}^2}(f)=\frac{1}{2}\vol_{M_0}J_{M_0}(\sigma,f)+\vol_{M_{21}}J_{M_{21}}(\sigma,f)+\frac{\vol_{M_0}\mathfrak{c}_S}{2}J_{M_{21}}(\sigma n,f).
		\end{equation}
	\end{theorem}

	The idea is to regard distributions for ramified orbits as limits of
	unramified ones. By \eqref{1.7} and \eqref{1.8} every term on the
	geometric side can be written as a combination of weighted orbital
	integrals, which themselves are limits of unramified orbital integrals.
	For $G=\GL(n)$, the contributions of the unipotent part to the geometric
	side have been studied by Pierre‑Henri Chaudouard \cite{PHC1,PHC2}.
	
	On the spectral side, Arthur normalized the intertwining operators $R_{P'|P}$\cite{A11} for parabolic subgroups $P,P'$ with the same Levi component. We recall the construction of $R_{P'|P}$ and express the 	spectral terms using these normalized operators.
	\begin{theorem}\textup{(Arthur)}
		The global normalizing factors have an expression
		\[r_{P'|P}(\pi_\lambda)=L(0,\pi_\lambda,\rho_{P'|P}^\vee)\delta(\pi_\lambda,\rho_{P'|P}^\vee)^{-1}L(1,\pi_\lambda,\rho_{P'|P}^\vee)^{-1},\] in terms of global $L$-functions and $\delta$-factors\[\delta(\pi_\lambda,\rho_{P'|P}^\vee)=\epsilon(0,\pi_\lambda,\rho_{P'|P}^\vee)\epsilon(\frac{1}{2},\pi_\lambda,\rho_{P'|P}^\vee)^{-1}.\]
	\end{theorem}
	Finally we write the spectral expansion in a form parallel to the
	geometric side.
	\begin{theorem}\textup{(Theorem \ref{thm 12.1})}
		Let $m_\text{cusp}(\pi)$ be the multiplicity of $\pi$ in the representation $\R_{M,\text{cusp}}$.  Fix $P=P_{21}$ in $\chi$,   we have (\ref{12.1}) equals
		\[	\sum_\chi J_\chi(f)=\sum_\chi m_\text{cusp}(\pi)J_{M_{21}}(\pi,f).\]
		Similarly, (\ref{12.2}) equals
		\[\sum_\chi J_\chi(f)=\sum_\chi m_\text{cusp}(\pi)J_{M_{12}}(\pi,f).\]
		Combining (\ref{12.3}) 
		and  (\ref{12.4}), fix $P=P_{0}$ in $\chi$, 
		\[\sum_{\chi}J_\chi(f)=\sum_{\chi}\sum_{L\in\mathcal{L}(M_0)} \sum_{\pi_\lambda\in \Pi_\text{disc}(M_0)} \int_{i\mathfrak{a}_G\backslash i\mathfrak{a}_L} a^L(\pi)J_L(\pi_\lambda,f)d\lambda.\]
	\end{theorem}

	The following outlines the structure and main objectives of each section. In section \ref{sec 2}, we provide the necessary preliminaries and  fix our notation. In the section \ref{sec 3}, we recall the theory of Eisenstein series, which is developed by Harish-Chandra, Langlands and so on. Then we can decompose the
	spectrum of automorphic representations of $G$.
	In the section \ref{sec 4}, we proves that the discrete part of the spectrum is of trace class relative to the given test function.
	
	We will describe all the orbits in section \ref{sec 5}, and  partition the elements in $G(\bb{Q})$ into different orbits. We find a correspondence between $\mathfrak{o}$ and parabolic subgroup $P$, then give a formula of $K_\mathfrak{o}(x,x)$ associated to $P$ for each orbit. 	In the section \ref{sec 6}, we prove the convergence of some special cases, and give some lemmas which will be used to prove the convergence of integral.

	In the section \ref{sec 7}, we shall give an explicit formula of the distribution of ramified orbits, which is one of our main results.
	Sections \ref{sec 8} and \ref{sec 9} analyze differences between geometric and spectral terms for $P_{21}$ and $P_0$ respectively. Then we can find  the divergent  terms can all be	canceled.
	
	In section \ref{sec 10},   we shall calculate the weighted orbital integral $J_{M_0}(1,f)$ and  present the convergent part on the geometric side by the weighted orbital integral. In section \ref{sec 11}, we will normalize the intertwining  operator  and write the spectral side by the weighted character. At last we obtain the coarse trace formula of $\GL(3)$
	
	\section*{Acknowledgment}
	We are deeply grateful to Arthur for his support and encouragement.
	We acknowledge generous support provided by National natural Science Foundation of PR China (No.12071326).

	\section{Preliminaries}\label{sec 2}
	Let $G = \mathrm{GL}(3, \bb{Q})$, which is a well-known reductive algebraic group. For any place $v$ of $\mathbb{Q}$, let $G(\mathbb{Q}_v)$ denote the group of $\mathbb{Q}_v$-rational points of $G$, and let $\mathcal{O}_v$ represent the ring of algebraic integers of $\mathbb{Q}_v$. We define $G_v$ as $G(\mathbb{Q}_v)$.
	
	Let $\mathbb{A}$ denote the ring of adèles of the rational field $\mathbb{Q}$, and $\mathbb{A}_f$ the finite adèles of $\mathbb{Q}$. Then
	\[
	G(\mathbb{A}) = G(\mathbb{R}) \cdot G(\mathbb{A}_f)
	\]
	is the restricted direct product over all places $v$ of the groups $G(\mathbb{Q}_v)$. Let $\Sigma$ be the set of places, and $\Sigma_\infty$ (resp. $\Sigma_{\text{fin}}$) the set of all infinite (resp. finite) places. We are interested in complex-valued functions on $G(\mathbb{A})$. We write $C_{c}^{\infty}(G(\mathbb{A}))$ for the space of linear combinations of functions
	\[
	f = \prod_v f_v
	\]
	that satisfy the following conditions:
	\begin{itemize}
		\item[(i)] If $v$ is infinite, $f_v \in C_{c}^{\infty}(G_v)$;
		\item[(ii)] If $v$ is finite, $f_v$ is locally constant and has compact support;
		\item[(iii)] For almost all finite places $v$, $f_v$ is the characteristic function of $G(\mathcal{O}_v)$.
	\end{itemize}
	
	We shall fix a minimal parabolic subgroup $P_0$, and a Levi component $M_0$ of $P_0$, both defined over $\mathbb{Q}$. They have the form
	\[
	P_0 = \begin{pmatrix} a & * & * \\ 0 & b & * \\ 0 & 0 & c \end{pmatrix}, \quad M_0 = \begin{pmatrix} a & 0 & 0 \\ 0 & b & 0 \\ 0 & 0 & c \end{pmatrix},
	\]
	where $a, b \in \mathbb{Q}^\times$, and $*$ denotes an arbitrary element of $\mathbb{Q}$.
	
	In this paper, a standard parabolic subgroup of $G$ is defined as any parabolic subgroup of $G$ that contains $P_0$ and is defined over $\mathbb{Q}$. There are four standard parabolic subgroups of $\mathrm{GL}(3, \mathbb{Q})$: $P_0$ as defined earlier, $P_G = G$, and
	\[
	P_{21} = \begin{pmatrix} * & * & * \\ * & * & * \\ 0 & 0 & * \end{pmatrix}, \quad P_{12} = \begin{pmatrix} * & * & * \\ 0 & * & * \\ 0 & * & * \end{pmatrix}.
	\]
	
	Fix a parabolic subgroup $P$, let $N_P$ be the unipotent radical of $P$, and let $M_P$ be the unique Levi component of $P$ that contains $M_{P_0}$. Denote the split component of $M_P$ by $A_P$, which is a subgroup of
	\[
	A_{P_0} = \begin{pmatrix} a & 0 & 0 \\ 0 & b & 0 \\ 0 & 0 & c \end{pmatrix},
	\]
	where $a, b, c \in \mathbb{Q}^\times$, the multiplicative subgroup of $\mathbb{Q}$. We denote $A_G$ simply by $Z$.
	
	If $P_i$ is a standard parabolic subgroup, we may often use only the subscript or superscript $i$ to denote $P_i$ (e.g., $M_{21} = M_{P_{21}}$).
	
	Let $X(M_P)_\mathbb{Q}$ be the group of rational characters of $M_P$ defined over $\mathbb{Q}$. Then
	\[
	\mathfrak{a}_P = \mathrm{Hom}(X(M_P)_\mathbb{Q}, \mathbb{R})
	\]
	is a real vector space whose dimension equals that of $A_P$. Its dual space is
	\[
	\mathfrak{a}^*_P = X(M_P)_\mathbb{Q} \otimes \mathbb{R}.
	\]
	
	We denote the set of simple roots of $(P, A)$ by $\Delta_P$, positive roots of $(P, A)$ by $\Phi_P$, and  the root system $\Phi_P\cup (-\Phi_P)$. We denote $W^{M_P}$ the Weyl group of $(\mathfrak{a}_M^G,\Phi_P\cup (-\Phi_P))$  These lie in $X(A_P)_\mathbb{Q}$ and are canonically embedded in $\mathfrak{a}^*_P$. The set $\Delta_0$ forms a basis for the root system. For each root $\alpha \in \Delta_0$, there is a corresponding coroot $\alpha^\vee \in \mathfrak{a}_0$. In the case of $\mathrm{GL}(3)$, we have $\alpha(\alpha^\vee) = 2$ for every root $\alpha \in \Phi_0$.
	
	If $P_0 \subseteq P_{21}$, then $M_{21} \cap P_0$ is a parabolic subgroup of $M_{21}$ with unipotent radical
	\[
	N^{P_{21}}_{P_0} = N_0 \cap M_{21}.
	\]
	The set $\Delta^{P_{21}}_0$ of simple roots of $(M_{21} \cap P_0, A_{P_{21}})$ is a subset of $\Delta_0$. Moreover, $\mathfrak{a}_{P_{21}}$ can be identified with the subspace
	\[
	\{H \in \mathfrak{a}_0 \mid \alpha(H) = 0, \quad \alpha \in \Delta_0^{P_{21}} \}.
	\]
	Let $\mathfrak{a}_0^{P_{21}}$ be the subspace of $\mathfrak{a}_0$ annihilated by $\mathfrak{a}_{P_{21}}^*$, then we have the decomposition
	\[
	\mathfrak{a}_0 = \mathfrak{a}_0^{P_{21}} \oplus \mathfrak{a}_{P_{21}}.
	\]
	The subspace $(\mathfrak{a}_0^{P_{21}})^*$ of $\mathfrak{a}_0^*$ spanned by $\Delta_0^{P_{21}}$ is naturally dual to $\mathfrak{a}_0^{P_{21}}$, and we have
	\[
	\mathfrak{a}_0^* = (\mathfrak{a}_0^{P_{21}})^* \oplus \mathfrak{a}_{P_{21}}^*.
	\]
	Denote the dual roots of $\Delta_0$ by
	\[
	\hat{\Delta}_0 = \{\varpi_\alpha \mid \alpha \in \Delta_0\}.
	\]
	If $P_1$ is a standard parabolic subgroup of $G$, then $\Delta_{P_1}^G$ is a basis of $(\mathfrak{a}_{P_1}^G)^*$. A second basis of $(\mathfrak{a}_{P_1}^G)^*$ is given by the set
	\[
	\hat{\Delta}_{P_1} = \{\varpi_\alpha \mid \alpha \in \Delta_0 - \Delta_0^{P_1} \}.
	\]
	
	Fix a maximal compact subgroup
	\[
	\bK = \prod_v \bK_v
	\]
	of $G(\mathbb{A})$. Denote the Weyl group of $(G, A_0)$ by $\Omega$, which acts on both $\mathfrak{a}_0$ and $\mathfrak{a}_0^*$. For each $s \in \Omega$, we choose a representative $w_s$ in the intersection of $\bK \cap G(\mathbb{Q})$ with the normalizer of $A_0$, noting that $w_s$ is determined modulo $M_0(\mathbb{Q})$. Moreover, we identify $\mathfrak{a}_0$ with its dual $\mathfrak{a}^*$ by a fixed positive definite $\Omega$-invariant bilinear form $\langle \cdot, \cdot \rangle$ on $\mathfrak{a}_0$.
	
	Suppose that $P$ is a standard parabolic subgroup. For any $m = \prod_v m_v \in M(\mathbb{A})$, define the vector $H_M(m)$ in $\mathfrak{a}_P$ by
	\[
	e^{\langle H_M(m), \chi \rangle} = |\chi(m)| = \prod_v |\chi(m_v)|_v, \quad \chi \in X(M_P)_{\mathbb{Q}}.
	\]
	Then $H_M$ is a homomorphism from $M(\mathbb{A})$ to the additive group $\mathfrak{a}_P$. Let $M(\mathbb{A})^1$ be its kernel. Then $M(\mathbb{A})$ is the direct product of $M(\mathbb{A})^1$ and $A(\mathbb{R})^0$, the identity component of $A(\mathbb{R})$. By the Iwasawa decomposition $G(\mathbb{A}) = P(\mathbb{A})\bK$, every $x \in G(\mathbb{A})$ has the decomposition
	\[
	x = n m a k, \quad n \in N(\mathbb{A}), \quad m \in M(\mathbb{A})^1, \quad a \in A(\mathbb{R})^0, \quad k \in \bK.
	\]
	Define $H_P(x) = H(x)$ to be the vector $H_M(ma) = H_M(a)$ in $\mathfrak{a}_P$.
	
	We identify $\mathbb{R}^3$ with $\mathfrak{a}_0$ by the isomorphism $(a, b, c) \mapsto \mathrm{diag}\{a, b, c\} \in \mathfrak{a}_0$. Let
	\[
	e_1 = (1, 0, 0), \quad e_2 = (0, 1, 0), \quad e_3 = (0, 0, 1),
	\]
	be an orthonormal basis. For $m = \mathrm{diag}\{m_1, m_2, m_3\} \in M_0(\mathbb{A})$, we have
	\[
	H_{M_0}(m) = \log(m_1) e_1 + \log(m_2) e_2 + \log(m_3) e_3,
	\]
	and
	\[
	\mathfrak{a}_G = \mathbb{R}(e_1 + e_2 + e_3),
	\]
	\[
	\mathfrak{a}_0^G = \{(a, b, c) \in \mathbb{R}^3 \mid a + b + c = 0\},
	\]
	\[
	\Delta_0 = \{e_1 - e_2, e_2 - e_3\}.
	\]
	If we set $\alpha = e_1 - e_2$ and $\beta = e_2 - e_3$, then the dual roots are
	\[
	\varpi_\alpha = \frac{2}{3} e_1 - \frac{1}{3}(e_2 + e_3), \quad \varpi_\beta = \frac{1}{3}(e_1 + e_2) - \frac{2}{3} e_3.
	\]
	
	Suppose $P = P_{21}$, then
	\[
	\mathfrak{a}_{P_{21}} = \{ H \in \mathfrak{a}_0 \mid \alpha(H) = \langle H, \alpha \rangle = 0 \} = \{(a, b, c) \in \mathbb{R}^3 \mid a = b\}.
	\]
	If $H \in \mathfrak{a}_{P_{21}}$, then $\beta(H) = \frac{3}{2} \varpi_\beta(H)$, so
	\[
	\Delta_{P_{21}} = \left\{ \frac{3}{2} \varpi_\beta \right\} = \{\beta\}, \quad \hat{\Delta}_{P_{21}} = \{\varpi_\beta\}.
	\]
	
	The case of $P = P_{21}$ is similar. If we identify $\mathfrak{a}_0^G$ with the two-dimensional Euclidean plane, then in Figure \ref{fig:1}, we can see the relation of simple roots and $\mathfrak{a}_P$.
	
	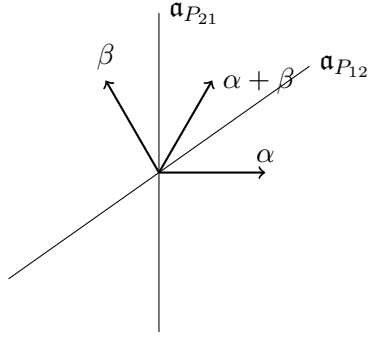
\begin{figure}[ht]
		\centering
		\begin{tikzpicture}
			% Define the length of the vectors
			\pgfmathsetmacro{\veclength}{sqrt(2)}
			
			% Draw the vector alpha
			\draw[->, thick] (0,0) -- (\veclength,0) node[above] {$\alpha$};
			
			% Draw the vector beta
			\draw[->, thick] (0,0) -- ({120}:\veclength) node[above] {$\beta$};
			
			% Calculate the sum of alpha and beta
			\coordinate (alpha) at (\veclength,0);
			\coordinate (beta) at ({120}:\veclength);
			\coordinate (sum) at ($(0,0) + (alpha) + (beta)$);
			
			% Draw the vector alpha + beta
			\draw[->, thick] (0,0) -- (sum) node[right] {$\alpha + \beta$};
			
			% Draw the axes
			\draw[-] (-2,-1*\veclength,0) -- (2,1*\veclength,0) node[right] {$\mathfrak{a}_{P_{12}}$};
			\draw[-] (0,-1.5*\veclength) -- (0,1.5*\veclength) node[right] {$\mathfrak{a}_{P_{21}}$};
		\end{tikzpicture}
		\caption{The space of $\mathfrak{a}_0$, $\mathfrak{a}_{P_{21}}$, $\mathfrak{a}_{P_{12}}$, and simple roots.}
		\label{fig:1}
	\end{figure}
	
	If $P_1$ and $P_2$ are parabolic subgroups, let $\Omega(\mathfrak{a}_1, \mathfrak{a}_2)$ denote the set of distinct isomorphisms from $\mathfrak{a}_1$ onto $\mathfrak{a}_2$ obtained by restricting elements in $\Omega$ to $\mathfrak{a}_1$. $P_1$ and $P_2$ are associated if $\Omega(\mathfrak{a}_1, \mathfrak{a}_2)$ is not empty. We denote by $\mathfrak{P}$ the associated class.
	
	For Levi subgroups $L$ and $M$, denote the set of Levi subgroups of $L$ that contain $M$ by $\mathcal{L}^L(M)$. Write $\mathcal{F}^L(M)$ for the set of parabolic subgroups of $L$ that contain $M$, and let $\mathcal{P}^L(M)$ denote the set of groups in $\mathcal{F}^L(M)$ for which $M$ is a Levi component. Each of these sets is finite. If $L = G$, we shall denote these sets by $\mathcal{L}(M)$, $\mathcal{F}(M)$, and $\mathcal{P}(M)$.
	
	We now fix left-invariant Haar measures on the subgroups of $G(\mathbb{A})$. For any connected subgroup $N$ of $N_0$ defined over $\mathbb{Q}$, we take the Haar measure on $N(\mathbb{A})$ that gives $N(\mathbb{Q}) \backslash N(\mathbb{A})$ volume one. Similarly, we take the Haar measure of $\bK$ to be volume one. Fix Haar measures on the vector spaces $\mathfrak{a}_P$. We then apply the isomorphisms
	\[
	H_P : A_P(\mathbb{R})^0 \to \mathfrak{a}_P
	\]
	to define the Haar measures on the groups $A_P(\mathbb{R})^0$.
	
	For any $P$, let
	\[
	\mathfrak{a}_P^+ = \{H \in \mathfrak{a}_P \mid \alpha(H) > 0, \alpha \in \Delta_P\}.
	\]
	There is a vector $\rho_P$ in $\mathfrak{a}_P^+$ such that the modular function
	\[
	\delta_P(p) = \left| \det(\text{Ad}  p)_{\mathfrak{n}_P(\mathbb{A})} \right|, \quad p \in P(\mathbb{A}),
	\]
	on $P(\mathbb{A})$ equals $\exp(2\rho_P(H_P(p)))$. Here $\mathfrak{n}_P$ stands for the Lie algebra of $N_P$. There are unique Haar measures on $M(\mathbb{A})$ and $M(\mathbb{A})^1$ such that for any function $h \in C_c(G(\mathbb{A}))$,
	\[
	\int_{G(\mathbb{A})} h(x) \, dx = c_P \int_{N(\mathbb{A})} \int_{M(\mathbb{A})} \int_K h(nmk) e^{2\rho_P(H_P(m))} \, dn \, dm \, dk
	\]
	\[
	= \int_{N(\mathbb{A})} c_P \int_{M(\mathbb{A})^1} \int_{A(\mathbb{R})^0} \int_K h(nmak) e^{2\rho_P(H_P(a))} \, dn \, da \, dm \, dk.
	\]
	
	For any function $\phi \in C_c^\infty( G(\mathbb{Q}) \backslash G(\mathbb{A})^1)$,
	\[
	\R(f) \phi(x) = \int_{ G(\mathbb{A})^1} f(y) \phi(xy) \, dy
	\]
	\[
	= \int_{ G(\mathbb{A})^1} f(x^{-1}y) \phi(y) \, dy
	\]
	\[
	= \int_{ G(\mathbb{Q}) \backslash G(\mathbb{A})^1} \sum_{\gamma \in G(\mathbb{Q})} f(x^{-1} \gamma y) \phi(y) \, dy
	\]
	Then the kernel of $\R(f)$ is
	\[
	K(x,y) = \sum_{\gamma \in G(\mathbb{Q})} f(x^{-1} \gamma y).
	\]
	And
	\[
	\mathrm{Tr}(\R(f)) = \int_{ G(\mathbb{Q}) \backslash G(\mathbb{A})^1} K(x,x) \, dx.
	\]

	\section{A review of Eisentien series}\label{sec 3}
	This section we shall  recall the results on Eisenstein series required for the trace formula, accroding to Langlands \cite{L1}. We state the key results without detailed proofs.
	
	Suppose that $P$ is a parabolic subgroup. Define $\mathcal{H
	}_{P}^0 $ as the space of functions
	\begin{center}
		$\Phi: N(\mathbb{A})M(\mathbb{Q})A(\mathbb{R})^0 \backslash G(\mathbb{A}) \rightarrow \mathbb{C}$
	\end{center}such that
	\begin{itemize}
		\item[(i)] for any $ x \in G(\bb{A}) $ the function $ m \rightarrow \Phi(mx), m \in M(\bb{A}) $, is $ \mathscr{Z}_{M(\bb{A})} $-finite, where $ \mathscr{Z}_{M(\bb{A})} $ denotes the center of the universal enveloping algebra of $ \mathfrak{m}(\mathbb{C}) $,
		\item[(ii)] The set of functions $\{\Phi_k : x \mapsto \Phi(xk) \mid k \in K\}$ spans a finite-dimensional vector space.
		\item[(iii)]\quad    
		
		\begin{center}
			$\|\Phi\|^2 = \int_K  \int_{M(\bb{Q}) \backslash M(\mathbb{A})^1} |\Phi(mk)|^2 \, dm \, dk < \infty.$
		\end{center}

	\end{itemize}
	Let $\mathcal{H}_{P} $ be the completion of $\mathcal{H}_{P}^0 $. It is a Hilbert space.
	
	Define the representation \(\mathcal{I}_P = \mathcal{I}_P^G\) by \[(\mathcal{I}_P(\lambda,y)\Phi)(x)=\Phi(xy)\exp(\langle \lambda+\rho_P,H_P(xy)\rangle)\exp(-\langle \lambda+\rho_P,H_P(x)\rangle),\]
	where $\lambda\in\mathfrak{a}_\mathbb{C}=\mathfrak{a}\bigotimes_{\bb{R}} \mathbb{C}$, $\Phi\in \mathscr{H}_P$,  $x,y\in G(\mathbb{A})$, $\rho_P$ is a vector defined as in section \ref{sec 2}. This representation is induced from the parabolic subgroup $P(\mathbb{A})$ and is the pullback of a representation $\pi_M^M(\lambda)$ of $M(\mathbb{A})$.\\
	
	The following identities hold:
	$$\mathcal{I}_P(\lambda,y)^*=\mathcal{I}_P(-\overline{\lambda},y^{-1}),\quad y\in G(\mathbb{A}),$$ and
	$$\mathcal{I}_P(\lambda,f)^*=\mathcal{I}_P(-\overline{\lambda},f^*),\quad f\in C_c^\infty(G(\mathbb{A})),$$
	where $f^*(y)=f(y^{-1}).$ In particular, $\mathcal{I}_P(\lambda)$ is unitary if $\lambda$ is purely imaginary.

	Fix parabolic subgroups $P$ and $P_1$, and let $s \in \Omega(\mathfrak{a}, \mathfrak{a}_1)$. Denote by $w_s$  a fixed representative of $s$ in $(\bK \cap G(\mathbb{Q})) \cap N(A_0)$, where $N_G(A_0)$ is the normalizer of $A_0$ in $G$. For $\Phi\in\mathcal{H}_{P}^0,$ $\lambda\in\mathfrak{a}_\bb{C},$ $x\in G(\bb{A})$,  define the operator $M_{P}(s, \lambda)$ on $\mathcal{H}_{P}^0$ by $(M_{P}(s,\lambda)\Phi)(x)$, which  equals
	\[	\int_{N_1(\bb{A})\cap w_sN_1(\bb{A})w_s^{-1}\backslash N(\bb{A})}\Phi(w_s^{-1}nx)\exp(\langle \lambda+\rho_{P},H_{P}(w_s^{-1}nx)\rangle)dn\ \exp(-\langle s\lambda+\rho_{P_1},H_{P_1}(x)\rangle).\]
	This integral is absolutely convergent when $\langle \alpha, \operatorname{Re} \lambda - \rho_{P} \rangle > 0$ for all $\alpha \in \Phi_{P}$ satisfying $s\alpha \in -\Phi_{P_1}$, and defines a linear operator $M_{P}(s, \lambda) : \mathcal{H}_{P}^0 \to \mathcal{H}_{P_1}^0$.
	The operator satisfies 
	\[M_{P}(s,\lambda)^*=M_{P}(s^{-1},-s\overline{\lambda}).\] 
	If $f\in C_c^\infty(G(\bb{A}))^\bK$, which is the $\bK$-conjugate invariant function in $C_c^\infty(G_\bb{A})$, then \[M_{P}(s,\lambda)\mathcal{I}_P(\lambda,f)=\mathcal{I}_{P_1}(s\lambda,f)M_{P}(s,\lambda).\]
	
	If $\Phi \in \mathcal{H}_{P}^0$, $x \in G(\mathbb{A})$, and $\lambda \in \mathfrak{a}_{\mathbb{C}}^*$ with $\operatorname{Re} \lambda \in \rho_P + \mathfrak{a}^+$, the Eisenstein series
	\[E_P(\Phi, \lambda, x) = \sum_{\delta \in P(\mathbb{Q}) \backslash G(\mathbb{Q})} \Phi(\delta x) \exp\bigl( \langle \lambda + \rho_P, H_P(\delta x) \rangle \bigr)\]
	is absolutely convergent. 
	
	The following fundamental theorem is due to Langlands.
	\begin{theorem}{\textup{(Langlands~\cite{L1})}}
		\begin{enumerate}
			\item Suppose $\Phi\in\mathcal{H}_P^0$, the Eisenstein series $E_P(x,\Phi,\lambda)$ and $M_P(s,\lambda)\Phi$ can be analytically continued as meromorphic functions to $\mathfrak{a}_\bb{C}$. On $i\mathfrak{a}_M$, $E(\Phi,\lambda,x)$ is regular, and $M(s,\lambda)$ is unitary. For $f\in C_c^\infty(G(\bb{A}))^\bK$ and $t\in \Omega(\mathfrak{a}_1, \mathfrak{a}_2)$, the following functional equation hold:
			\begin{enumerate}
				\item $E_P(\mathcal{I}_P(\lambda,f)\Phi,\lambda,x)=\int_{G(\bb{A})}f(y)E_P(\Phi,\lambda,xy)dy,$
				\item $E_P(M_P(s,\lambda)\Phi,s\lambda,x)=E_P(\Phi,\lambda,x)$,
				\item $M_P(ts,\lambda)\Phi=M_P(t,s\lambda)M_P(s,\lambda)\Phi.$
			\end{enumerate}
			\item Let $\mathfrak{P}$ be an associated class of parabolic subgroups and $\hat{L}_{\mathfrak{P}}$ be the set of collections
			\[F=\{F_P:P\in\mathfrak{P}\}\] of measurable functions $F_P:i\mathfrak{a}\rightarrow \mathcal{H}_P$ such that 
			\begin{enumerate}
				\item If $s\in \Omega(\mathfrak{a},\mathfrak{a}_1)$,\[F_{P_1}(s\lambda)=M_P(s,\lambda)F_P(\lambda),\]
				\item \[\|F\|^2=\sum_{P\in\mathfrak{P}}n(a)^{-1}(\frac{1}{2\pi i})^{\dim\ A}\int_{i\mathfrak{a}}\|F_P(\lambda)\|^2\le\infty,\]
				where $n(A)$ is the number of chambers in $\mathfrak{a}$. Then the map which sends $F$ to the function \[\sum_{P\in\mathfrak{P}}n(A)^{-1}(\frac{1}{2\pi i})^{\dim\ A}\int_{i\mathfrak{a}}E_P(x,F_P(\lambda),\lambda)d\lambda,\]
				defined for $F$ in a dense subspace of $\hat{L}_{\mathfrak{P}}$, extends to a unitary map from $\hat{L}_{\mathfrak{P}}$ onto a closed $G_\bb{A}$-invariant subspace $L^2_\mathfrak{P}(G(\bb{Q})\backslash G(\bb{A}))$ of $L^2(G(\bb{Q})\backslash G(\bb{A}))$. Moreover, we have an orthogonal decomposition $$L^2(G(\bb{Q})\backslash G(\bb{A}))=\oplus_{\mathfrak{P}}L^2_{\mathfrak{P}}(G(\bb{Q})\backslash G(\bb{A})).$$
			\end{enumerate}
		\end{enumerate}
	\end{theorem}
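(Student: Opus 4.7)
The plan is to follow Langlands' original strategy from \cite{L1}, proceeding by induction on the semisimple rank of the Levi $M$ and leaning heavily on the Maass-Selberg inner-product formula for truncated Eisenstein series. For $\GL(3)$ only three inductive layers appear: $G$ itself, the maximal parabolics $P_{12}, P_{21}$ (whose Levi's have semisimple rank one), and the minimal parabolic $P_0$ (rank two). The bulk of the work is the meromorphic continuation of the intertwining operators $M_P(s,\lambda)$; once those are controlled, continuation of $E_P$ and the spectral decomposition will follow by standard machinery.

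First I would continue $M_P(s,\lambda)$ for $s$ a simple reflection. By a standard reduction this is a rank-one computation inside a Levi of semisimple rank one; the local intertwining integrals factor over places, and at unramified places the Gindikin-Karpelevich formula expresses them as ratios of automorphic $L$-functions for $\GL(2)$, yielding meromorphic continuation with a controllable list of possible poles. General $M_P(s,\lambda)$ is then assembled by writing $s$ as a product of simple reflections and iterating the cocycle identity $(1)(c)$. Continuation of the Eisenstein series follows from the constant-term identity
\[
E_P(\Phi,\lambda,x)_Q \;=\; \sum_{s\in\Omega(\mathfrak{a}_P,\mathfrak{a}_Q)} \bigl(M_P(s,\lambda)\Phi\bigr)(x)\,\exp\langle s\lambda+\rho_Q,H_Q(x)\rangle,
\]
combined with the fact that an automorphic form of moderate growth is determined by its constant terms and therefore inherits their analytic behaviour. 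The functional equations $(1)(a)$-$(c)$ are evident in the region of absolute convergence and propagate by uniqueness of analytic continuation.

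Unitarity of $M_P(s,\lambda)$ on $i\mathfrak{a}_M$ and regularity of $E_P$ there are the delicate points; both are pinned down by the Maass-Selberg relations, which express the inner product of two truncated Eisenstein series on $G(\bb{Q})\backslash G(\bb{A})^1$ as an explicit finite sum, indexed by $\Omega(\mathfrak{a},\mathfrak{a}_1)$, of matrix coefficients of the intertwining operators weighted by exponentials. Positivity of this Hermitian form forces cancellation of potential poles on the unitary axis, and the adjointness identity $M_P(s,\lambda)^*=M_P(s^{-1},-s\bar\lambda)$ combined with the cocycle relation applied to $s\,s^{-1}=1$ gives unitarity.

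For part $(2)$, I would form pseudo-Eisenstein series (wave packets)
\[
\theta_\Phi(x) \;=\; \int_{i\mathfrak{a}_P^*} E_P(\Phi,\lambda,x)\,d\lambda,
\]
shifting the contour to the unitary axis using the regularity from $(1)$. Their inner products $\langle\theta_\Phi,\theta_{\Phi'}\rangle$ are computed by Maass-Selberg and reproduce exactly the weight $n(A)^{-1}$ and the sum over $\Omega(\mathfrak{a},\mathfrak{a}_1)$ appearing in $(2)(b)$, while the functional equation $(2)(a)$ is forced by $(1)(b)$. Two wave packets attached to different associated classes $\mathfrak{P}\neq\mathfrak{P}'$ have disjoint exponents in their constant terms and are therefore mutually orthogonal, giving the decomposition $L^2 = \bigoplus_{\mathfrak{P}} L^2_{\mathfrak{P}}$; density of wave packets in each summand comes from an exhaustion argument using the induction hypothesis on the cuspidal spectrum of each Levi. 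The hardest step is the rank-two one: controlling the poles of the minimal-parabolic intertwining operator for $P_0$, whose unramified piece involves the ratio $\prod_{\alpha>0}\zeta(\langle\lambda,\alpha^\vee\rangle)/\zeta(\langle\lambda,\alpha^\vee\rangle+1)$, and showing regularity on $i\mathfrak{a}_{P_0}$. This is precisely where the residues producing the discrete non-cuspidal spectrum reside, and where Langlands' deepest analytic input is genuinely needed.
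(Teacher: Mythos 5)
The paper does not prove this theorem. Section~\ref{sec 3} is explicitly a review: the authors write that they ``state the key results without detailed proofs'' and cite the result directly to Langlands~\cite{L1}. There is therefore no proof in the paper against which your proposal can be measured, and no comparison of routes is meaningful here.

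As a free-standing sketch of Langlands' argument, your outline captures the broad structure (induction on rank, meromorphic continuation of intertwining operators via the rank-one reduction and Gindikin--Karpelevich, Maass--Selberg relations for unitarity and holomorphy on the imaginary axis, pseudo-Eisenstein series and their inner products for the spectral decomposition). Two imprecisions worth flagging. First, for $\GL(3)$ over $\mathbb{Q}$ with cuspidal data on $M_0$ the unramified rank-one factors are ratios of Hecke $L$-functions attached to the inducing idele-class characters (which are Riemann zeta ratios only for the trivial character), not ``automorphic $L$-functions for $\GL(2)$''. Second, the step ``positivity of the Maass--Selberg form forces cancellation of poles on the unitary axis'' and the derivation of unitarity from ``the cocycle relation applied to $ss^{-1}=1$'' compress what is genuinely the delicate analytic core of Langlands' book; the functional equation $M(s,\lambda)^{*}=M(s^{-1},-s\bar\lambda)$ together with $M(s^{-1},s\lambda)M(s,\lambda)=\mathrm{Id}$ does give $M(s,\lambda)^{*}M(s,\lambda)=\mathrm{Id}$ on $i\mathfrak{a}$ \emph{once holomorphy there is known}, so the order of argument matters and the two steps cannot be decoupled as cleanly as your prose suggests. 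Neither issue affects the fact that, at the level of strategy, your sketch is the standard one.
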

	
	Denote $\mathcal{H}_{P,\text{cusp}}$ the space of the measurable functions $\Phi$ on $N(\bb{A})M(\bb{Q})A(\bb{R})^0\backslash G(\bb{A})$ satisfing
	\begin{enumerate}
		\item $\|\Phi\|^2=\int_K\int_{M(\mathbb{Q})\backslash M(\mathbb{A})^1}|\Phi(mk)|^2dmdk<\infty$,
		\item for any parabolic subgroup $Q$ such that $G\supsetneq Q\supsetneq P$, and if $x\in G(\mathbb{A})$, then the integral $$\int_{N_Q(\mathbb{Q})\backslash N_Q(\mathbb{A})}\Phi(nx)dn=0$$
	\end{enumerate}
	In fact, this is a Hilbert space and is invariant under right $G(\mathbb{A})$. 
	
	\begin{lemma}\textup{(\cite{GGPS})}\label{lem 3.2}
		If $f\in C_c^N(G(\mathbb{A}))$ for $N$ large enough, the map $\Phi\rightarrow \Phi*f$, $ \Phi\in\mathcal{H}_{G,\textup{cusp}}$ is a Hilbert-Schmidt operator on $\mathcal{H}_{G,\textup{cusp}}$.
	\end{lemma}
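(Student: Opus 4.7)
The plan is to realise convolution by $f$ on $\mathcal{H}_{G,\textup{cusp}}$ as an integral operator whose kernel is square-integrable on $(G(\bb{Q})\backslash G(\bb{A})^1)^2$; that is the Hilbert--Schmidt criterion. Concretely, the map $\Phi\mapsto\Phi*f$ coincides with $P_{\textup{cusp}}\circ \R(f)$, where $P_{\textup{cusp}}$ is the orthogonal projection onto $\mathcal{H}_{G,\textup{cusp}}$. For any orthonormal basis $\mathcal{B}$ of $\mathcal{H}_{G,\textup{cusp}}$, the corresponding kernel is
\[
K_{\textup{cusp}}(x,y)=\sum_{\varphi\in\mathcal{B}}(\R(f)\varphi)(x)\,\overline{\varphi(y)},
\]
and the Hilbert--Schmidt norm squared equals $\int\!\!\int|K_{\textup{cusp}}(x,y)|^2\,dx\,dy$.

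First I would invoke reduction theory for $\GL(3)$: fix a Siegel domain $\mathfrak{S}\subset G(\bb{A})^1$ whose projection to $G(\bb{Q})\backslash G(\bb{A})^1$ is finite-to-one and surjective, so every integration over the quotient reduces to an integration over $\mathfrak{S}$. Next, combine the cuspidality condition $\int_{N_Q(\bb{Q})\backslash N_Q(\bb{A})}\Phi(nx)\,dn=0$ for every proper standard $Q=M_Q N_Q$ with the smoothness of $f$: Fourier-expanding $\Phi$ along each $N_Q$ produces a sum indexed by nontrivial characters, and provided $N$ exceeds the Sobolev dimension of $G(\bb{R})K$ plus the number of times needed to absorb the modular growth, repeated integration by parts against $f$ in the direction of $A_Q(\bb{R})^0$ converts derivatives into rapid decay of each Fourier coefficient. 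The outcome is the Gelfand--Piatetski-Shapiro uniform estimate
\[
|(\R(f)\Phi)(x)|\le C_{f,N'}\,\|\Phi\|_2\,(1+\|H_{P_0}(x)\|)^{-N'},\qquad x\in\mathfrak{S},
\]
valid for every $N'\ge 0$, with $C_{f,N'}$ depending only on $f$ and $N'$.

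Given this uniform bound, Bessel's inequality applied to the orthonormal basis $\mathcal{B}$ gives
\[
\int_{G(\bb{Q})\backslash G(\bb{A})^1}|K_{\textup{cusp}}(x,y)|^2\,dy=\sum_{\varphi\in\mathcal{B}}|(\R(f)\varphi)(x)|^2\le C_{f,N'}^2(1+\|H_{P_0}(x)\|)^{-2N'}
\]
for $x\in\mathfrak{S}$, after applying the same argument to the adjoint operator $\R(f^*)$ in the $y$-variable. Integrating in $x$ over $\mathfrak{S}$ and choosing $N'$ large enough that the Haar density $e^{2\rho_{P_0}(H_{P_0}(x))}$ on the Siegel cone is dominated by $(1+\|H_{P_0}(x)\|)^{-2N'}$, we obtain $\int\!\!\int|K_{\textup{cusp}}(x,y)|^2\,dx\,dy<\infty$, which is the claim.

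The main obstacle is the uniform rapid-decay step: the cuspidality condition alone only gives qualitative vanishing of the constant terms along proper parabolics, so one must genuinely convert the smoothness of $f$ into quantitative polynomial decay of every nontrivial Fourier coefficient while keeping the implied constant independent of $\Phi$. The threshold on $N$ in the statement is precisely what is required to execute enough integrations by parts in the unipotent Fourier variables to counteract the exponential growth of the Haar modulus along the Siegel cone in $\mathfrak{S}$.
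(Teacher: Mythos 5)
The paper itself does not prove this lemma; it simply cites Gelfand--Graev--Piatetski-Shapiro. Your outline is in fact the standard GGPS/Langlands argument (realize the restricted operator as an integral operator, prove a uniform rapid-decay estimate from cuspidality plus smoothing, then deduce square-integrability of the kernel), and the Riesz-representation\,+\,Parseval step
\[
\sum_{\varphi\in\mathcal{B}}|(\R(f)\varphi)(x)|^{2}=\|k_{x}\|^{2},\qquad k_{x}\ \text{representing}\ \Phi\mapsto(\R(f)\Phi)(x),
\]
is exactly the right way to turn a uniform pointwise bound into a bound on the diagonal $L^{2}$-norm of the kernel.

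The difficulty is in the last paragraph. You assert that one can ``choose $N'$ large enough that the Haar density $e^{2\rho_{P_{0}}(H_{P_{0}}(x))}$ on the Siegel cone is dominated by $(1+\|H_{P_{0}}(x)\|)^{-2N'}$.'' No choice of $N'$ can make an exponentially growing quantity dominated by a polynomially decaying one, so the sentence as written is simply false. There are two compounding problems here. First, the decay estimate you quote for $\R(f)\Phi$ is too weak: what Gelfand--Piatetski-Shapiro actually prove (see also \cite{H1} or Borel's notes) is that for every $N$ there is $C_{f,N}$ with $|(\R(f)\Phi)(x)|\le C_{f,N}\,\|\Phi\|_{2}\,\|x\|^{-N}$ on a Siegel set, where $\|x\|$ is a height function that is comparable to $\max_{\alpha\in\Delta_{0}}e^{\alpha(H_{0}(x))}$ there --- i.e.\ the decay is \emph{exponential} in $\|H_{0}(x)\|$, not merely polynomial. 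Second, you took the sign of the exponent in the Haar density from the paper, but the density of Haar measure along the $a$-variable of a Siegel set in the $x=nak$ parametrization is $e^{-2\rho_{P_{0}}(H_{0}(a))}$, which \emph{decays} toward the cusp (a Siegel set has finite volume, so it could not grow). With either correction --- use the genuine exponential GGPS bound, or note that the density already decays so that boundedness of the evaluation functional would suffice --- the integral $\iint|K_{\mathrm{cusp}}(x,y)|^{2}\,dx\,dy$ is finite and the argument closes. As stated, though, the final inequality is vacuous and the proof does not go through.

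A smaller point: the middle paragraph attributes the gain of decay to ``repeated integration by parts against $f$ in the direction of $A_{Q}(\bb{R})^{0}$.'' The integration by parts that produces rapid decay of the nontrivial Fourier coefficients is performed in the \emph{unipotent} directions $\mathfrak{n}_{Q}$; the torus variable $a$ only enters through the scaling $\Ad(a)^{-1}$ of those characters, which is precisely why the decay in $a$ ends up exponential rather than polynomial. Making this precise is the content of the Gelfand--Piatetski-Shapiro lemma, and it is also why the threshold on $N$ in the statement cannot be avoided.
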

	\begin{corollary}
		$\mathcal{H}_{G,\textup{cusp}}$ decomposes into a direct sum of irreducble representations of $G(\mathbb{A})$, each occuring with finite multiplicity.
	\end{corollary}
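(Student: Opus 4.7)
The result is a standard consequence of Lemma \ref{lem 3.2} via the Gelfand--Graev--Piatetski-Shapiro strategy: compactness of convolution operators on an invariant Hilbert space forces admissibility of the underlying unitary representation. First I would set up the convolution operators carefully. For $f \in C_c^\infty(G(\bb{A}))$ put $f^*(y) = \overline{f(y^{-1})}$; then $\R(f)^* = \R(f^*)$ on $\mathcal{H}_{G,\text{cusp}}$, so taking $f$ with $f = f^*$ (and of regularity at least $N$ as in Lemma \ref{lem 3.2}) makes $\R(f)$ self-adjoint and Hilbert--Schmidt, hence compact. The usual smoothing argument produces an approximate identity $(f_\alpha)$ of such self-adjoint test functions for which $\R(f_\alpha) \to \text{Id}$ strongly on $\mathcal{H}_{G,\text{cusp}}$.

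Next, given any nonzero closed $G(\bb{A})$-invariant subspace $V \subset \mathcal{H}_{G,\text{cusp}}$, pick $\phi \in V \setminus \{0\}$ and choose $f = f^*$ in the admissible family with $\R(f)\phi \neq 0$. Then $T := \R(f)|_V$ is compact self-adjoint, so it possesses a nonzero real eigenvalue $\lambda$ whose eigenspace $V_\lambda \subset V$ is finite-dimensional. I would then apply Zorn's lemma to the collection of nonzero closed $G(\bb{A})$-invariant subspaces $W \subseteq V$ with $W \cap V_\lambda \neq 0$: any descending chain intersects to such a subspace because the finite-dimensionality of $V_\lambda$ forces $W \cap V_\lambda$ to stabilize. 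A minimal such $W$ must be irreducible, for otherwise $W$ would split orthogonally as $W_1 \oplus W_2$, the projection of any nonzero vector of $W \cap V_\lambda$ would land in $(W_1 \cap V_\lambda) \oplus (W_2 \cap V_\lambda)$, and one of $W_i \cap V_\lambda$ would be nonzero, contradicting minimality.

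A second application of Zorn's lemma to pairwise-orthogonal families of closed irreducible invariant subspaces then yields an orthogonal decomposition $\mathcal{H}_{G,\text{cusp}} = \widehat{\bigoplus}_\alpha V_\alpha$ into irreducible representations. For finite multiplicity, suppose some irreducible $\pi$ occurred in this sum infinitely often, with isotypic component $\mathcal{H}(\pi) \cong \pi \otimes \ell^2(I)$, $|I| = \infty$. Choose $f = f^*$ so that $\pi(f)$ has a nonzero eigenvalue $\mu$ on some eigenvector (possible since $\pi(f_\alpha) \to \text{Id}$ on any realization of $\pi$). Then the $\mu$-eigenspace of $\R(f)$ on $\mathcal{H}(\pi)$ is infinite-dimensional, contradicting the compactness of $\R(f)$ guaranteed by Lemma \ref{lem 3.2}.

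The main technical obstacle I anticipate is purely bookkeeping: verifying that the family of $f$'s produced by Lemma \ref{lem 3.2} is rich enough both to separate points of $\mathcal{H}_{G,\text{cusp}}$ (needed so that $\R(f)\phi \neq 0$ for some admissible $f$) and to realize nonzero spectral values on every irreducible $\pi$ appearing in the decomposition. Both follow from the strong approximation of the identity by operators $\R(f_\alpha)$ with $f_\alpha = f_\alpha^*$ of sufficient regularity, but one must pick the approximate identity inside the class of $f$ covered by the hypothesis of Lemma \ref{lem 3.2}, which is routine by truncation and smoothing.
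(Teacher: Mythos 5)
Your argument is the standard Gelfand--Graev--Piatetski-Shapiro elaboration of precisely what the paper invokes in a single line, namely Lemma \ref{lem 3.2} together with the spectral theorem for compact operators; the paper leaves the approximate-identity step and the two Zorn's lemma reductions implicit, but your filling-in of those details is faithful to the same route and correct.
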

	This Corollary can be followed by Lemma \ref{lem 3.2}, combined with the spectral theorem for compact operators.
	The space $\mathcal{H}_{G,\text{cusp}}$ is referred to as the space of cusp forms on $G(\bb{A})$. By this corollary, any function in $\mathcal{H}_{G,\text{cusp}}$ can be  obtained by taking the limit of the functions in $\mathcal{H}_P^0$. So $\mathcal{H}_{P,\text{cusp}}$ is a subspace in $\mathcal{H}_P$.
	
	Denote the set of  triplets $\chi=(\mathfrak{P},\mathcal{V},W)$  by $\mathcal{T}(G)$, where $W$ is an irreducble representation of $\bK$, $\mathfrak{P}$ is an associated class of parabolic subgroups, and $\mathcal{V}$ be a collection of subspaces
	$$\{V_P\subset \mathcal{H}_{M,\text{cusp}}^M, \text{ the  space  of  cusp  forms  on } M(\mathbb{A})\}_{P\in\mathfrak{P}},$$ satisfing the following conditions:
	\begin{enumerate}
		\item for each $P\in\mathfrak{P}$, $V_P$ is the eignspace of $\mathcal{H}_{M,\text{cusp}}^M$ associated to a complex homomorphism of $\mathcal{Z}_{M(\mathbb{R})}$,
		\item for $P_1,P_2\in\mathfrak{P}$, $s\in\Omega(\mathfrak{a}_1,\mathfrak{a}_2)$, the space $V_{P_2}$ can be obtained by conjugating functions in $V_{P_1}$ by $w_s$.
	\end{enumerate}
	For $P\in\mathfrak{P}$, the space $\mathcal{H}_{P,\chi}$ consists of the functions $\Phi\in \mathcal{H}^0_{P,\text{cusp}}$ satisfing the following conditions for every $x\in G(\mathbb{A})$,
	\begin{enumerate}
		\item the function that takes $k$ to $\Phi(xk),$ where $ k\in \bK$, is a matrix coefficient of $W$,
		\item the function that takes $m$ to $\Phi(mx),$ where $m\in M(\bb{A})$, is contained in $V_P$.
	\end{enumerate}
	The dimension of the space $\mathcal{H}_{P,\chi}$ is finite and it is invariant under $\mathcal{I}_P(\lambda,f)$, for any $ f\in C_c^\infty(G(\bb{A}))^\bK$.
	
	We have the decomposition
	$$\mathcal{H}_{P,\text{cusp}}=\oplus_{\chi}\mathcal{H}_{P,\chi}, $$where $\chi=(\mathfrak{P},\mathcal{V},W), P\in\mathfrak{P}$. 
	
	Fix any $\chi$, and any $P\in\mathfrak{P}$, suppose that the analytic function
	\[\lambda\rightarrow \Phi_x(\lambda)=\Phi(\lambda,x),\quad\lambda\in\mathfrak{a}_\bb{C},\quad x\in N(\bb{A})M(\bb{Q})\backslash G(\bb{A})^1,\]
	is of Paley-Wiener type. Then the Fourier inverse 
	\[\phi(x)=(\frac{1}{2\pi i})^{\dim A}\int_{\Re\, \lambda=\lambda_0}\exp(\langle \lambda+\rho_P,H_P(x)\rangle)\Phi(\lambda,x)d\lambda,\quad x\in N(\bb{A})M(\bb{Q})\backslash G(\bb{A}), \]is a function on $M(\bb{Q})N(\bb{A})\backslash G(\bb{A})^1$, and it  is independent of $\lambda_0\in\mathfrak{a}$. 
	
	The series 
	\[\hat{\phi}(x)=\sum_{\delta\in P(\bb{Q})\backslash G(\bb{Q})}\phi(\delta x) \]belongs to $L^2(G(\bb{Q})\backslash G(\bb{A}))$ and 
	converges absolutely. The space spanned  of all such $\widehat{\phi}$ is denoted by $L^2_\chi(G(\mathbb{Q}) \backslash G(\mathbb{A}))$, we have the orthogonal decomposition
	\[L^2(G(\bb{Q})\backslash G(\bb{A}))=\oplus_{\chi\in\mathcal{T}}L^2_\chi(G(\bb{Q})\backslash G(\bb{A})).\]
	
	For $P_1 \in \mathfrak{P}$, denote by $E_P^{c_1}(\Phi, \lambda, x)$ the constant term of  Eisenstein series along another parabolic subgroup $P_1$, which is given by
	\[
	\int_{N_1(\bb{Q})\backslash N_1(\bb{A})}E_P(\Phi,\lambda,nx)dn\\
	=\sum_{s\in\Omega(\mathfrak{a},\mathfrak{a}_1)}(M_P(s,\lambda)\Phi)(x)\exp(\langle s\lambda+\rho_{P_1},H_{P_1}(x)\rangle).
	\]
	For $\lambda_0\in \rho_P+\mathfrak{a}^+$, we have 
	\[\hat{\phi}(x)=(\frac{1}{2\pi i})^{\dim A}\int_{\Re\, \lambda=\lambda_0}E(\Phi(\lambda),\lambda,x)d\lambda.\]
	For another $\Phi_1(\lambda_1,x)$ associated to parabolic subgroup $P_1\in\mathfrak{P}$, the inner product 
	\[\int_{G(\bb{Q})\backslash G(\bb{A})}\hat{\phi}(x)\overline{\hat{\phi}_1(x)}dx, \]equals
	\begin{eqnarray*}
		(\frac{1}{2\pi i})^{\text{dim} A}\int_{\lambda_0+i\mathfrak{a}}\sum_{s\in\Omega(\mathfrak{a},\mathfrak{a}_1)}(M_P(s,\lambda)\Phi(\lambda),\Phi_1(-s\overline{\lambda}))d\lambda,\quad\lambda_1\in\rho_P+\mathfrak{a}^+.
	\end{eqnarray*}
	Fix $\chi=(\mathfrak{P}_\chi,\mathcal{V},W)$, for $\Phi\in\mathcal{H}_{P,\chi}$, $P\in\mathfrak{P}_{\chi}$, one shows that the singularities of the functions $E_P(\Phi,\lambda,x)$ and $M_P(s,\lambda)\Phi$ are hyperplanes of the form
	\[\mathcal{\tau}=\{\lambda\in\mathfrak{a}_\bb{C}:\langle\alpha,\lambda\rangle=\mu, \mu\in\bb{C},\alpha\in\Phi_P\},\]
	and there are only finite $\mathcal{\tau}$ meet $\mathfrak{a}^++i\mathfrak{a}$, which equals to the set
	\[\{\lambda\in\mathfrak{a}_{\bb{C}}:\langle\alpha,\Re\ \lambda\rangle>0,\alpha\in\Phi_P\}.\]
	
	Denote by $L^2_{\mathfrak{P}_\chi,\chi}(G(\mathbb{Q})\backslash G(\mathbb{A}))$ the closed subspace of $L^2_\chi(G(\mathbb{Q})\backslash G(\mathbb{A}))$ generated by the functions $\phi(x)$.	
	
	If $\phi_1(x)$ comes from $\Phi_1(\lambda_1)$ with $P_1 \in \mathfrak{P}_\chi$, then the inner product  
	\[\int_{G(\bb{Q})\backslash G(\bb{A})}\hat{\phi}(x)\overline{\hat{\phi}_1(x)}dx,\]
	is 
	\[\sum_{P_2\in\mathfrak{P}_\chi}n(A)^{-1}(\frac{1}{2\pi i})^{\text{dim} A}\int_{i\mathfrak{a}_2}(F_{P_2}(\lambda),F_{1,P_2}(\lambda))d\lambda,\]
	where $F_{1,P_2}(\lambda)=\sum_{r\in\Omega(\mathfrak{a}_2,\mathfrak{a}_1)}M_P(r,\lambda)^{-1}\Phi_1(r\lambda)$, and $F_{P_2}$ is defined similarly.\\
	Let $\hat{L}_{\mathfrak{P}_\chi,\chi}$ denote the space of functions  \[\{F_{P_2}:P_2\in\mathfrak{P}_\chi\mid F_{P_2} \text{ take values in } \mathcal{H}_{P_2,\chi}\}.\] In fact, it is  an isometric isomorphic from a dense subspace of $\hat{L}_{\mathfrak{P}_\chi,\chi}$ to a dense subspace of $L^2_{\mathfrak{P}_\chi,\chi}(G(\bb{Q})\backslash G(\bb{A}))$.
	
	Let $Q$ be the projection of $L^2_\chi(G(\bb{Q})\backslash G(\bb{A}))$ onto the orthogonal complement of $L^2_{\mathfrak{P}_\chi,\chi}(G(\bb{Q})\backslash G(\bb{A}))$, which denoted by  $L^2_{\chi,\text{res}}(G(\bb{Q})\backslash G(\bb{A}))$. Then for the functions $\hat{\phi}(x)$, $\hat{\phi}_1(x)$ corresponding to $\Phi(\lambda)$, $\Phi_1(\lambda_1)$, the inner product $(Q\hat{\phi},\hat{\phi}_1)$ is given by
	\[(\frac{1}{2\pi i})^{\text{dim} A}\left(\int_{\lambda_0+i\mathfrak{a}}\sum_{s\in\Omega(\mathfrak{a},\mathfrak{a}_1)}(M_P(s,\lambda)\Phi(\lambda),\Phi_1(-s\overline{\lambda}))d\lambda
	-\int_{i\mathfrak{a}}\sum_{s\in\Omega(\mathfrak{a},\mathfrak{a}_1)}(M_P(s,\lambda)\Phi(\lambda),\Phi_1(-s\overline{\lambda})d\lambda)\right).\]
	Now choose a path in $\mathfrak{a}^+$ from $\lambda_1$ to $0$ such that the intersection with any singular hyperplane $\tau$ of $\{M_P(s,\lambda)\mid s\in\Omega(\mathfrak{a},\mathfrak{a}_1)\}$ is at most one point, we denote it by $Z(\mathscr{\tau})$. Decompose $\tau$ into the sum $X(\tau)+\tau_\bb{C}^{\vee}$, where $\tau^{\vee}$ is a subspace of $\mathfrak{a}$ of codimension one, and $X(\tau)$ is a vector in $\mathfrak{a}$ which is orthogonal to $\tau^{\vee}$ and  $Z(\tau)\in X(\tau)+\tau^{\vee}$. Then by the residue theorem, the inner product $(Q\hat{\phi},\hat{\phi}_1)$ equals 
	\[(\frac{1}{2\pi i})^{(\text{dim} A)-1}\sum_\tau\int_{Z(\tau)+i\tau^{\vee}}\sum_{s\in\Omega(\mathfrak{a},\mathfrak{a}_1)}\text{Res}_\tau(M_P(s,\lambda)\Phi(\lambda),\Phi_1(-s\overline{\lambda}))d\lambda.\]
	We then  have  the following decompositions
	$$L^2_\chi(G(\bb{Q})\backslash G(\bb{A}))=\oplus_{\mathfrak{P}}L^2_{\mathfrak{P},\chi}(G(\bb{Q})\backslash G(\bb{A})),$$
	$$L^2_{\mathfrak{P}}(G(\bb{Q})\backslash G(\bb{A}))=\oplus_\chi L^2_{\mathfrak{P},\chi}(G(\bb{Q})\backslash G(\bb{A})),$$
	and 
	$$L^2(G(\bb{Q})\backslash G(\bb{A}))=\oplus_{\mathfrak{P},\chi}L^2_{\mathfrak{P},\chi}(G(\bb{Q})\backslash G(\bb{A})).$$
	
	For the space $L^2(G(\bb{Q})\backslash G(\bb{A}))$, if we fix a character $\omega$ on $Z(\bb{R})^0$, then we can replace $G(\bb{Q})\backslash G(\bb{A})$ by $G(\bb{Q})\backslash G(\bb{A})^1$. For a fixed $P$ and $f\in C_c^\infty (G(\bb{A})^1)$, $\lambda\in \mathfrak{a}_\bb{C}$, we define the function $P_P(\lambda,f,x,y)$ by the product of
	\[\exp(\langle \lambda+\rho_P,H_P(y)\rangle)\exp(\langle-\lambda-\rho_P,H_P(x)\rangle)\]and
	\[\sum_{\gamma\in M({\bb{Q})}}	\int_{N(\bb{A})}\int_{\mathfrak{a}_G\backslash\mathfrak{a}} f(x^{-1} n\exp(a)\gamma y)\exp(\langle-\lambda-\rho_P,a\rangle)dadn.\]
	This function is continuous on $N(\bb{A})M(\bb{Q})\backslash G(\bb{A})^1\times N(\bb{A})M(\bb{Q})\backslash G(\bb{A})^1$. And this function is a Schwartz function of $\lambda\in\mathfrak{a}$. 
	\begin{lemma}\label{lem 3.4}
		Given $f\in C_c^\infty( G(\bb{A})^1)$, $\lambda\in\mathfrak{a}_\bb{C}, \phi\in \mathcal{H}_P,$  then \[(\mathcal{I}_P(\lambda,f)\phi)(x)=c_P\int_K\int_{M(\bb{Q})\backslash M(\bb{A})^1}P_P(\lambda,f,x,mk)dmdk.\]
	\end{lemma}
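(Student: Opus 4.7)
The plan is to compute $(\pi_P(\lambda,f)\phi)(x)$ by direct unfolding using the Iwasawa decomposition on $G(\bb{A})^1$, and then to match the resulting integrand against the defining formula for $P_P(\lambda,f,x,mk)$ (the statement is most naturally read as a kernel identity, with $\phi(mk)$ understood as part of the integrand on the right-hand side). Starting from
\[
(\pi_P(\lambda,f)\phi)(x)=\int_{G(\bb{A})^1}f(y)\phi(xy)\exp(\langle \lambda+\rho_P,H_P(xy)-H_P(x)\rangle)\,dy,
\]
I would substitute $y\mapsto x^{-1}y$; this is legitimate for $x\in G(\bb{A})^1$ since $G(\bb{A})^1$ is normal in $G(\bb{A})$ so left-translation preserves Haar measure. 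This pulls out the factor $\exp(-\langle\lambda+\rho_P,H_P(x)\rangle)$ and replaces $\phi(xy)$ by $\phi(y)$.

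Next I apply the Iwasawa decomposition $y=nm\exp(a)k$ with $n\in N(\bb{A})$, $m\in M(\bb{A})^1$, $a\in\mathfrak{a}_G\backslash\mathfrak{a}_P$, $k\in K$. The measure formula from Section \ref{sec 2} gives $dy=c_P\,e^{2\rho_P(a)}\,dn\,dm\,da\,dk$ and $H_P(y)=a$. Since $\phi\in\mathcal{H}_P$ is invariant under left multiplication by $N(\bb{A})$ and $A_P(\bb{R})^0$, one has $\phi(nm\exp(a)k)=\phi(mk)$. Folding the $M(\bb{A})^1$-integration into an integration over $M(\bb{Q})\backslash M(\bb{A})^1$ via the standard insertion of $\sum_{\gamma\in M(\bb{Q})}$ and the left $M(\bb{Q})$-invariance of $\phi$, the expression becomes
\[
c_P\,e^{-\langle\lambda+\rho_P,H_P(x)\rangle}\int_K\!\int_{M(\bb{Q})\backslash M(\bb{A})^1}\!\!\phi(mk)\sum_{\gamma}\int_{\mathfrak{a}_G\backslash\mathfrak{a}_P}\!\!\int_{N(\bb{A})}\!\!f(x^{-1}n\gamma m\exp(a)k)\,e^{\langle\lambda+\rho_P,a\rangle+2\rho_P(a)}\,dn\,da\,dm\,dk.
\]

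The final step is to identify the inner sum-integral as $P_P(\lambda,f,x,mk)$. Since $\exp(a)\in A_P$ is central in $M$, we have $\gamma m\exp(a)=\exp(a)\gamma m$, so the argument of $f$ can be rewritten as $x^{-1}n\exp(a)\gamma mk$, matching the shape in the definition of $P_P$. To reconcile the weight $e^{\langle\lambda+\rho_P,a\rangle+2\rho_P(a)}$ produced by Iwasawa with the weight $e^{\langle-\lambda-\rho_P,a\rangle}$ prescribed in the definition of $P_P$, I would apply the substitution $a\mapsto -a$ (valid since Lebesgue measure on $\mathfrak{a}_G\backslash\mathfrak{a}_P$ is symmetric) combined with the compensating conjugation $n\mapsto\exp(-a)n\exp(a)$ on the $N(\bb{A})$-variable, whose Jacobian is $e^{-2\rho_P(a)}$. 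Conjugation by $\gamma\in M(\bb{Q})$ is measure-preserving on $N(\bb{A})$ by the product formula, and conjugation by $m\in M(\bb{A})^1$ is measure-preserving since $H_P(m)=0$ makes $\delta_P(m)=1$.

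The main obstacle is this last matching step: carefully tallying the Jacobians from the several sequential substitutions so that the $e^{\pm 2\rho_P(a)}$ factors collapse correctly and the exponential in $\lambda$ flips sign into the form given in the definition of $P_P$. Everything else is a routine application of Fubini's theorem, justified by the compact support of $f$ and the finite covolume of $M(\bb{Q})$ in $M(\bb{A})^1$.
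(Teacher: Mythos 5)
Your argument up through folding $M(\bb{A})^1$ over $M(\bb{Q})$ is the same route the paper takes and is fine. The gap is in the final reconciliation step. After unfolding and using that $\exp(a)$ is central in $M(\bb{A})$, the inner integral has the form
\[
\sum_{\gamma\in M(\bb{Q})}\int_{\mathfrak{a}_G\backslash\mathfrak{a}_P}\int_{N(\bb{A})} f\bigl(x^{-1}n\exp(a)\gamma mk\bigr)\,e^{\langle\lambda+\rho_P,a\rangle\,\pm\,2\rho_P(a)}\,dn\,da .
\]
Substituting $a\mapsto -a$ puts $x^{-1}n\exp(-a)\gamma mk$ into the argument of $f$; conjugating $n$ by $\exp(\pm a)$ then produces either $x^{-1}\exp(-a)n'\gamma mk$ or $x^{-1}\exp(a)n'\exp(-2a)\gamma mk$, and in neither case do you recover the form $x^{-1}n'\exp(a)\gamma mk$ demanded by the definition of $P_P$. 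Conjugating the $n$-variable moves $\exp(\pm a)$ past $n$; it never converts $\exp(-a)$ into $\exp(a)$. The obstruction is structural, not a Jacobian one can tally away, so the proposed change of variables cannot close the argument.

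What your care actually exposes is an inconsistency in the paper rather than a genuine extra step you must supply. The exponent produced by the Iwasawa decomposition is $\langle\lambda+\rho_P,a\rangle\pm 2\rho_P(a)$, which is not $\langle-\lambda-\rho_P,a\rangle$ for generic $\lambda$; moreover the displayed Iwasawa measure formula in Section~\ref{sec 2}, with the factor $e^{+2\rho_P(H_P(m))}$, fails left $M(\bb{A})$-invariance and should read $e^{-2\rho_P(H_P(m))}$. The paper's own proof does not confront any of this: it defines the auxiliary kernel $O(m,k)$ with weight $\exp(\langle\lambda+\rho_P,a\rangle)$ — already dropping the modular factor — and then asserts $O(m,k)=P_P(\lambda,f,x,mk)$ without justification, even though the signs visibly disagree. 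The correct reading is that the $\lambda$-exponent in the definition of $P_P$ carries a sign typo; once that is corrected, the lemma follows from the direct unfolding (your first two steps) with no substitution $a\mapsto -a$ needed — and, as above, no such substitution is possible under the paper's literal definitions.
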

	\begin{proof}
		\begin{align*}
			(\mathcal{I}_P(\lambda,f)\phi)(x) 
			&= \int_{ G(\mathbb{A})^1} f(y) (\mathcal{I}_P(\lambda,y) \phi)(x)  dy \\
			&= \int_{ G(\mathbb{A})^1} f(y) \phi(xy) \exp(\langle \lambda+\rho_P,H_P(xy)\rangle) \exp(-\langle \lambda+\rho_P,H_P(x)\rangle)  dy \\
			&= \int_{ G(\mathbb{A})^1} f(x^{-1}y) \phi(y) \exp(\langle \lambda+\rho_P,H_P(y)\rangle) \exp(-\langle \lambda+\rho_P,H_P(x)\rangle)  dy.
			\tag{\textup{3.1}}
			\label{3.1}
		\end{align*}
		For $m\in M(\bb{Q})\backslash M(\bb{A})^1$, $k\in \bK$, define function $O(m,k)$ equals
		\[\sum_{\gamma \in M(\bb{Q})} \int_{\mathfrak{a}_G \backslash \mathfrak{a}} \int_{N(\mathbb{A})} f(x^{-1} n \exp(a)\gamma m k) \cdot \exp(\langle\lambda + \rho_P, a\rangle) dn da \cdot \exp(-\langle\lambda + \rho_P, H_P(x)\rangle).\]
		By the Iwasawa decomposition, (\ref{3.1}) equals 
		\[c_P\int_K\int_{M(\bb{Q})\backslash M(\bb{A})^1}O(m,k)\phi(mk)dm dk,\]
		which equals
		\[c_P\int_K\int_{M(\bb{Q})\backslash M(\bb{A})^1}P_P(\lambda,f,x,mk)\phi(mk)dm dk.\]
	\end{proof}
	We denote  \[L_0(G(\bb{Q})\backslash G(\bb{A})^1)=\oplus_{\chi}L^2_{\chi,\text{res}}(G(\bb{Q})\backslash G(\bb{A})^1)\oplus L^2_{\text{cusp}}(G(\bb{Q})\backslash G(\bb{A})^1),\]and
	\[ L^2(G(\bb{Q})\backslash G(\bb{A})^1)=L_0(G(\bb{Q})\backslash G(\bb{A})^1)\oplus L_1^2(G(\bb{Q})\backslash G(\bb{A})^1).\]
	Denote	 the restriction of $\R(f)$ to the space $L_0(G(\bb{Q})\backslash G(\bb{A})^1)$ by $\R_0(f)$, and  the restriction to the space $L_1^2(G(\bb{Q})\backslash G(\bb{A})^1)$ by $\R_1(f)$.
	
	Thus we have $$\R(f)=\R_0(f)+\R_1(f).$$ 
	
	\section{The operator $\R_0(f)$}\label{sec 4}
	In this section, we  prove that the operator $\R_0(f)$ is of trace class.
	By the decomposition in section \ref{sec 3}, we have a similar decomposition
	\[
	L^2(G({\mathbb{Q}}) \backslash G({\mathbb{A}})^1) = \oplus_{\mathfrak{P}, \chi} L^2_{\mathfrak{P}, \chi}(G({\mathbb{Q}}) \backslash G({\mathbb{A}})^1).
	\]
	Thanks to Duflo and Labesse, we have the following lemma:
	\begin{lemma}{\textup{(Duflo, Labesse~\cite{D1})}}\label{lem 4.1}
		For every $N \geq 0$, suppose $f\in C_c^\infty(G(\bb{A})^1)$, then $f$ equals a finite sum of functions of the form\[
		f^1 * f^2,
		\]where $f^1, f^2 \in C_c^N( G({\mathbb{A}})^1)^\bK$, the superscript $\bK$ indicates the function is $\bK$-finite.
	\end{lemma}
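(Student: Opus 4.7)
The plan is to reduce the factorization to local problems at each valuation, applying the Dixmier--Malliavin theorem at the archimedean place and an idempotent identity at each finite place, then reassembling into an adelic decomposition.

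By linearity it suffices to factor a single pure tensor $f = \prod_v f_v$ satisfying conditions (i)--(iii) of Section \ref{sec 2}. At each finite place $v$, the function $f_v$ is locally constant with compact support, so there exists a compact open subgroup $K_v^0 \subset K_v$ (equal to $K_v = G(\co_v)$ for almost all $v$) under which $f_v$ is bi-invariant. Setting $\chi_v = \vol(K_v^0)^{-1} \mathbf{1}_{K_v^0}$ gives an idempotent with $\chi_v * f_v = f_v$; both $\chi_v$ and $f_v$ are then $K_v$-finite because they factor through the finite coset space $K_v/K_v^0$. This handles all finite places with no analytic work and only finitely many choices of $K_v^0$ differ from $G(\co_v)$.

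At the archimedean place the main obstacle is to factor $f_\infty \in C_c^\infty(G_\infty^1)$ as a finite sum $\sum_i g_i * h_i$ with $g_i, h_i \in C_c^N(G_\infty^1)^{K_\infty}$. I would invoke the Dixmier--Malliavin theorem to obtain such a decomposition in $C_c^\infty$ without the $K_\infty$-finiteness constraint. To impose $K_\infty$-finiteness of both factors simultaneously, I would first decompose $f_\infty$ into a finite direct sum of $K_\infty \times K_\infty$-isotypic components plus a remainder, using Peter--Weyl to project onto a carefully chosen finite set of $K_\infty$-types large enough to capture a prescribed portion of the $C^N$-norm. On each isotypic component, one applies Dixmier--Malliavin while arranging the auxiliary smoothing functions to be bi-$K_\infty$-finite (by convolving against characters from the same finite set of types); iterating this construction on the remainder and summing a geometric-type control on the $C^N$-norms yields an exact factorization by $K_\infty$-finite functions. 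Taking the tensor product of the local convolution identities across all valuations produces the required adelic decomposition.

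The hard part is the joint control of the $K_\infty$-finiteness and the Dixmier--Malliavin factorization at the archimedean place: the latter does not automatically respect $K_\infty$-types, and naive truncation to finitely many types gives only an approximation rather than an exact convolution identity. This is precisely the content of Duflo--Labesse's original argument, where one iterates the smoothing and the $K$-type truncation in a controlled way to pass from approximation to exact equality. The restriction to $C^N$ rather than $C^\infty$ regularity of the factors is harmless, since only finitely many derivatives enter into the convergence estimates needed in the trace formula applications.
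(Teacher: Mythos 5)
The paper does not prove this lemma --- it is stated as a citation to Duflo--Labesse \cite{D1} and used as a black box --- so there is no in-paper proof to compare your attempt against. Your local reduction to a pure tensor and the idempotent argument at the finite places are correct and standard.

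At the archimedean place, however, the outline has a genuine gap. Projecting $f_\infty$ onto finitely many $K_\infty\times K_\infty$-types gives an approximation, not an identity; the remainder is not itself $K_\infty$-finite, so iterating the construction yields an \emph{infinite} series of convolutions, whereas the lemma requires a \emph{finite} sum. A ``geometric control on the $C^N$-norms'' does not repair this: there is no common convolution factor to pull out of the infinite series, and the supports of iterated remainders grow under convolution, so absolute $C^N$-convergence does not produce a single compactly supported factor. There is also a secondary problem: applying Dixmier--Malliavin to $f_\infty^\Gamma$ and grouping as $f_\infty^\Gamma = \sum_i (e_\Gamma*g_i)*(h_i*e_\Gamma)$ makes each factor $K_\infty$-finite on one side only, and inserting a further middle projection to enforce bi-$K_\infty$-finiteness changes the value of the convolution. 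Finally, Dixmier--Malliavin (1978) postdates Duflo--Labesse (1971), so it cannot be ``precisely the content'' of their original argument; Duflo--Labesse instead construct an explicit approximate identity and exploit the fact that only $C^N$ (not $C^\infty$) regularity is demanded of the factors, which is a genuinely different route and one you would need to reproduce or replace to close the gap identified above.
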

	
	Then we can assume that \[f=f^1 * f^2.\]
	
	For any fixed $P$,   fix an orthonormal basis $\{\Phi_\beta : \beta \in I_P\}$ of $\mathcal{H}_P$, where $I_P$ is the set of indices associated to $P$. Denote $\Phi_\alpha = \mathcal{I}_P(\lambda, f)\Phi_\beta$.
	
	Write  $\mathcal{B}_{P, \chi}$ for the set of indices $\alpha$ which correspond to a given orthonormal basis of the finite dimensional space $\mathcal{H}_{P, \chi}$, if we fix $P$ and $\chi$. Then
	\[
	I_P = \cup_{\chi} \mathcal{B}_{P, \chi}.
	\]

	Denote $\mathcal{T}$ the collection of $\chi'=(P,\pi)$,  $\pi$ be considered as a set of equivalence classes of   irreducible unitary  cuspidal  representations of $M_P({\bb{A}})^1$\cite{A4}. For any representation $\pi$ of $M_P(\bb{A})^1$, define the action of $s\in\Omega(\mathfrak{a}_P,\mathfrak{a}_{P'})$ on another Levi subgroup $M_{P'}(\bb{A})^1$ by
	\[(s\pi')(m')=\pi(w_s^{-1}m'w_s),\quad m'\in M_{P'}(\bb{A})^1.\]
	Then we  say that a class $\chi'$ is \textit{unramified} if for every pair $(P, \pi_P)$ in $\chi'$, the stabilizer of $\pi_P$ in $\Omega(\mathfrak{a}, \mathfrak{a})$ is the identity, otherwise $\chi'$ is \textit{ramified}.
	
	For unramified $\chi'$,  $P\in\mathcal{P}(M)$, $\Phi,$ $ \Phi'\in\mathcal{H}_{P,\chi'}$,  $s,s'\in\Omega(\mathfrak{a,
		\mathfrak{a}})$, then
	\[(M_P(s,\lambda)\Phi,M_{P'}(s',\lambda')\Phi')=0,\]unless $s=s'$(see \cite{A4}).
	
	Recall that\[\mathcal{I}_P(\lambda)=\oplus_l\oplus_v\mathcal{I}_P(\pi^l_{v,\lambda}),\]
	we can assume that
	\[\mathcal{I}_{P,\chi'}(\lambda,f^1)=\mathcal{I}_{P,\chi'}(\lambda,f^1)=0,\tag{4.1}\label{4.1}\]
	for almost all ramified $\chi'$.
	The residual discrete spectrum associated to unramified $\chi'$ is zero (see \cite{L1}).
	
	For a parabolic subgroup $P$, denote the restriction of the operator $\R_1(f)$ on the space $L^2_{\mathfrak{P}}(G(\bb{Q}\backslash G(\bb{A})^1))$ by $\R_{{P},1}(f)$.
	\begin{lemma}Given  $P\in\mathfrak{P}$, $\R_{{P},1}(f)$ is an integral operator with kernel $	K_{P}(x, y) $, which is
		\[
		\sum_{\chi} n(A)^{-1} (\frac{1}{2\pi i})^{(\dim A/Z)} \int_{i \mathfrak{a}_G \backslash i\mathfrak{a}} \sum_{\alpha, \beta \in \mathcal{B}_{P, \chi}} E_P(\Phi_\alpha, \lambda, x) \overline{E_P(\Phi_\beta, \lambda, y)} d\lambda.
		\]
	\end{lemma}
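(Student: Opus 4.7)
The plan is to compute the action of $\R_{P,1}(f)$ on an arbitrary element of $L^2_{\mathfrak{P},\chi}$ and read off the integral kernel, then sum over $\chi$. First, using the isometric isomorphism from a dense subspace of $\widehat{L}_{\mathfrak{P}_\chi,\chi}$ onto a dense subspace of $L^2_{\mathfrak{P}_\chi,\chi}$ recalled in Section~\ref{sec 3}, I represent every $\hat\phi \in L^2_{\mathfrak{P},\chi}(G(\bb{Q})\backslash G(\bb{A})^1)$ as a wave packet
\[\hat\phi(x) = n(A)^{-1}\left(\frac{1}{2\pi i}\right)^{\dim A/Z}\int_{i\mathfrak{a}_G\backslash i\mathfrak{a}} E(F_P(\lambda), \lambda, x)\, d\lambda,\]
for some measurable section $F_P : i\mathfrak{a} \to \mathcal{H}_{P,\chi}$. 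The simplifying observation (\ref{4.1}) allows me to restrict attention to those $\chi$ for which $\pi_{P,\chi}(\lambda,f)$ is not identically zero.

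Second, I would apply $\R(f)$ and invoke the functional equation (a) of Langlands' theorem, namely $\R(f) E(\Phi, \lambda, \cdot) = E(\pi_P(\lambda,f)\Phi, \lambda, \cdot)$, after first justifying that the $G(\bb{A})$-convolution commutes with the contour integral defining the wave packet. This yields
\[\R(f)\hat\phi(x) = n(A)^{-1}\left(\frac{1}{2\pi i}\right)^{\dim A/Z}\int_{i\mathfrak{a}_G\backslash i\mathfrak{a}} E\bigl(\pi_P(\lambda,f)F_P(\lambda),\lambda,x\bigr)\, d\lambda.\]

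Third, I would expand both $F_P(\lambda)$ and $\pi_P(\lambda,f)F_P(\lambda)$ in the orthonormal basis $\{\Phi_\beta : \beta \in \mathcal{B}_{P,\chi}\}$ of the finite-dimensional space $\mathcal{H}_{P,\chi}$. Using linearity of the Eisenstein series in its first argument, this rewrites the previous display as
\[n(A)^{-1}\left(\frac{1}{2\pi i}\right)^{\dim A/Z}\int\sum_{\alpha,\beta}\bigl(\pi_P(\lambda,f)\Phi_\beta,\Phi_\alpha\bigr)\,(F_P(\lambda),\Phi_\beta)\, E(\Phi_\alpha,\lambda,x)\, d\lambda.\]
Then I would use the Parseval adjoint of the wave packet inversion to write $(F_P(\lambda),\Phi_\beta) = \int_{G(\bb{Q})\backslash G(\bb{A})^1}\hat\phi(y)\,\overline{E(\Phi_\beta,\lambda,y)}\,dy$, and interchange orders of integration to recognise the kernel in the claimed form, with the convention $\Phi_\alpha = \pi_P(\lambda,f)\Phi_\beta$ from the statement absorbing the matrix coefficient into $E(\Phi_\alpha,\lambda,x)$. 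Summing over $\chi$ finishes the identification.

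The main technical obstacle is to justify Fubini: interchanging the $\lambda$-integral, the $y$-integral, and the double sum over $(\alpha,\beta)$. For this I would combine the factorization $f = f^1 * f^2$ afforded by Lemma~\ref{lem 4.1} with Lemma~\ref{lem 3.2}, which together guarantee that each $\pi_P(\lambda,f)$ is of trace class on $\mathcal{H}_{P,\chi}$ with square-summable matrix coefficients. The Schwartz behaviour in $\lambda$ of the auxiliary kernel $P_P(\lambda,f,\cdot,\cdot)$ discussed before Lemma~\ref{lem 3.4} supplies the required decay on the imaginary axis, while the $K$-finiteness built into $\mathcal{H}_P^0$ keeps only finitely many $W$-isotypic components contributing for fixed $f$. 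These estimates legitimize the exchange of sum and integral and complete the identification of the kernel.
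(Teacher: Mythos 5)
Your plan follows a different route from the paper, and this divergence is worth noting before pointing to the gap. The paper treats the \emph{form} of the kernel as an immediate consequence of the spectral decomposition from Section~\ref{sec 3} (its proof opens with a single sentence to that effect) and devotes all its effort to the convergence claim: it bounds $|K_{P,\chi}(f,x,y)|$ by $K_{P,\chi}(f^1*(f^1)^*,x,x)^{1/2}K_{P,\chi}((f^2)^**f^2,y,y)^{1/2}$ via Cauchy--Schwarz, then uses the crucial observation that $\R_{P,1}(f^1*(f^1)^*)$ is the restriction to an invariant subspace of the \emph{positive semidefinite} operator $\R(f^1*(f^1)^*)$, so the integrand is nonnegative and the whole expression is majorized by the full kernel $K(x,x)$, which Harish-Chandra proved to be bounded. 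You, by contrast, spend most of your effort deriving the kernel formula explicitly via wave packets, the functional equation, and Parseval, which is a legitimate (and more self-contained) way to arrive at the formula the paper takes for granted.

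The gap is in your convergence argument. You invoke Lemma~\ref{lem 3.2} to conclude that $\pi_P(\lambda,f)$ is trace class on $\mathcal{H}_{P,\chi}$, but that lemma concerns Hilbert--Schmidt operators on the cuspidal space $\mathcal{H}_{G,\textup{cusp}}$, and trace class on the \emph{finite-dimensional} space $\mathcal{H}_{P,\chi}$ is automatic and carries no uniform content. The real difficulty is the sum over the infinite set of $\chi$ together with the $\lambda$-integral, and here your cited tools do not obviously close the loop: the Schwartz decay of $P_P(\lambda,f,\cdot,\cdot)$ and the $K$-finiteness control the $\lambda$-integral and the number of $W$-isotypes for a \emph{fixed} $\chi$, but you never produce a summable majorant over $\chi$. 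The paper's positivity argument is precisely what supplies this: positivity makes term-by-term comparison with the bounded function $K(x,x)$ legitimate without needing any uniform estimate on individual $\chi$-pieces. Without this (or an equivalent domination argument), the Fubini interchange and the $\chi$-summation in your plan remain unjustified, and the proof does not go through as stated.
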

	\begin{proof}
		The definition of the kernel $K_{P}(x,y)$ follows from the spectral decomposition.
		
		We now only need to prove the convergence of the integral in $K_{P}(x,y)$ and the sum over $\chi$ converged and show that they are locally bounded.
		
		We  denote $f=f^1\ast f^2$. Define $K_{P,\chi}(f,x,y)$ to be 
		\[\sum_{\beta \in \mathcal{B}_{P, \chi}} E_P(\mathcal{I}_P(\lambda, f)\Phi_\beta, \lambda, x) \overline{E_P(\Phi_\beta, \lambda, x)}.\]
		By the Cauchy-Schwartz inequality, the absolute value of the function $K_{P,\chi}(f,x,y)$ is bounded by
		\[
		K_{P, \chi}(f^1 * (f^1)^*, x, x)^{\frac{1}{2}} \cdot K_{P, \chi}((f^2)^* * f^2, y, y)^{\frac{1}{2}}.
		\]
		However, the operator $\R_{P,1}(f)$ is the restriction of the semi-positive definite operator $\R(f)$ to an invariant subspace. The integral in the expression for $K_{P,1}(x,x)$ is nonnegative, and the integral is bounded by $K(x,x)$.	
		
		By \cite{H1}, $K(x,x)$ is bounded.
	\end{proof}
	The proof also shows that the kernel $K_{P}(x,x)$ is continuous in $x,y$.
	\begin{theorem}
		Given function $f\in C_c^\infty(G(\bb{A})^1)$ the operator $\R_0(f)$ is of trace class.
	\end{theorem}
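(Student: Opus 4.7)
The plan is to reduce the trace class statement for $\R_0(f)$ to a Hilbert--Schmidt statement for a square root, and then handle the cuspidal and residual summands of $L_0(G(\bb{Q})\backslash G(\bb{A})^1)$ separately using the tools already assembled in the section.

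First I would apply Lemma \ref{lem 4.1} (Duflo--Labesse) with $N$ chosen as large as needed, writing $f$ as a finite sum of convolutions $f^1 \ast f^2$ with $f^1, f^2 \in C_c^N(G(\bb{A})^1)^K$. Since a finite sum of trace class operators is trace class, I may assume $f = f^1 \ast f^2$. Then $\R_0(f) = \R_0(f^1)\circ \R_0(f^2)$, and since the composition of two Hilbert--Schmidt operators is trace class, it suffices to prove that each $\R_0(f^i)$ is Hilbert--Schmidt on $L_0(G(\bb{Q})\backslash G(\bb{A})^1)$.

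Next I would use the invariant decomposition
\[
L_0(G(\bb{Q})\backslash G(\bb{A})^1) \;=\; L^2_{\text{cusp}}(G(\bb{Q})\backslash G(\bb{A})^1)\;\oplus\;\bigoplus_{\chi} L^2_{\chi,\text{res}}(G(\bb{Q})\backslash G(\bb{A})^1)
\]
and show that $\R_0(f^i)$ is Hilbert--Schmidt on each summand. On the cuspidal summand, the operator $\phi \mapsto \R_0(f^i)\phi$ is (up to the standard identification) the convolution $\phi \mapsto \phi \ast (f^i)^*$ on $\mathcal{H}_{G,\text{cusp}}$, and this is Hilbert--Schmidt by Lemma \ref{lem 3.2}, provided $N$ was chosen large enough in Step 1.

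For the residual summand, the key input is the dichotomy between unramified and ramified classes $\chi$ as set up just before Lemma \ref{lem 3.4}. For unramified $\chi$ the residual discrete spectrum is zero (as recalled after equation \ref{4.1}, following Langlands \cite{L1}), so $\R_0(f^i)$ vanishes on all these pieces. For ramified $\chi$, assumption \ref{4.1} states that $\pi_{P,\chi}(\lambda,f^i)=0$ for almost all such $\chi$; since the operator induced by $\R_0(f^i)$ on $L^2_{\chi,\text{res}}$ is built from the induced operators $\pi_{P,\chi}(\lambda,f^i)$ through residues of Eisenstein series, it vanishes for all but finitely many $\chi$. For each of the finitely many remaining $\chi$, the space $L^2_{\chi,\text{res}}$ is a finite direct sum of irreducible unitary automorphic representations, each of finite multiplicity, and $\R_0(f^i)$ acts by a trace class (hence Hilbert--Schmidt) operator because $f^i$ is smooth of compact support. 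Summing over all $\chi$, the contribution to the Hilbert--Schmidt norm is a finite sum of finite quantities.

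Combining the cuspidal and residual bounds yields that $\R_0(f^i)$ is Hilbert--Schmidt, and composing gives $\R_0(f)$ trace class. The step I expect to be the most delicate is Step 4: one has to confirm that the residual operator is really controlled by the induced operators $\pi_{P,\chi}(\lambda, f^i)$ that appear in \ref{4.1}, so that the finiteness statement there transfers to a finiteness statement for the set of $\chi$ on which $\R_0(f^i)$ is nonzero on $L^2_{\chi,\text{res}}$. Once this linkage is in place, the argument collapses to the two ``soft'' inputs Lemma \ref{lem 4.1} and Lemma \ref{lem 3.2}, together with the vanishing of the unramified residual spectrum.
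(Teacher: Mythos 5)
Your proposal is essentially the paper's own argument, with one small structural variation.  You reduce to $f=f^1\ast f^2$ via Duflo--Labesse, decompose $L_0$ into cuspidal and residual summands, handle the cuspidal piece via the Hilbert--Schmidt statement for convolution by $f^i$ on $\mathcal{H}_{G,\mathrm{cusp}}$, and handle the residual piece via the vanishing of the unramified residual spectrum together with assumption (\ref{4.1}); these are exactly the inputs the paper uses.  The only difference is the order of operations: you establish that each $\R_0(f^i)$ is Hilbert--Schmidt on all of $L_0$ and then compose once, whereas the paper first splits $\R_0(f)$ into $\R_{0,\mathrm{cusp}}(f)\oplus\R_{0,\mathrm{res}}(f)$ and composes within each summand, concluding that $\R_{0,\mathrm{cusp}}(f)$ is trace class as a product of two Hilbert--Schmidt operators (Harish-Chandra) and that $\R_{0,\mathrm{res}}(f)$ is trace class because each $\R_{0,\mathrm{res}}(f^i)$ is in fact finite rank.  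For the residual piece the paper's phrasing is slightly cleaner: (\ref{4.1}) plus the finite-dimensionality of each $\mathcal{H}_{P,\chi}$ give finite rank directly, which is a tad stronger than your ``trace class on each of the finitely many remaining $\chi$ because $f^i$ is smooth of compact support,'' though your version also works.  The delicate linkage you flag in Step 4 — that the residual operator is genuinely controlled by the $\pi_{P,\chi}(\lambda,f^i)$ appearing in (\ref{4.1}) — is indeed implicit in the paper as well and is not spelled out further there either, so you have correctly identified where the real content lies.
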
 
	\begin{proof}
		The operator $\R_0(f)$ is the direct sum of $\R_{0, \text{cusp}}(f)$ and $\R_{0, \text{res}}(f)$, these two are the restriction of $\R_0(f)$ to the space of cusp forms and the space  $\oplus L^2_{\chi', \text{res}}(G(\bb{Q}) \backslash G(\bb{A}))$.\\
		Now \[\R_{0, \text{cusp}} = \R_{0, \text{cusp}}(f^1) \cdot \R_{0, \text{cusp}}(f^2),\] Harish-Chandra (\cite{H1}) has proved that these two operators$\R_{0, \text{cusp}}(f^1)$ and $\R_{0, \text{cusp}}(f^2)$ are of Hilbert-Schmidt class. Then $\R_{0, \text{cusp}}(f)$ is of trace class.
		
		By (\ref{4.1}), and the fact that the residual discrete spectrum associated to unramified $\chi$ is zero \cite{L1}, both  $\R_{0, \text{res}}(f^1)$ and $\R_{0, \text{res}}(f^2)$ are of finite rank. Thus $\R_{0, \text{res}}(f)$ is of trace class. 
	\end{proof}
	We now express the trace of $\R_0(f)$ as
	\[\Tr(\R_0(f))=\int_{G(\bb{Q})\backslash G({\bb{A}})^1} K_0(x, x) dx.\]

	\section{The calculation of the kernel}\label{sec 5}
	In preparation for computing the geometric side of the trace formula for \(\mathrm{GL}(3)\), it is useful to partition the elements of \(G(\mathbb{Q})\) into equivalence classes that are coarser than conjugacy classes.   Such classes will be referred to as \textit{orbits}.
	
	For \(\gamma \in G(\mathbb{Q})\), denote its Jordan decomposition as \(\gamma = \gamma_s \gamma_u\) with \(\gamma_s\) semisimple and \(\gamma_u\) unipotent.  Two elements \(\gamma,\gamma' \in G(\mathbb{Q})\) are declared equivalent if their semisimple parts \(\gamma_s\) and \(\gamma_s'\) are conjugate under \(G(\mathbb{Q})\).  Let \(\mathcal{O}\) denote the collection of these equivalence classes.  A class \(\mathfrak{o}\in\mathcal{O}\) therefore consists of a single \(G(\mathbb{Q})\)-conjugacy class of semisimple elements together with all their unipotent parts.
	
	For \(\gamma \in G(\mathbb{Q})\) and a connected subgroup \(H\subseteq G\), write \(H^{+}(\gamma)\) for the centralizer of \(\gamma\) in \(H\) and \(H(\gamma)\) for its identity component.  When \(H\) is reductive, \(H(\gamma)\) is a normal subgroup of finite index in \(H^{+}(\gamma)\); denote this index by \(n_{\gamma,H}\).  If in addition \(\gamma\) is semisimple in \(H(\mathbb{Q})\), then \(H(\gamma)\) is again reductive (see \cite{Bor}).
	
	Given \(\mathfrak{o}\in\mathcal{O}\) one can choose a parabolic subgroup \(P\) and a semisimple element \(\gamma\in\mathfrak{o}\) such that \(\gamma\in M(\mathbb{Q})\) but no \(M(\mathbb{Q})\)-conjugate of \(\gamma\) lies in any parabolic subgroup \(P_{1}(\mathbb{Q})\) of \(G\) with \(P_{1}\subsetneq P\).  In other words, \(\gamma\) does not belong to any proper parabolic subgroup of \(M\).  Semisimple elements of a Levi subgroup \(M(\mathbb{Q})\) with this property are called \textit{elliptic in \(M\)}.
	
	If \(\gamma\in G(\mathbb{Q})\) is elliptic in \(G(\mathbb{Q})\), its characteristic polynomial is irreducible over \(\mathbb{Q}\); consequently \(\gamma\) is semisimple.  The collection of such elements determines a set of orbits, which we denote by \(\mathfrak{o}_{G}\).
	
	Elliptic elements in \(M_{21}(\mathbb{Q})\) are also semisimple.  They correspond to matrices having one eigenvalue in \(\mathbb{Q}\) and two distinct eigenvalues in a quadratic extension of \(\mathbb{Q}\).  These form another family of orbits, denoted by \(\mathfrak{o}_{21}\).
	
	Elliptic elements in the minimal Levi subgroup $M_0(\bb{Q})$  are diagonalizable matrices with entries in $\bb{Q}$. Hence their characteristic polynomials split completely over \(\mathbb{Q}\).  The remaining orbits are classified according to the multiplicities of the eigenvalues.  We introduce the following notation for the corresponding sets of orbits:
	
	\begin{itemize}
		\item \(\mathfrak{o}_{111}^{0}\): orbits whose elements have three distinct eigenvalues in \(\mathbb{Q}\).
		\item \(\mathfrak{o}_{111}^{2}\): orbits whose eigenvalues lie in \(\mathbb{Q}\) and exactly two of them coincide.
		\item \(\mathfrak{o}_{111}^{3}\): orbits whose semisimple part is a scalar matrix (all eigenvalues in \(\mathbb{Q}\) equal).
	\end{itemize}
	
	For an orbit \(\mathfrak{o}\in\mathcal{O}\) let \(M\) be the smallest Levi subgroup containing some element of \(\mathfrak{o}\).  Fix a semisimple element \(\gamma\in\mathfrak{o}\cap M\).  The orbit is called \textit{unramified} if \(G(\gamma)=M(\gamma)\); otherwise it is called \textit{ramified}.  The orbits belonging to \(\mathfrak{o}_{G}\), \(\mathfrak{o}_{21}\) and \(\mathfrak{o}_{111}^{0}\) are unramified, while all others are ramified.
	
	In what follows we shall occasionally use the same symbol \(\mathfrak{o}\) to denote both an individual orbit inside a set \(\mathfrak{o}_{i}^{j}\) and the whole set itself; the meaning will always be clear from the context.

	\begin{lemma}\label{lemma5.1}
		For fixed parabolic subgroup $P$, and any $\gamma \in M(\bb{Q})$
		\[
		P^+(\gamma) = M^+(\gamma) N^+(\gamma).
		\]
	\end{lemma}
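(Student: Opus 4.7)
The plan is to exploit the Levi decomposition $P = M \ltimes N$, which writes every $p \in P$ uniquely as $p = mn$ with $m \in M$ and $n \in N$, and which says that $N$ is a normal subgroup of $P$. Since $\gamma \in M(\mathbb{Q}) \subset P(\mathbb{Q})$, conjugation by $\gamma$ preserves both $M$ and $N$, so each of $M^+(\gamma)$ and $N^+(\gamma)$ makes sense as a subgroup and their product lies inside $P^+(\gamma)$.

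The inclusion $M^+(\gamma) N^+(\gamma) \subseteq P^+(\gamma)$ is immediate: if $m$ commutes with $\gamma$ and $n$ commutes with $\gamma$, then $(mn)\gamma = m(n\gamma) = m(\gamma n) = (\gamma m)n = \gamma(mn)$.

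For the reverse inclusion, I would take an arbitrary $p \in P^+(\gamma)$ and use Levi to write $p = mn$ uniquely with $m \in M$, $n \in N$. The relation $p\gamma = \gamma p$ becomes $mn\gamma = \gamma mn$, equivalently
\[
(\gamma^{-1} m \gamma)(\gamma^{-1} n \gamma) = mn.
\]
Here $\gamma^{-1} m \gamma \in M$ because $\gamma, m \in M$, and $\gamma^{-1} n \gamma \in N$ because $N$ is normal in $P$ and $\gamma \in P$. By the uniqueness part of the Levi decomposition, these two factorizations must coincide factor by factor, giving $\gamma^{-1} m \gamma = m$ and $\gamma^{-1} n \gamma = n$, i.e., $m \in M^+(\gamma)$ and $n \in N^+(\gamma)$, so that $p \in M^+(\gamma) N^+(\gamma)$.

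This is a short structural lemma rather than a computational one, and I do not anticipate a real obstacle: the only thing to be careful about is to invoke the uniqueness of the Levi factorization (not just its existence) and the normality of $N$ in $P$, both of which are standard facts about parabolic subgroups of reductive groups and are available here. The same argument works verbatim if one replaces the $\mathbb{Q}$-points by $\mathbb{A}$-points or by $\mathbb{Q}_v$-points, which is useful because the lemma is typically applied adelically in the sequel.
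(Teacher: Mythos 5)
Your proof is correct and follows essentially the same route as the paper: write $p = mn$ by the Levi decomposition, use the relation $p\gamma = \gamma p$ to get $(\gamma^{-1}m\gamma)(\gamma^{-1}n\gamma) = mn$, and conclude by comparing factors since $\gamma$ normalizes both $M$ and $N$. If anything, you are a bit more explicit than the paper in invoking the uniqueness of the $M$-$N$ factorization, which is the step the paper leaves slightly implicit.
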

	\begin{proof}
		Since $M^+(\gamma)\subset P^+(\gamma)$, and $N^+(\gamma)\subseteq P^+(\gamma)$, then  
		\[M^+(\gamma) N^+(\gamma) \subseteq P^+(\gamma). \]
		
		For any $p \in P^+(\gamma)$,   it can be written as
		\[
		p = mn,\quad m \in M(\bb{Q}), \quad n \in N(\bb{Q}).
		\]
		Then
		\[
		p = \gamma^{-1} p \gamma = mn,
		\]
		that is,
		\[
		p = \gamma^{-1} m \gamma \cdot \gamma^{-1} n \gamma = mn.
		\]
		Now, since $\gamma$ is the normalizer of $M$ and $N$, we have
		\[
		m = \gamma^{-1} m \gamma, \quad n = \gamma^{-1} n \gamma.
		\]
		Therefore \[m\in M^+(\gamma),\quad n\in N^+(\gamma).\]
		Thus $p\in M^+(\gamma)N^+(\gamma)$.
	\end{proof}
	By the fact that $N^+(\gamma)$ is connected,   the centralizer of $\gamma$ in $N(\bb{Q})$ is just $N(\gamma,\bb{Q})$.
	\begin{lemma}	\label{lemma:1}
		Suppose that $P=MN$, is a parabolic subgroup.  For $\gamma \in M(\bb{Q}) $ and $\phi \in C_{c}(N(\mathbb{A}))$, then 
		\begin{equation}\label{5.1}
			\sum_{\delta\in N(\mathbb{Q},\gamma_s)\backslash N(\mathbb{Q})}\sum_{\eta\in N(\mathbb{Q},\gamma_s)}\phi(\gamma^{-1}\delta^{-1}\gamma\eta\delta) = \sum_{\eta\in N(\mathbb{Q})}\phi(\eta).
		\end{equation}
	\end{lemma}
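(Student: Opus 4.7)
The plan is to produce an explicit bijection
\[
N(\bb Q,\gamma_s)\times\bigl(N(\bb Q,\gamma_s)\backslash N(\bb Q)\bigr)\;\longrightarrow\;N(\bb Q),\qquad(\eta,\delta)\longmapsto\gamma^{-1}\delta^{-1}\gamma\eta\delta,
\]
under which the left-hand side of \eqref{5.1} becomes $\sum_{\nu\in N(\bb Q)}\phi(\nu)$ term by term. Two things must be checked: that the map descends to the indicated quotient in $\delta$, and that it is a bijection onto $N(\bb Q)$.

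For well-definedness, I would verify that replacing $\delta$ by $\mu\delta$ with $\mu\in N(\bb Q,\gamma_s)$ leaves the inner sum over $\eta$ unchanged. The summand becomes $\gamma^{-1}\delta^{-1}\mu^{-1}\gamma\eta\mu\delta$; since $\mu$ centralizes $\gamma_s$, using the remark after Lemma~\ref{lemma5.1} that $N^{+}(\gamma_s)=N(\gamma_s)$, this rearranges as $\gamma^{-1}\delta^{-1}\gamma(\mu^{-1}\eta\mu)\delta$, and the substitution $\eta\mapsto\mu^{-1}\eta\mu$ permutes $N(\bb Q,\gamma_s)$.

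The main step is bijectivity, which I would prove by induction on the nilpotency class of $N$. In the abelian base case all factors commute, so $\gamma^{-1}\delta^{-1}\gamma\eta\delta=\psi(\delta)\cdot\eta$, where $\psi(\delta):=\gamma^{-1}\delta^{-1}\gamma\delta$ is a group homomorphism $N\to N$. Via the exponential isomorphism $\exp\colon\mathfrak n\to N$, $\psi$ corresponds to $1-\Ad(\gamma_s^{-1})$ on $\mathfrak n$. The semisimplicity of $\Ad(\gamma_s^{-1})$ supplies the direct-sum decomposition $\mathfrak n=\mathfrak n^{\gamma_s}\oplus(1-\Ad(\gamma_s^{-1}))\mathfrak n$, from which every $\nu\in N(\bb Q)$ acquires a unique factorization $\nu=\psi(\delta)\eta$ with $\eta\in N(\bb Q,\gamma_s)$ and $\delta\in N(\bb Q)/N(\bb Q,\gamma_s)$. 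For the inductive step, choose an $\Ad(\gamma_s)$-stable central subgroup $Z\subseteq N$ defined over $\bb Q$ (e.g.\ $Z=Z(N)$); the inductive hypothesis applied to $N/Z$ parametrizes the image of $\nu$ modulo $Z$, while the abelian base case applied to $Z$ handles the residue. For $G=\GL(3)$, only $N_0$ is non-abelian (of nilpotency class two), so a single iteration suffices.

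The principal technical obstacle lies in the gluing step of the induction: one must verify that the natural map $N(\bb Q,\gamma_s)\twoheadrightarrow(N/Z)(\bb Q,\gamma_s)$ is surjective, so that a coset representative chosen at the $N/Z$ level lifts to one in $N$ compatible with the $\gamma_s$-fixed-point decomposition. This surjectivity follows from the semisimplicity of $\Ad(\gamma_s)$ on $Z$, which forces the vanishing of the relevant cohomological obstruction attached to the central extension $1\to Z\to N\to N/Z\to 1$.
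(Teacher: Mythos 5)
Your overall strategy — establishing a direct bijection $N(\mathbb{Q},\gamma_s)\times(N(\mathbb{Q},\gamma_s)\backslash N(\mathbb{Q}))\to N(\mathbb{Q})$ by reduction to the abelian case along a $\gamma_s$-stable central series — is parallel in spirit to the paper's argument (which runs a three-step descending induction on a filtration $N=N_0\supset N_1\supset N_2=\{e\}$). But there is a substantive slip at precisely the point where the paper is most careful: the distinction between $\gamma$ and its semisimple part $\gamma_s$.

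In your well-definedness check you claim $\gamma^{-1}\delta^{-1}\mu^{-1}\gamma\eta\mu\delta$ "rearranges as" $\gamma^{-1}\delta^{-1}\gamma(\mu^{-1}\eta\mu)\delta$, but $\mu$ centralizes $\gamma_s$, not $\gamma$. The correct computation gives $\eta'=(\gamma_u^{-1}\mu^{-1}\gamma_u)\eta\mu$; this still lies in $N(\mathbb{Q},\gamma_s)$ because $\gamma_u$ commutes with $\gamma_s$ and therefore normalizes $N(\gamma_s)$, so your conclusion is salvageable, but the step as written suppresses the role of $\gamma_u$. The same conflation then causes a real gap in the abelian base case. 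The map $\psi(\delta)=\gamma^{-1}\delta^{-1}\gamma\delta$ corresponds to $1-\mathrm{Ad}(\gamma^{-1})$, not $1-\mathrm{Ad}(\gamma_s^{-1})$. These differ by the unipotent factor $\mathrm{Ad}(\gamma_u^{-1})$, and the direct sum $\mathfrak{n}=\mathfrak{n}^{\gamma_s}\oplus(1-\mathrm{Ad}(\gamma_s^{-1}))\mathfrak{n}$ that you invoke does not describe $\ker\psi$ and $\mathrm{im}\,\psi$: one has $\ker\psi=\mathfrak{n}(\gamma)\subsetneq\mathfrak{n}(\gamma_s)$ in general, and $\mathrm{im}\,\psi=V\oplus(1-\mathrm{Ad}(\gamma_u^{-1}))\mathfrak{n}(\gamma_s)$ where $V$ is the sum of non-trivial $\mathrm{Ad}(\gamma_s^{-1})$-eigenspaces, which is \emph{not} a complement to $\mathfrak{n}(\gamma_s)$. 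What you actually need is the weaker but more delicate statement that $\psi$ restricts to a linear isomorphism of $V$ and that $\psi(\delta)\in N(\gamma_s)$ forces $\delta\in N(\gamma_s)$; that is what makes $(\eta,\delta\bmod N(\gamma_s))\mapsto\psi(\delta)\eta$ a bijection, and it is exactly what the paper isolates via the identity $\gamma^{-1}\delta_2^{-1}\gamma=\gamma_s^{-1}(\gamma_u^{-1}\delta_2^{-1}\gamma_u\delta_2)\delta_2^{-1}\gamma_s$ before invoking semisimplicity of $\gamma_s$ alone. You cannot skip the $\gamma_u$-bookkeeping.

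A second, smaller gap is the gluing step. Declaring that the surjectivity of $N(\mathbb{Q},\gamma_s)\to(N/Z)(\mathbb{Q},\gamma_s)$ "follows from the vanishing of the relevant cohomological obstruction" is a placeholder, not a proof; and even granting it, you have not explained how the bijections for $Z$ and for $N/Z$ combine, which is nontrivial because $(\eta,\delta)\mapsto\gamma^{-1}\delta^{-1}\gamma\eta\delta$ is not a homomorphism in $\delta$ once $N$ is non-abelian. The paper avoids both difficulties by working with the sum identity for each stage $N_k(\gamma_s)N_{k+1}\backslash N_k$ of the filtration rather than trying to assemble a single global bijection, and by reducing at each stage to the bijectivity of $y\mapsto\gamma_s^{-1}y^{-1}\gamma_s y$ on an abelian quotient where $\gamma_s$ acts semisimply and $\gamma_u$ has already been absorbed into the coset. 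I would recommend adopting that reduction pattern, or else carrying out the $\gamma_u$ analysis and the $N/Z$-to-$N$ lifting explicitly.
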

	\begin{proof}
		In the proof of this lemma, we shall substitute $M$(resp.$N$) for $M(\mathbb{Q})$(resp $N(\mathbb{Q})$).
		
		If $\gamma$ is replaced by an $M$-conjugate element, $\gamma = \mu^{-1}\gamma\mu$, where $\mu \in M$.  Then the term $\gamma^{-1}\delta^{-1}\gamma\eta\delta$ becomes
		$\mu^{-1} m^{-1} \mu \delta^{-1} \mu^{-1} m \mu \eta \delta$, 
		which is
		\[	\mu^{-1} m^{-1} \mu \delta^{-1} \mu^{-1} m \mu \mu^{-1} \mu \eta \mu^{-1} \mu \delta.\]
		We write this as
		\[	\mu^{-1} \cdot m^{-1} \cdot \mu \delta^{-1}  \mu^{-1} \cdot m \cdot \mu \eta \mu^{-1} \cdot \mu \delta \mu^{-1} \cdot \mu,\]
		where $ \mu \delta^{-1}  \mu^{-1} $, $\mu \eta  \mu^{-1}$, $\mu \delta  \mu^{-1}$ are belong to $N$, since $N$ is normal in $ {P}$. So neither side of (\ref{5.1}) changes. 
		
		We can  assume that there is $ P_1\subset P$, such that
		\[\gamma_s \in M_1, \qquad \gamma_u \in M(\gamma_s) \cap N_1.\]
		Since $N$ is unipotent, then we have sequence $N=N_0\supset N_1 \supset N_2 ={e}$ of normal $\gamma_s$-stable subgroups of $N$. And they satisfy the two properties:$N_{k+1}\backslash N_{k}$ is abelian for $k=0,1$, and $\eta^{-1} \delta^{-1} \eta  \delta$ belongs to $N_k+1$ for any $\eta \in N_k$ and $\eta \in N$.
		
		We claim that for $k=0,1,2$ 
		\begin{equation}\label{5.2}
			\sum_{\delta \in N(\gamma_s)N_k \backslash N} \cdot \sum_{\eta \in N(\gamma_s)N_k}\phi \left( \gamma^{-1}\delta^{-1}\gamma\eta\delta\right)
		\end{equation} 
		equals
		\begin{equation}\label{5.3}
			\sum_{\delta \in N(\gamma_s) \backslash N}\cdot \sum_{\eta \in N(\gamma_s)} \phi(\gamma^{-1}\delta^{-1}\gamma\eta\delta).
		\end{equation}	     
		It is easily to see that  the equation of (\ref{5.1}) is the case of $k=0$, and the equality holds when $k=2$. 
		
		Note that
		\[ \sum_{\delta \in N(\gamma_s)N_2 \backslash N} \cdot \sum_{\eta \in N(\gamma_s)N_2}\phi \left( \gamma^{-1}\delta^{-1}\gamma\eta\delta\right)  \]
		is the sum over $\delta_1 \in N(\gamma_s)N_1 \backslash N$ of
		\[\sum_{\delta_2 \in N(\gamma_s)N_{2} \backslash N(\gamma_s)N_1}\cdot \sum_{\eta \in N(\gamma_s)N_{2}}\phi(\gamma^{-1}\delta_1^{-1}\delta_2^{-1}\gamma\eta\delta_2\delta_1), \]which is
		\[\sum_{\delta_2 \in N_1(\gamma_s)N_{2} \backslash N_1} \sum_{\eta} \phi(\gamma^{-1}\delta_1^{-1}\delta_2^{-1}\gamma\eta\delta_2\delta_1).\]
		
		For  $\delta_2 \in N_1$, by changing the variables in the sum over $\eta$. We find that
		\[\sum_{\eta \in N(\gamma_s)N_{2}} \phi(\gamma^{-1}\delta_1^{-1}\delta_2^{-1}\gamma\eta\delta_2\delta_1) = \sum_{\eta} \phi(\gamma^{-1}\delta_1^{-1}\gamma \cdot \gamma^{-1}\delta_2^{-1}\gamma\eta\delta_2 \cdot \delta_1),
		\]
		since  \[\gamma^{-1} \delta_{2}^{-1} \gamma = \gamma_{s}^{-1} \gamma_{u}^{-1} \delta_{2}^{-1} \gamma_u \delta_2 \cdot \delta_{2}^{-1} \gamma_s.\] And note  that $\delta_2^{-1} \in N_1(\gamma_s)N_{2}$, thus it equals
		\[ \sum_{\eta} \phi(\gamma^{-1}\delta_1^{-1}\gamma \cdot \gamma_s^{-1}\delta_2^{-1}\gamma_s\eta\delta_2 \cdot \delta_1)= \sum_{\eta} \psi(\gamma_s^{-1}\delta_2^{-1}\gamma_s\delta_2),\]
		where 
		\[\psi(x) = \sum_{\eta \in N(\,\gamma_s) N_{2}} \phi(\gamma^{-1}\delta_1^{-1}\gamma\eta \cdot x \cdot \delta_1),\]is a compactly supported function on the discrete set
		$N_{1}(\gamma_s)N_2\backslash N_1$.
		
		The map 
		\[y\mapsto N_{1}(\gamma_s)N_2\cdot \gamma_s^{-1}y^{-1}\gamma_s y, \qquad y\in N_{1}(\gamma_s)N_2\backslash N_1\]
		is an isomorphism from $N_{1}(\gamma_s)N_2\backslash N_1$ onto itself. Therefore 
		\[\sum_{\delta_2 \in N_1(\gamma_s) N_2 \backslash N_{1}} \psi(\gamma_s^{-1}\delta_2^{-1}\gamma_s\delta_2)= \sum_{\eta \in N(\gamma_s) N_1} \phi(\gamma^{-1}\delta_1^{-1}\gamma\eta\delta_1),\]
		which is the case of $(\ref{5.1})$ at $k=1$.
		From  case $k=1$, we could prove $(\ref{5.2})=(\ref{5.3})$ for  case  $k=2$ by the same method. 
	\end{proof}
	
	It follows from the lemma that if $ \mathfrak{o} \in \mathcal{O} $, and if $ \gamma \in \mathfrak{o} \cap M(\mathbb{Q}) $, then $ \gamma\eta $ belongs to $ \mathfrak{o} $ for each $ \eta $ in $ N(\mathbb{Q}) $. In other words,
	\[
	\mathfrak{o} \cap P(\mathbb{Q}) = (\mathfrak{o} \cap M(\mathbb{Q})) \cdot N(\mathbb{Q}).
	\]
	A similar remark holds for the intersection of $ \mathfrak{o} $ with any parabolic subgroup of $ M $.
	We have an adèles version of the  previous lemma.
	\begin{lemma}\label{lemma 2}
		The condition is the same as Lemma \ref{lemma:1},  then
		\begin{center}
			$\int_{N(\mathbb{A},\gamma_s) \backslash N(\mathbb{A})} \int_{N(\mathbb{A},\gamma_s)} \phi(\gamma^{-1} n_1^{-1} \gamma n_2 n_1) \, dn_2 dn_1 = \int_{N(\bb{A})} \phi(n) dn.$
		\end{center}   	
	\end{lemma}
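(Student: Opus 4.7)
The plan is to mimic the proof of Lemma \ref{lemma:1}, replacing the sums over $N(\bb{Q})$-cosets with adelic integrals. I use the same filtration $N = N_0 \supset N_1 \supset N_2 = \{e\}$ of $\gamma_s$-stable normal algebraic subgroups, in which $N_1$ is abelian and commutators $[N, N_k]$ lie in $N_{k+1}$. For $k = 0, 1, 2$, define
\[
I_k := \int_{N(\bb{A},\gamma_s)N_k(\bb{A}) \backslash N(\bb{A})} \int_{N(\bb{A},\gamma_s)N_k(\bb{A})} \phi(\gamma^{-1} n_1^{-1} \gamma n_2 n_1) \, dn_2 \, dn_1.
\]
At $k = 0$ the outer integration is trivial, so $I_0 = \int_{N(\bb{A})} \phi(n) \, dn$ after the change of variables $n_2 \mapsto n$; at $k = 2$, $I_2$ is precisely the left-hand side of the identity to be proved. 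Hence it suffices to show $I_0 = I_1 = I_2$. A preliminary reduction allows us to assume $\gamma_s \in M_1(\bb{Q})$ and $\gamma_u \in M(\gamma_s) \cap N_1(\bb{Q})$: replacing $\gamma$ by $\mu^{-1}\gamma\mu$ for $\mu \in M(\bb{Q})$ changes neither side, because the product formula gives $|\det \Ad(\mu)|_{N(\bb{A})}|_{\bb{A}} = 1$.

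For the induction step $I_k = I_{k+1}$, I unfold the outer $n_1$-integration via Fubini into a triple integral, peeling off an integration over $N_k(\bb{A},\gamma_s) N_{k+1}(\bb{A}) \backslash N_k(\bb{A})$. For $y$ in this range, use the identity
\[
\gamma^{-1} y^{-1} \gamma = \gamma_s^{-1}\bigl(\gamma_u^{-1} y^{-1} \gamma_u y\bigr) y^{-1} \gamma_s,
\]
in which the commutator $\gamma_u^{-1} y^{-1} \gamma_u y$ lies in $N_{k+1}(\bb{A})$ by the filtration property. This factor absorbs into the $n_2$-variable (whose range contains $N_{k+1}(\bb{A})$), and the integrand then depends on $y$ only through $\gamma_s^{-1} y^{-1} \gamma_s y$. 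The remaining task is to change variables along the map $y \mapsto \gamma_s^{-1} y^{-1} \gamma_s y$.

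The hard part will be verifying that this map induces a measure-preserving bijection from $N_k(\bb{A},\gamma_s) N_{k+1}(\bb{A}) \backslash N_k(\bb{A})$ onto its image in $N_{k+1}(\bb{A})$. On the abelian quotient $N_k(\bb{A})/N_{k+1}(\bb{A})$, identified with its Lie algebra via the exponential, it becomes the linear endomorphism $\Ad(\gamma_s^{-1}) - 1$, whose kernel is exactly $N_k(\bb{A},\gamma_s) N_{k+1}(\bb{A})/N_{k+1}(\bb{A})$. Locally at each place $v$ the Jacobian equals $|\det(\Ad(\gamma_s^{-1}) - 1)|_v$ on the nonfixed eigenspaces; because $\gamma_s$ is rational, the product formula forces the global Jacobian to equal $1$, so the change of variable is measure-preserving on the adelic quotient. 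Combining this with Fubini and the inductive hypothesis yields $I_k = I_{k+1}$, and the chain $\int_{N(\bb{A})} \phi(n) \, dn = I_0 = I_1 = I_2$ gives the lemma.
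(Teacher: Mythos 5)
The paper leaves this lemma unproved, saying only that it is the ad\`elic version of Lemma~\ref{lemma:1}, so there is no argument of the paper's to compare against; what matters is whether your adaptation is sound. It is, and you have correctly isolated the one genuinely new ingredient that the discrete case doesn't require: the induced map on the quotient $N_k(\bb{A},\gamma_s)N_{k+1}(\bb{A})\backslash N_k(\bb{A})$ must be not merely a bijection (as in Lemma~\ref{lemma:1}) but measure-preserving, and this follows from the product formula applied to $\det(\Ad(\gamma_s^{-1})-1)$ on the non-fixed eigenspaces, which is a nonzero rational number because $\gamma_s\in M(\bb{Q})$ is semisimple. The preliminary reduction to $\gamma_s\in M_1(\bb{Q})$, $\gamma_u\in M(\gamma_s)\cap N_1(\bb{Q})$ by $M(\bb{Q})$-conjugation is likewise handled correctly via the product formula. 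One small misstatement: you write that the map is a bijection ``onto its image in $N_{k+1}(\bb{A})$,'' but the image of $y\mapsto\gamma_s^{-1}y^{-1}\gamma_s y$ lies in $N_k(\bb{A})$, not $N_{k+1}(\bb{A})$; on the abelian quotient $N_k/N_{k+1}$ it is the image subspace of $\Ad(\gamma_s^{-1})-1$, a $\bb{Q}$-rational complement of the fixed subspace. This is a typo and does not affect the argument, since the rest of the paragraph treats the image correctly.
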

	
	\begin{lemma}
		For any fixed parabolic subgroup $P(\mathbb{Q})$, if $\gamma\in P(\mathbb{Q})$, then $\gamma$ is $P(\mathbb{Q})$-conjugate to an element $\gamma\nu$, where $\gamma\in M(\mathbb{Q})$, $\nu\in N(\gamma,\bb{Q})$.
	\end{lemma}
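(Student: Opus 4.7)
The plan is to apply the Jordan decomposition of $\gamma$, use $N(\bb{Q})$-conjugation to move the semisimple part into $M(\bb{Q})$, and then invoke Lemma~\ref{lemma5.1} to split the resulting unipotent part along $M^{+}(\gamma_s)\cdot N^{+}(\gamma_s)$.

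First I would write the Jordan decomposition $\gamma=\gamma_s\gamma_u$; since $P$ is defined over $\bb{Q}$ and is preserved by the Jordan decomposition, both $\gamma_s$ and $\gamma_u$ lie in $P(\bb{Q})$. For $G=\GL(3)$, $P$ is the stabiliser of a $\bb{Q}$-flag $V_\bullet$ in $\bb{Q}^{3}$; since $\gamma_s$ preserves $V_\bullet$ and is semisimple, its $\bb{Q}$-rational eigenspaces split $V_\bullet$ as a direct sum and yield a $\bb{Q}$-basis in which $\gamma_s$ is block-diagonal. The corresponding change-of-basis matrix is unipotent and lies in $N(\bb{Q})$, so there exists $n\in N(\bb{Q})$ with $n^{-1}\gamma_s n\in M(\bb{Q})$. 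Replacing $\gamma$ by its $P(\bb{Q})$-conjugate $n^{-1}\gamma n$, we may assume $\gamma_s\in M(\bb{Q})$ from now on.

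Next, $\gamma_u$ commutes with $\gamma_s$, hence $\gamma_u\in P^{+}(\gamma_s,\bb{Q})$. Lemma~\ref{lemma5.1} gives $P^{+}(\gamma_s)=M^{+}(\gamma_s)N^{+}(\gamma_s)$, and the remark immediately following Lemma~\ref{lemma5.1} identifies $N^{+}(\gamma_s)(\bb{Q})$ with $N(\gamma_s,\bb{Q})$. I would then write $\gamma_u=\mu\nu$ with $\mu\in M^{+}(\gamma_s,\bb{Q})$ (necessarily unipotent, since $\gamma_u$ is) and $\nu\in N(\gamma_s,\bb{Q})$. Setting $\gamma':=\gamma_s\mu\in M(\bb{Q})$ (this is the Jordan decomposition of $\gamma'$ inside $M$, because $\gamma_s$ is semisimple, $\mu$ is unipotent, and the two commute), one obtains $n^{-1}\gamma n=\gamma'\nu$ with $\gamma'\in M(\bb{Q})$ and $\nu\in N(\gamma_s,\bb{Q})$, giving the asserted $P(\bb{Q})$-conjugate form.

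The main issue to flag is the interpretation of ``$\nu\in N(\gamma,\bb{Q})$'' in the conclusion: the element $\nu$ constructed above commutes with the semisimple part $\gamma_s$ of $\gamma'$ but not a priori with its unipotent part $\mu$, so $N(\gamma,\bb{Q})$ in the statement should be read as the centralizer in $N(\bb{Q})$ of the semisimple part of $\gamma$, which is the standard interpretation in Arthur's formalism and is precisely what the subsequent computation of $K_\mathfrak{o}(x,y)$ requires. The only nonformal input in the proof is the first step, which for $\GL(3)$ reduces to the elementary linear-algebra argument sketched above; for general reductive groups it is the Borel--Tits theorem on conjugacy of maximal $\bb{Q}$-tori inside a parabolic subgroup.
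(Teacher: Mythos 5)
Your proof is correct, but it follows a genuinely different route from the paper's. The paper's argument is shorter and purely group-theoretic: it writes $\gamma = \gamma'\eta$ via the Levi decomposition $P(\bb{Q}) = M(\bb{Q})N(\bb{Q})$, with $\gamma' \in M(\bb{Q})$ and $\eta \in N(\bb{Q})$, and then appeals to the bijectivity built into Lemma~\ref{lemma:1} (the map $(\delta,\nu)\mapsto (\gamma')^{-1}\delta^{-1}\gamma'\nu\delta$ from $(N(\gamma'_s,\bb{Q})\backslash N(\bb{Q}))\times N(\gamma'_s,\bb{Q})$ onto $N(\bb{Q})$) to produce $\delta\in N(\bb{Q})$ and $\nu\in N(\gamma'_s,\bb{Q})$ with $\eta=(\gamma')^{-1}\delta^{-1}\gamma'\nu\delta$, whence $\gamma=\gamma'\eta=\delta^{-1}(\gamma'\nu)\delta$. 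That is, the paper never needs to normalize $\gamma_s$ into $M$ and never invokes the Jordan decomposition of $\gamma$ directly; the $\gamma'$ it exhibits is simply the $M$-component of the original $\gamma$. Your argument instead first conjugates $\gamma_s$ into $M(\bb{Q})$ by $N(\bb{Q})$-conjugacy of Levi subgroups (made explicit for $\GL(3)$ via the primary decomposition of the flag under the semisimple operator $\gamma_s$), and then uses the centralizer decomposition $P^{+}(\gamma_s)=M^{+}(\gamma_s)N^{+}(\gamma_s)$ of Lemma~\ref{lemma5.1} to conclude that once $\gamma_s\in M$, the Levi factorization $\gamma=\gamma'\nu$ \emph{automatically} has $\nu\in N(\gamma_s,\bb{Q})$. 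Both approaches are valid and land on the same normal form; yours is more structural and explains \emph{why} the $N$-part lies in $N(\gamma_s)$, at the cost of invoking $N(\bb{Q})$-conjugacy of Levi subgroups, whereas the paper gets the conclusion for free from the already-proved Lemma~\ref{lemma:1}. You are also right to flag that the "$\nu\in N(\gamma,\bb{Q})$" in the lemma statement is an abuse of notation for $N(\gamma_s,\bb{Q})$: both the paper's invocation of Lemma~\ref{lemma:1} and your Lemma~\ref{lemma5.1} step produce a $\nu$ that centralizes only the semisimple part, and this is precisely how the lemma is used downstream (the sums in $J^{\mathfrak{o}}_{\text{ram}}$ run over $\nu\in N(\gamma_s,\bb{Q})$). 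One small imprecision in your write-up: the decomposition of the flag should be phrased in terms of the primary (or $\gamma_s$-isotypic) decomposition rather than "$\bb{Q}$-rational eigenspaces", since the eigenvalues of $\gamma_s$ need not lie in $\bb{Q}$; the argument is unchanged.
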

	\begin{proof}
		Since $P=MN$,  every element in $P(\mathbb{Q})$ can be decomposed as $\gamma\eta$, for $\gamma\in M(\bb{Q})$, $\eta\in N(\mathbb{Q})$, then by Lemma \ref{lemma:1}, there exist $\delta\in N(\mathbb{Q})$, $\nu\in  N(\gamma,\bb{Q})$, such that \[\eta=\gamma^{-1}\delta^{-1}\gamma\nu\delta,\] so we have $\gamma\eta=\delta^{-1}\gamma\nu\delta$.
	\end{proof}
	Let $P^{\mathfrak{o}}(\bb{Q})$ denote the set of elements in $P(\bb{Q})\cap\mathfrak{o}$, and let $P_\mathfrak{o}(\bb{Q})$ denote the minimal standard parabolic subgroup: 
	\[P_{\{\mathfrak{o}\},i_1i_2...i_n},\quad i_1\geq i_2\geq ...\geq i_n,\]
	where elements in $\mathfrak{o}$ could lie.
	
	We adopt the simplified notation $P^{\mathfrak{o}} = P^{\mathfrak{o}}(\mathbb{Q})$ and $P_{\mathfrak{o}} = P_{\mathfrak{o}}(\mathbb{Q})$. Let $M_{\mathfrak{o}}$ and $A_{\mathfrak{o}}$ denote the Levi component and split torus of $P_{\mathfrak{o}}$ respectively.
	
	\begin{lemma}\label{lem:5.5}
		Fix an unramified orbit $\mathfrak{o}$, and a standard parabolic subgroup $P=P_{\mathfrak{o}}$. Suppose  $\delta_1, \delta_2 \in G(\mathbb{Q})$ satisfy
		\[
		\delta_1^{-1} \gamma_1 \delta_1 = \delta_2^{-1} \gamma_2 \delta_2,
		\]
		for $\gamma_1, \gamma_2 \in M^\mathfrak{o}$. Then there is one $w_s \in M(\bb{Q})\backslash N_{G}(A)$, such that $\delta_1 \delta_{2}^{-1}\in M(\bb{Q})w_s$.
	\end{lemma}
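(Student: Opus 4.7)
The plan is to reduce the statement to showing that $\delta := \delta_1\delta_2^{-1}$ normalizes the split component $A = A_P$, and then apply the standard decomposition $N_G(A) = \bigsqcup_s M w_s$. First I would rewrite the hypothesis $\delta_1^{-1}\gamma_1\delta_1 = \delta_2^{-1}\gamma_2\delta_2$ as $\delta^{-1}\gamma_1\delta = \gamma_2$, which says that $\delta$ conjugates $\gamma_1$ to $\gamma_2$ inside $G(\bb{Q})$. Conjugating centralizers then gives $\delta^{-1}G(\gamma_1)\delta = G(\gamma_2)$. Since $\mathfrak{o}$ is assumed unramified and $\gamma_i \in M^{\mathfrak{o}}$, the unramified hypothesis yields $G(\gamma_i) = M(\gamma_i)$ for $i = 1, 2$, and hence $\delta^{-1} M(\gamma_1)\delta = M(\gamma_2)$.

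The key step is to identify $A = A_P$ intrinsically inside each $M(\gamma_i)$. Because $\gamma_i$ is elliptic in $M$ (by construction of $P = P_{\mathfrak{o}}$ as the minimal standard parabolic whose Levi contains an element of $\mathfrak{o}$), the maximal $\bb{Q}$-split torus in the center of $M(\gamma_i)$ should equal $A$. In the $\GL(3)$ setting I would verify this case by case from the explicit classification in Section~\ref{sec 5}: for $\mathfrak{o}_G$ the centralizer $G(\gamma) \simeq \bb{Q}[\gamma]^\times$ is the unit group of a cubic field, whose maximal split subtorus is $Z = A_G$; for $\mathfrak{o}_{21}$ the centralizer is isomorphic to $\GL(1) \times \mathrm{Res}_{E/\bb{Q}}\GL(1)$ for a quadratic extension $E/\bb{Q}$, whose maximal split center is $A_{P_{21}}$; and for $\mathfrak{o}_{111}^0$ the centralizer of a diagonal matrix with three distinct rational entries is $M_0$ itself, so the identification is immediate. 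Applying this intrinsic characterization to both sides of $\delta^{-1} M(\gamma_1) \delta = M(\gamma_2)$ then forces $\delta^{-1} A \delta = A$, that is, $\delta \in N_G(A)(\bb{Q})$.

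The final step is routine: since $M = Z_G(A)$, the quotient $N_G(A)/M$ is the relative Weyl group with the representatives $\{w_s\}$ fixed in Section~\ref{sec 2}, giving the disjoint coset decomposition $N_G(A) = \bigsqcup_{s} M w_s$. The element $\delta$ then lies in exactly one such coset $M(\bb{Q}) w_s$, which is the desired conclusion $\delta_1 \delta_2^{-1} \in M(\bb{Q}) w_s$.

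The hardest part will be the second step, namely the intrinsic identification of $A$ as the maximal $\bb{Q}$-split torus in the center of $M(\gamma_i)$ for $\gamma_i$ elliptic in $M$. Conceptually this is a general structural fact about elliptic elements in reductive groups, but a clean proof either requires invoking that theory or, more concretely, the short direct computation sketched above for each of the three unramified orbit types in $\GL(3)$; the case-by-case approach fits naturally with the explicit spirit of this paper.
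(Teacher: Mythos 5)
Your proposal is correct, and while it rests on the same structural fact as the paper's proof---that $A$ can be recovered from the centralizer $G(\gamma_i)$---it packages the argument more cleanly and takes a slightly different route. The paper tracks where $\epsilon^{-1}a\epsilon$ lands for $a\in A$: using the commutation $a\gamma_i=\gamma_i a$ and the conjugation $\epsilon^{-1}\gamma_1\epsilon=\gamma_2$, it shows $\epsilon^{-1}a\epsilon\in G(\gamma_1)\cap\epsilon^{-1}G(\gamma_1)\epsilon$, then argues (via a somewhat awkward claim-and-alternative dichotomy) that in either case $\epsilon$ normalizes $A$. You instead conjugate the full centralizer at once, $\delta^{-1}G(\gamma_1)\delta=G(\gamma_2)$, invoke $G(\gamma_i)=M(\gamma_i)$ from the unramified hypothesis, and identify $A$ intrinsically as the maximal $\bb{Q}$-split subtorus of $G(\gamma_i)$; this immediately yields $\delta^{-1}A\delta=A$. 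Your case-by-case verification of the intrinsic characterization for $\mathfrak{o}_G$, $\mathfrak{o}_{21}$, and $\mathfrak{o}_{111}^0$ is accurate: in each case $G(\gamma_i)$ is a maximal torus of $G$ (since all eigenvalues are distinct), and its maximal split subtorus is exactly $A_{P_\mathfrak{o}}$ ($Z$, $\{\diag(a,a,b)\}$, and $M_0$ respectively). The gain of your approach is conceptual clarity and the avoidance of the paper's intersection argument, at the cost of having to spell out (or cite) the structural fact about split subtori of centralizers of elliptic elements; the paper's version, by contrast, stays closer to elementary matrix manipulations and never states that fact explicitly, but its branching into cases is harder to follow. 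Both are valid for $\GL(3)$.
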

	\begin{proof}
		Let $\epsilon = \delta_1 \delta_2^{-1}$ and $a \in A$, then
		\[a\gamma_i = \gamma_i a, \quad i = 1, 2.\] Consider
		\[
		\epsilon^{-1} \gamma_1^{-1} a \gamma_1 \epsilon = \epsilon^{-1} a \epsilon.\tag{5.4}\setcounter{equation}{4}
		\label{5.4}
		\]
		The left hand side of (\ref{5.4}) equals
		\[
		\epsilon^{-1} \gamma_1^{-1} \epsilon \cdot \epsilon^{-1} a \epsilon \cdot \epsilon^{-1} \gamma_1 \epsilon,
		\]
		which equals to 
		\[
		\gamma_2^{-1} \cdot \epsilon^{-1} a \epsilon \cdot \gamma_2.
		\]
		Hence, $\epsilon^{-1} a \epsilon \in G(\gamma_2)  $, and $G(\gamma_2)$ is a maximal torus.
		
		Similarly, $\epsilon^{-1} a \epsilon \in G(\gamma_1) $. Then,
		\[
		\epsilon^{-1} a \epsilon \in G(\gamma_1) \cap G(\epsilon^{-1} \gamma_1 \epsilon)  .
		\]
		However, it is easy to check 
		\[
		G(\epsilon^{-1} \gamma_1 \epsilon)  = \epsilon^{-1} G({\gamma_1})  \epsilon.
		\]
		Since $\gamma_1, \gamma_2$ are both in the unramified orbit,
		\[G({\gamma_1})\subseteq M,\quad G({\gamma_2})\subseteq M .\]
		If \[G({\gamma_1})  \cap \epsilon^{-1} G({\gamma_1}) \epsilon = A.\]
		Then \[ \epsilon^{-1}a\epsilon\in A. \] It indicates that $\epsilon\in N_G(A)$.
		\newline		If then the intersection contains an element $g$ that not in $A$, then  there exists elements $m_1, m_2 \in G(\gamma_1)$ such that
		\[
		g = m_1 = \epsilon^{-1} m_2  \epsilon,
		\]
		$\epsilon$ must in $  N_{G}(G(\gamma_1) )\subset N_{G}(A)$. 
		Then \[ \epsilon^{-1}a\epsilon\in A, \] it means $\epsilon \in Mw_s$, for  $w_s\in M(\bb{Q})\backslash N_{G}(A). $
	\end{proof}
	Denote by $M_t$ the subset of $\gamma \in M(\mathbb{Q})$ such that $N(\gamma)$ is trivial, and by $M_n$ the subset such that $N(\gamma)$ is nontrivial. Write $\{M_t\}$ and $\{M_n\}$ fixed sets of representatives of $M(\bb{Q})$-conjugacy classes in $M_t$ and $M_n$.  We now describe the geometric terms of $\GL(3)$.
	
	Define  
	\[I_G(f, x) = \sum_{\gamma \in \{G_e\}} (n_{\gamma, G})^{-1} \sum_{\delta \in G(\gamma,\mathbb{Q})\backslash G(\mathbb{Q})  } f(x^{-1} \delta^{-1} \gamma \delta x).\]where $G_e$ denote the set of $G$-elliptic elements in $\mathfrak{o}_G$.
	
	For the unramified orbits, by Lemma \ref{lem:5.5},  the  contribution to $K(x,x)$ from the elements that belong to any unramified orbits are :\[I_{\text{unr}}^\mathfrak{o}(f,x)=\sum_{\delta \in N_{G}(A_{\mathfrak{o}}) \backslash G(\mathbb{Q}) } \sum_{\gamma \in M_{t,\mathfrak{o}}^\mathfrak{o}} f(x^{-1} \delta^{-1} \gamma \delta x).\]
	We write this term  as:
	\begin{equation}\label{unramified}
		\frac{1}{|M_\mathfrak{o}\backslash N_{G}(A_\mathfrak{o})|} \sum_{\delta \in M_\mathfrak{o}\backslash G(\mathbb{Q})} \sum_{\gamma \in M_{t,\mathfrak{o}}^\mathfrak{o}} f(x^{-1} \delta^{-1} \gamma \delta x), 
	\end{equation}
	which equals
	\[\frac{1}{|M_\mathfrak{o}\backslash N_{G}(A_\mathfrak{o})|}\sum_{\{M_{t,\mathfrak{o}}^\mathfrak{o}\}}(n_{\gamma,M})^{-1}\sum_{\delta\in M_{\mathfrak{o}}(\gamma,\bb{Q})\backslash G(\bb{Q})}f(x^{-1} \delta^{-1} \gamma \delta x).\]

	Define $\Omega(\mathfrak{a};P_1)$ be the set of elements $s$ contained in $\cup_{P'}\Omega(\mathfrak{a},\mathfrak{a}')$ such that if $\mathfrak{a}'=s\mathfrak{a}$, $\mathfrak{a}'$ contains $\mathfrak{a}_1$, and $s^{-1}\alpha$ is positive for every root $\alpha\in\Delta_{P'}^{P_1}$.
	\begin{lemma}\label{lem 5.6}
		For fixed unramified $\mathfrak{o}$, parabolic subgroups  $P$ and $P_\mathfrak{o}$ . Then $(\ref{unramified})$ can be written as  
		\[
		\frac{1}{|\Omega(\mathfrak{a}_{\mathfrak{o}}, P)|} \sum_{\delta \in M(\bb{Q}) \backslash G(\bb{Q})} \sum_{\gamma \in M_t^{\mathfrak{o}}} f(x^{-1} \delta^{-1} \gamma \delta x).
		\]
	\end{lemma}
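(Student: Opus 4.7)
The plan is to reduce both sides of the claimed equality to a common expression — a sum over $G(\bb{Q})$-conjugacy class representatives of $\mathfrak{o}$ with centralizer-coset sums — so that only a Weyl-group counting remains to reconcile the normalizing constants.

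To transform the RHS, I would regroup $\sum_{\gamma \in M_t^\mathfrak{o}}$ by $M(\bb{Q})$-conjugacy class. Every $\gamma \in M_t^\mathfrak{o}$ is $M(\bb{Q})$-conjugate to some $\gamma_0 \in M_{t,\mathfrak{o}}^\mathfrak{o}$ via rational canonical form in $M$, and the unramified hypothesis gives $G(\gamma_0) = M_\mathfrak{o}(\gamma_0) \subseteq M$, whence $M(\gamma_0,\bb{Q}) = G(\gamma_0,\bb{Q})$. Writing $\gamma = m^{-1}\gamma_0 m$ with $m \in G(\gamma_0,\bb{Q})\backslash M(\bb{Q})$ and absorbing the $m$-sum into the outer $\delta$-sum via the substitution $\delta' = m\delta$ collapses the double sum into $\sum_{\delta' \in G(\gamma_0,\bb{Q})\backslash G(\bb{Q})}$. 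The same reduction applied to the LHS, with $M_\mathfrak{o}$ in place of $M$, yields a parallel expression where $\gamma_0$ ranges over $M_\mathfrak{o}(\bb{Q})$-class representatives of $M_{t,\mathfrak{o}}^\mathfrak{o}$.

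The two parametrizations then differ only in how $\gamma_0$ varies. I would verify that each $M(\bb{Q})$-conjugacy class of $M_t^\mathfrak{o}$ meets $M_{t,\mathfrak{o}}^\mathfrak{o}$ in precisely $|N_M(A_\mathfrak{o})/M_\mathfrak{o}|$ many $M_\mathfrak{o}(\bb{Q})$-classes: two elements of $M_{t,\mathfrak{o}}^\mathfrak{o}$ are $M(\bb{Q})$-conjugate iff they differ by $N_M(A_\mathfrak{o})(\bb{Q})$, because any conjugator must send one centralizer torus to another, both containing $A_\mathfrak{o}$ as maximal split subtorus. Combining this with the standard Weyl decomposition
\[
|N_G(A_\mathfrak{o})/M_\mathfrak{o}| \;=\; |N_M(A_\mathfrak{o})/M_\mathfrak{o}| \cdot |\Omega(\mathfrak{a}_\mathfrak{o}; P)|,
\]
which reflects that the positivity condition $s^{-1}\alpha > 0$ for $\alpha \in \Delta_{P'}^P$ in the definition of $\Omega(\mathfrak{a}_\mathfrak{o}; P)$ picks out exactly one representative in each coset of $N_M(A_\mathfrak{o})\backslash N_G(A_\mathfrak{o})$, shows that the two class-counts and the two normalizing constants cancel in matched pairs.

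The main obstacle I foresee is the counting step in the non-split case $\mathfrak{o} = \mathfrak{o}_{21}$, where $M_\mathfrak{o}(\gamma_0)$ is a non-split torus strictly containing $A_\mathfrak{o}$: one must verify that any $M(\bb{Q})$-conjugator between elements of $M_{t,\mathfrak{o}}^\mathfrak{o}$ can be taken in $N_M(A_\mathfrak{o})(\bb{Q})$ rather than merely in the normalizer of a non-split torus, and separately that the Weyl decomposition above remains valid when $s$ is allowed to send $\mathfrak{a}_\mathfrak{o}$ to some $\mathfrak{a}_{P'} \ne \mathfrak{a}_\mathfrak{o}$, as permitted by the definition of $\Omega(\mathfrak{a}_\mathfrak{o}; P)$ when $P$ is strictly larger than $P_\mathfrak{o}$.
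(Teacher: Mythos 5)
Your proposal is correct and takes essentially the same route as the paper: both rest on the fact that $M(\bb{Q})$-conjugacy between elements of $M_{t,\mathfrak{o}}^\mathfrak{o}$ is controlled by $N_M(A_\mathfrak{o})/M_\mathfrak{o}$ (Lemma~\ref{lem:5.5} applied inside $M$), and both close with the index identity $|M_\mathfrak{o}\backslash N_G(A_\mathfrak{o})| = |M_\mathfrak{o}\backslash N_M(A_\mathfrak{o})|\cdot|\Omega(\mathfrak{a}_\mathfrak{o};P)|$. The paper splits $\delta = \delta_2\delta_1$ and overcounts in place rather than passing both sides through a conjugacy-class/centralizer-coset normal form, but the counting content is identical.
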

	\begin{proof}
		We can write (\ref{unramified}) as
		\[\frac{1}{|M_\mathfrak{o}\backslash N_{G}(A_\mathfrak{o})|}
		\sum_{\delta_1 \in  M(\bb{Q}) \backslash G(\bb{Q})} \sum_{\delta_2 \in M_{\mathfrak{o} } \backslash M(\bb{Q})} \sum_{\gamma \in M_{{t,\mathfrak{o}}}^\mathfrak{o}} f(x^{-1} \delta_1^{-1} \delta_2^{-1} \gamma \delta_2 \delta_1 x).
		\]
		Notice that the sum over $\delta_2$ and $\gamma$ range over the orbit in $M$ if we multiply by \[\frac{1}{|M_{\mathfrak{o}}\backslash N_{M}(A_\mathfrak{o})|}.\] 
		Then (\ref{unramified}) becomes
		\[
		\frac{|M_\mathfrak{o} \backslash N_{M}(A_\mathfrak{o})|}{|M_\mathfrak{o} \backslash N_{G}(A_\mathfrak{o})|} \sum_{\delta \in M(\bb{Q}) \backslash G(\bb{Q})} \sum_{\gamma \in M_t^\mathfrak{o}} f(x^{-1} \delta^{-1} \gamma \delta x).
		\]
		Then we can obtain
		\[
		\frac{|M_\mathfrak{o} \backslash N_{M}(A_\mathfrak{o})|}{|M_\mathfrak{o} \backslash N_{G}(A_\mathfrak{o})|} = \frac{1}{|\Omega(\mathfrak{a}_\mathfrak{o}, P)|}.\] \end{proof} 
	Now we calculate the terms  from ramified orbits. Note that  ramified orbits in $\mathrm{GL}(3,\bb{Q})$ will intersect $M_0(\bb{Q})$. So we can write $M_\mathfrak{o}$ as $M_0(\bb{Q})$ for ramified $
	\mathfrak{o}$.

	If $\mathfrak{o}$ is ramified, let $M_{\{\mathfrak{o}\}}$ be the minimal Levi subgroup such that 
	\[G(\gamma)=M_{\{\mathfrak{o}\}}(\gamma),\quad \gamma\in M_{\mathfrak{o},n}^\mathfrak{o},\]
	and \[M_{\{\mathfrak{o}\},i_1i_2...i_n},\quad i_1\geq i_2\geq ...\geq i_n .\]
	
	Assume that $\mathfrak{o}$ is a ramified orbit, denote
	\[I^\mathfrak{o}(f,x)=\sum_{\gamma\in G^\mathfrak{o}}f(x^{-1}\gamma x).\]Define $M_s$ to be the subset of $M$ consisting of semisimple elements.  
	Since the integrals of some orbits over $G(\mathbb{Q}) \backslash G(\mathbb{A})^1$ are divergent, we introduce a characteristic function to control them. Let $\hat{\tau}_P$ be the characteristic function of 
	\[\{H \in \mathfrak{a}_0 : \varpi(H) > 0, \varpi \in \hat{\Delta}_P\}.\]

	Take $T \in \mathfrak{a}_0^+$.  We say $T$ is large
	enough, we mean that $T$ is far away from the walls. \\
	Write the term corresponds to unramified orbit as the sum of
	\[\begin{split}
		J_{\text{unr}}^\mathfrak{o}(f, x, T) &= \frac{1}{|M_\mathfrak{o} \backslash N_G(A_\mathfrak{o})|} \sum_{\delta \in M_{\mathfrak{o}} \backslash G(\bb{Q})} \sum_{\gamma \in M_{t, \mathfrak{o}}^\mathfrak{o}} f(x^{-1} \delta^{-1} \gamma \delta x)\\&
		\cdot( \sum_{P_1 \neq G} (-1)^{\text{dim}(A_1/Z)+1} \sum_{s \in \Omega(\mathfrak{a}, P_1)} \hat{\tau}_{P_1}(H_0(w_s \delta x) - T) ),
	\end{split}
	\]
	and
	\[\begin{split}
		I_{\text{unr}}^\mathfrak{o}(f, x, T) =& \frac{1}{|M_\mathfrak{o} \backslash N_G(A_\mathfrak{o})|} \sum_{\delta \in M_{\mathfrak{o}} \backslash G(\bb{Q})} \sum_{\gamma \in M_{t, \mathfrak{o}}^\mathfrak{o}} f(x^{-1} \delta^{-1} \gamma \delta x)\\&
		\cdot( 1 + \sum_{P_1 \neq G} (-1)^{\text{dim}(A_1/Z)} \sum_{s \in \Omega(\mathfrak{a}, P_1)} \hat{\tau}_P(H_0(w_s \delta x) - T) ).
	\end{split}
	\]
	If the orbit $\mathfrak{o}$ is ramified, we consider 
	\begin{align*}
		J_{\text{ram}}^\mathfrak{o}(f, x, T) = &\sum_{P \neq G} (-1)^{\dim(A/Z) + 1} \sum_{\gamma \in M^\mathfrak{o}} \sum_{\delta \in M(\bb{Q}) N(\gamma_s,\bb{Q}) \backslash G(\bb{Q})} \sum_{\nu \in N(\gamma_s,\bb{Q})}\\&\cdot f(x^{-1} \delta^{-1} \gamma \nu \delta x) \hat{\tau}_P(H_0(\delta x) - T),
	\end{align*}
	and 
	\begin{align*}
		I_{\text{ram}}^\mathfrak{o}(f, x, T) =& \sum_{P} (-1)^{\dim(A/Z)} \sum_{\gamma \in M^\mathfrak{o}} \sum_{\delta \in M(\bb{Q}) N(\gamma_s,\bb{Q}) \backslash G(\bb{Q})} \sum_{\nu \in N(\gamma_s,\bb{Q})}\\& \cdot f(x^{-1} \delta^{-1} \gamma \nu \delta x)  \hat{\tau}_P(H_0(\delta x) - T).
	\end{align*}
	By Lemma \ref{lemma:1},
	\[\sum_{\gamma \in M^\mathfrak{o}} \sum_{\delta \in M(\bb{Q}) N(\gamma_s,\bb{Q}) \backslash G(\bb{Q})} \sum_{\nu \in N(\gamma_s,\bb{Q})} f(x^{-1} \delta^{-1} \gamma \nu \delta x),\]
	equals
	\[\sum_{\gamma \in M^\mathfrak{o}}\sum_{\delta\in P(\bb{Q})\backslash G(\bb{Q})}\sum_{\nu\in N(\bb{Q})}f(x^{-1} \delta^{-1} \gamma \nu \delta x),\]
	which equals
	\[\sum_{\gamma \in \{M^\mathfrak{o}\}}(n_{\gamma,M})^{-1}\sum_{\delta_1\in M(\gamma,\bb{Q})\backslash M(\bb{Q})}\sum_{\delta\in P(\bb{Q})\backslash G(\bb{Q})}\sum_{\nu\in N(\bb{Q})}f(x^{-1} \delta^{-1}\delta_1^{-1} \gamma\delta_1 \nu \delta x).\]
	Since $N$ is  normal subgroup of $P$, we can   replace $\nu$ by $\delta_1^{-1}\nu\delta_1$.	
	
	Then 
	$J_{\text{ram}}^\mathfrak{o}(f, x, T)$ becomes
	\begin{align*}
		&	\sum_{P \neq G} (-1)^{\dim(A/Z) + 1} \sum_{\gamma \in \{M^\mathfrak{o}\}}(n_{\gamma,M})^{-1}\\& \sum_{\delta \in M(\bb{Q}) N(\gamma_s,\bb{Q}) \backslash G(\bb{Q})} \sum_{\nu \in N(\gamma_s,\bb{Q})} f(x^{-1} \delta^{-1} \gamma \nu \delta x) \hat{\tau}_P(H_0(\delta x) - T),
	\end{align*}
	and $I_{\text{ram}}^\mathfrak{o}(f, x, T)$ becomes
	\begin{align*}
		&\sum_{P} (-1)^{\dim(A/Z)} \sum_{\gamma \in \{M^\mathfrak{o}\}}(n_{\gamma,M})^{-1}\\& \sum_{\delta \in M(\bb{Q}) N(\gamma_s,\bb{Q}) \backslash G(\bb{Q})} \sum_{\nu \in N(\gamma_s,\bb{Q})} f(x^{-1} \delta^{-1} \gamma \nu \delta x) \hat{\tau}_P(H_0(\delta x) - T).
	\end{align*}
	\begin{lemma}\label{lem 5.9}
		For fixed ramified orbit $\mathfrak{o}$, and any parabolic subgroup $P$, then  
		\begin{equation}\label{5.6}
			\sum_{\gamma \in M^\mathfrak{o}} \sum_{\delta \in M(\bb{Q}) N(\gamma_s) \backslash G(\bb{Q})} \sum_{\nu \in N(\gamma_s,\bb{Q})} f(x^{-1} \delta^{-1} \gamma \nu \delta x)
		\end{equation}
		can be written as
		\begin{align*}
			&  \sum_{\gamma \in M_n^\mathfrak{o}} \sum_{\delta \in M(\bb{Q}) N(\gamma,\mathbb{Q}) \backslash G(\bb{Q})} \sum_{\nu \in N(\gamma_s,\bb{Q})} f(x^{-1} \delta^{-1} \gamma \nu \delta x) \\
			& \quad +  \sum_{\gamma \in M_{t}^{\mathfrak{o}}} \sum_{\delta \in M(\bb{Q}) \backslash G(\bb{Q})} f(x^{-1} \delta^{-1} \gamma \delta x).
		\end{align*}
	\end{lemma}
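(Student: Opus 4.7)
The plan is to split the outer $\gamma$-sum of (\ref{5.6}) along the decomposition $M^\mathfrak{o} = M_n^\mathfrak{o} \sqcup M_t^\mathfrak{o}$ and, on each piece, manipulate the inner double sum via a single conjugation trick. The key observation is that for any $\gamma \in M^\mathfrak{o}$ and any $n \in N(\gamma_s,\bb{Q})$,
\[n^{-1}\gamma n = \gamma \cdot \mu_\gamma(n), \qquad \mu_\gamma(n) := \gamma^{-1} n^{-1} \gamma n = \gamma_u^{-1} n^{-1} \gamma_u n,\]
the second equality using $n \in N(\gamma_s)$. Because $\gamma_u$ commutes with $\gamma_s$ it normalizes $N(\gamma_s)$, so $\mu_\gamma$ sends $N(\gamma_s,\bb{Q})$ into itself; by uniqueness of Jordan decomposition, its kernel on $N(\gamma_s,\bb{Q})$ is exactly $N(\gamma_s)\cap N(\gamma_u) = N(\gamma)$.

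First I would handle the contribution from $\gamma \in M_t^\mathfrak{o}$. Since $M\cap N = \{1\}$, the projection $M(\bb{Q})\backslash G(\bb{Q}) \twoheadrightarrow M(\bb{Q}) N(\gamma_s)\backslash G(\bb{Q})$ has fiber $N(\gamma_s,\bb{Q})$, so writing $\delta = n\delta'$ with $n \in N(\gamma_s,\bb{Q})$ gives
\[\sum_{\delta \in M\backslash G} f(x^{-1}\delta^{-1}\gamma\delta x) = \sum_{\delta' \in MN(\gamma_s)\backslash G}\ \sum_{n \in N(\gamma_s,\bb{Q})} f\bigl(x^{-1}\delta'^{-1}\gamma\,\mu_\gamma(n)\,\delta' x\bigr).\]
For $\gamma \in M_t^\mathfrak{o}$ the kernel $N(\gamma) = \{1\}$, so $\mu_\gamma$ is injective on $N(\gamma_s,\bb{Q})$; an inductive argument up the descending central series of $N(\gamma_s)$, directly modeled on the three-step filtration $N = N_0 \supset N_1 \supset N_2 = \{e\}$ used in Lemma \ref{lemma:1}, upgrades injectivity to surjectivity on $\bb{Q}$-points. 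The substitution $\nu = \mu_\gamma(n)$ is then a bijection, and the display above coincides with the $\gamma$-component of (\ref{5.6}), establishing the $M_t^\mathfrak{o}$ part of the claim.

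For $\gamma \in M_n^\mathfrak{o}$, the contribution is simply left in the form prescribed by the right-hand side. Here $\mu_\gamma$ has nontrivial kernel $N(\gamma)\subseteq N(\gamma_s)$; however, a substitution $\nu \mapsto \mu_\gamma(m)\,m^{-1}\nu m$ in the $\nu$-sum shows that the inner expression $\delta \mapsto \sum_{\nu \in N(\gamma_s)} f(x^{-1}\delta^{-1}\gamma\nu\delta x)$ is invariant under left translation of $\delta$ by $N(\gamma_s,\bb{Q})$. This invariance is exactly what makes the passage between the coarser quotient $M(\bb{Q})N(\gamma_s)\backslash G(\bb{Q})$ and the finer quotient $M(\bb{Q})N(\gamma)\backslash G(\bb{Q})$ consistent with the conjugacy-class bookkeeping adopted in the derivation of $J_{\textup{ram}}^{\mathfrak{o}}$ and $I_{\textup{ram}}^{\mathfrak{o}}$ preceding the lemma.

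The main obstacle is the bijectivity of $\mu_\gamma$ on $N(\gamma_s,\bb{Q})$ in the $M_t^\mathfrak{o}$ case: injectivity is immediate from $N(\gamma) = \{1\}$, but surjectivity on rational points of a unipotent group does not follow formally and must be bootstrapped through the nilpotent filtration, replaying the abelian-by-abelian induction already used in the proof of Lemma \ref{lemma:1}. Once that bijection is secured, everything else reduces to careful coset bookkeeping.
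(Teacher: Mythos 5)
The lemma's content in the $M_t^{\mathfrak{o}}$ case is exactly that the quotient $M(\bb{Q})N(\gamma_s)\backslash G(\bb{Q})$ coincides with $M(\bb{Q})\backslash G(\bb{Q})$ and the $\nu$-sum collapses to a single term. You frame the ``main obstacle'' as the bijectivity of $\mu_\gamma$ on $N(\gamma_s,\bb{Q})$ and propose to secure surjectivity by replaying the abelian-by-abelian induction from Lemma~\ref{lemma:1}. That induction cannot be replayed here: its crux is that $\gamma_s$ acts \emph{semisimply} on each abelian subquotient, so that $\Ad(\gamma_s^{-1})-1$ becomes invertible once the fixed subspace is factored out. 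For your map $\mu_\gamma$ the acting element is $\gamma_u$, which is \emph{unipotent}; $\Ad(\gamma_u^{-1})-1$ is nilpotent and therefore never invertible on a nonzero space. The induction stalls at the very first step unless every subquotient is already zero, which is precisely the fact you needed to begin with.

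The observation you are missing, and the reason the paper treats the lemma as immediate, is that $N(\gamma)=\{1\}$ already forces $N(\gamma_s)=\{1\}$. Indeed $\gamma_u$ centralizes $\gamma_s$ and so acts by conjugation on the connected unipotent group $N(\gamma_s)$; this action is unipotent, and a unipotent automorphism of a nontrivial connected unipotent group always fixes a nontrivial element (restrict to the center, a vector group on which $\Ad(\gamma_u)$ has eigenvalue $1$). Hence $N(\gamma)=N(\gamma_s)^{\gamma_u}$ is nontrivial whenever $N(\gamma_s)$ is, and contrapositively $\gamma\in M_t^{\mathfrak{o}}$ implies $N(\gamma_s)=\{1\}$, so there is no bijection to establish and the $\delta$- and $\nu$-sums collapse on the spot. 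Relatedly, the invariance remark you make for the $M_n^{\mathfrak{o}}$ piece points in the wrong direction: if the summand were left-invariant in $\delta$ under $N(\gamma_s,\bb{Q})$ while $N(\gamma)\subsetneq N(\gamma_s)$, then passing to the finer quotient $M(\bb{Q})N(\gamma,\bb{Q})\backslash G(\bb{Q})$ would overcount each term by the infinite index $[N(\gamma_s,\bb{Q}):N(\gamma,\bb{Q})]$. The invariance you found is an obstruction to the passage, not the ``consistency'' you claim; what is actually needed is a statement about when $N(\gamma)=N(\gamma_s)$, and that is never supplied.
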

	This lemma is clear, as $\gamma \in MN(\gamma_s)$.
	
	For the spectral terms,
	suppose  $\phi\in\mathcal{H}_P$ for some $P$, recall that $E_P^{c_{P_1}}(\Phi,\lambda,x)$ is the constant term of $E_P(\Phi,\lambda,x)$ associated to $P_1\in\mathfrak{P}$, is given by 
	\[\sum_{s \in \Omega(\mathfrak{a}, \mathfrak{a}_1)} (M_P(s, \lambda)\Phi)(x) \exp(\langle s\lambda + \rho_{P_1}, H_{0}(x)\rangle).\]
	For any $T\in\mathfrak{a}^+$,  define \[E_P^{'T}(\Phi,\lambda,x)=(-1)^{(\mathrm{dim} A/Z) + 1} \sum_{P_1 \in \mathfrak{P}} \sum_{\delta \in P_1( \mathbb{Q})\backslash G(\bb{Q})  } E_P^{c_{P_1}}(\Phi, \lambda, \delta x) \hat{\tau}_{P_1}(H_0(\delta x) - T).\]
	Set
	\[E_P^{''T}(\Phi, \lambda, x) = E_P(\Phi, \lambda, x) - E_P^{'T}(\Phi, \lambda, x).\]	For any $\phi\in L^2(G(\bb{Q})\backslash G(\bb{A})^1)$, define 
	\[(\La\phi)(x)=\sum_P (-1)^{\dim(A/Z)}\sum_{\delta\in P(\bb{Q})\backslash G(\bb{Q})}\int_{N(\bb{Q})\backslash N(\bb{A})}\phi(n\delta x)\hat{\tau}_P(H(\delta x)-T)dn.\] 
	Since \[\int_{N_{P_1}(\bb{Q})\backslash N_{P_1}(\bb{A})}E_P(\phi,\lambda,x)dn=0,\]if $\Omega(\mathfrak{a},\mathfrak{a}_1)$ is empty, thus we have \[\La E_P(\Phi, \lambda, x)=E_P^{'T}(\Phi, \lambda, x).\]	
	If $\phi$ is cusp form, then $\La\phi=\phi$. And if $\phi_1$, $\phi_2$ are continuous functions on $G(\bb{Q})\backslash G(\bb{A})$, then  $(\La\phi_1,\phi_2)=(\phi_1,\La\phi_2)$ and $\La\circ\La=\La$. These  properties of $\La$ has been proved in ~\cite{L1}.
	Define 
	\[\begin{split}
		K_P'(f, x, T) = &\sum_{P_1,P_2 \in \mathfrak{P}} \sum_{\chi} n(A)^{-1} \cdot (-1)^{(\dim A/Z) + 1} \left( \frac{1}{2\pi i} \right)^{\mathrm{dim} A/Z} \sum_{\delta \in P(\mathbb{Q}) \backslash G({\mathbb{Q}})}\\&\cdot \int_{i \mathfrak{a}_G \backslash i\mathfrak{a}} \sum_{\alpha, \beta \in \mathcal{B}_{P, \chi}} E_P^{c_{P_1}}(\Phi_\alpha, \lambda, \delta x) \overline{E_P^{c_{P_2}}(\Phi_\beta, \lambda, \delta x)} \hat{\tau}_{P}(H_0(\delta x) - T) d\lambda.
	\end{split}\]	
	\begin{lemma}\label{lem 5.10}
		For any parabolic subgroups $P, P_1 \in \mathfrak{P}$, $y \in G(\bb{A})$, and fix $s, s' \in \Omega(\mathfrak{a}, \mathfrak{a}_1)$, then the expression
		\[
		\int_{i \mathfrak{a}_G \backslash i\mathfrak{a}} \sum_{\chi} \left| \sum_{\beta \in \mathcal{B}_{P, \chi}} (M_P(s, \lambda) \mathcal{I}_P(\lambda, f) \Phi_\beta)(y) \overline{(M_P(s', \lambda) \Phi_\beta)(y)} \right| d\lambda,
		\]is finite.
	\end{lemma}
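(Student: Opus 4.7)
The plan is to use the Duflo--Labesse decomposition (Lemma \ref{lem 4.1}) to write $f=f^1\ast f^2$ with $f^1,f^2\in C_c^N(G(\bb{A})^1)^K$ for $N$ as large as needed, so that the inner sum $S_\chi$ becomes the pointwise value at $y$ of a smooth spectral projection whose $\lambda$-dependence is Schwartz. By linearity it suffices to treat a single convolution, so I assume $f=f^1\ast f^2$. I will use $\pi_P(\lambda,f)=\pi_P(\lambda,f^1)\pi_P(\lambda,f^2)$, the intertwining identity $M_P(s,\lambda)\pi_P(\lambda,f^j)=\pi_{P_1}(s\lambda,f^j)M_P(s,\lambda)$ from Section \ref{sec 3}, the integral-kernel representation of $\pi_{P_1}(s\lambda,f^1)$ from Lemma \ref{lem 3.4}, and the unitarity of $M_P(s,\lambda)$ on $i\mathfrak{a}$. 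A direct adjoint computation then yields
\[
A_\beta:=(M_P(s,\lambda)\pi_P(\lambda,f)\Phi_\beta)(y)=\langle \Phi_\beta,u_y\rangle_{\mathcal{H}_P},
\]
with
\[
u_y\;:=\;c_{P_1}\,\pi_P(\lambda,(f^2)^*)\,M_P(s,\lambda)^*\,\overline{P_{P_1}(s\lambda,f^1,y,\cdot)}\;\in\;\mathcal{H}_P.
\]
Convolution by $(f^2)^*$ makes $u_y$ smooth of order $N$, and its norm is Schwartz in $\lambda$ by the analogous property of $P_{P_1}(s\lambda,f^1,y,\cdot)$ noted before Lemma \ref{lem 3.4}.

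Second, since $\mathcal{H}_{P,\chi}$ is finite dimensional with orthonormal basis $\mathcal{B}_{P,\chi}$, linearity of $M_P(s',\lambda)$ combined with pointwise evaluation will give
\[
S_\chi=\sum_{\beta\in\mathcal{B}_{P,\chi}}\langle \Phi_\beta,u_y\rangle\,\overline{(M_P(s',\lambda)\Phi_\beta)(y)}\;=\;\overline{(M_P(s',\lambda)\,P_\chi u_y)(y)},
\]
where $P_\chi$ is the orthogonal projection of $\mathcal{H}_P$ onto $\mathcal{H}_{P,\chi}$. Consequently $|S_\chi|=|(M_P(s',\lambda)P_\chi u_y)(y)|$, and the problem reduces to bounding $\sum_\chi|(M_P(s',\lambda)P_\chi u_y)(y)|$ pointwise in $y$ and showing the resulting function of $\lambda$ is $L^1$ over $i\mathfrak{a}_G\backslash i\mathfrak{a}$.

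Finally, formally $\sum_\chi M_P(s',\lambda)P_\chi u_y=M_P(s',\lambda)u_y$, a smooth function whose pointwise value at $y$ is finite. To upgrade this to absolute convergence of the pointwise series, I will invoke the Sobolev regularity of $u_y$ inherited from the $N$-fold smoothness of $f^2$: for $N$ large enough, a Sobolev embedding forces $\sum_\chi|(M_P(s',\lambda)P_\chi u_y)(y)|$ to converge, majorized by a Sobolev norm of $u_y$ which is Schwartz in $\lambda$, yielding integrability. The hard part will be precisely this absolute convergence step: the unitary intertwiner $M_P(s',\lambda)$ offers no kernel smoothing for the second factor, so a naive Cauchy--Schwarz bound collapses to the divergent reproducing kernel of $\mathcal{H}_{P_1}$ at $(y,y)$. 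The resolution is to push all of the smoothing into $u_y$ via the adjoint manipulation above, and then to control the pointwise spectral decomposition through Sobolev embedding for functions in $\mathcal{H}_P$.
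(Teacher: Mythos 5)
Your diagnosis of the difficulty is accurate, but your proposed resolution leaves the hard step unproved and actually misses the mechanism the paper uses.

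You are right that a naive Cauchy--Schwarz over $\beta$ fails: bounding $|(M_P(s',\lambda)P_\chi u_y)(y)|$ by $\|P_\chi u_y\|$ times the norm of evaluation at $y$ on $\mathcal{H}_{P_1,\chi}$ and then Cauchy--Schwarz over $\chi$ produces $\|u_y\|_{L^2}\cdot\bigl(\sum_\chi\sum_{\alpha\in\mathcal{B}_{P_1,\chi}}|\Phi_\alpha(y)|^2\bigr)^{1/2}$, and the second factor is the divergent diagonal reproducing kernel of $\mathcal{H}_{P_1}$. The trouble with your fix is that you push \emph{all} of the smoothing of $f=f^1*f^2$ into $u_y$ (so $u_y=c_{P_1}\pi_P(\lambda,(f^2)^*)M_P(s,\lambda)^*v$ with $v=\overline{P_{P_1}(s\lambda,f^1,y,\cdot)}$), leaving the $s'$-side an undressed unitary, and then invoke ``a Sobolev embedding'' to force absolute convergence of $\sum_\chi|(M_P(s',\lambda)P_\chi u_y)(y)|$. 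That step is exactly the content of the lemma and you have not produced it: it amounts to a weighted local Weyl law for $\mathcal{H}_{P_1}$ (controlling $\sum_\chi\mu_\chi^{-N}\sum_\alpha|\Phi_\alpha(y)|^2$ for an appropriate eigenvalue weight $\mu_\chi$) together with control of $\sum_\chi\mu_\chi^{N}\|P_\chi u_y\|^2$. Neither estimate is in the paper or in your proposal, and the first is not an automatic consequence of smoothness of $u_y$.

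The paper resolves this by \emph{balancing} the smoothing instead of stacking it on one side. Write $S_\chi$ as the diagonal kernel at $(y,y)$ of the restriction to the $\chi$-isotypic space of the operator $T_1T_2^*$, where $T_1=M_P(s,\lambda)\pi_P(\lambda,f^1)$ and $T_2=M_P(s',\lambda)\pi_P(\lambda,(f^2)^*)$. Cauchy--Schwarz for kernels of products gives $|S_\chi|\le K_{T_1T_1^*}(y,y)^{1/2}K_{T_2T_2^*}(y,y)^{1/2}$, and the intertwining relations plus unitarity of $M_P$ on $i\mathfrak{a}$ collapse $T_iT_i^*$ to $\pi_{P_1}(\ast\lambda,{}^{i}f)$ with ${}^{i}f=f^i*(f^i)^*$ of positive type; each diagonal kernel $\R_{P_1,\chi}(\ast\lambda,{}^{i}f,y,y)$ is nonnegative, their $\chi$-sums are majorized by the continuous function $P_{P_1}(\ast\lambda,{}^{i}f,y,y)$ of Lemma \ref{lem 3.4}, and $\int P_{P_1}(\ast\lambda,{}^{i}f,y,y)\,d\lambda<\infty$. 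No local Weyl law or Sobolev machinery is needed. To repair your argument either supply the missing Sobolev/Weyl estimates, or redistribute $f^1$ to the $s$-side and $f^2$ to the $s'$-side and apply the operator-level Cauchy--Schwarz as above.
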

	\begin{proof}
		Put 
		\[\R_{P, \chi}(\lambda, f, y, x) = \sum_{\beta \in \mathcal{B}_{P_1, \chi}} (\mathcal{I}_P(\lambda, f) \Phi_\beta)(y) \overline{\Phi_\beta(x)},\]
		which is continuous at $ x,y \in G(\bb{A})$.
		
		We  denote \[(M_P(s,\lambda)\mathcal{I}_P(\lambda,f)\Phi_\beta)(y)\overline{(M_P(s',\lambda)\Phi_\beta)(y)}\] as
		\[ (M_P(s, \lambda)\mathcal{I}_P(\lambda, f)M_P(s'^{-1}, s'\lambda)M_P(s', \lambda)\Phi_\beta)(y)\overline{(M_P(s', \lambda)\Phi_\beta)(y)},\]
		which is 
		\[(M_P(s, \lambda)M_P(s'^{-1}, s'\lambda)\mathcal{I}_P(s'\lambda, f)M_P(s', \lambda)\Phi_\beta)(y)\overline{(M_P(s', \lambda)\Phi_\beta)(y)},\]
		by the properties of intertwining operator \[ M_P(s, \lambda)^* = M_P(s^{-1}, -s\bar{\lambda}) \]and \[M_P(s, \lambda)\mathcal{I}_P(\lambda, f) = \mathcal{I}_P(s\lambda, f)M_P(s, \lambda).\]
		Since $\{M_P(s, \lambda)\Phi_\beta\}$ is also an orthonormal basis for $\mathcal{H}_{P, \chi}$, we can see $\R_{P, \chi}(ss'^{-1}\lambda, f, y, x)$ is just the kernel of the restriction of \[M_P(s, \lambda)M_P(s'^{-1}, s'\lambda)\mathcal{I}_P(\lambda, f)\] to $\mathcal{H}_{P, \chi}$.
		
		Recall that
		\[ M_P(s, \lambda)\mathcal{I}_P(\lambda, f) = \mathcal{I}_P(s\lambda, f)M_P(s, \lambda),\] and \[\mathcal{I}_P(\lambda, f)=\mathcal{I}_P(\lambda, f')\mathcal{I}(\lambda, f''), \]
		we have
		\begin{align*}
			&M_P(s, \lambda)M_P(s'^{-1}, s'\lambda)\mathcal{I}_P(\lambda, f')(M_P(s, \lambda)M_P(s'^{-1}, s\lambda)\mathcal{I}_P(\lambda, f'))^* \\
			&= M_P(s, \lambda)M_P(s'^{-1}, s'\lambda)\mathcal{I}_P(\lambda, f')\mathcal{I}_P(\lambda, (f')^*)M_P(s', \lambda)M_P(s^{-1}, s\lambda) \\
			&= M_P(s, \lambda)M_P(s'^{-1}, s'\lambda)\mathcal{I}_P(\lambda, \prescript{1}{ } f)M_P(s', \lambda)M_P(s^{-1}, s\lambda),
			\tag{5.7}\label{5.7}\setcounter{equation}{7}
		\end{align*}
		where$\prescript{1}{ } f = f' * (f')^* $, $ \prescript{2}{ } f = f'' * (f'')^*.$
		
		Since $M_P(s,\lambda)M_P(s^{-1},s\lambda)=\mathrm{Id}$, then (\ref{5.7}) equals
		\[M_P(s, \lambda)M_P(s'^{-1}, s'\lambda)\mathcal{I}_P(\lambda,\prescript{1}{ } f)M_P(s'^{-1}, s'\lambda)^{-1}M_P(s, \lambda)^{-1}.\]
		By $M_P(s, \lambda)\mathcal{I}_P(\lambda, f) = \mathcal{I}_P(s\lambda, f)M_P(s, \lambda)$, the above expression is	$\mathcal{I}_P(ss'^{-1}\lambda, \prescript{1}{ } f).$ 
		Therefore, the absolute value of 
		\[\sum_{\beta \in \mathcal{B}_{P, \chi}} (M_P(s, \lambda)\mathcal{I}_P(\lambda, f)\Phi_\beta)(y)\overline{(M_P(s', \lambda)\Phi_\beta)(y)}\] 
		is bounded by
		\[|\R_{P, \chi}(ss'^{-1}\lambda, \prescript{1}{}f, y, y)|^{\frac{1}{2}}|\R_{P, \chi}(ss'^{-1}\lambda, \prescript{2}{}f, y, y)|^{\frac{1}{2}}.\]
		Also,we have proved for every finite set $S$ of $\chi$,
		\[\sum_{\chi\in S}|\R_{P,\chi}(\lambda,\prescript{1}{ }f ,y,y)|\]
		is bounded by a function $P(\lambda, \prescript{1}{ }f, y, y), $
		which is independent of $S$. Thus we can get \[\sum_{\chi} \R_{P, \chi}(ss'^{-1}\lambda, \prescript{1}{ }f, y, y)\] is bounded by  $P_P(ss'^{-1}\lambda, \prescript{1}{ }f, y, y).$ Since
		\[
		\int_{i \mathfrak{a}_G \backslash i\mathfrak{a}} P_P(\lambda, \prescript{1}{ }f, y, y) d\lambda\]
		is convergent.
	\end{proof}
	
	Set $a\in \mathfrak{a}_G \backslash \mathfrak{a}$, we decompose $a = \sum_{k=1}^j a_k \varpi_{\alpha_k}$, where $j$  is the rank of $A_P$.  $\varpi_{\alpha_{k}}$ is the dual roots corresponding to simple roots $\alpha_k\in\Delta_{P}$. Define $a_P = \mathrm{det}(\langle\varpi_{\alpha_m}, \varpi_{\alpha_n}\rangle_{m,n})^{\frac{1}{2}}$.
	
	If $P,$ $P_1$ and $s,$ $s_1\in\Omega(\mathfrak{a},\mathfrak{a}_1)$ are given, denote the function $L(s_1,s_2,f,x)$ by
	\[
	\int_{i \mathfrak{a}_G \backslash i\mathfrak{a}} \sum_{\chi} \sum_{\beta \in \mathcal{B}_{P, \chi}} ((M_P(s_1, \lambda) \mathcal{I}_P(\lambda, f) \Phi_{\beta})(\delta x)) (\overline{M_P(s_2, \lambda) \Phi_{\beta})(\delta x)} d\lambda,
	\]
	and $L'(s_1, s_2, f, kp, a)$ by
	\[\begin{split}
		&\int_K \int_{P(\bb{Q} )\backslash P(\bb{A})} \int_{i \mathfrak{a}_G \backslash i\mathfrak{a}} \sum_{\chi} \sum_{\beta \in \mathcal{B}_{P, \chi}} \\&((M_P(s_1, \lambda) \mathcal{I}_P(\lambda, f) \Phi_{\beta})(\delta k p a)) (\overline{M_P(s_2, \lambda) \Phi_{\beta}(\delta k p a)}) \exp(\langle2\lambda, a\rangle) d\lambda dpdk.
	\end{split}
	\]
	\begin{lemma}
		Fix $s_1$, $s_2 \in\Omega(\mathfrak{a},\mathfrak{a}_1)$, such that $M_P(s_1, \lambda) \neq M_P(s_2, \lambda)$, then the function
		\[
		\frac{1}{(2\pi i)^{\mathrm{dim}(A/Z)} \cdot n(A)} \sum_{\delta \in P(\mathbb{Q}) \backslash G(\mathbb{Q})} L(s_1,s_2,f,x)\exp( \langle-2\rho_P, H_0(\delta x)\rangle ) \tau_P(H_0(\delta x) - T)\tag{5.8}\label{5.8}\setcounter{equation}{8}
		\]
		is locally integrable  over $G(\bb{Q})\backslash G(\bb{A})^1$, and its integral tends 0 as $T$  approaches to $\infty$.
	\end{lemma}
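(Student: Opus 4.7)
First, I would unfold the sum $\sum_{\delta \in P(\bb{Q})\backslash G(\bb{Q})}$ against the outer integration over $G(\bb{Q})\backslash G(\bb{A})^1$, collapsing (\ref{5.8}) into an integral of $L(s_1, s_2, f, x) \, e^{\langle -2\rho_P, H_0(x)\rangle} \tau_P(H_0(x) - T)$ over $P(\bb{Q})\backslash G(\bb{A})^1$. Then I would apply the Iwasawa decomposition $x = nmak$ with $a = \exp H$, $H \in \mathfrak{a}_P/\mathfrak{a}_G$. The Jacobian $e^{2\rho_P(H)}$ in $dx$ cancels the explicit $e^{-2\rho_P}$ factor; the $n$-integration over $N(\bb{Q})\backslash N(\bb{A})$ is trivial since the integrand is left $N(\bb{A})$-invariant via $M_P(s_j,\lambda)\Phi_\beta \in \mathcal{H}_{P_1}^0$; and $\tau_P(H - T)$ restricts $H$ to the shifted open cone $\mathfrak{a}_P^+(T) := \{H \in \mathfrak{a}_P/\mathfrak{a}_G \colon \alpha(H - T) > 0,\ \alpha \in \Delta_P\}$.

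Second, I would isolate the $H$-dependence of the integrand. Using the intertwining identity $\pi_{P_1}(s\lambda, \exp H) M_P(s,\lambda) = M_P(s,\lambda) \pi_P(\lambda, \exp H)$ together with the induced-representation twist in the definition of $\pi_P$, one checks that $(M_P(s_j, \lambda)\pi_P(\lambda, f)\Phi_\beta)(mk \exp H)$ equals its value at $mk$ multiplied by $\exp\langle s_j\lambda + \rho_{P_1} - \lambda - \rho_P, H\rangle$, and similarly for $(M_P(s_2, \lambda)\Phi_\beta)(mk \exp H)$. Taking the complex conjugate of the second factor and multiplying, the $\rho$-contributions and the $\lambda$ versus $-\bar\lambda$ parts combine so that the entire $H$-dependence of $L(s_1,s_2,f,mk\exp H)$ collapses to the single exponential $e^{\langle (s_1 - s_2)\lambda,\, H\rangle}$.

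Third, I would exchange the $H$- and $\lambda$-integrations (first in the region $\Re\lambda \in \rho_P + \mathfrak{a}^+$ where every integral converges absolutely, then by analytic continuation to $i\mathfrak{a}$) and perform the $H$-integration of $e^{\langle (s_1-s_2)\lambda, H\rangle}$ over $\mathfrak{a}_P^+(T)$. This yields a boundary factor $e^{\langle (s_1-s_2)\lambda, T\rangle}$ times a rational function in the quantities $\langle \varpi_i^\vee, (s_1-s_2)\lambda\rangle$ indexed by $\Delta_P$. The resulting expression has the shape
\[
\int_K \int_{M(\bb{Q})\backslash M(\bb{A})^1} \int_{i\mathfrak{a}_G\backslash i\mathfrak{a}} G(\lambda, m, k)\, e^{\langle (s_1-s_2)\lambda,\, T\rangle}\, d\lambda\, dm\, dk,
\]
with $G(\lambda, m, k)$ the sum over $\chi$ and $\beta$ of the spectral data, which is $L^1$ in $\lambda$ uniformly in $(m,k)$ by Lemma \ref{lem 5.10}, and smooth in $\lambda$ because $f \in C_c^\infty(G(\bb{A})^1)$.

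Finally, the hypothesis $M_P(s_1, \lambda) \neq M_P(s_2, \lambda)$ forces $s_1 \neq s_2$, so $(s_1 - s_2)$ is a nontrivial linear map on $\mathfrak{a}$ and $\lambda \mapsto e^{\langle (s_1-s_2)\lambda, T\rangle}$ is a nontrivial oscillating character on $i\mathfrak{a}_G\backslash i\mathfrak{a}$ whose frequency grows linearly with $T$. Repeated integration by parts in $\lambda$, against the smooth integrable weight $G$, delivers a bound $O(\|T\|^{-N})$ for every $N$, so the full integral tends to $0$ as $T \to \infty$. Local integrability follows from the uniform $L^1$ bound in Lemma \ref{lem 5.10}, which makes $L(s_1,s_2,f,\cdot)$ bounded on compacta, combined with the fact that $\tau_P$ restricts the $H$-integration to a set that is bounded in the compact pieces of $G(\bb{Q})\backslash G(\bb{A})^1$. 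The hardest point will be the interchange in step three: the naive $H$-integral produces poles along the hyperplanes $\langle (s_1-s_2)\lambda, \varpi_i^\vee\rangle = 0$, so the contour shift from $\rho_P + \mathfrak{a}^+$ back to $i\mathfrak{a}$ must be arranged so these apparent singularities are cancelled, using the orthogonality between distinct intertwining operators on $\mathcal{H}_{P,\chi}$ that underlies the proof of Lemma \ref{lem 5.10}.
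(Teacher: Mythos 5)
Your argument diverges from the paper's at the crucial decay step. The paper's proof, after reducing as you do to a tail integral $\int_{\langle T,\varpi_{\alpha_1}\rangle}^\infty\cdots\int_{\langle T,\varpi_{\alpha_j}\rangle}^\infty |L'(s_1,s_2,f,kp,a)|\,da$, shifts the $\lambda$-contour \emph{first} to $\{\lambda:\langle\Re\lambda,\alpha_k\rangle=\delta<0\}$: this makes the exponential in $a$ decay absolutely, so the tail $a$-integral over the shrinking cone trivially tends to zero as $T\to\infty$. You instead perform the $a$-integral first (obtaining the boundary factor $e^{\langle (s_1-s_2)\lambda,T\rangle}$) and then run a non-stationary phase argument in $\lambda$. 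Both routes ultimately exploit $s_1\neq s_2$ --- the paper because otherwise the $\lambda$-exponential in $a$ is trivial and the contour shift produces no decay, you explicitly because otherwise $(s_1-s_2)\lambda$ is zero and there is nothing to oscillate --- so the hypothesis is used consistently, but the mechanism for extracting $T$-decay is genuinely different.

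Your version, however, has a gap that you flag but do not resolve, and I do not think the suggested fix works. The $a$-integral over the shifted cone does not converge absolutely on $i\mathfrak{a}$, and the analytic continuation from $\Re\lambda\in\rho_P+\mathfrak{a}^+$ back to the imaginary axis produces poles along the hyperplanes $\langle(s_1-s_2)\lambda,\varpi_i^\vee\rangle=0$. You propose that the orthogonality of $M_P(s_1,\lambda)\Phi$ and $M_P(s_2,\lambda)\Phi$ for unramified $\chi$ cancels these residues, but that orthogonality is a statement about \emph{inner products} on $\mathcal{H}_{P_1}$ (i.e.\ after integrating over $M(\bb{Q})\backslash M(\bb{A})^1\times K$), whereas the residues here are contributions of the \emph{pointwise} product $(M_P(s_1,\lambda)\pi_P(\lambda,f)\Phi_\beta)(x)\,\overline{(M_P(s_2,\lambda)\Phi_\beta)(x)}$ at fixed $x$ on the singular hyperplane, which need not vanish; moreover the lemma is stated for all $\chi$, not just unramified ones. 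The paper's contour-shift-first argument avoids this issue entirely, since no analytic continuation of the $a$-integral is needed. If you want to salvage the oscillatory-phase route you would have to show that the contributions from these hyperplanes genuinely cancel after summing over parabolics or over pairs $(s_1,s_2)$, which is a separate and non-trivial argument not contained in your sketch.
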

	\begin{proof}
		It is clear from  Lemma \ref{lem 5.10} that the function inside is locally integrable. The integral of the absolutely value of  (\ref{5.8}) over $G(\bb{Q})\backslash G(\bb{A})^1$ is
		\[\frac{1}{(2\pi i)^{\mathrm{dim}(A/Z)} \cdot n(A)} \int_{\langle T,\varpi_{\alpha_1}\rangle}^\infty \cdots \int_{\langle T,\varpi_{\alpha_j}\rangle}^\infty |L'(s_1, s_2, f, kp, a)| da_1\cdots da_j.\]
		The function 
		\[\mathcal{I}_P(\lambda, f)\Phi_\beta\]
		vanishes for all but finitely many $\beta $.
		So, the  integral over $\lambda$ in $L'(s_1, s_2, f, kp, a)$  we can change the contour to  \[\{\lambda : \langle \text{Re} \lambda, \alpha_k\rangle  = \delta, \quad \alpha_k \in \Delta_P\},\] for $\delta < 0$, such that the integral of exponential function can be finite. The integral approaches $0$ as $T$ approaches to $\infty$.
	\end{proof}	
	By the property of the truncation operation $\La\circ\La=\La$, we have 
	\[\La E_P''(\Phi,\lambda,x)=E_P''(\Phi,\lambda,x),\]
	and \[\La E_P'(\Phi,\lambda,x)=0.\]
	Then  for any $\Phi_1$, $\Phi_2\in\mathcal{H}_{P}$,
	\begin{align*}
		& (E_P(\Phi_1, \lambda_1, x), E_P(\Phi_2, \lambda, x)) \\
		&= (E_P''^T(\Phi_1, \lambda_1, x) + E_P'^T(\Phi_1, \lambda_1, x), E_P''^T(\Phi_2, \lambda, x) + E_P'^T(\Phi_2, \lambda, x)) ,
	\end{align*}
	equals 
	\[ (E_P''^T(\Phi_1, \lambda_1, x), E_P''^T(\Phi_2, \lambda, x)) + (\Lambda^T E_P''^T(\Phi_1, \lambda_1, x), \Lambda^T E_P'^T(\Phi_2, \lambda, x)) \]
	\[+ (\Lambda^T E_P'^T(\Phi_1, \lambda_1, x), \Lambda^T E_P''^T(\Phi_2, \lambda, x)) + (E_P'^T(\Phi_1, \lambda_1, x), E_P'^T(\Phi_2, \lambda, x)),\] 
	which is
	\[ (E_P''^T(\Phi_1, \lambda_1, x), E_P''^T(\Phi_2, \lambda, x)) + (E_P'^T(\Phi_1, \lambda_1, x), E_P'^T(\Phi_2, \lambda, x)).\] 
	Thus we can define\[K_P''(f,x,T)=K_P(f,x,T)-K_P'(f,x,T).\]
	Then we can write the $K(x,x)-K_1(x,x)$ as the sum 
	\begin{align}
		& I_G(f, x)\label{5.9} ,\\
		&  J_{\text{unr}}^{\mathfrak{o}_{21}}(f, x, T) - K'_{P_{21}}(f, x, T)-K'_{P_{12}}(f, x, T) +  I_{\text{unr}}^{\mathfrak{o}_{21}}(f, x, T)\label{5.10}, \\
		&  J_{\text{unr}}^{\mathfrak{o}^0_{111}}(f, x, T) + \sum_i J_{\text{ram}}^{\mathfrak{o}^i_{111}}(f, x, T) - K'_{P_{0}}(f, x, T) + I_{\text{unr}}^{\mathfrak{o}^0_{111}}(f, x, T) + \sum_i I_{\text{ram}}^{\mathfrak{o}^i_{111}}(f, x, T),\label{5.11} \\
		& - \sum_{\mathfrak{P}} K_P''(f, x, T).\label{5.12}
	\end{align}	
	We shall call these terms respectively for G-elliptic term \[I_G(f,x),\]
	and  the first parabolic term \[J_{\text{unr}}^\mathfrak{o}(f, x, T) + J_{\text{ram}}^\mathfrak{o}(f, x, T) - K'(f, x, T),\]
	the second parabolic term
	\[I_{\text{unr}}^\mathfrak{o}(f, x, T) + I_{\text{ram}}^\mathfrak{o}(f, x, T),\]  
	the third parabolic term
	\[-\sum_{\mathfrak{P}}K''(f, x, T).\] 
	
	We will   prove  (\ref{5.9}) is integrable over $G(\bb{Q})\backslash G(\bb{A})^1$, the first parabolic terms are locally integrable and the values of them approach $0$ when $T$ approaches $\infty$. The sum of the second parabolic and third parabolic terms is integrable and its value is independent of the parameter $T$.

	\section{elliptic term }\label{sec 6}
	In this section, we shall prove that the integral of $G$-elliptic term is absolutely convergent.
	\begin{lemma}\label{lem 6.1} 
		In this section, we shall prove that the integral of $G$-elliptic term is absolutely convergent.
		Suppose $C$ is a subset of $G(\bb{A})$ compact modulo $Z(\bb{R})^0$. For fixed parabolic $P$. The number of elements $\gamma\in \{M_t\}\cup \{M_n\}$, such that there exist $x\in G(\bb{A})$, $n\in N(\bb{A})$ with  $x^{-1}\gamma nx \in C$ is finite.
	\end{lemma}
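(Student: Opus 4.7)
The plan is to constrain the characteristic polynomial of $\gamma$ to a finite set, and then use $\GL(3)$ structure theory to bound the number of class representatives producing any given characteristic polynomial.

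First I would observe that, since $\gamma\in M(\bb{Q})$ and $n\in N(\bb{A})$ with $N$ the unipotent radical of $P=MN$, the product $\gamma n$ is block upper triangular with diagonal blocks equal to those of $\gamma$. Hence $\det(tI-\gamma n)=\det(tI-\gamma)$, and the continuous characteristic polynomial map $c\colon G(\bb{A})\to\bb{A}^3$ sending a matrix to its non-leading coefficients satisfies $c(\gamma n)=c(\gamma)$. The $G(\bb{A})$-conjugation invariance of $c$ combined with $x^{-1}\gamma n x\in C$ then gives $c(\gamma)\in c(C)$.

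Next comes the key compactness step. Scalar multiplication by $z\in Z(\bb{R})^0$ scales the $i$-th coefficient of the characteristic polynomial by $z^i$, so $c(C)$ is compact modulo this weighted $Z(\bb{R})^0$-action on $\bb{A}^3$. On the other hand, the product formula yields $\prod_v|\det\gamma|_v=1$ for $\gamma\in G(\bb{Q})$; since $n$ is unipotent we also have $\prod_v|\det(\gamma n)|_v=1$, and this single equation selects one $Z(\bb{R})^0$-slice. Intersecting $c(C)$ with this slice gives a genuinely compact subset of $\bb{A}^3$. Because $\bb{Q}^3$ is discrete in $\bb{A}^3$, its intersection with a compact set is finite, so $c(\gamma)$ takes only finitely many values.

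To finish, for $\GL(3)$ the $G(\bb{Q})$-conjugacy class of a semisimple element is determined by its characteristic polynomial, so only finitely many $G(\bb{Q})$-classes of $\gamma_s$ arise. Each such class meets $M(\bb{Q})$ in at most $|\Omega|$ many $M(\bb{Q})$-classes, and for each semisimple $\gamma_s$ the unipotent part $\gamma_u\in M(\gamma_s)(\bb{Q})$ lies in one of finitely many unipotent conjugacy classes of the reductive centralizer $M(\gamma_s)$ (a short explicit list for each Levi of $\GL(3)$). Multiplying these finite counts bounds the number of $\gamma\in\{M_t\}\cup\{M_n\}$. The main obstacle is the compactness step: converting the hypothesis ``compact modulo $Z(\bb{R})^0$'' into genuine adelic compactness of $c(\gamma n)$ via the product formula to kill the $Z(\bb{R})^0$-ambiguity. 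Once the characteristic polynomial is pinned to a finite set, the remaining finiteness statements for semisimple and unipotent conjugacy classes are entirely standard.
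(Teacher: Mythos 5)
Your proof is correct and takes a genuinely different route from the paper's. The paper argues by reduction theory: it conjugates $C$ into $P(\bb{A})$ via $K$, factors the conjugating element through a Siegel domain $p=a\nu\pi$, uses the fact that $A_P(\bb{R})^0$ is central in $M(\bb{A})$ and normalizes $N(\bb{A})$ to absorb the $a$-factor, and concludes by discreteness of $M(\bb{Q})$ in a compact-modulo-$Z(\bb{R})^0$ subset of $M(\bb{A})$. You instead push $\gamma n$ through the conjugation-invariant characteristic-polynomial map, observe $c(\gamma n)=c(\gamma)\in\bb{Q}^3$, and use the product formula $\prod_v|\det\gamma|_v=1$ to cut the weighted $Z(\bb{R})^0$-orbit of the compact set $c(C_0)$ down to a genuinely compact set, so that discreteness of $\bb{Q}^3$ in $\bb{A}^3$ gives finiteness of $c(\gamma)$; then rational canonical form for $\GL_3$ and the finiteness of unipotent classes in the centralizers close it out. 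Your approach is essentially the GIT/adjoint-quotient argument, which is shorter and more self-contained for $\GL_n$, but it leans on the special feature that the characteristic polynomial is a complete set of invariants separating semisimple classes over $\bb{Q}$ and that unipotent classes in Levi centralizers are parametrized by partitions; the paper's Siegel-domain argument is the one that transfers verbatim to an arbitrary reductive group, where the invariant-theoretic quotient need not separate rational classes so cleanly (Galois cohomology and non-connected centralizers can enter). One small point worth making explicit in your write-up: the compactness of $|\det(C_0)|_{\bb{A}}\subset\bb{R}_{>0}$ uses that $\det\colon G(\bb{A})\to\bb{A}^\times$ is continuous for the idele topology (not merely the subspace topology from $\bb{A}$), so $\det(C_0)$ is a compact subset of $\bb{A}^\times$ on which the idele norm is continuous and bounded away from $0$ and $\infty$.
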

	\begin{proof}
		Consider 
		\[C_1 = \{k^{-1}ck\mid c \in C, k \in \bK\}.\]
		Since $P$ is closed, and  the intersection of $C_1$ and $P(\bb{A})$ is compact modulo $Z(\bb{R})^0$. Then we choose a subset $C_M \subset M(\bb{A})$ compact modulo $Z(\bb{R})^0$ and satisfying $C_1 \cap P(\bb{A}) \subset C_M N(\bb{A})$.\newline
		For $x^{-1} \gamma n x \in C$,  write
		\[x = kp, \quad k \in \bK, p \in P(\bb{A}),\] then  
		\[p^{-1} \gamma n p \in C_M N(\bb{A}).\]
		By the definition of Siegel domain, we can choose $\omega$ a relatively compact set of representatives in $P(\bb{A})$, and write
		\[
		p = a\nu\pi, \quad a \in A_P(\bb{R})^0, \nu \in \omega, \pi \in P({\bb{Q}}).
		\]
		Thus \[\nu^{-1} \pi^{-1} \cdot \gamma n \cdot \pi \nu \in a \cdot C_M N(\bb{A}) \cdot a^{-1} = C_M N(\bb{A}).\]
		We can choose a subset $C'_M \subset M(\bb{A})$ compact modulo $Z(\bb{R})^0$, satisfying
		\[\omega \cdot C_M N(\bb{A}) \cdot \omega^{-1} \subset C'_M N(\bb{A}).\] Then \[\pi^{-1} \gamma n \pi \in C'_M N(\bb{A}).\] Therefore $\gamma$ can be conjugated by $M(\bb{Q})$  into  $C'_M$.\newline
		Since the intersection of a compact set and a finite set is finite, we conclude that only finitely many $M(\mathbb{Q})$-conjugacy classes in $M(\mathbb{Q})$ meet $C'_M$.
	\end{proof}
	Recall that $N(\gamma)$ is a subgroup of $N(\gamma_s)$.
	\begin{lemma}\label{lem 6.2}
		For fixed parabolic $P$, and $\gamma\in\{M_t\} \cup \{M_n\}$. Suppose that $C$ is a compact subset in $P(\bb{A})^1$. If $p \in P(\gamma,\bb{A})^1 \backslash P(\bb{A})^1$ satisfying
		\[
		(p^{-1} \cdot \gamma N(\gamma,\bb{A}) \cdot p) \cap C \neq \emptyset,
		\]
		then there is a compact subset $C_1 \subset P(\gamma,\bb{A})^1 \backslash P(\bb{A})^1$ such that $p \in C_1$.
	\end{lemma}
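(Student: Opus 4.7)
The plan is to use the Levi decomposition of $P$ to reduce the problem to two separate compactness statements, one in the Levi component and one in the unipotent radical. Write $p = mn$ uniquely with $m \in M(\bb{A})^1$ and $n \in N(\bb{A})$; by the adelic analog of Lemma \ref{lemma5.1}, $P(\gamma, \bb{A})^1 = M(\gamma, \bb{A})^1 N(\gamma, \bb{A})$, so a coset in $P(\gamma, \bb{A})^1 \backslash P(\bb{A})^1$ is parameterized by a pair $(\bar{m}, \bar{n})$ with $\bar{m} \in M(\gamma, \bb{A})^1 \backslash M(\bb{A})^1$ and $\bar{n}$ in a twisted quotient of $N(\bb{A})$ by $N(m^{-1}\gamma m, \bb{A})$. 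It therefore suffices to bound each component.

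For the $M$-part, observe that $p^{-1}\gamma\nu p = n^{-1}(m^{-1}\gamma m)(m^{-1}\nu m) n$, whose image under the projection $P(\bb{A}) \to M(\bb{A})$ is $m^{-1}\gamma m$. The hypothesis forces $m^{-1}\gamma m \in C_M$, where $C_M \subset M(\bb{A})^1$ is the compact image of $C$. I would then invoke properness of the orbit map: $\bar{m} \mapsto m^{-1}\gamma m$ is a closed embedding of $M(\gamma, \bb{A})^1 \backslash M(\bb{A})^1$ into $M(\bb{A})^1$ when $\gamma$ is semisimple, and the general case reduces to this via the Jordan decomposition $\gamma = \gamma_s \gamma_u$ together with the reductivity of $M(\gamma_s)$. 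This yields a compact $C_1^M$ containing $\bar{m}$; after left-translating $p$ by a suitable element of $M(\gamma, \bb{A})^1$ we may assume $m$ itself lies in a fixed compact subset of $M(\bb{A})^1$.

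For the $N$-part, set $\gamma' := m^{-1}\gamma m$, which now varies in a compact subset of $M(\bb{A})$, and $\nu' := m^{-1}\nu m$, a free parameter in $N(\gamma', \bb{A})$. A direct computation shows the $N$-component of $p^{-1}\gamma\nu p$ equals $c_n(\gamma') \cdot n^{-1}\nu' n$, where $c_n(\gamma') = \gamma'^{-1}n^{-1}\gamma' n$, and this must lie in the compact $C_N \subset N(\bb{A})$ obtained by projecting $C$. Absorbing the free parameter $\nu' \in N(\gamma', \bb{A})$ into the coset structure, the constraint reduces to the statement that $c_n(\gamma')$ is compact modulo $N(\gamma', \bb{A})$. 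The map $\bar{n} \mapsto c_n(\gamma')$ from $N(\gamma', \bb{A}) \backslash N(\bb{A})$ into $N(\bb{A})$ is then a closed embedding, verified by a direct computation in exponential coordinates using a $\gamma_s$-stable descending filtration $N = N_0 \supset N_1 \supset N_2 = \{e\}$, in the same spirit as the proof of Lemma \ref{lemma:1}. This gives the required compact containing $\bar{n}$, and assembling the two bounds produces the desired $C_1$.

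The main technical obstacle is securing the closed embedding property in the $N$-step \emph{uniformly} as $\gamma'$ ranges over a compact subset of $M(\bb{A})$, since both the centralizer $N(\gamma', \bb{A})$ and the commutator map depend continuously on $\gamma'$. Handling this uniformity requires exploiting the nilpotent structure of $N$ layer by layer along the filtration $N_k$, analogous to the inductive commutator manipulation already carried out in Lemma \ref{lemma:1}, together with the observation that $\gamma'$ stays within a small neighborhood of the compact $M(\bb{Q})$-conjugacy class of $\gamma$.
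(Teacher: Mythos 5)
Your proposal takes a genuinely different route from the paper. The paper first pulls $p$ back to a rational representative using a relatively compact fundamental set $\omega$ for $P(\bb{Q})\backslash P(\bb{A})^1$: writing $p=a\nu\pi$ with $\nu\in\omega$, $\pi\in P(\bb{Q})$, the problem reduces to the \emph{finiteness} of the set $S=\{\delta\in P(\gamma,\bb{Q})\backslash P(\bb{Q}):\ (\delta^{-1}\gamma N(\gamma,\bb{A})\delta)\cap C'\neq\emptyset\}$, and that finiteness is then settled for the $M(\bb{Q})$-factor by discreteness of $M(\bb{Q})$ in a compact set, and for the $N(\bb{Q})$-factor by the graded subgroups $\tilde N_i(\gamma)$. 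You instead work directly in the adelic quotient and try to establish properness of the two orbit maps (for $M$ and for $N$) separately. Your core computation is correct: the $M$-component of $p^{-1}\gamma\nu p$ is $m^{-1}\gamma m=\gamma'$, and the $N$-component is the commutator $\gamma'^{-1}(\nu'n)^{-1}\gamma'(\nu'n)$, which descends to a well-defined function on $N(\gamma',\bb{A})\backslash N(\bb{A})$.

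The gap is in the $M$-step. You invoke ``properness of the orbit map $\bar m\mapsto m^{-1}\gamma m$'' on $M(\gamma,\bb{A})^1\backslash M(\bb{A})^1$ as a known fact, but over the adeles this is precisely the $M$-restricted version of Lemma~\ref{lem 6.2} itself, so the appeal is circular as stated. Closedness of a semisimple orbit is standard over a local field, but the adelic statement requires an argument that controls all places simultaneously, and that is exactly the content one is trying to prove. The paper avoids this by the fundamental-domain reduction: once $S$ is a set of rational cosets, the compactness question becomes a finiteness question, and the needed fact is simply that the discrete set $M(\bb{Q})$ meets the compact $C_M$ finitely. You would need a comparable argument — either reproduce the fundamental-domain trick, or give an honest adelic proof of properness (which will amount to much the same thing). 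Two smaller issues: the uniformity of the $N$-step as $\gamma'$ varies over a compact set is acknowledged but not handled — the centralizer $N(\gamma',\bb{A})$ genuinely moves, so you need the $\gamma_s$-stable filtration to be uniform over the family, and this should be argued explicitly; and the quotient $P(\gamma,\bb{A})^1\backslash P(\bb{A})^1$ is not a direct product of the two pieces you bound, so passing from ``both factors compact'' to ``the quotient is compact'' needs a word about the fibration structure (the paper sidesteps this by taking explicit coset representatives $\{M\}_\gamma\prod_i\tilde N_i(\gamma)$).
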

	\begin{proof}
		Suppose the positive roots of $P$ are $\alpha_1, \ldots, \alpha_n$. Denote the restriction of these elements to $P(\gamma)$ by $\alpha_i(\gamma),$ $1\leq i\leq n$. Let
		\[
		\mathfrak{n}_i(\gamma) = \{X \in \mathfrak{n}(\gamma)\mid \text{Ad}(a)X = a^{\alpha_i(\gamma)}X, \quad a \in A\}.
		\]
		$\mathfrak{n}_i(\gamma)$ is a subspace of $\mathfrak{n}(\gamma)$. Denote $\mathfrak{n}_i= \mathfrak{n}_i(e)$. Let $\tilde{\mathfrak{n}}_i(\gamma)$ the complementary subspace of $\mathfrak{n}_i(\gamma)$ in $\mathfrak{n}_i$. Write $N_i(\gamma), \tilde{N}_i(\gamma)$ the image of $\exp\mathfrak{n}_i(\gamma)$ and  $ \exp\tilde{\mathfrak{n}}_i(\gamma)$ respectively. It is clear that $\tilde{N}_i(\gamma)$ is the set of representatives for $N_i(\gamma)\backslash N_i$.
		
		Let $\omega$ be the relatively compact fundamental set in $P(\bb{A})^1$ for $P(\bb{Q}) \backslash P(\bb{A})^1$. And write $C'$ the closure of $\omega \cdot C \cdot \omega^{-1}$ in $P(\bb{A})^1$.
		Let \[S = \{\delta \in P(\gamma,\bb{Q}) \backslash P(\bb{Q}) \mid (\delta^{-1} \cdot \gamma N(\gamma,\bb{A}) \cdot \delta) \cap C' \neq \emptyset\}.\] If we can prove that the set $S$ is finite, let \[ C_1' =\{ \overline{\bigcup_{\delta \in S} \delta \omega}\},\] 
		it is the closure in $ P(\gamma,\bb{Q}) \backslash P(\bb{A})^1 $. The $p$ satisfies the condition of the lemma must lies in $ C_1' $. If we write $ C_1 $ the projection of $ C_1' $ onto $ P(\gamma,\bb{A})^1 \backslash P(\bb{A})^1 $, then we can see $ C_1 $ is the set we need.
		
		Let $\{M\}_\gamma $ be the set of representatives of $ M(\gamma,\bb{Q})\backslash M(\bb{Q}) $, then 
		\[\{M\}_\gamma \prod_{i=1}^n \tilde{N}_i(\gamma) \] is the set of representatives of $ P(\gamma,\bb{Q}) \backslash P(\bb{Q}) $ in $ P(\bb{Q}) $. Thus  there exists a compact subset $ C_M \subset M(\bb{A})^1 $ satisfying $ C' \subset C_M M(\bb{A}) $. 
		Write  \[S_1 = \{\delta \in \{M\}_\gamma \mid \delta^{-1} \gamma \delta \in C_M\}.\] 
		Since $M(\bb{Q})$ is discrete in $C_M$, $ S_1 $ is finite. Thus $ \bigcup_{\delta \in S_1} \delta C' \delta^{-1} $ is compact in $ P(\bb{A})^1 $. \\
		Also,\[
		\cup_{\delta \in S_1} \delta C' \delta^{-1} \subset M(\bb{A})^1 \cdot \Pi_{i=1}^n \tilde{C}_{N_i} \cdot N_i(\gamma,\bb{A}),
		\]
		where $\tilde{C}_{N_i}$ is a compact subset in $\tilde{N}_i(\gamma,\bb{A})$.\\
		Now let 
		\[S_{N_i} = \{n_{i} \in \tilde{N}_i(\gamma,\bb{Q}) \mid \gamma^{-1} n_{i}^{-1} \gamma n_i\in \tilde{C}_{N_i} \cdot N_i(\gamma,\bb{A}).\]
		Then $S_i$ is finite.
		
		The set\[ \{\cup_{\delta \in S_1} \cup_{n_{i} \in S_i}   \delta n_{i}C'\delta^{-1}n_{i}^{-1} \} \]is compact and  contained in
		\[
		M(\bb{A})^1 \cdot N_i (\bb{A}),
		\]
		
		Since the product over $i$ is finite and the finite,  the set
		\[
		S_1 \cdot S_{N_i}
		\]
		contains a set of representatives of $S$ of cosets. Then the lemma follows.
	\end{proof}
	We now take an example. For the ramified orbits $\mathfrak{o}_{111}^3$, we consider the semisimple elements in these orbits. Define
	\[I_{\text{ram}}^\mathfrak{o}=\sum_{\gamma\in\{G_s^\mathfrak{o}\}}(n_{\gamma,G})^{-1}\sum_{G(\gamma,\bb{Q})\backslash G(\bb{Q})}f(x^{-1}\delta^{-1}\gamma \delta x).\]
	The integral
	\[
	\int_{ G(\bb{Q}) \backslash G(\bb {A})^1} |I_{\text{ram}}^\mathfrak{o}(f, x)| dx
	\]
	is bounded by
	\[
	\sum_{\gamma \in \{G_s^{\mathfrak{o}}\}} (n_{\gamma, G})^{-1} \int_{ G(\gamma,\bb{Q}) \backslash G(\bb {A})^1} |f(x^{-1} \gamma x)| dx.
	\]
	Since $f \in C_c^\infty( G(\bb{A})^1)$, we can use Lemma \ref{lem 6.1} to see the sum over $\gamma$ is finite. 
	Easy to see that the split component  of $G(\gamma)$ is $Z(G)$.
	The first integral resemble the Tamagawa,  but the measure on $Z(\bb{R})^0$ can not use directly. \\
	Define
	\[
	\Gamma_{\gamma, G} = [X(G(\gamma, \bb{Q})) : X(G(\bb{Q}))|_{G(\gamma)}].
	\]
	And denote
	\[
	\tilde{\tau}(\gamma, G) = (n_{\gamma, G})^{-1} (\Gamma_{\gamma, G})^{-1} \tau(G(\gamma)).
	\]
	The integral of
	this case of  orbit $\mathfrak{o}_{111}^3$ equals
	\[
	\sum_{\gamma \in \{M^{\mathfrak{o}}_{n,\mathfrak{o}}\}} \tilde{\tau}(\gamma, G) \int_{ G(\gamma,\bb{A}) \backslash G(\bb {A})} |f(x^{-1} \gamma x)| dx.
	\]

	Then we can decompose the integral into the integral over $\bK$ and $ P(\gamma,(\bb{A})^1 \backslash P(\bb{A})^1$.  Now by  Lemma \ref{lem 6.2} , for fixed $\gamma$, the function on $P(\gamma,\bb{A})^1 \backslash P(\bb{A})^1$  that maps $p$ to \[\int_K |f(k^{-1} p^{-1} \gamma p k)| dk,\] is of compact support. Thus the integral is absolutely convergent.

	The integral of the absolute value of elliptic term is\[\int_{G(\bb{Q})\backslash G(\bb{A})^1}|\sum_{\gamma\in G_e}f(x^{-1}\gamma x)|dx,\]
	which is bounded by the integral over $G(\bb{Q})\backslash G(\bb{A})^1$ of
	\[\sum_{\gamma\in G_e}|f(x^{-1}\gamma x)|.\]
	Define $\tau_1^P$ to be the characteristic function of
	\[\{H\in\mathfrak{a}_0\mid\alpha(H)>0,\alpha \in\Delta_{P_1}^P\}.\]
	Suppose $\omega$ is a compact subset of $N_0(\bb{Q})M_0(\bb{A})^1$ and $T\in-\mathfrak{a}_0^+$. For any standard parabolic subgroup $P_1$, we define $\mathfrak{s}^{P_1}(T_0,\omega)$ to be 
	\[\{pak\mid  p\in \omega, k\in \bK, a\in A_0(\bb{R})^0\text{, and }\alpha(H(a)-T_0)>0 \text{ for every } \alpha \in \Delta_{0}^1\}.\]
	Thus we have  $G(\bb{A})=P_1(\bb{Q})\mathfrak{s}^{P_1}(T_0,\omega)$ for any parabolic subgroup $P_1$. And we also define $\mathfrak{s}^{P_1}(T_0,T,\omega)$ to be the set of \[\{x\in\mathfrak{s}^{P_1}(T_0,\omega)\mid\hat{\alpha}(H_0(x)-T)\leq0,\text{ for every }\varpi_\alpha\in\hat{\Delta}_{0}^1\}.\]
	Let $F^{P_1}(x,T)=F^1(x,T)$ be the characteristic function of \[\{x\in G(\bb{A})\mid\delta x\in\mathfrak{s}^{P_1}(T_0,T,\omega), \text{ for some }\delta\in P_1(\bb{Q})\}.\]
	We have an equality, for every $x\in G(\bb{A}),$ 
	\begin{equation}\label{6.1}
		\sum_{\{P_1:P_0\subset P_1 \subset P\}}\sum_{\delta\in P_1(\bb{Q})\backslash G(\bb{Q})}F^1(\delta x,T)\tau_{1}^P(H_0(\delta x)-T)=1
	\end{equation}
	For example, if $ P = P_{21}$. Then $P_1 = P_0$ or $P_{21}$.
	
	For $x \in G({\bb{A}})$, choose $\delta \in P(\bb{Q})$ such that $\delta \in \mathfrak{s}^P(T_0, \omega)$. It is  easy to see that $P_1 = P_0$ satisfies the condition $\varpi_\alpha(H_0(\delta x) - T) \geq 0, \varpi_\alpha\in \hat{\Delta}_0^1$ and $\alpha(H_0(\delta x) - T) > 0, \alpha \in\Delta_0^1$. Thus
	\[
	F^1(\delta x, T) \tau_1^P(H_0(\delta x) - T) = 1.
	\]

	Suppose there exist $\delta_1, \delta_2 \in G(\bb{Q})$, and $P_1 = P_0,$ $ P_2 = P_{21}$ such that
	\[
	F^1(\delta_1 x, T) \tau_1^P(H_0(\delta_1 x) - T) = 1 = F^1(\delta_2 x, T) \tau_2^P(H_0(\delta_2 x) - T).
	\]
	We may assume that
	\[
	\delta_i x \in \mathfrak{s}^{P_i}(T_0, T, \omega), \quad i = 1, 2,
	\]
	by translating $\delta_i$ by an element in $P_{i}(\bb{Q})$.
	
	Thus the projection of $H_0(\delta_i x) - T$, $i = 1, 2$ onto $\mathfrak{a}_0^P$ can be written as $c_1 \varpi_{\alpha}$  and $ -c_2 \alpha,$    
	where $c_1, c_2 > 0$.
	
	Now we use a standard result from reduction theory (see \cite{L1}): any suitably regular point $T \in \mathfrak{a}_0^+$ has the property, suppose $P_1 \subset P$, and $x, \delta \in \mathfrak{s}^{P_1}(T_0, \omega)$ for points $x \in G(\bb{A}),$ $ \delta \in P(\bb{Q})$, if $\alpha(H_0(x) - T) > 0$, $ \alpha \in \Delta_0^P \backslash \Delta_0^{P_1},$ then $\delta \in P_{1}(\bb{Q})$.
	
	Thus in this case, \[\alpha(H_0(\delta_i x) - T) > 0,\quad  \alpha \in \Delta_0^P \backslash \Delta_0^i.\] 
	Since $T \in T_0 + \mathfrak{a}_0^+,$ $ \delta_i x \in \mathfrak{s}^P(T_0, \omega)$. We have $\delta_2 \delta_1^{-1} \in P_{0}(\bb{Q}),$ $\delta_1 \delta_2^{-1} \in P_{21}(\bb{Q})$. That is, there exists $\xi \in P_0(\bb{Q})$, such that $ \delta_2 = \xi \delta_1.$\\
	However, $\delta_1 \in P_0(\bb{Q}) \backslash G(\bb{Q}),$ $ \delta \in P_{21}(\bb{Q}) \backslash G(\bb{Q})$, there is a contradiction.\\ 
	Therefore $P_1 = P_2,$ $ \delta_1,$ $ \delta_2$  belong to the same $P_1(\bb{Q})$ coset in $G(\bb{Q})$.
	
	Let $S\subset G(\bb{A})$ be the support of $f$, then we have $Z(\bb{R})^0\backslash S$ is compact. Let $C$ be the closure in $G(\bb{A})$ of the set $\mathfrak{s}^{G}(T_0,\omega)^{-1}KSK\mathfrak{s}^{P_1}(T_0,\omega)$. $C$ is  compact modulo $Z(\bb{R})^0$. 
	
	If  $P_2 \supset P_1$, define
	\[
	\sigma_1^2(H) = \sigma_{P_1}^{P_2}(H) = \sum_{P_3: P_3 \supset P_2} (-1)^{\dim(A_3 \backslash A_2)} \tau_1^3(H) \cdot \hat{\tau}_3(H), \quad H \in \mathfrak{a}_0.
	\]
	\begin{lemma}\label{lem 6.3} \textup{(Arthur\cite{A3})}If $P_2 \supset P_1$, $\sigma_1^2$ is the characteristic function of the set of $H \in \mathfrak{a}_1$ such that
		\begin{itemize}
			\item $\alpha(H) > 0$, for all $\alpha \in \Delta_1^2$,
			\item $\alpha(H) \leq 0$, for all $\alpha \in \Delta_1 \backslash \Delta_1^2$, 
			\item $\varpi_{\alpha}(H) > 0$, for all $\varpi_{\alpha} \in \hat{\Delta}_2$.
		\end{itemize}
	\end{lemma}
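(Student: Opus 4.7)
\medskip

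\noindent\textbf{Proof plan for Lemma \ref{lem 6.3}.}

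The plan is to verify the identity pointwise on $\mathfrak{a}_1$: for each fixed $H$, I check that $\sigma_1^2(H)$ equals $1$ when $H$ lies in the asserted set and $0$ otherwise. The whole computation is a Möbius-type inversion on the lattice of standard parabolic subgroups between $P_2$ and $G$; since the statement is attributed to \cite{A3}, my goal is to lay out that combinatorial argument in a form adapted to the present notation.

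First I would parametrize the sum. Standard parabolics $P_3 \supseteq P_2$ are in bijection with subsets $F \subseteq \Omega := \Delta_1 \setminus \Delta_1^2$ via $\Delta_1^{P_3} = \Delta_1^2 \sqcup F$, and under this bijection the sign $(-1)^{\dim(A_3\backslash A_2)}$ equals $(-1)^{|\Omega|-|F|}$. Since $\Delta_1^{P_3} \supseteq \Delta_1^2$ for every $P_3 \supseteq P_2$, we may factor
\[
\tau_1^{P_3}(H) \;=\; \tau_1^2(H)\,\prod_{\alpha \in F}\mathbf{1}_{\{\alpha(H) > 0\}}.
\]
Hence if some $\alpha \in \Delta_1^2$ satisfies $\alpha(H) \le 0$, every summand vanishes and $\sigma_1^2(H)=0$, which agrees with the first condition of the claim. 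Assume now $\tau_1^2(H)=1$, i.e.\ $\alpha(H)>0$ for all $\alpha \in \Delta_1^2$.

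Next I would isolate the sign of $\alpha(H)$ for each $\alpha \in \Omega$. Let $\Omega_+ := \{\alpha \in \Omega : \alpha(H)>0\}$ and $\Omega_- := \Omega \setminus \Omega_+$. Only subsets $F \subseteq \Omega_+$ contribute, so
\[
\sigma_1^2(H) \;=\; \sum_{F \subseteq \Omega_+} (-1)^{|\Omega|-|F|}\,\hat\tau_{P_F}(H),
\]
where $P_F$ denotes the parabolic associated with $F$. The main step is now Langlands' combinatorial identity (the key lemma in the proof of \cite[Lemma 2.1]{A3}): the family $\{\hat\tau_{P_F}\}_F$ telescopes under the alternating sum so that the only $H$ for which the total is non-zero are those with $\varpi(H)>0$ for every $\varpi \in \hat\Delta_2$, in which case $\Omega_-$ must exhaust $\Omega$, i.e.\ $\alpha(H) \le 0$ for every $\alpha \in \Omega$, and the sum collapses to $1$. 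Equivalently, expanding each $\hat\tau_{P_F}(H)$ in terms of coordinates on $\mathfrak{a}_2^G$ with respect to the dual basis $\hat\Delta_{P_F}$, the alternating sum becomes a product of one-variable inclusion-exclusions, each of which is either $\mathbf 1_{\{\alpha(H) \le 0\}}$ or $0$.

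The routine bookkeeping is the careful identification of $\hat\Delta_{P_F}$ inside $(\mathfrak{a}_{P_F}^G)^*$ as $F$ varies, since the dual basis changes with $F$; the cleanest way is to decompose $H$ in $\mathfrak{a}_1$ along the basis $\{\varpi_\alpha : \alpha \in \Delta_1\} \cup \{\alpha : \alpha \in \Delta_1^2\}$ adapted to the flag $P_1 \subset P_2 \subset G$, so that the dependence of $\hat\tau_{P_F}(H)$ on $F$ reduces to the signs of a fixed set of coordinates. I expect this bookkeeping — keeping the dual bases compatible across all $P_F$ — to be the main technical obstacle; once handled, the identity follows from the finite-dimensional inclusion-exclusion identity $\sum_{F \subseteq S} (-1)^{|S|-|F|} \mathbf{1}_{\{\alpha(H)>0\ \forall \alpha \in F\}} = \prod_{\alpha \in S}\mathbf{1}_{\{\alpha(H)\le 0\}}$ applied to $S = \Omega$, producing precisely the three conditions of the claim.
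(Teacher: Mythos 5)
Your plan has two problems, one of them minor and one of them fatal to the argument as sketched.

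\textbf{Sign.} Under the bijection $P_3 \leftrightarrow F$ with $\Delta_1^{P_3} = \Delta_1^2 \sqcup F$, the exponent $\dim(A_3\backslash A_2)$ should be read as $\dim A_2 - \dim A_3 = |F|$ (compare the usage $(-1)^{\dim A_2/A_3}$, $P_2\subset P_3$, in the proof of Lemma~\ref{lem 6.4}), not $|\Omega| - |F|$ as you claim. The quantity $|\Omega|-|F|$ is $\dim(A_3/Z)$, a different thing. These signs differ by $(-1)^{|\Omega|}$, and with your sign the final formula is $(-1)^{|\Omega|}$ times a characteristic function, which is not itself a characteristic function when $|\Omega|$ is odd; this is already a signal that something is off.

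\textbf{The real gap.} Even after fixing the sign, the inclusion--exclusion you describe does not by itself produce the three conditions of the lemma. Writing $b_\alpha = \mathbf{1}_{\{\varpi_\alpha(H)>0\}}$ for $\alpha \in \Omega$ and carrying the computation through (with the correct sign $(-1)^{|F|}$, and using $\hat\tau_{P_F}(H) = \prod_{\alpha\in\Omega-F}b_\alpha$), one gets, on $\{\tau_1^2(H)=1\}$,
\[
\sigma_1^2(H) \;=\; \prod_{\alpha\in\Omega_-}b_\alpha\cdot\prod_{\alpha\in\Omega_+}(1-b_\alpha),
\qquad \Omega_\pm := \{\alpha\in\Omega:\ \pm\alpha(H)>0\ \text{or}\ \le 0\ \text{resp.}\}.
\]
This is the characteristic function of the set where $\varpi_\alpha(H)>0$ for $\alpha\in\Omega_-$ and $\varpi_\alpha(H)\le 0$ for $\alpha\in\Omega_+$ --- which on its face is \emph{not} the set claimed in the lemma. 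The lemma asserts that on the support of $\sigma_1^2$ one in fact has $\Omega_+=\emptyset$, i.e.\ that the configuration ``some $\alpha\in\Omega$ has $\alpha(H)>0$ and $\varpi_\alpha(H)\le 0$, while $\beta(H)>0$ for all $\beta\in\Delta_1^2$ and the remaining $\varpi$'s are positive'' never occurs. That is a geometric fact about root systems (essentially the non-negativity of the coefficients of $\varpi_\alpha$ when expanded in the simple roots, applied inside a suitable positive chamber), not a consequence of formal inclusion--exclusion. You gesture at it with ``$\Omega_-$ must exhaust $\Omega$'' and ``the family telescopes'', but the telescoping step is precisely what is missing. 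Your final sentence, which invokes the identity $\sum_{F\subseteq S}(-1)^{|S|-|F|}\mathbf{1}_{\{\alpha(H)>0\ \forall\alpha\in F\}}=\prod_{\alpha\in S}\mathbf{1}_{\{\alpha(H)\le 0\}}$, only touches the $\tau_1^{P_3}$ factors and ignores the $\hat\tau_{P_3}$ factors entirely, so it cannot alone deliver a conclusion that mentions the coweights.

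Note also that the paper does not prove this lemma itself; it simply cites Arthur's~\cite{A3}. Arthur's own argument is an induction combined with the geometric positivity fact above (his Lemma~6.1 in the reference), so there is no gentler purely-combinatorial route that you could be unconsciously reproducing.
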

	We now write
	\[
	I_{\text{ram}}^\mathfrak{o}(f, x, T)
	\]
	as the sum over $P$ of
	\begin{equation}\label{6.2}
		\begin{split}
			(-1)^{\dim A/Z} &\sum_{\delta \in P(\bb{Q}) \backslash G(\bb{Q})} \sum_{\gamma \in M^\mathfrak{o}} \sum_{\nu \in N(\bb{Q})} f(x^{-1} \delta^{-1} \gamma \nu \delta x)\hat{\tau}_P(H_0(\delta x)-T)\\&\cdot \sum_{\{P: P_0 \subset P_1 \subset P\}} \sum_{\xi \in P_1(\bb{Q}) \backslash P(\bb{Q})} F^1(\xi \delta x, T) \tau_1^P(H_0(\xi \delta x) - T).
		\end{split}
	\end{equation}

	\begin{lemma}\label{lem 6.4}Given $P$, $x,$ $\mathfrak{o}$, \textup{(\ref{6.2})} equals
		\begin{equation}
			(-1)^{\dim A/Z} \sum_{\{P_1,P_2:P_1\subset P\subset P_2\}}\sum_{\delta\in P_1(\bb{Q})\backslash G(\bb{Q})}F^1(\delta x,T)\sigma_1^2(H_0(\delta x)-T)\sum_{\gamma\in M_1^\mathfrak{o}}\sum_{\nu\in N_1(\bb{Q})}f(x^{-1}\delta^{-1}\gamma\nu\delta x).
		\end{equation}
	\end{lemma}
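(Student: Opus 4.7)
The plan is to rewrite (\ref{6.2}) by absorbing the partition of unity (\ref{6.1}), applying the Möbius inversion of Lemma~\ref{lem 6.3}, and then matching the resulting inner orbit sum with the decomposition of Lemma~\ref{lemma:1}.

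First I would collapse the double sum $\sum_{P_1\subset P}\sum_{\xi\in P_1(\bb{Q})\backslash P(\bb{Q})}$ inside (\ref{6.2}) into a single sum over $\delta'=\xi\delta\in P_1(\bb{Q})\backslash G(\bb{Q})$. The factor $\hat\tau_P(H_0(\,\cdot\,)-T)$ descends to $P(\bb{Q})\backslash G(\bb{Q})$, and the inner orbit sum $\sum_{\gamma\in M^{\mathfrak{o}}}\sum_{\nu\in N(\bb{Q})}f(x^{-1}\delta^{-1}\gamma\nu\delta x)=\sum_{\mu\in\mathfrak{o}\cap P(\bb{Q})}f(x^{-1}\delta^{-1}\mu\delta x)$ is invariant under $\delta\mapsto\xi\delta$ for $\xi\in P(\bb{Q})$, after the relabeling $\mu\mapsto\xi^{-1}\mu\xi$ (permissible since $\xi$ preserves $\mathfrak{o}\cap P(\bb{Q})$ under conjugation). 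This change of variables rewrites (\ref{6.2}) as
\[
(-1)^{\dim A/Z}\sum_{P_1\subset P}\sum_{\delta'\in P_1(\bb{Q})\backslash G(\bb{Q})} F^1(\delta' x,T)\,\tau_1^P(H_0(\delta' x)-T)\,\hat\tau_P(H_0(\delta' x)-T)\,k_P(x,\delta'),
\]
where $k_P(x,\delta'):=\sum_{\gamma\in M^{\mathfrak{o}}}\sum_{\nu\in N(\bb{Q})}f(x^{-1}\delta'^{-1}\gamma\nu\delta' x)$.

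Second I would Möbius-invert Lemma~\ref{lem 6.3}. Applying the elementary alternating-sum identity $\sum_{\{S:\,A\subset S\subset B\}}(-1)^{|B\setminus S|}=\delta_{A,B}$ with $A=\Delta_0^P$, $B=\Delta_0^{P_3}$, $S=\Delta_0^{P_2}$ (so that $|B\setminus S|=\dim(A_{P_3}\backslash A_{P_2})$), one deduces the dual identity
\[
\tau_1^P(H)\,\hat\tau_P(H)=\sum_{\{P_2:\,P_2\supset P\}}\sigma_1^2(H),\qquad H\in\mathfrak{a}_1.
\]
Substituting converts the previous display into
\[
(-1)^{\dim A/Z}\sum_{\{P_1,P_2:\,P_1\subset P\subset P_2\}}\sum_{\delta\in P_1(\bb{Q})\backslash G(\bb{Q})} F^1(\delta x,T)\,\sigma_1^2(H_0(\delta x)-T)\,k_P(x,\delta).
\]

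The remaining step is to replace $k_P(x,\delta)$ by $\sum_{\gamma\in M_1^{\mathfrak{o}}}\sum_{\nu\in N_1(\bb{Q})}f(x^{-1}\delta^{-1}\gamma\nu\delta x)$, which by Lemma~\ref{lemma:1} equals $\sum_{\mu\in\mathfrak{o}\cap P_1(\bb{Q})}f(x^{-1}\delta^{-1}\mu\delta x)$. The discrepancy comes from $\mu\in\mathfrak{o}\cap P(\bb{Q})\setminus\mathfrak{o}\cap P_1(\bb{Q})$, for which the Levi component $\gamma$ in the decomposition $\mu=\gamma\nu$ lies outside $M_1(\bb{Q})$ and therefore has a nontrivial component in at least one root subgroup $U_\alpha$ with $\alpha\in\Delta_{P_1}\setminus\Delta_{P_1}^{P}$. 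Under the joint support constraint imposed by $F^1(\delta x,T)\sigma_1^2(H_0(\delta x)-T)$, together with the convention that $T$ lies sufficiently far from the walls of $\mathfrak{a}_0^+$, a Siegel-domain argument in the style of Lemma~\ref{lem 6.2}, combining the Iwasawa decomposition of $\delta$ with the dilation behavior of root subgroups under conjugation by $A_0(\mathbb{R})^0$, shows that $x^{-1}\delta^{-1}\mu\delta x$ is driven outside the compact support of $f$ for every such $\mu$. Hence the extraneous contributions vanish and the expression reduces to the claimed form.

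The combinatorial maneuvers of Steps 1 and 2 are routine once the Möbius relation dual to Lemma~\ref{lem 6.3} is in hand; the main obstacle is the support-theoretic cancellation in Step 3, which depends crucially on the regularity of $T$ and on a careful analysis of how conjugation by the split component of $\delta$ acts on the root data of elements $\mu\in\mathfrak{o}\cap P(\bb{Q})$ that fail to lie in $P_1(\bb{Q})$.
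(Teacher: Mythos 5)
Your proposal follows essentially the same route as the paper's proof: the collapse of the double sum over $P_1\subset P$ and $\xi\in P_1(\bb{Q})\backslash P(\bb{Q})$, followed by the M\"obius inversion
\[
\tau_1^P(H)\,\hat\tau_P(H)=\sum_{\{P_2:\,P\subset P_2\}}\sigma_1^2(H),
\]
are exactly the first two moves in the paper, and your Step~3 correctly isolates the remaining issue, namely that the orbit sum indexed by $P$ must be replaced by the one indexed by $P_1$ on the support of $F^1\cdot\sigma_1^2$.

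That said, your description of Step~3 is imprecise in a way worth flagging. You attribute the vanishing to the element $\gamma$ ``having a nontrivial component in at least one root subgroup $U_\alpha$ with $\alpha\in\Delta_{P_1}\setminus\Delta_{P_1}^P$.'' But $\gamma$ lies in the Levi $M(\bb{Q})$, not the unipotent radical, so it does not literally have a root-subgroup component in the sense suggested; what fails is that $\gamma$ does not lie in the sub-parabolic $M(\bb{Q})\cap P_1(\bb{Q})$ of $M$. The paper's mechanism for exploiting this is the Bruhat decomposition $\gamma=\nu\,w_s\,\pi$ in $M(\bb{Q})$, where $s$ lies in the Weyl group of $(M,A_0)$ but \emph{not} in that of $(M_1,A_1)$. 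One then picks a fundamental weight $\varpi\in\hat\Delta_1^P$ not fixed by $s$, takes a rational representation $\Lambda$ of highest weight $d\varpi$ with highest-weight vector $v$, and observes that
\[
\|\Lambda(h_a^{-1}\gamma h_a)v\|\ \geq\ e^{\,d(\varpi-s\varpi)(H_0(h_a))}\,\|\Lambda(w_s)v\|,
\]
where $\varpi-s\varpi$ is a nonnegative combination of roots in $\Delta_0^P$ with at least one coefficient supported on $\Delta_0^{P_1}$. The inequalities $\alpha(H_0(h_a)-T_0)>0$ for $\alpha\in\Delta_0^{P_1}$ (from $F^1$) and $\alpha(H_0(h_a)-T)>0$ for $\alpha\in\Delta_1^2$ (from $\sigma_1^2$) then make the right-hand side arbitrarily large, contradicting the boundedness coming from compactness of $\mathrm{supp}\,f$. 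Your phrase ``dilation behavior of root subgroups under conjugation by $A_0(\bb{R})^0$'' gestures at this but conflates the Levi's Bruhat-cell contribution with a unipotent one; the correct statement is the height-function estimate through the rational representation. With this correction, your outline is the paper's argument.
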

	\begin{proof}
		By the fact that 
		\[
		\sum_{\{P: P_1 \subset P \subset P_2\}} (-1)^{\dim(A/A_2 )} = 
		\begin{cases} 
			1, & \text{if} \quad P_1 = P_2, \\
			0, & \text{else},
		\end{cases}
		\]
		we can write
		\[
		\tau_1^P(H_0(\xi \delta x) - T) \hat{\tau}_P(H_0(\xi \delta x) - T)
		\]
		as
		\[
		\sum_{\{P_2, P_3: P \subset P_2 \subset P_3\}} (-1)^{\dim A_2 / A_3 } \tau_1^3(H_0(\xi \delta x) - T) \hat{\tau}_3(H_0(\xi \delta x) - T),
		\]
		which is
		\[
		\sum_{\{P_2: P_2 \supset P\}} \sigma_1^2(H_0(\xi \delta x) - T).
		\]
		Then the term $(\ref{6.2})$ becomes
		\[
		\sum_{\{P_1, P_2: P_1 \subset P \subset P_2\}} \sum_{\delta \in P_1(\bb{Q)} \backslash G(\bb{Q})} (-1)^{\dim A/Z} F^1(\delta x, T) \sigma_1^2(H_0(\delta x) - T) \sum_{\gamma \in M^{\mathfrak{o}}} \sum_{\nu \in N(\bb{Q})} f(x^{-1} \delta^{-1} \gamma \nu \delta x).
		\]
		We choose a representative of $x\in G(\bb{A})^1$ such that $n_2n_{0}^2mh_ak$, $k\in \bK$, $n_{2}, n_{0}^{2}, m$  belong to a fixed compact subsets of $N_2(\bb{A}), N_0^{2}(\bb{A}), M_0(\bb{A})^{1}$ respectively, and by the definition of $F^{1}(x,T)$, the element $a$ satisfies
		\begin{equation}\label{6.4}
			\alpha(H_{0}(h_a) - T_{0}) > 0, \quad \alpha \in \Delta_{0}^{1},
		\end{equation} and
		\[
		\varpi_\alpha(H_{0}(h_a) - T) \leq 0, \quad \varpi_\alpha \in \hat{\Delta}_{0}^{1}.
		\]
		Then by  Lemma \ref{lem 6.3}, if $\sigma_1^2(H_0(h_a)-T)\neq 0$
		\begin{equation}\label{6.5}
			\alpha(H_{0}(h_a) - T) > 0,\quad \alpha \in \Delta_{1}^{2}.
		\end{equation}
		By the theory of Siegel domain, for such $h_a$, the element  $h_a^{-1}n_2mh_a$ belongs to a fixed compact subset of $N_{0}^2(\bb{A})\times M_0(\bb{A})$.
		Suppose there exists a
		\[
		\gamma \in M(\bb{Q}) \cap P_1(\bb{Q}) \backslash M(\bb{Q}),
		\]
		such that
		\[
		\sum_{\nu \in N(\bb{Q})} f(k^{-1} h_a^{-1} m^{-1} (n_0^2)^{-1} n_2^{-1} \cdot \gamma \nu \cdot n_2 n_0^2 m h_a k) \neq 0.
		\]
		Since $N_2$ is normal in $P$, we have $n_2^{-1}\nu n_2\in N$, 
		this term is
		\[
		\sum_{\nu \in N(\bb{Q})} f(k^{-1} (h_a^{-1} m n_0^2 h_a)^{-1} \cdot h_a^{-1} \gamma \nu h_a \cdot (h_a^{-1} m n_0^2 h_a) k).
		\]
		Thus $h_a^{-1} \gamma h_a$ belongs a compact subset of $M(\bb{A})^1$. \newline
		By the Bruhat decomposition, for any $\gamma\in M(\bb{Q})$ we have
		\[
		\gamma = \nu w_s \pi, \quad \nu \in N_0^P, \pi \in P_0 \cap M(\bb{Q}), 
		\]
		where 	$s $ belongs to the Weyl group of $ (M, A_0),$ 
		and $s$ can not belong to the Weyl group of $(M_1, A_1)$. Then we can find $\varpi \in \hat{\Delta}_1^P$ not fixed by $s$. 	
		
		Let $\Lambda$ be a rational representation of $G$ with highest weight $d\varpi$, where $d>0$. And  let $v$ be the highest weight vector in $V(\bb{Q})$.
		
		Choose a height function $\|\cdot\|$ relative to a basis of $V(\bb{Q})$( the definition and properties of the height function are introduced in \cite{A3}), and we can assume that $v$ and $\Lambda(w_s)v$ are included in the basis.\\
		Then the component of \[\Lambda(h_a^{-1}\gamma h_a)=\Lambda(h_a^{-1}\nu w_s\pi h_a)v\] in the projection of $\Lambda(w_s)v$ is\[ e^{d(\varpi-s\varpi)(H_0(h_a))}\Lambda(w_s)v.\]Therefore 
		\[\|\Lambda(h_a^{-1}\gamma h_a)v\|=\|\Lambda(h_a^{-1}\nu w_s\pi h_a)v\geq e^{d(\varpi-s\varpi)(H_0(h_a))}\Lambda(w_s)v.
		\] 
		The left side of the inequality is bounded since $h_a^{-1}\gamma h_a \in C$. But the right side, since $h_a^{-1}\gamma h_a$ is a nonnegative sum of roots in $\Delta_{0}^P$, and at least one element in $\Delta_0^1$ has nonzero coefficient. which by (\ref{6.4}) and (\ref{6.5}), can be made arbitrarily large by choosing different $T$.
	\end{proof}
	\begin{lemma}
		The integral of \[\sum_{\gamma\in G^{\mathfrak{o}_G}}f(x^{-1}\delta x)\]over $G(\bb{Q})\backslash G(\bb{A})$ is absolutely convergent.
	\end{lemma}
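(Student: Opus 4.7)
The plan is to reduce the sum over $G^{\mathfrak{o}_G}$ to a finite sum of orbital integrals over centralizers, then show each such orbital integral is finite, invoking Lemmas \ref{lem 6.1} and \ref{lem 6.2} with $P=G$. First I would decompose the sum over $G^{\mathfrak{o}_G}$ into $G(\mathbb{Q})$-conjugacy classes: for each orbit in $\mathfrak{o}_G$, the elements are elliptic and hence semisimple, so the orbit is a single conjugacy class, giving
\[
\sum_{\gamma\in G^{\mathfrak{o}_G}}f(x^{-1}\gamma x)
=\sum_{\gamma\in\{G_e\}}(n_{\gamma,G})^{-1}\sum_{\delta\in G(\gamma,\bb{Q})\backslash G(\bb{Q})}f(x^{-1}\delta^{-1}\gamma\delta x)=I_G(f,x).
\]
After interchanging sum and integral and unfolding the inner sum against the integral over $G(\bb{Q})\backslash G(\bb{A})^1$, the integral of $|I_G(f,x)|$ is bounded by
\[
\sum_{\gamma\in\{G_e\}}(n_{\gamma,G})^{-1}\int_{G(\gamma,\bb{Q})\backslash G(\bb{A})^1}|f(x^{-1}\gamma x)|\,dx.
\]

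Next I would factor each inner integral through $G(\gamma,\bb{A})^1$. Since $\gamma$ is elliptic in $G(\bb{Q})$, its characteristic polynomial is irreducible over $\bb{Q}$, so $G(\gamma)$ is a torus whose $\bb{Q}$-split component is exactly $Z$, i.e.\ $G(\gamma)/Z$ is anisotropic over $\bb{Q}$. Consequently $G(\gamma,\bb{Q})\backslash G(\gamma,\bb{A})^1$ is compact and has finite volume, and the above inner integral decomposes as
\[
\tilde{\tau}(\gamma,G)\int_{G(\gamma,\bb{A})^1\backslash G(\bb{A})^1}|f(x^{-1}\gamma x)|\,dx,
\]
where $\tilde{\tau}(\gamma,G)=(n_{\gamma,G})^{-1}(\Gamma_{\gamma,G})^{-1}\tau(G(\gamma))$ as defined earlier in this section.

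To handle the quotient integral, I apply Lemma \ref{lem 6.2} with $P=G$ (so $N=\{e\}$): since $f\in C_c^\infty(G(\bb{A})^1)$, its support $C$ is compact modulo $Z(\bb{R})^0$, and Lemma \ref{lem 6.2} shows that the set of $x\in G(\gamma,\bb{A})^1\backslash G(\bb{A})^1$ for which $x^{-1}\gamma x\in C$ is contained in a compact subset. Thus the integrand has compact support on the quotient and the integral is finite. Finally, Lemma \ref{lem 6.1} applied with $P=G$ implies that the number of $\gamma\in\{G_e\}$ for which the integrand is not identically zero is finite. Combining the finite sum with the finiteness of each orbital integral yields absolute convergence. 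The only non-routine ingredient is the compactness of $G(\gamma,\bb{Q})Z(\bb{R})^0\backslash G(\gamma,\bb{A})$, which follows from ellipticity of $\gamma$; this is the main conceptual input, while the remaining steps are a straightforward unfolding combined with the two support lemmas just established.
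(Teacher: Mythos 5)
Your proof is correct, but it takes a genuinely different route from the paper's. The paper's proof is a one-liner invoking Lemma \ref{lem 6.4} with $P=G$: it multiplies the integrand by the partition of unity \eqref{6.1}, applies the combinatorial rearrangement of Lemma \ref{lem 6.4} to get a sum over pairs $P_1\subset P_2$ weighted by $F^1\sigma_1^2$, and then observes that since $\mathfrak{o}_G$ meets no proper parabolic subgroup ($M^{\mathfrak{o}_G}=\emptyset$ for $M\neq G$), only the term with $P_1=P_2=G$ survives; the support of $F^G(\cdot,T)$ on $G(\bb{Q})\backslash G(\bb{A})^1$ is compact, so convergence is immediate. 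This is precisely the truncation/Siegel-domain machinery that the paper set up in the preceding paragraphs and reuses for all the other orbits. Your approach instead proceeds by classical unfolding: split the sum into $G(\bb{Q})$-conjugacy classes, unfold the integral through $G(\gamma,\bb{Q})\backslash G(\bb{A})^1$, factor through $G(\gamma,\bb{A})^1$, then appeal to Lemma \ref{lem 6.2} (with $P=G$, $N=\{e\}$) for compact support of the orbital integral on $G(\gamma,\bb{A})^1\backslash G(\bb{A})^1$, to Lemma \ref{lem 6.1} for finiteness of the sum over $\gamma$, and to the anisotropicity of $G(\gamma)/Z$ (ellipticity) for finiteness of the volume of $G(\gamma,\bb{Q})\backslash G(\gamma,\bb{A})^1$. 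Both proofs are sound; the paper's is shorter given the infrastructure already in place and is cast in a form that generalizes uniformly to the non-elliptic orbits, whereas yours is more self-contained and makes the essential input---that the centralizer of an elliptic element is an anisotropic torus modulo center---explicit. Note also that your unfolded expression is exactly what the paper records immediately after the lemma as the value of the $G$-elliptic term, so the two arguments converge on the same formula; you have just proven convergence en route to deriving it rather than separately.
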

	\begin{proof}
		By the Lemma \ref{lem 6.4}, take $P=G$ and $\mathfrak{o}=\mathfrak{0}_G$, after multiplying
		the term (\ref{6.1}), the Lemma is clear since $\mathfrak{o}_G\cap P=\emptyset$ for $P\neq G$.
	\end{proof}   
	The integral of  $G$-elliptic term  is 
	\[\sum_{\gamma\in\{G_e\}}(n_{\gamma,G})^{-1}\int_{G(\gamma,\bb{Q})\backslash G(\gamma,\bb{A})^1}dx_1\int_{G(\gamma,\bb{A})\backslash G(\bb{A})}f(x^{-1}\gamma x)dx.
	\]
	which equals
	\[
	\sum_{\gamma \in \{G_e\}} \tilde{\tau}(\gamma, G) \int_{G(\gamma,\bb{A})\backslash G(\bb{A})}f(x^{-1}\gamma x) dx.
	\]

	\section{ The convergence associated to some orbits   }\label{sec 7}
	\subsection{The second parabolic terms of ramified orbit}
	Since our ramified orbits are corresponding to parabolic subgroup $P_0$, so we only need to consider $P_0$. Its simple roots are $\{\alpha, \beta\}$.
	
	In discussing the integral over $P_0(\gamma_s,\bb{A})$ and $P_0(\gamma_s,\bb{A})^1$, for $\gamma \in \{M_n^{\mathfrak{o}}\}$, we need consider the Haar measure. Define
	\[
	\delta_{P_0(\gamma_s)}(p) = \exp(\langle2 \rho_{P_0(\gamma_s)}, H_{P_0}(p)\rangle), \quad p \in P_0(\gamma_s,\bb{A})
	\]
	to be the modular function of $P_0(\gamma,\mathbb{A})$. 
	
	By Lemma \ref{lem 5.9} we consider the function $I_{\text{ram}}^{\mathfrak{o}_{111}^3}(f, x, T)$ equals
	\begin{align*}
		&	\sum_{P}(-1)^{\text{dim}A/Z}
		\sum_{\gamma \in M^{\mathfrak{o}}} \sum_{\delta \in M(\bb{Q})N(\gamma_s,\bb{Q})\backslash G(\bb{Q})} \sum_{\nu \in N(\gamma_s,\bb{Q})} f(x^{-1} \delta^{-1} \gamma \nu \delta x) (\hat{\tau}_{P}(H_0(\delta x ) - T)).
	\end{align*}
	We need to consider the integral of its absolute value over  $G(\bb{Q})\backslash G(\bb{A})^1$. \\
	For any standard parabolic subgroup $P$, consider
	\[
	\sum_{\gamma \in \{M^{\mathfrak{o}}\}} (n_{\gamma, M})^{-1} \int_{M(\gamma,\bb{Q})N(\gamma_s,\bb{Q}) \backslash G(\bb{A})^1} \sum_{\nu \in N(\gamma_s,\bb{Q}) } f(x^{-1} \gamma \nu x) (\hat{\tau}_{P}(H_0(x) - T)) dx,
	\]
	it equals
	\[
	c_{P} \sum_{\gamma \in \{M^{\mathfrak{o}}\}} (n_{\gamma, M})^{-1} \int_K \int_{M(\gamma,\bb{Q})N(\gamma_s,\bb{Q} )\backslash {P(\bb{A})^1}} \sum_{\nu \in N(\gamma_s,\bb{Q}) } f(k^{-1} p^{-1} \gamma \nu p k) (\hat{\tau}_{P}(H_{0}(p) - T)) \delta^{-1}_{P(p)} d_rp dk.
	\]
	We can write it as
	\begin{equation}\label{7.1}
		\begin{split}
			&c_{P}\sum_{\gamma\in\{M^{\mathfrak{o}}\}}(n_{\gamma,M})^{-1}\Gamma_{\gamma,M}\int_K\int_{Z(\bb{R})^0\backslash A(\bb{R})^0}\int_{M(\gamma,\bb{A})^1N(\gamma_s,\bb{A})\backslash P(\bb {A})^1}\int_{M(\gamma,\bb{Q})N(\gamma_s,\bb{Q})\backslash M(\gamma,\bb{A})^1N(\gamma_s,\bb{A})}\\
			&\sum_{\nu \in N(\gamma_s,\bb{Q}) } f(k^{-1} p^{*-1} \cdot p^{-1}h_a^{-1} \gamma \nu h_a p \cdot p^* k)(\hat{\tau}_{P}(H_{0}(p) - T)) \delta^{-1}_{P(\gamma_s)}(a)da d_rp  dp^*  dk.
		\end{split}
	\end{equation}
	By  Lemma \ref{lem 6.2},  the integral  over \[M(\gamma,\bb{A})^1N(\gamma_s,\bb{A})\backslash P(\bb {A})^1\subseteq P(\gamma,\bb{A})^1\backslash P(\bb{A})^1\] can be replaced by a compact subset $C_1$ of $P(\gamma,\bb{A})\backslash P(\bb{A})^1$ or equivalently,  it is integral over a compact set $C_1(\gamma_s)$ of representatives of $C_1$ in $P(\bb{A})^1$. 
	
	We define the function $\Phi_\gamma(f, n)$ to be
	\[
	c_{P} (n_{\gamma, M})^{-1} \int_K \int_{C_1(\gamma_s)} f(k^{-1} p^{-1} \gamma np k) dp \ dk,
	\]
	for fixed $\gamma \in \{M^{\mathfrak{o}}\}$, $n \in N(\gamma_s,\bb{A})$.  We denote the support of this function by $U(\gamma_s)$, it is a compact subset of $N(\gamma_s,\bb{A})$. 
	
	Our discussion will be proceed with the case  $\hat{\tau}_P=\hat{\tau}_{P_0}$, and the case of $P=P_{21}$ will be similar and easier. If $a\in\mathfrak{a}_G\backslash \mathfrak{a}_0$, we set $a =  a_1\varpi_\alpha+ a_2\varpi_\beta$. 
	
	Let $\omega(\gamma_s)$ be the relatively compact set of representatives of $M_0(\gamma,\bb{Q})N_0(\gamma_s,\bb{Q})\backslash M_0(\gamma,\bb{A})^1N_0(\gamma_s,\bb{A})$ in $M_0(\gamma,\bb{A})^1N_0(\gamma_s,\bb{A})$. Since $N_0(\gamma_s,\bb{Q})$ is discrete, We can choose positive number $t_1,t_2$ small enough so that 
	\begin{equation}\label{7.21}
		\{\nu h_a\cdot n \cdot h_a^{-1}\nu^{-1}, \nu\in\omega(\gamma),a_i\geq t_i, n\in U(\gamma_s)\}\cap N_0(\gamma_s,\bb{Q})=\{e\}.
	\end{equation}
	Thus we can see that the integral over $Z(\bb{R})^0\backslash A_0(\bb{R})^0$ can be decomposed into the integrals over $a_i\geq t_i$. 
	
	Define $\hat{\tau}_P'$ be the characteristic function of $\{H\in\mathfrak{o}\mid \varpi_\alpha(H)\leq0,\varpi_\alpha\in\hat{\Delta}_P\}.$ 
	\begin{lemma}\label{lem 7.1}
		Given a standard parabolic subgroup $P_1$, the characteristic
		function
		\begin{equation}\label{7.2n}
			\sum_{P \supset P_1 } (-1)^{\dim A/Z} \hat{\tau}_{P}(H_0(\delta x)-T)
		\end{equation} equals to\[\hat{\tau}_{P_1}'(H_{0}(\delta x) - T).\]
	\end{lemma}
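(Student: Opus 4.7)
The plan is to recognize the identity as a standard inclusion-exclusion expansion once it is reindexed properly. First, I would use the well-known bijection between standard parabolic subgroups $P\supset P_1$ and subsets $S\subseteq\hat{\Delta}_{P_1}$, given by $P\mapsto \hat{\Delta}_P$. Under this bijection, $\dim A_P/Z = |\hat{\Delta}_P|$, and $P=G$ corresponds to the empty subset with $\hat{\Delta}_G=\emptyset$, $\hat{\tau}_G\equiv 1$ and sign $(-1)^0 = 1$. Therefore the leading $1$ on the left-hand side can be absorbed into the sum, giving the equivalent expression
\[
\sum_{P \supset P_1} (-1)^{|\hat{\Delta}_P|}\,\hat{\tau}_{P}(H_0(\delta x)-T).
\]

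Next, I would use that by definition $\hat{\tau}_P$ factors as a product of one-variable indicators: writing $H=H_0(\delta x)-T$ and $\chi_\varpi = \mathbf{1}_{\varpi(H)>0}$,
\[
\hat{\tau}_P(H) \;=\; \prod_{\varpi\in\hat{\Delta}_P}\chi_\varpi.
\]
Substituting into the reindexed sum converts it into a purely combinatorial expression:
\[
\sum_{S\subseteq\hat{\Delta}_{P_1}} (-1)^{|S|}\prod_{\varpi\in S}\chi_\varpi \;=\; \prod_{\varpi\in\hat{\Delta}_{P_1}}(1-\chi_\varpi),
\]
by the standard identity $\sum_{S\subseteq T}(-1)^{|S|}\prod_S x_i = \prod_T(1-x_i)$. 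Finally, $1-\chi_{\varpi(H)>0}$ is exactly $\mathbf{1}_{\varpi(H)\le 0}$, so the product equals $\prod_{\varpi\in\hat{\Delta}_{P_1}}\mathbf{1}_{\varpi(H)\le 0} = \hat{\tau}_{P_1}'(H)$, as desired.

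There is essentially no obstacle here; the only point requiring care is the bookkeeping of signs and the observation that absorbing the $P=G$ term makes the sum a genuine sum over all subsets of $\hat{\Delta}_{P_1}$ (rather than only proper subsets). For the case at hand, $G=\GL(3)$, one could also verify the four cases $P_1\in\{G,P_{21},P_{12},P_0\}$ by hand as a sanity check, but the combinatorial proof above works uniformly and avoids any case analysis.
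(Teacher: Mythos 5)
Your proof is correct, and it takes a cleaner route than the paper's. The key identity in both cases is the alternating sum over the Boolean lattice of standard parabolics $P_1 \subset P \subset G$ (equivalently over subsets of $\hat{\Delta}_{P_1}$), but you reach the conclusion differently. You exploit the multiplicative structure $\hat{\tau}_P = \prod_{\varpi\in\hat{\Delta}_P}\chi_\varpi$ directly and apply the subset-sum identity $\sum_{S\subseteq T}(-1)^{|S|}\prod_{S}x_i = \prod_{T}(1-x_i)$, which yields $\hat{\tau}_{P_1}'$ in one step with no case distinctions. The paper instead argues pointwise: it observes that for a fixed $H$ the set $\{\varpi\in\hat{\Delta}_{P_1}:\varpi(H)>0\}$ determines a unique minimal $P_m \supset P_1$ with $\hat{\tau}_{P_m}(H)=1$, so that $\hat{\tau}_P(H)=1$ precisely for $P\supset P_m$; feeding that into the combinatorial identity $\sum_{P_m\subset P\subset G}(-1)^{\dim A_P/A_G}=\delta_{P_m,G}$ gives the left side, and one then checks separately that $P_m=G$ is equivalent to $\hat{\tau}'_{P_1}(H)=1$. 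Your version is essentially the algebraic form of that argument — expanding the product rather than evaluating it term by term — and has the advantage of avoiding the somewhat awkward ``minimal $P_m$'' discussion and the detour through associated classes that appears in the paper's write-up. Both proofs hinge on the same fact (the alternating-sum identity on the Boolean lattice); the difference is in presentation and in whether the evaluation is done globally (your product formula) or locally (the paper's case analysis), so I would characterize your approach as a streamlined, arguably preferable, variant rather than a fundamentally new argument.
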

	\begin{proof}
		If $\hat{\tau}_{P_1}(H_0(\delta x) - T) = 1$, we can see the other $\hat{\tau}_{P_2}=1$ for every $P_2\supseteq P_1$.
		
		If $\hat{\tau}_{P_1}(H_0(\delta x) - T) = 0$, the other $\hat{\tau}_{P_2}$ do not always  be 0. But we can  find out the minimal $P_m \supset P$, such that \[\hat{\tau}_{P_m}(H_0(\delta x) - T) = 1,\]
		Thus, for any $P$ such that $P \in \mathfrak{P}_m$, where $\mathfrak{P}_m$ is the associated class of $P_m$,
		\[
		\hat{\tau}_P(H_0(\delta x) - T) = 0,
		\]
		otherwise we would have a smaller parabolic subgroup $P'_m$ such that $P'_m \subsetneq P_m$, contradicting the minimality of $P_m$.
		\newline		Thus,
		\[
		\hat{\tau}_{P_m}(H_0(\delta x) - T) = 1, \quad \text{for} \quad P \supset P_m.
		\]
		Hence, by the fact that
		\[
		\sum_{\{P: P_1 \subset P \subset P_2\}} (-1)^{\dim(  A/A_2)} = 
		\begin{cases} 
			1, & \text{if} \quad P_1 = P_2, \\
			0, & \text{else},
		\end{cases}
		\]
		Thus we have
		\[1 + \sum_{\substack{P \supset P_1 \\ P \neq G}} (-1)^{\text{dim } A/Z} \hat{\tau}_{P}(H_0(\delta x)) = 	\begin{cases} 
			1, & \text{if} \quad P_m = G, \\
			0, & \text{if} \quad P_m \neq G.
		\end{cases}\]
		The lemma follows.
	\end{proof}
	\begin{lemma}\label{lem 7.2}
		The integral
		\[\int_{G(\bb{Q}\backslash G(\bb{A})^1}I_{\text{ram}}^\mathfrak{o}(f,x,T)dx\] is absolutely convergent.
	\end{lemma}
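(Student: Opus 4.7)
The strategy is to interchange the order of the outer sums and integrals so that the alternating sum over $P$ of $\hat{\tau}_{P}(H_0(\delta x)-T)$ combines (via Lemma \ref{lem 7.1}) into a single characteristic function $\hat{\tau}'_{P_0}(H_0(\delta x)-T)$ that cuts off the split torus variable from above. One then pairs this with the lower cutoff $a_i\geq t_i$ furnished by the disjointness condition \textup{(\ref{7.21})} to localize everything to a compact region, on which the integral is manifestly convergent.

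More precisely, I would first reduce the sum $\sum_{P}(-1)^{\dim A/Z}$ to a sum indexed by parabolic subgroups containing the minimal parabolic $P_0$, because for $\mathfrak{o}=\mathfrak{o}_{111}^3$ the contributing Levi subgroups satisfy $M\supseteq M_0$. After collapsing the $\gamma$-sum on a common $M_0$-conjugacy-class datum and regrouping the characteristic functions, Lemma \ref{lem 7.1} lets me replace the alternating sum by $\hat{\tau}'_{P_0}(H_0(\delta x)-T)$, the characteristic function of $\{H:\varpi(H)\leq 0,\ \varpi\in\hat{\Delta}_{P_0}\}$. I would then apply the setup in \textup{(\ref{7.1})}: unfold $G(\bb{Q})\backslash G(\bb{A})^{1}$ against $M(\gamma,\bb{Q})N(\gamma_s,\bb{Q})\backslash G(\bb{A})^1$ using the Iwasawa decomposition, and use Lemma \ref{lem 6.2} to shrink the integration over $P(\gamma,\bb{A})^1\backslash P(\bb{A})^1$ to a compact set $C_1(\gamma_s)$, absorbing all the dependence on the $K$ and $P$-component into the compactly supported function $\Phi_\gamma(f,n)$.

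The remaining integration is against $Z(\bb{R})^0\backslash A_0(\bb{R})^0$ and $M(\gamma,\bb{Q})N(\gamma_s,\bb{Q})\backslash M(\gamma,\bb{A})^1 N(\gamma_s,\bb{A})$. By \textup{(\ref{7.21})}, for $\nu$ ranging over the relatively compact set $\omega(\gamma_s)$ and $a_i\geq t_i$, the conjugate $\nu h_a n h_a^{-1}\nu^{-1}$ avoids nontrivial elements of $N_0(\gamma_s,\bb{Q})$, so the discrete sum over $N_0(\gamma_s,\bb{Q})$ collapses to a single term and the integral over $M(\gamma,\bb{A})^1N(\gamma_s,\bb{A})$ is taken against the compactly supported function $\Phi_\gamma(f,\,\cdot\,)$. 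The characteristic function $\hat{\tau}'_{P_0}(H_0(x)-T)$ then constrains the torus coordinates $a_1,a_2$ from above by a $T$-dependent bound, while the splitting $a_i\geq t_i$ gives a lower bound, so the $A_0$-integration runs over a bounded region (once one also accounts for the modular factor $\delta_{P_0(\gamma_s)}$, which is continuous on this bounded region). Finally, by Lemma \ref{lem 6.1}, the set of $\gamma\in\{M^{\mathfrak{o}}\}$ for which the integrand is nonzero is finite, so the outer sum is finite and the whole expression is absolutely convergent.

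The main obstacle I anticipate is the bookkeeping needed to justify the swap: one has to be careful that, after replacing $\sum_P(-1)^{\dim A/Z}\hat{\tau}_P$ by $\hat{\tau}'_{P_0}$, the coset spaces $M(\bb{Q})N(\gamma_s,\bb{Q})\backslash G(\bb{Q})$ for different $P$ combine consistently on a common index (and that the contribution from $P=G$ is absorbed correctly into the ``$1$'' in Lemma \ref{lem 7.1}). The estimate on the $A_0(\bb{R})^0$ integral requires verifying that the exponential weights from $\delta_{P_0(\gamma_s)}$ and the integrand's intrinsic size do not blow up against the upper cutoff $\varpi_\alpha(H_0(h_a)-T)\leq 0$; this is where the compact support of $f$, transmitted through $\Phi_\gamma(f,\cdot)$ and the bounded $\nu$-translate in \textup{(\ref{7.21})}, is essential.
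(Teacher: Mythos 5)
Your proposal captures the core blueprint of the paper's proof: collapse the alternating sum of $\hat{\tau}_P$'s via Lemma \ref{lem 7.1} into the nonnegative characteristic function $\hat{\tau}'_{P_1}$, then combine the resulting upper cutoff on the $A_0(\bb{R})^0$-coordinates with the lower cutoff provided by \textup{(\ref{7.21})} and the compact support of $\Phi_\gamma(f,\cdot)$ (Lemmas \ref{lem 6.1} and \ref{lem 6.2}) to obtain compactly supported integration, with only finitely many $\gamma$ contributing. That is the right strategy and matches the paper's plan.

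The genuine gap --- and you correctly flag it as your ``main obstacle'' but do not resolve it --- is the step where you claim that ``Lemma \ref{lem 7.1} lets me replace the alternating sum by $\hat{\tau}'_{P_0}(H_0(\delta x)-T)$.'' Lemma \ref{lem 7.1} is a pointwise identity in the single argument $H_0(\delta x)$, but in $I_{\text{ram}}^{\mathfrak{o}}(f,x,T)$ the $P$-th term has its own $\delta$-sum over the $P$-dependent coset space $M(\bb{Q})N(\gamma_s,\bb{Q})\backslash G(\bb{Q})$, and its own inner sums over $\gamma\in M^{\mathfrak{o}}$ and $\nu\in N(\gamma_s,\bb{Q})$. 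There is no single $\delta$ shared across the different $P$-terms at which all the $\hat{\tau}_P$'s are evaluated, so the alternating sum cannot be pulled out in front of one fixed $\delta$-sum and Lemma \ref{lem 7.1} cannot be applied as stated. The paper resolves this by first inserting the Siegel partition of unity \textup{(\ref{6.1})} into each $P$-term and then rearranging via Lemma \ref{lem 6.4}. After that rearrangement the outer index is a parabolic $P_1$ with $\delta\in P_1(\bb{Q})\backslash G(\bb{Q})$, the inner sums run over $\gamma\in M_1^{\mathfrak{o}}$ and $\nu\in N_1(\bb{Q})$ independently of $P$, and the alternating sum $\sum_{P\supset P_1}(-1)^{\dim A/Z}\hat{\tau}_P(H_0(\delta x)-T)$ is genuinely of the form required by Lemma \ref{lem 7.1}, collapsing to $\hat{\tau}'_{P_1}(H_0(\delta x)-T)\geq 0$. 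In the $P_1$-sector the function $F^{P_1}(\delta x,T)$ also furnishes the Siegel-domain constraints that bound the $\mathfrak{a}_0^{P_1}$ directions, which your proposal omits; combined with $\hat{\tau}'_{P_1}$ and \textup{(\ref{7.21})} this gives the compact region on which the remainder of your argument is correct. In short: the estimates at the end of your proof are sound, but the reduction that makes Lemma \ref{lem 7.1} applicable is the missing ingredient, and it is supplied by the partition of unity \textup{(\ref{6.1})} together with Lemma \ref{lem 6.4}.
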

	\begin{proof}By Lemma \ref{7.1}, we have shown that the other integrals are convergent, 
		we only need to prove the convergence of the integral over $\mathfrak{a}_G\backslash \mathfrak{a}_0$.
		By the proof of Lemma \ref{lem 6.2}, 
		\[\int_{G(\bb{Q}\backslash G(\bb{A})^1}|I_{\text{ram}}^\mathfrak{o}(f,x,T)|dx\] is bounded by the integral of
		\[
		\sum_{P_1} \sum_{\delta \in P_1 (\bb{Q})\backslash G(\bb{Q})} F^1(\delta x, T) \sum_{\{P: P_1 \subset P\}}| (-1)^{\dim A/Z} \sum_{\gamma \in M_1^\mathfrak{o}} \sum_{\nu \in N_1(\bb{Q})} f(x^{-1} \delta^{-1} \gamma \nu \delta x)| \hat{\tau}_P(H_0(\delta x) - T).
		\]
		The expression is bounded by the sum over $P_1$ of
		\begin{equation}\label{7.4}
			\sum_{\delta \in P_1(\bb{Q}) \backslash G(\bb{Q})} F^1(\delta x, T) \sum_{\gamma \in M_1^\mathfrak{o}} \sum_{\nu \in N_1(\bb{Q})} |f(x^{-1} \delta^{-1} \gamma \nu \delta x)| \cdot \sum_{\{P: P_1 \subset P\}} (-1)^{\dim A/Z} \hat{\tau}_P(H_0(\delta x) - T).
		\end{equation}
		Now by Lemma \ref{lem 7.1},
		\[
		\sum_{\{P: P_1 \subset P\}} (-1)^{\dim A/Z} \hat{\tau}_P(H_0(\delta x) - T) = \hat{\tau}_{P_1}'(H_0(\delta x) - T).
		\]
		The integral of (\ref{7.4}) over $\mathfrak{a}_G \backslash \mathfrak{a}_1$ can be written over $a_i \leq \varpi_{\alpha_i}(T)$. \\
		Then by the definition of $F^1(x, T)$, we now have a lower bound over $\mathfrak{a}_G \backslash \mathfrak{a}_0$.
		Also, we have stated that we have a lower bound by (\ref{7.21}).
		Thus the support of the integral of $I_{\text{ram}}^\mathfrak{o}(f,x,T)$ is compact, then the lemma follows.
	\end{proof}
	
	In order to apply the tools of complex analysis, we shall turn the function
	\[\int_{G(\bb{Q})\backslash G(\bb{A})^1}I_{\text{ram}}^\mathfrak{o}(f,x,T)dx\] 
	into a function of $\lambda=(\lambda_1, \lambda_2)\in\bb{C}^2$. 
	
	Given $P,f,\gamma,p,a$, if $P\neq G$, let 
	$Y_{P}(f,x,\gamma,p,a)$ equal
	\[\sum_{\nu\in N(\gamma_s,\bb{Q})}\Phi_\gamma(f,h_a^{-1}p^{-1}\nu p h_a)\exp(-\langle2\rho_P(\gamma_s),\sum_{k=1}^{j}(1+\langle\lambda,\alpha_k\rangle)a_k\varpi_{\alpha_k}\rangle),\]
	if $P=G$, we replace $\rho_P(\gamma_s)$ by $\rho_{P_0}(\gamma_s)$.
	
	We define $I^\mathfrak{o}_T(\lambda)$ equal
	\[\sum_Pa_Pc_P \int_{M(\gamma,\bb{Q})N(\gamma_s,\bb{Q})\backslash M(\gamma,\bb{A})^1N(\gamma_s,\bb{A})}\sum_{\gamma \in \{M^\mathfrak{o}\}} \int_{\langle T, \varpi_{\alpha_1}\rangle}^{\infty} \cdots \int_{\langle T, \varpi_{\alpha_j}\rangle}^{\infty} Y_{P}(f,x,\gamma,p,a) dp \ da_1 \cdots da_j.
	\]
	\begin{lemma}
		For any $\lambda$, $I^\mathfrak{o}_T(\lambda)$ is absolutely convergent. The function $I^\mathfrak{o}_T(\lambda)$ is entire and its value
		at $\lambda=0$ is given by the integral
		\[\int_{G(\bb{Q})\backslash G(\bb{A})^1}I_{\text{ram}}^\mathfrak{o}(f,x,T)dx.\]
	\end{lemma}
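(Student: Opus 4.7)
The plan is to establish absolute convergence by combining a lattice point estimate on the sum $\sum_\nu$ with the exponential weight coming from $(1 + \langle \lambda, \alpha_k\rangle) a_k$, then extend each summand meromorphically in $\lambda$, and finally evaluate at $\lambda = 0$ by matching the integrand to that of Lemma \ref{lem 7.2}.

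First I would fix $\lambda \in \bb{C}^2$ and $P$, and analyze the inner sum over $\nu \in N(\gamma_s,\bb{Q})$. Since $\Phi_\gamma(f,\cdot)$ is supported on a compact subset $U(\gamma_s) \subset N(\gamma_s,\bb{A})$, the sum $\sum_\nu \Phi_\gamma(f, h_a^{-1} p^{-1} \nu p h_a)$ is locally finite. Counting lattice points in $h_a U(\gamma_s) h_a^{-1}$ gives the crude but usable bound
\[ \Big|\sum_\nu \Phi_\gamma(f, h_a^{-1} p^{-1} \nu p h_a)\Big| \leq C(f,\gamma)\, \exp(\langle 2\rho_P(\gamma_s), H_P(h_a)\rangle), \]
with $C(f,\gamma)$ independent of $p$ and $a$. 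Multiplying by the exponential factor in the definition of $Y_P$, the integrand is bounded by $\exp\bigl(-\sum_k \langle 2\rho_P(\gamma_s), \varpi_{\alpha_k}\rangle \langle \lambda, \alpha_k\rangle\, a_k\bigr)$. The outer $dp$ integration is over a quotient of finite volume and the sum over $\gamma \in \{M^\mathfrak{o}\}$ is finite, so each summand converges absolutely for $\lambda$ in the cone where $\Re\langle \lambda, \alpha_k\rangle > 0$ for every $k$; the other chambers are handled by refining the estimate so that one controls $\sum_\nu$ by only those $\nu$ actually contributing for the given $a$.

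To get an entire function of $\lambda$, I would carry out the $a_k$-integrations explicitly. After fixing $p$ and $\gamma$ and separating the $\nu = e$ term from the rest (or, equivalently, applying Poisson summation to the $\nu$-sum on $N(\gamma_s,\bb{Q})\backslash N(\gamma_s, \bb{A})$), each summand becomes a meromorphic function of $\lambda$ whose only singularities lie on the hyperplanes $1 + \langle \lambda, \alpha_k\rangle = 0$. The key observation is that the weights $a_P c_P$ and the modular exponents $\rho_P(\gamma_s)$ are tailored so that these residues cancel when the sum $\sum_P$ is assembled — this is the analytic shadow of the cancellation of truncation tails already visible in Lemma \ref{lem 7.2}. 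Consequently the combined sum extends to an entire function of $\lambda \in \bb{C}^2$.

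For the value at $\lambda = 0$, I would use the substitution directly: the exponential factor becomes $\exp(-\langle 2\rho_P(\gamma_s), H_P(h_a)\rangle) = \delta_{P(\gamma_s)}(h_a)^{-1}$, which is precisely the modular factor appearing in the Iwasawa decomposition performed in (\ref{7.1}). The $a_k$-integration from $\langle T, \varpi_{\alpha_k}\rangle$ to $\infty$ matches $\hat{\tau}_P(H_0(\delta x) - T)$, and the sign $(-1)^{\dim A/Z}$ is absorbed into $a_P c_P$; thus $I^\mathfrak{o}_T(0)$ recovers term-by-term the expression integrated in $I^\mathfrak{o}_{\mathrm{ram}}(f,x,T)$, whose absolute integrability was already secured by Lemma \ref{lem 7.2}, so Fubini justifies the equality. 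The principal obstacle is the cancellation step: one must verify that the residues of the individual $P$-summands along $\langle \lambda, \alpha_k\rangle = -1$ really do vanish in the total sum, which requires a careful bookkeeping of the constants $a_P c_P$, the weights $\rho_P(\gamma_s)$, and the way the Poisson-dual terms of the $\nu$-sum distribute across the different parabolics.
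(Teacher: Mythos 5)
There is a genuine gap, and it concerns the location of the singularities and the logical direction of the final step. You claim that after Poisson summation each summand is meromorphic with singularities on the hyperplanes $1 + \langle\lambda,\alpha_k\rangle = 0$; this is what one would get if the sum $\sum_\nu \Phi_\gamma(f, h_a^{-1}p^{-1}\nu p h_a)$ were uniformly bounded in $a$. It is not: the $\xi = 0$ term in the Poisson decomposition equals $\exp(\langle 2\rho_P(\gamma_s), a\rangle)\Psi_\gamma(0,p)$, and this exponential growth cancels exactly the ``$1$'' inside the factor $(1 + \langle\lambda,\alpha_k\rangle)a_k$ in the definition of $Y_P$, leaving $\exp(-\langle 2\rho_P(\gamma_s), \langle\lambda,\alpha_k\rangle\, a_k\varpi_{\alpha_k}\rangle)$. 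Integrating this over $a_k \in (\langle T,\varpi_{\alpha_k}\rangle,\infty)$ places the actual pole at $\langle\lambda,\alpha_k\rangle = 0$, as the displayed formula in the text (for $P = P_0$) makes explicit. The $\xi\neq 0$ terms are Schwartz-rapidly decreasing in $a$ and give entire contributions; they produce no poles at all, let alone at $1+\langle\lambda,\alpha_k\rangle=0$. Since $\lambda=0$ is precisely where you want to evaluate, misplacing the poles means you cannot see that each individual $P$-summand of $I_T^\mathfrak{o}(\lambda)$ is singular exactly at $\lambda=0$.

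This also undermines the final step. You propose to substitute $\lambda=0$ into each summand, match the integrand of $I_{\text{ram}}^\mathfrak{o}(f,x,T)$ term by term, and invoke Fubini via Lemma \ref{lem 7.2}. But each individual $P$-summand of $I_T^\mathfrak{o}(\lambda)$ is divergent at $\lambda=0$ (the $a$-integration over a ray against a bounded integrand diverges once the modular exponential has been cancelled), so there is no term-by-term passage through $\lambda=0$ and no term-by-term Fubini. The correct logic runs the other way: Lemma \ref{lem 7.1} shows that the alternating sum of characteristic functions $\hat\tau_P$ collapses to $\hat\tau'_{P_1}$, which has compact support in $a$; Lemma \ref{lem 7.2} therefore gives absolute convergence of $\int I_{\text{ram}}^\mathfrak{o}(f,x,T)\,dx$, and one then identifies this number with $\lim_{\lambda\to 0} I_T^\mathfrak{o}(\lambda)$. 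It is this identification that \emph{forces} the residues of the individual $P$-summands at $\langle\lambda,\alpha_k\rangle=0$ to cancel. Your proposal treats the cancellation as a prior structural fact to be verified by bookkeeping of $a_Pc_P$ and $\rho_P(\gamma_s)$ and then deduces convergence; the argument actually needed derives the cancellation as a consequence of the convergence already secured by Lemma \ref{lem 7.2}.
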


	We  apply the Poisson summation formula to $N(\gamma_s)$. So we decompose the unipotent subgroup $N_0$ as
	\[N_0=(N_0-N_{21})\oplus N_{21}.\]
	It is clear that every term in this direct sum is abelian. When we apply the Poisson summation formula to any subgroup $ H $ of $ N_0 $, we decompose $H$ into two subgroups which are the intersections of $ H $ with the terms in the direct sum. Denote the intersection as $H_1$ and $ H_2 $.
	
	Write $X(\gamma_s,\bb{A})$ to  be the unitary dual group of $\mathfrak{n}(\gamma_s,\bb{A})$ and $X(\gamma_s,\bb{Q})$ be the subset of $X(\gamma_s,\bb{A})$ satisfies that the elements of it are trivial on $\mathfrak{n}(\gamma_s,\bb{A})$. 
	
	Recall that the height function $\|\cdot\|$ on $X(\gamma_s,\bb{A})$ associated to some fixed basis of $X(\gamma_s,\bb{Q})$. $X(\gamma_s,\bb{Q})$ is a subgroup of $X(\bb{Q})$.\\
	It is easy to verify that there is an $N \in \mathbb{R}$ such that
	\[
	\sum_{\substack{\xi \in X(\bb{Q})\\{\xi \neq 0}}} \|\xi\|^{-N} < \infty.
	\]
	For $\xi \in X(\bb{A})$ and $a \in \mathbb{R}^j$, define
	\[\xi^a(Y)=\xi(\Ad(h_a(Y))),\quad Y\in\mathfrak{n}(\bb{A}).\]
	Then there must be a number $d$, $d\in\bb{R}^{j+}$ such that if $\xi$ is primitive and $a\geq0$,
	\[\|\xi^a\| \geq \exp(\langle d,a\rangle) \|\xi\|.
	\]
	We decompose the group\[
	N(\gamma_s,\bb{Q})=N_1(\gamma_s,\bb{Q})\oplus N_2(\gamma_s,\bb{Q}), \]and\[\mathfrak{n}(\gamma_s,\bb{Q})=\mathfrak{n}_1(\gamma_s,\bb{Q})\oplus \mathfrak{n}_2(\gamma_s,\bb{Q}).\]
	Define
	\[
	\Psi_{\gamma,i}(\xi, p) = \int_{\mathfrak{n}_i(\gamma_s,\bb{A})} \Phi_\gamma(f, p^{-1} \cdot \exp \ Y \cdot p ) \exp(\xi(Y))dY,\quad p\in M(\gamma,\bb{A})N(\gamma_s,\bb{A}),  \xi \in X(\gamma_s,\bb{A}).
	\]
	By the Poisson summation formula, we have
	\begin{equation}\label{7.51}
		\sum_{\substack{\nu \in N_i(\gamma_s,\bb{Q})}} \Phi_\gamma(f, p^{-1} \nu p) = \sum_{\substack{\xi \in X_i(\gamma_s,\bb{Q}) \\ \xi \neq 0}} \Psi_\gamma(\xi, p) + \Psi_\gamma(0, p).
	\end{equation}
	We now consider the integral of the first term on the right-hand side of (\ref{7.51}) after multiplying $\hat{\tau}_P$, which equals
	\[\begin{split}
		&\int_{M(\gamma,\bb{Q})N(\gamma_s,\bb{Q})\backslash M(\gamma,\bb{A})^1N(\gamma_s,\bb{A})} \int_{\langle T, \varpi_{\alpha_1}\rangle}^{\infty} \cdots \int_{\langle T,\varpi_{\alpha_j} \rangle}^{\infty} \sum_{\substack{\xi \in X_i(\gamma_s,\bb{Q}) \\ \xi \neq e}} |\Psi_{\gamma,i}(\xi, h_a p)|\\&
		\cdot |\exp(-\langle2 \rho_P(\gamma_s), \sum_{k=1}^j(1+\langle\lambda,\alpha_i\rangle) a_k\varpi_{\alpha_k}\rangle) |dp \ da_1\cdots\ da_j,\end{split}\tag{7.6}\setcounter{equation}{6}\label{7.61}
	\]
	It is easy to verify that \[\Psi_{\gamma,i}(\xi, h_a p)=\exp(\langle2\rho_P(\gamma_s),a\rangle)\Psi_{\gamma,i}(\xi^{-a},p),\] 
	and $\Psi_{\gamma,i}(\cdot,p)$ is the Fourier transform of Schwartz-Bruhat function,  it is continuous in $p$. By Lemma \ref{lem 6.1}, we observe that there are only finitely many $\gamma\in M(\bb{Q})$, such that $\Psi_{\gamma,i}(\xi,p)$ does not vanish.\\
	Thus, for any $N$, there exists a constant $\Gamma_N$ such that for any primitive $\xi\in X(\bb{A})$,
	\[\sum_{\gamma\in M^\mathfrak{o}}|\Psi_{\gamma,i}(\xi,p)|\leq \Gamma_N\|\xi\|^{-N}.\]
	Then, for any $N$, the above integral is bounded by
	\[
	\int_{M(\gamma,\bb{Q})N(\gamma_s,\bb{Q})\backslash M(\gamma,\bb{A})^1N(\gamma_s,\bb{A})} \int_{\langle T, \varpi_{\alpha_1}\rangle}^{\infty}\cdots  \int_{\langle T,\varpi_{\alpha_j} \rangle}^{\infty} \exp(\langle2\rho_P(\gamma_s),a\rangle)\sum_{\substack{\xi\in X(\bb{Q})\\\xi\neq0}} \|\xi^{-a}\|^{-N}da,\]
	and it is majorized by the product of\[\sum_{\substack{\xi\in X(\bb{Q})\\\xi\neq0}} \|\xi^{-a}\|^{-N}\]
	and \[\int_{M(\gamma,\bb{Q})N(\gamma_s,\bb{Q})\backslash M(\gamma,\bb{A})^1N(\gamma_s,\bb{A})} \int_{\langle T, \varpi_{\alpha_1}\rangle}^{\infty} \cdots \int_{\langle T,\varpi_{\alpha_j} \rangle}^{\infty} \exp(\langle2\rho_P(\gamma_s),a\rangle)\exp(-\langle d,Na\rangle)da.\]
	For sufficiently large $N$, this term is finite   and approaches $0$ as $T$ approaches $\infty$ for any $i$. 
	
	Now when $T$ is large enough, we can see that the function $I_{\text{ram}}^\mathfrak{o}(f,x,T)$ equals
	\[\sum_P(-1)^{\text{dim}A/Z}\sum_{\gamma\in M^\mathfrak{o}}\sum_{\delta\in M(\bb{Q})N(\gamma_s,\bb{Q})\backslash G(\bb{Q})}\int_{N(\gamma_s,\bb{A})}f(x^{-1}\delta^{-1}\gamma n\delta x )\hat{\tau}_P(H_0(\delta x)-T)dn,\]
	which is
	\[\sum_P(-1)^{\text{dim}A/Z}\sum_{\gamma\in M^\mathfrak{o}}\sum_{\delta\in P(\bb{Q})\backslash G(\bb{Q})}\sum_{\xi\in N(\gamma_s,\bb{Q})\backslash N(\bb{Q})}\int_{N(\gamma_s,\bb{A})}f(x^{-1}\delta^{-1}\gamma n\xi\delta x )\hat{\tau}_P(H_0(\delta x)-T)dn.\]
	
	If we take $P=P_0$, then the the integral of $I_{\text{ram}}^\mathfrak{o}(f,x,T)$ is
	\[\int_{M(\gamma,\bb{Q})N(\gamma_s,\bb{Q})\backslash M(\gamma,\bb{A})^1N(\gamma_s,\bb{A})} \int_{\langle T, \varpi_{\alpha}\rangle}^{\infty}  \int_{\langle T,\varpi_{\beta} \rangle}^{\infty} \Psi_{\gamma}(0, h_a p)\]\[
	\cdot |\exp(-\langle2 \rho_P(\gamma_s), 1+\langle\lambda,\alpha\rangle a_1\varpi_\alpha+ 1+\langle\lambda,\beta\rangle a_2\varpi_\beta\rangle) |dp  da_1 da_2\]
	it is absolutely convergent for $\langle\Re\, \lambda,\alpha\rangle>0$ and $\langle\Re\, \lambda,\beta\rangle>0$.
	
	By calculation,  the integral equals
	\[a_P\frac{\exp(-\langle2\rho_P(\gamma_s),\langle \lambda, T,\rangle\varpi_\alpha\rangle)}{-\langle2\rho_P(\gamma_s),\langle \lambda,\alpha\rangle\varpi_\alpha\rangle}\cdot
	\frac{\exp(-\langle2\rho_P(\gamma_s),\langle \lambda, T,\rangle\varpi_\beta\rangle)}{-\langle2\rho_P(\gamma_s),\langle \lambda,\beta\rangle\varpi_\beta\rangle}\int_{M(\gamma,\bb{Q})\backslash M(\gamma,\bb{A})^1}\int_{N(\gamma_s,\bb{A})}\Phi_\gamma(f,n)dndm
	.\]
	We replace $\lambda$ by $t \lambda_0$, where $\lambda_0$ is any regular element, that is,  $\lambda_0$ is not on any wall. Since it is a zeta integral, $t=0$ is a simple pole, take the constant term of the Laurent expansion at $t=0$, we obtain
	\begin{equation}\label{7.7}
		\frac{1}{2}a_P \frac{\langle \lambda_0, T\rangle^2}{\langle \lambda_0, \alpha\rangle\langle \lambda_0,\beta\rangle}\int_{M(\gamma,\bb{Q})\backslash M(\gamma,\bb{A})^1} \int_{N(\gamma_s,\bb{A})} \Phi_\gamma(f, n) dn.
	\end{equation} 	     
	However, by the definition of $\Phi_\gamma$, we have
	\[
	\int_{N_i(\gamma_s,\bb{A})} \Phi_\gamma(f, n) dn = c_P (n_{\gamma, M})^{-1} \int_K \int_{M(\gamma,\bb{A})^1N(\gamma_s,\bb{A})\backslash P(\bb{A})^1} \int_{N_i(\gamma_s,\bb{A})} f(k^{-1} p^{-1} \gamma n p k) dn  dp  dk.
	\]
	Then,  we decompose $M(\gamma,\bb{A})^1N(\gamma_s,\bb{A})\backslash P(\bb{A})^1$ as the product of
	\[N(\bb{A})M(\gamma,\bb{A})^1N(\gamma_s,\bb{A})\backslash P(\bb{A})^1\times M(\gamma,\bb{A})^1N(\gamma_s,\bb{A})\backslash N(\bb{A})M(\gamma,\bb{A})^1N(\gamma_s,\bb{A}),
	\]
	the integral becomes
	\[
	c_P (n_{\gamma, M})^{-1} \int_K \int_{M(\gamma,\bb{A})^1 \backslash M(\bb{A})^1} \int_{N_i(\bb{A})} f(k^{-1} m^{-1} \gamma n m k) dn \ dm \ dk.
	\]	     
	according to the Lemma \ref{lemma 2}. Similarly the term (\ref{7.7}) becomes
	\begin{equation}\label{7.8}
		\frac{1}{2}a_P  \frac{\langle \lambda_0, T\rangle^2}{\langle \lambda_0, \alpha\rangle\langle \lambda_0,\beta\rangle} \cdot c_P \int_K \int_{M(\gamma,\bb{Q}) \backslash M(\bb{A})^1}  \int_{N(\bb{A})} f(k^{-1} m^{-1} \gamma n m k) dn \ dm \ dk,
	\end{equation}
	which equals
	\[	\frac{1}{2}a_P\frac{\langle \lambda_0, T\rangle^2}{\langle \lambda_0, \alpha\rangle\langle \lambda_0,\beta\rangle} \cdot c_P \int_K \int_{ M(\gamma,\bb{Q}) \backslash M(\bb{A})^1}  \int_{N(\bb{A})} f(k^{-1} m^{-1} \gamma n m k) \exp(-\langle2 \rho_P, H_0(m)\rangle) dn \ dm \ dk.\]
	
	For $\langle \Re\,\lambda, \alpha\rangle>0$, and $\langle \Re\,\lambda, \beta\rangle>0$,  we calculate one $\hat{\tau}_{P}$, to get the result, we consider the function
	\begin{equation}\label{7.9}
		\begin{split}
			&\int_{G(\bb{Q})\backslash G(\bb{A})^1}I_{\text{ram}}^\mathfrak{o}(f,x)dx+\sum_{P\neq G}(-1)^{\text{dim}(A/Z)}\int_{\langle T, \varpi_{\alpha}\rangle}^{\infty}  \int_{\langle T,\varpi_{\beta} \rangle}^{\infty}\int_{P(\gamma,\bb{Q})\backslash P(\gamma_s,\bb{A})^1}\sum_{\nu\in N(\gamma_s,\bb{Q})}\\
			&\Phi_\gamma(f,h_{a}^{-1} \nu p h_{a})\exp(-\langle2 \rho_P(\gamma_s), 1+\langle\lambda,\alpha\rangle a_1\varpi_\alpha+ 1+\langle\lambda,\beta\rangle a_2\varpi_\beta\rangle)  dp \ da_1  da_2.
		\end{split}
	\end{equation}
	For every term in (\ref{7.9}), except the first, we can use our statement above to obtain the result similar
	to the term (\ref{7.8}).

	For any ramified orbit $\mathfrak{o}$, define the function $\mu_{\mathfrak{o}}(\lambda, f,x)$ to be 
	\begin{equation}\label{7.10}
		\sum_{\gamma\in G^\mathfrak{o}}f(x^{-1}\gamma x)\exp(-\langle2\rho_{P_0}(\gamma_s),1+\langle\lambda,\alpha\rangle a_1\varpi_\alpha+ 1+\langle\lambda,\beta\rangle a_2\varpi_\beta\rangle).
	\end{equation}
	We have shown that\[\int_{ G(\bb{Q}) \backslash G(\bb{A})^1}I_{\text{ram}}^\mathfrak{o}(f,x,T)dx\]is absolutely convergent, and it equals
	\[\lim_{\lambda\rightarrow0}I^\mathfrak{o}_T(\lambda),\]
	it indicates that the poles at $\lambda=0$ of each term in the sum over $P$ of $I^\mathfrak{o}_\text{ram}(f,x,T)$ can be canceled.
	Then, by the fact that zeta integral can be analytically continued to a meromorphic function,
	and $\lambda=(0,0)$ is the pole of this  function.
	\begin{lemma}\label{lem 7.4}
		The integral of  $I_{\text{ram}}^{\mathfrak{o}}(f,x,T)$ over $G(\bb{Q})\backslash G(\bb{A})^1$ equals the sum of
		\begin{equation}\label{7.11}
			\lim_{\lambda\rightarrow0}\int_{G(\bb{Q}) \backslash G(\bb{A})^1} \mu_\mathfrak{o}(\lambda, f,x)dx,
		\end{equation}
		\begin{equation}\label{7.12}
			\begin{split}
				&\sum_{i}a_{P_{i}} \frac{\langle \lambda_0, T\rangle}{\langle \lambda_0, \alpha_i\rangle} \cdot c_{P_i} \int_K \int_{ M_i(\bb{Q}) \backslash M_i(\bb{A})^1} (n_{\gamma,M})^{-1}\\&
				\cdot\sum_{\gamma \in \{M_{i,n}^\mathfrak{o}\}} \int_{N_i(\gamma_s,\bb{A})} f(k^{-1} m^{-1} \gamma n m k) \exp(-\langle2 \rho_{P_i}, H_0(m)\rangle) dn \, dm \, dk,
			\end{split}
		\end{equation}
		\begin{equation}\label{7.13}
			\begin{split}
				&\sum_{i}a_{P_{i}}a_{P_i}\sum_{\gamma \in \{M_{i,t}^\mathfrak{o}\}}\tilde{\tau}(\gamma,M)\int_K\int_{M_i(\gamma,\bb{A})\backslash M_i(\bb{A})}\int_{N_i(\bb{A})}\\&f(k^{-1}n^{-1}m^{-1}\gamma mnk )\prod
				\frac{\langle \lambda_0, T-H_0(m)\rangle}{\langle \lambda_0, \alpha_i\rangle} dndmdk,
			\end{split}
		\end{equation}\\
		\begin{equation}\label{7.14}
			\begin{split}
				&\frac{1}{2}a_{P_0}\frac{\langle \lambda_0, T\rangle^2}{\langle \lambda_0, \alpha\rangle\langle \lambda_0, \beta\rangle} \cdot c_{P_0}\sum_{\gamma \in\{M_{n}^\mathfrak{o}\}} (n_{\gamma,M})^{-1}\\&
				\cdot\int_K \int_{M_0(\bb{Q})\backslash M_0(\bb{A})^1}\int_{N_0(\gamma_s,\bb{A})}f(k^{-1} m^{-1} \gamma n m k)\exp(-\langle2\rho_{P_0},H_0(m)\rangle) dn \ dm \ dk.
			\end{split}			
		\end{equation}
		where $i \in \{21,12\}$.
	\end{lemma}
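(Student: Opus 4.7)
The starting point is the identity
\[
\int_{G(\bb{Q})\backslash G(\bb{A})^1} I_{\text{ram}}^\mathfrak{o}(f,x,T)\,dx = \lim_{\lambda \to 0} I^\mathfrak{o}_T(\lambda)
\]
established in the discussion preceding the lemma. The plan is to compute this limit by decomposing $I^\mathfrak{o}_T(\lambda)$ as a sum over standard parabolic subgroups $P$, applying the Poisson summation formula (7.51) to the inner sum over $N(\gamma_s,\bb{Q})$ for each $P$, and then performing a Laurent expansion at $\lambda=0$ of the resulting zeta integrals. The convergence established in Lemma 7.2 guarantees that the principal parts across different $P$ will cancel, leaving constant terms that combine to give (7.11)--(7.14).

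First, for the contribution of $P=G$: here $j=0$, so no $a$-integration occurs, the summand is analytic at $\lambda=0$, and directly from the definition (7.10) of $\mu_\mathfrak{o}$ this summand produces (7.11). For $P\neq G$, Poisson summation splits the sum over $N(\gamma_s,\bb{Q})$ into the zero-character term $\Psi_\gamma(0,h_a p)$ and a tail over non-zero characters $\xi\in X(\gamma_s,\bb{Q})$. The tail was already shown to be majorized, for any $N$, by a product of $\sum_{\xi\neq 0}\|\xi\|^{-N}$ and an integral in $a$ bounded by $\exp(\langle 2\rho_P(\gamma_s),a\rangle -\langle d,Na\rangle)$; choosing $N$ large enough makes this contribution both convergent and vanishing as $T\to\infty$, so it extends analytically to $\lambda=0$ and contributes $0$ to the limit. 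The zero-character term, combined with the exponential from $Y_P$, yields the explicit zeta integrals
\[
\int_{\langle T,\varpi_{\alpha_k}\rangle}^\infty \exp\bigl(-\langle 2\rho_P(\gamma_s),\, \langle\lambda,\alpha_k\rangle a_k\varpi_{\alpha_k}\rangle\bigr)\,da_k
\]
in each root direction, producing a pole of order $j = |\Delta_P|$ at $\lambda=0$.

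Next, I would substitute $\lambda = s\lambda_0$ for a regular $\lambda_0$ and read off the Laurent expansion in $s$. For each maximal parabolic $P_i$ ($i=12,21$) the pole is simple; applying Lemma 5.9 to split $M_i^\mathfrak{o} = M_{i,n}^\mathfrak{o}\sqcup M_{i,t}^\mathfrak{o}$, the constant term from $M_{i,n}^\mathfrak{o}$ unfolds via Lemma 3.2 (converting $\int_{M(\gamma,\bb{A})^1N(\gamma_s,\bb{A})\backslash P(\bb{A})^1}\int_{N_i(\gamma_s,\bb{A})}$ to $\int_{M_i(\bb{Q})\backslash M_i(\bb{A})^1}\int_{N_i(\bb{A})}$) and produces (7.12), while the $M_{i,t}^\mathfrak{o}$ part is elliptic in $M_i$ and, after invoking the Tamagawa factor $\tilde{\tau}(\gamma,M)$ as in Section 6, yields (7.13). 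For $P=P_0$, the pole is of order 2 and the leading $T^2$-coefficient in the Laurent expansion matches (7.14) after the same unfolding. The identification of coefficients $\langle\lambda_0,T\rangle/\langle\lambda_0,\alpha_i\rangle$ and $\langle\lambda_0,T\rangle^2/(\langle\lambda_0,\alpha\rangle\langle\lambda_0,\beta\rangle)$ follows from explicit evaluation of the zeta integrals and is independent of the choice of $\lambda_0$, which will be verified at the end.

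\textbf{Main obstacle.} The principal technical difficulty is the combinatorial bookkeeping of the pole cancellations across $P_0$, $P_{12}$, $P_{21}$ in the Laurent expansion. The order-$2$ pole from $P_0$ contributes not only a $T^2$ piece but also subleading $T^1$ pieces and pure poles, while each $P_i$ contributes its own simple pole and a $T^1$ constant term; showing that all the pole terms cancel cleanly, and that the residual $T^1$ and $T^0$ pieces reorganize into exactly (7.11)--(7.14), requires matching contributions carefully. Relatedly, the verification that the final expression does not depend on the auxiliary regular direction $\lambda_0$ must be checked: the $\lambda_0$-dependent factors in (7.12)--(7.14) collapse by residue symmetry of the zeta integrals, but one must confirm this against the same symmetry of the principal parts absorbed into the cancellations.
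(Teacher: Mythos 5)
Your proposal follows the same route as the paper: Poisson summation on $N(\gamma_s,\bb{Q})$ for each parabolic, Laurent expansion of the resulting zeta integrals at $\lambda = 0$ (with $\lambda = s\lambda_0$), and the split into $M_n^{\mathfrak{o}}$ versus $M_t^{\mathfrak{o}}$ via Lemma \ref{lem 5.9}, which is exactly the "preceding discussion plus Lemma 5.9" the paper's terse proof invokes. One small slip: the unfolding of $\int_{M(\gamma,\bb{A})^1 N(\gamma_s,\bb{A})\backslash P(\bb{A})^1}\int_{N_i(\gamma_s,\bb{A})}$ into $\int_{M_i(\bb{Q})\backslash M_i(\bb{A})^1}\int_{N_i(\bb{A})}$ comes from the adelic version of the change-of-variables lemma (\ref{lemma 2}), not Lemma \ref{lem 3.2} (which concerns Hilbert--Schmidt operators on $\mathcal{H}_{G,\text{cusp}}$); you describe its content correctly, so this is just a citation error.
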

	\begin{proof}
		According to above discussion, it suffices to prove  (\ref{7.12}) and (\ref{7.13}) , which are led by Lemma \ref{lem 5.9}.
		
		When $N(\gamma_s)$ is trivial, the integral over $G(\bb{Q})\backslash G(\bb{A})^1$ of
		\[\sum_{\gamma\in\{M_t^\mathfrak{o}\}}(n_{\gamma,M})^{-1}\sum_{\delta\in M(\gamma,\bb{Q})\backslash G(\bb{Q})}f(x^{-1}\delta^{-1}\gamma \delta x)\hat{\tau}_P(H_0(\delta x)-T),\]
		is 
		\[c_P\sum_{\gamma\in\{M_t^\mathfrak{o}\}}(n_\gamma,M)^{-1}\int_K\int_{M(\gamma,\bb{Q})\backslash P(\bb{A})^1}\int_{Z(\bb{R})^0\backslash A(\bb{R})^0}f(k^{-1}p^{-1}\gamma pk)\hat{\tau}_P(H_0(apk)-T)da \, d_lp \, dk.\]
		It also equals 
		\[
		c_{P} a_{P} \sum_{\gamma \in \{M_{t}^{\mathfrak{o}}\}} \tilde{\tau}(\gamma,M) \int_{\bK}  \int_{M(\gamma,\bb{A})^1 \backslash P(\bb{A})^1} f(k^{-1} p^{-1} \gamma p k)
		\]
		\[
		\int_{\langle T - H_0(p), \varpi_{\alpha_i}\rangle}^{\infty}  \exp( \langle-2\rho_{P}(\gamma_s),  (1 + \langle\lambda, \alpha_i \rangle) a_k \hat{\alpha}_{i_k} + H_0(p) \rangle ) da  dp  dk.\]
	\end{proof}
	However, the first term can be calculated by taking the constant term of the Laurent expansion at $\lambda=(0,0)$, we write it as
	\[
	\lim_{\lambda \to 0} \int_{G(\bb{Q})\backslash G(\bb{A})^1}D_\lambda \{ \lambda \mu_\mathfrak{o}( \lambda, f,x)\}dx,
	\]
	where
	\[D_\lambda\{ \lambda \mu_\mathfrak{o}( \lambda, f,x)\}\]
	equals \[\frac{d}{d\langle\lambda,\beta\rangle}(\langle\lambda,\beta\rangle)\frac{d}{d\langle\lambda,\alpha\rangle}(\langle\lambda,\alpha\rangle\mu_\mathfrak{o}(\lambda,f,x)).\]
	\subsection{the convex hull}
	We give the formula of $v_\mathfrak{o}(x, T)$, where the orbit $\mathfrak{o}$ is unramified. 
	
	We define
	\[
	v_\mathfrak{o}(x, T) = \int_{Z(\bb{R})^0 \backslash A_\mathfrak{o}(\bb{R})^0} \left\{ \sum_P (-1)^{\dim (A/Z)} \sum_{s \in \Omega(\mathfrak{a}_\mathfrak{o}; P)} \hat{\tau}_P(H_0(w_s a x) - T) da \right\},
	\]
	where $\Omega(\mathfrak{a}_\mathfrak{o}; P)$ is the set of elements $s$ in $\cup_{P_1} \Omega(\mathfrak{a}_\mathfrak{o}, \mathfrak{a}_1)$ such that if $\mathfrak{a}_1 = s \mathfrak{a}_\mathfrak{o}$, $\mathfrak{a}_1$ contains $\mathfrak{a}$, and $s^{-1} \alpha$ is positive for every root 0l$\alpha \in \Delta_{P_1}^P$.
	
	In fact, $v_\mathfrak{o}(x, T)$ represents the volume in $\mathfrak{a}_G \backslash \mathfrak{a}_\mathfrak{o}$ of the convex hull of the projection onto $\mathfrak{a}_G \backslash \mathfrak{a}_\mathfrak{o}$ of the set $\{s \in \cup_P \Omega(\mathfrak{a}_\mathfrak{o}, \mathfrak{a}_1) \mid s^{-1} T - s^{-1} H_0(w_s x)  \}$.  It was Langlands who surmised that the volume of a
	convex hull would play a role in the trace formula. \\
	In \cite{A2}, Arthur has derived a formula
	\begin{equation}\label{7.15}
		v_{\mathfrak{o}}(x,T) = \lim_{\lambda\rightarrow 0}\sum_{P \in \mathfrak{P}(A_{\mathfrak{o}})} \frac{\exp(\langle\lambda, T_P - H_P(x)\rangle)}{\Pi_{\eta \in \Delta_P} \exp(\langle\lambda, \eta\rangle)},
	\end{equation}
	where $\mathcal{P}(A_{\mathfrak{o}})$ is the set of parabolic subgroups, which are not necessarily standard, such that their split center is $A_{\mathfrak{o}}$. And here we use the property
	\[
	s^{-1} H_0(w_s x) = H_{w_s^{-1} P_0 w_s}(x) = H_P(x),
	\]
	It can be observed that this formula replaces the sum over $s$ and $P \in \mathfrak{P}$ by the sum of $\mathcal{P}(A_{\mathfrak{o}})$ in (\ref{7.15}). \\
	Then in \cite{A7} Arthur introduces the concept of $(G,M)$-family as follows:  suppose for each $P\in \mathcal{P}(M)$, $c_P(\lambda)$ is a smooth function on $i\mathfrak{a}_M^\ast$, the collection of $\{c_P(\lambda)\mid P\in \mathcal{P}(M)\}$ is a $(G,M)$-family if
	$P$ and $P'$ are
	adjacent groups in $\mathcal{P}(M)$, and that $\lambda$ belongs to the hyperplane spanned by the
	common wall of the chambers of $P$ and $P'$, then $c_P(\lambda)=c_{P'}(\lambda)$. Thus  \[ \{v_P(\lambda,x,T)=e^{\lambda(-H_P(x)+T_P)}\},   \]      is a $(G,M)$-family. 
	Define \[\theta_P(\lambda)=a_P^{-1}\prod_{\alpha\in\Delta_P}\lambda(\alpha),\qquad\lambda \in i \mathfrak{a}_M^*,\]
	and \[v_M(x,T)=\lim_{\lambda \rightarrow 0}\sum_{P \in \mathcal{P}(M)}v_P(\lambda,x,T)\theta_P(\lambda)^{-1}.\]
	Then  $v_{\mathfrak{o}}(x,T)$  as $v_M(x,T)$. 
	
	\begin{lemma}
		For any ramified orbit $\mathfrak{o}$, if the parabolic subgroup $P$ contains $P_1$, where $P_1 \in \mathfrak{P}_{\{\mathfrak{o}\}}$, then the term
		\[
		\sum_{\delta \in M(\bb{Q}) \backslash G(\bb{Q})} \sum_{\gamma \in M_t^{\mathfrak{o}}} f(x^{-1} \delta^{-1} \gamma \delta x)
		\]
		equals
		\[
		\sum_{\delta \in M_1({\bb{Q}}) \backslash G(\bb{Q})} \sum_{\gamma \in M_{t,1}^{\mathfrak{o}}} f(x^{-1} \delta^{-1} \gamma \delta x).
		\]
	\end{lemma}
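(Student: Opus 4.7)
The plan is to establish the bijection
\[
\Phi \colon \bigl(M_1(\bb{Q})\backslash M(\bb{Q})\bigr) \times M_{t,1}^{\mathfrak{o}} \longrightarrow M_t^{\mathfrak{o}}, \qquad (\mu, \gamma_1) \longmapsto \mu^{-1}\gamma_1\mu,
\]
and then reindex the double sum on the left-hand side. Granting that $\Phi$ is a bijection, the left-hand side becomes
\[
\sum_{\delta\in M(\bb{Q})\backslash G(\bb{Q})}\ \sum_{\mu\in M_1(\bb{Q})\backslash M(\bb{Q})}\ \sum_{\gamma_1\in M_{t,1}^{\mathfrak{o}}} f\bigl(x^{-1}(\mu\delta)^{-1}\gamma_1(\mu\delta)x\bigr),
\]
and substituting $\delta' = \mu\delta$---which ranges bijectively over $M_1(\bb{Q})\backslash G(\bb{Q})$ as $(\delta,\mu)$ ranges over $\bigl(M(\bb{Q})\backslash G(\bb{Q})\bigr)\times\bigl(M_1(\bb{Q})\backslash M(\bb{Q})\bigr)$---immediately yields the right-hand side.

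For the surjectivity of $\Phi$, I would use that $M_1 = M_{\{\mathfrak{o}\}}$ is by definition the minimal Levi subgroup containing representatives of the ramified orbit $\mathfrak{o}$, and that $P_1\subseteq P$ forces $M_1\subseteq M$. Hence every $\gamma\in M^{\mathfrak{o}}$ admits some $\mu\in M(\bb{Q})$ with $\mu\gamma\mu^{-1}\in M_1(\bb{Q})$. The condition $N(\gamma)=\{1\}$ transfers to $N_1(\mu\gamma\mu^{-1})=\{1\}$ through the Levi-type decomposition $N_1 = N\cdot(N_1\cap M)$: any element of $N_1$ centralizing $\mu\gamma\mu^{-1}$ decomposes into an $N$-part and an $(N_1\cap M)$-part, each of which must be trivial by the assumed centralizer conditions. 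Thus the conjugate lies in $M_{t,1}^{\mathfrak{o}}$.

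For injectivity, suppose $\mu_1^{-1}\gamma_1\mu_1 = \mu_2^{-1}\gamma_1'\mu_2$ with $\gamma_1,\gamma_1'\in M_{t,1}^{\mathfrak{o}}$. Then $\epsilon = \mu_2\mu_1^{-1}\in M(\bb{Q})$ conjugates $\gamma_1$ to $\gamma_1'$, both of which lie in $M_1$. Applying the argument of Lemma \ref{lem:5.5} inside the reductive group $M$ gives $\epsilon\in M_1(\bb{Q})\cdot w_s$ for some $w_s$ representing an element $s$ of the relative Weyl group of $(M, A_1)$. One then verifies that in the ramified setting, restricted to $M_t^{\mathfrak{o}}$, no nontrivial $s$ can preserve $\gamma_{1,s}$: the additional algebraic constraints satisfied by the semisimple part of $\gamma_1$ (arising precisely because $\mathfrak{o}$ is ramified and $M_1$ is its minimal Levi) rule out the Weyl permutations that produced the factor $1/|\Omega(\mathfrak{a}_{\mathfrak{o}},P)|$ appearing in the unramified Lemma \ref{lem 5.6}. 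Hence $\epsilon\in M_1(\bb{Q})$ and $\Phi$ is a bijection.

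The main obstacle is this last injectivity step: one must isolate exactly why the Weyl-normalizer contribution, which was nontrivial in the unramified setting, collapses here. I expect the cleanest way to close it is to observe that the triviality of $N_1(\gamma_1)$ combined with the ramified eigenvalue structure forces the conjugating element to commute with $A_1$ itself, hence lie in $M_1(\bb{Q})$; verifying this case by case for the ramified orbits $\mathfrak{o}_{111}^{2}$ and $\mathfrak{o}_{111}^{3}$ of $\GL(3)$ should suffice.
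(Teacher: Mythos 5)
The overall plan---establish the bijection $(\mu,\gamma_1)\mapsto\mu^{-1}\gamma_1\mu$ from $(M_1(\bb{Q})\backslash M(\bb{Q}))\times M_{t,1}^{\mathfrak{o}}$ onto $M_t^{\mathfrak{o}}$ and reindex---is a sound strategy and is in effect what the paper also does. But both your surjectivity and injectivity steps have real gaps, and the paper's proof closes them by a route you do not take: it exploits the defining property of $M_1 = M_{\{\mathfrak{o}\}}$, namely that for $\gamma$ in a ramified orbit one has $G(\gamma_{s})\subset M_1$.

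Your injectivity step is the serious problem, as you yourself flag. You appeal to Lemma~\ref{lem:5.5} to conclude $\epsilon\in M_1(\bb{Q})w_s$. But Lemma~\ref{lem:5.5} is a statement about \emph{unramified} orbits, and its proof uses crucially that $G(\gamma_2)$ is a maximal torus. For a ramified orbit with nontrivial unipotent part---say $\gamma_1=\begin{pmatrix}a&1&0\\0&a&0\\0&0&b\end{pmatrix}\in M_{21,t}^{\mathfrak{o}_{111}^2}$---the centralizer $G(\gamma_1)$ is a three-dimensional non-toral group, and the argument of Lemma~\ref{lem:5.5} does not get started; the difficulty is not confined to killing a residual Weyl contribution at the end. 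The paper instead argues directly: take $m_1,m_2\in M_1(\bb{Q})$ bringing $\gamma_1,\gamma_1'$ to a common Jordan form $J\in M_1(\bb{Q})$; then $m_1m_2^{-1}\epsilon^{-1}$ centralizes $\gamma_1$, so lies in $G(\gamma_1)\subset G(\gamma_{1,s})\subset M_1$ by the defining property of $M_{\{\mathfrak{o}\}}$, hence $\epsilon\in M_1(\bb{Q})$. No Weyl group ever enters. That inclusion $G(\gamma_{1,s})\subset M_1$ is the single fact that makes the ramified case collapse, and it is precisely the ingredient your sketch does not use.

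Your surjectivity transfer is also circular in the case that actually matters. The only nontrivial instance is $P=G$, $P_1=P_{21}$, $\mathfrak{o}=\mathfrak{o}_{111}^2$ (for $\mathfrak{o}_{111}^3$ one has $M_1=G$ and the lemma is vacuous). There $N=N_G=\{e\}$, so the decomposition $N_1=N\cdot(N_1\cap M)$ reads $N_{21}=\{e\}\cdot N_{21}$: the ``$(N_1\cap M)$-part'' is all of $N_1$, and asserting its triviality ``by the assumed centralizer conditions'' is just asserting the conclusion. The correct argument again goes through $M_{\{\mathfrak{o}\}}$: after conjugating $\gamma$ into $M_1(\bb{Q})$ one has $N_1(\gamma')\subset N_1\cap G(\gamma'_s)\subset N_1\cap M_1=\{e\}$, since $G(\gamma'_s)\subset M_1$. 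Your closing sentence gestures toward something like this (``forces the conjugating element to commute with $A_1$''), but that is not quite right either---the relevant containment is in $M_1$, not in the centralizer of $A_1$---and in any case it is left unproved.
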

	\begin{proof}
		For $\gamma_1, \gamma_2 \in M_{t,1}^{\mathfrak{o}}$, if there exists $g \in G(\bb{Q})$, such that
		\[
		g^{-1} \gamma_1 g = \gamma_2.
		\]
		We aim to show
		\[
		g \in M_1({\bb{Q}}).\]
		Since there exists $m_1, m_2 \in M_1({\bb{Q}})$ such that
		\[
		m_1^{-1} \gamma_1 m_1 = J = m_2^{-1} \gamma_2 m_2,
		\]
		where $J$ is the Jordan normal form of $\gamma_1, \gamma_2$. Then
		\[
		m_2 m_1^{-1} \gamma_1 m_1 m_2^{-1} = \gamma_2 = g^{-1} \gamma_1 g,
		\]
		Thus,
		\[
		m_1 m_2^{-1} g^{-1} \in G(\gamma_1) \subset G(\gamma_{1,s}) \subset M_1,
		\]
		which implies
		\[
		g \in M_1(\bb{Q}).
		\]
		We conclude the result.
	\end{proof}
	By this lemma, we can combine some terms in (\ref{7.11}) and (\ref{7.13}) together to form a convex hull $v_{M_{\{\mathfrak{o}\}}}(x,T)$.
	\begin{theorem}\label{thm 11.1}
		For ramified orbits, the integrals of the kernel over $G(\bb{Q})\backslash G(\bb{A})^1$ is the sum of the case $\mathfrak{o}=\mathfrak{o}_{111}^3:$
		\[\lim_{\lambda \to 0} \int_{G(\bb{Q})\backslash G(\bb{A})^1}D_\lambda \{ \lambda \mu_{\mathfrak{o}}( \lambda, f,x)\}dx,\]
		and  the case $\mathfrak{o}=\mathfrak{o}_{111}^2:$	\begin{align*}
			&c_{P_{\{\mathfrak{o} \}}} a_{P_{\{\mathfrak{o} \}}} \sum_{\gamma\in M_{t,\{\mathfrak{o} \}}^{\mathfrak{o} }}\tilde{\tau}(\gamma,M)
			\cdot\int_{\bK}\int_{N_{\{\mathfrak{o} \}}(\bb{A})}\\&\int_{M_{\{\mathfrak{o} \}}(\gamma,\bb{A})\backslash M_{\{\mathfrak{o} \}}(\bb{A})}f(k^{-1}n^{-1}m^{-1}\gamma mnk)v_{M_{\{\mathfrak{o} \}}}(m) dm  dn  dk.
		\end{align*}
	\end{theorem}

	\section{The terms associated to $P_{21}$}\label{sec 8}
	\subsection{the first parabolic.}
	The first parabolic term is
	\[
	J_{\text{unr}}^{\mathfrak{o}_{21}}(f, x, T) - K_{P_{21}}'(f, x, T)-K_{P_{12}}'(f, x, T).
	\]
	In this section we shall prove that this term is locally integrable over $ G(\bb{Q}) \backslash G(\bb{A})^1$, and the integral approaches 0 as $T$ approaches to $\infty$.
	
	Recall that $J_{\text{unr}}^{\mathfrak{o}_{21}}(f, x, T) $ equals
	\[
	\sum_{\gamma \in \{M_{21,t}^{\mathfrak{o}_{21}}\}} (n_{\gamma, M_{21}})^{-1} \sum_{\delta \in M_{21}(\gamma, \mathbb{Q}) \backslash G(\bb{Q})} f(x^{-1} \delta^{-1} \gamma \delta x) (\hat{\tau}_{P_{21}}(H_0(\delta x) - T) + \hat{\tau}_{P_{12}}(H_0(w_s \delta x) - T)).
	\]
	It is easy to check,
	\begin{align*}
		&\sum_{\gamma \in \{M_{21,t}^{\mathfrak{o}_{21}}\}} (n_{\gamma,M_{21}})^{-1} \sum_{\delta \in M_{21}(\gamma,\mathbb{Q}) \backslash G(\mathbb{Q})} f(x^{-1}\delta^{-1}\gamma \delta x)(\hat{\tau}_{P_{21}}(H_0(w_s\delta x) - T)) \\
		&= \sum_{\gamma \in \{M_{12,t}^{\mathfrak{o}_{21}}\}} (n_{\gamma,M_{12}})^{-1} \sum_{\delta \in M_{21}(w_s\gamma w_s^{-1},\mathbb{Q})\backslash G(\mathbb{Q})} f(x^{-1}\delta^{-1}w_s\gamma w_s^{-1}\delta x)(\hat{\tau}_{P_{12}}(H_0(w_s\delta x) - T)) \\
		&= \sum_{\gamma \in \{M_{12,t}^{\mathfrak{o}_{21}}\}} (n_{\gamma,M_{12}})^{-1} \sum_{\delta \in M_{12}(\gamma,\mathbb{Q}) \backslash G(\mathbb{Q})} f(x^{-1}\delta^{-1}\gamma \delta x)(\hat{\tau}_{P_{12}}(H_0(\delta x) - T)).
	\end{align*}
	Thus, $J_{\text{unr}}^{\mathfrak{o}_{21}}(f,x,T)$ equals  the sum of
	\[
	\sum_{\gamma \in \{M_{21,t}^{\mathfrak{o}_{21}}\}} (n_{\gamma,M_{21}})^{-1} \sum_{\delta \in M_{21}(\gamma,\bb{Q}) \backslash G(\bb{Q})} f(x^{-1}\delta^{-1}\gamma \delta x)(\hat{\tau}_{P_{21}}(H_0(\delta x) - T)),
	\]and
	\[
	\sum_{\gamma \in \{M_{12,t}^{\mathfrak{o}_{21}}\}} (n_{\gamma,M_{12}})^{-1} \sum_{\delta \in M_{12}(\gamma,\mathbb{Q}) \backslash G(\bb{Q})} f(x^{-1}\delta^{-1}\gamma \delta x)(\hat{\tau}_{P_{12}}(H_0(\delta x) - T)).
	\]
	Since for $\gamma \in M_{21,r}^{\mathfrak{o}_{21}}$, the group $N(\gamma_s)$ is trivial, and by Lemma \ref{lemma:1}, $	J_{\text{unr}}^{\mathfrak{o}_{21}}(f,x,T)$ equals the sum of 
	\[
	\sum_{\gamma \in \{M_{21,t}^{\mathfrak{o}_{21}}\}} (n_{\gamma,M_{21}})^{-1} \sum_{\delta \in M_{21}(\gamma,\mathbb{Q})N_{21}(\mathbb{Q}) \backslash G(\bb{Q})} \sum_{\nu \in N_{21}(\bb{Q})} f(x^{-1}\delta^{-1}\gamma \nu \delta x)(\hat{\tau}_{P_{21}}(H_0(\delta x) - T)),\]and
	\[
	\sum_{\gamma \in \{M_{12,t}^{\mathfrak{o}_{21}}\}} (n_{\gamma,M_{12}})^{-1} \sum_{\delta \in M_{12}(\gamma,\mathbb{Q})N_{12}(\bb{Q}) \backslash G(\bb{Q})}  \sum_{\nu \in N_{12}(\mathbb{Q})} f(x^{-1}\delta^{-1}\gamma \nu \delta x)(\hat{\tau}_{P_{12}}(H_0(\delta x) - T)).\]
	Which equals the sum of
	\[
	\sum_{\delta \in P_{21}(\mathbb{Q}) \backslash G(\mathbb{Q})} \sum_{\gamma \in M_{21,t}^{\mathfrak{o}_{21}}} \sum_{v \in N_{21}(\mathbb{Q})} f(x^{-1}\delta^{-1}\gamma v \delta x)(\hat{\tau}_{P_{21}}(H_0(\delta x) - T)),
	\]and
	\[
	\sum_{\delta \in P_{12}(\mathbb{Q}) \backslash G(\mathbb{Q})} \sum_{\gamma \in M_{12,t}^{\mathfrak{o}_{21}}} \sum_{\nu \in N_{12}(\bb{Q})} f(x^{-1}\delta^{-1}\gamma \nu \delta x)(\hat{\tau}_{P_{12}}(H_0(\delta x) - T)).
	\]
	
	Now, we  calculate the elements of geometric terms corresponding to the orbit $\mathfrak{o}\neq \mathfrak{o}_{21}$ in $P_{21}$. For other orbits $\mathfrak{o}\neq\mathfrak{o}_{21}$, we choose the terms from $J^\mathfrak{o}_{\text{ram}}(f,x,T)$ and $J^\mathfrak{o}_{\text{unr}}(f,x,T)$ whose characteristic functions are $\hat{\tau}_{P_{21}}$ and we combine them.
	By Lemma \ref{lem 5.6} we have $J_{P_{21}}(f,x,T) $ equals
	\begin{equation}
		\begin{split}
			&J_{\text{unr}}^{\mathfrak{o}_{21}}(f,x,T)  \\
			& \sum_{\substack{\mathfrak{o}\text{ unramified} \\ \mathfrak{o} \neq \mathfrak{o}_{21}}} \frac{1}{|\Omega(\mathfrak{a}_\mathfrak{o},P_{21})|} \sum_{s \in \Omega(\mathfrak{a}_\mathfrak{o},P_{21})} \sum_{\delta \in P_{21}(\bb{Q}) \backslash G(\bb{Q})} \sum_{\gamma \in M_{21,t}^{\mathfrak{o}} } \sum_{\nu \in N_{21}(\bb{Q})} f(x^{-1}\delta^{-1}\gamma \nu \delta x)(\hat{\tau}_{P_{21}}(H_0(w_s \delta x)) - T) \\
			& + \sum_{\substack{\mathfrak{o}\text{ unramified} \\ \mathfrak{o} \neq \mathfrak{o}_{21}}} \frac{1}{|\Omega(\mathfrak{a}_\mathfrak{o},P_{12})|} \sum_{s \in \Omega(\mathfrak{a}_\mathfrak{o},P_{12})} \sum_{\delta \in P_{12}(\bb{Q}) \backslash G(\bb{Q})} \sum_{\gamma \in M_{12,t}^{\mathfrak{o}} }\sum_{\nu \in N_{12}(\bb{Q})} f(x^{-1}\delta^{-1}\gamma \nu \delta x)(\hat{\tau}_{P_{12}}(H_0(w_s \delta x)) - T) \\
			& + \sum_{\mathfrak{o}\text{ ramified}}\sum_{\delta\in M_{21}(\bb{Q})N_{21}(\gamma,\bb{Q})\backslash G(\bb{Q})}\sum_{\gamma\in M_{21}^\mathfrak{o}}\sum_{\nu\in N_{21}(\gamma_s,\bb{Q})}f(x^{-1}\delta^{-1}\gamma\nu\delta x)\hat{\tau}_{21}(H_0(\delta x)-T) \\
			& + \sum_{\mathfrak{o}\text{ ramified}}\sum_{\delta\in M_{12}(\bb{Q})N_{12}(\gamma,\bb{Q})\backslash G(\bb{Q})}\sum_{\gamma\in M_{12}^\mathfrak{o}}\sum_{\nu\in N_{12}(\gamma_s,\bb{Q})}f(x^{-1}\delta^{-1}\gamma\nu\delta x)\hat{\tau}_{12}(H_0(\delta x)-T).
		\end{split}
	\end{equation}

	Notice that when we fix an unramified orbit $\mathfrak{o}$, for $s_1 \in \Omega(\mathfrak{a}_\mathfrak{o};P_{21})$, the expression 
	\[
	\sum_{\delta \in P_{21}(\bb{Q}) \backslash G(\bb{Q})} \sum_{\gamma \in M_{ 21,t}^{\mathfrak{o}}} \sum_{\nu \in N_{21}(\bb{Q})} f(x^{-1}\delta^{-1}\gamma \nu \delta x)(\hat{\tau}_{P_{21}}(H_0(w_s \delta x)) - T),
	\]
	equals
	\[
	\sum_{\delta \in P_{21}(\bb{Q}) \backslash G(\bb{Q})} \sum_{\gamma \in M_{21}^{\mathfrak{o}}} \sum_{\nu \in N_{21}(\bb{Q})} f(x^{-1}\delta^{-1}\gamma \nu \delta x)(\hat{\tau}_{P_{21}}(H_0(\delta x)) - T).
	\]
	For ramified orbits in $P_{21}$, the terms can write  as\[
	\sum_{\delta\in P_{21}(\bb{Q})\backslash G(\bb{Q})}\sum_{\gamma\in M_{21,n}^\mathfrak{o}}\sum_{\delta_1\in N_{21}(\gamma_s,\bb{Q})\backslash N_{21}(\bb{Q})}\sum_{\nu\in N_{21}(\gamma_s,\bb{Q}) }f(x^{-1}\delta^{-1}\delta_1^{-1}\gamma \nu \delta_1 \delta x)(\hat{\tau}_{P_{21}}(H_0( \delta x)) - T).\]
	Then by Lemma \ref{lemma:1}, it equals
	\[\sum_{\delta\in P_{21}(\bb{Q})\backslash G(\bb{Q})}\sum_{\gamma\in M_{21}^\mathfrak{o}}\sum_{\nu\in N_{21}(\bb{Q})}f(x^{-1}\delta^{-1}\gamma\nu\delta x)(\hat{\tau}_{P_{21}}(H_0( \delta x)) - T). \]
	
	Since $M_{21} = \cup_{\mathfrak{o}} M_{21}^{\mathfrak{o}}$, we can see $J_{P_{21}}(f,x,T)$ equals the sum of
	\[
	\sum_{\delta \in P_{21}(\bb{Q}) \backslash G(\bb{Q})} \sum_{\gamma \in M_{21}(\bb{Q})} \sum_{\nu \in N_{21}(\bb{Q})} f(x^{-1}\delta^{-1}\gamma \nu
	\delta x)(\hat{\tau}_{P_{21}}(H_0(\delta x) - T)),
	\]and
	\[
	\sum_{\delta \in P_{12}(\bb{Q}) \backslash G(\bb{Q})} \sum_{\gamma \in M_{12}(\bb{Q})} \sum_{\nu \in N_{12}(\bb{Q})} f(x^{-1}\delta^{-1}\gamma \nu \delta x)(\hat{\tau}_{P_{12}}(H_0(\delta x) - T)).
	\]

	Fix a parabolic subgroup $P$ with $j$ simple roots $\alpha_1,\dots,\alpha_j$. Let $\mathfrak{n}(\bb{A})$ be the Lie algebra of $N(\mathbb{A})$, it is a locally compact abelian group, and $\mathfrak{n}(\bb{Q})$ is a discrete group of it. Let $X(\bb{A})$ be the unitary dual group of $\mathfrak{n}(\mathbb{A})$, $X(\bb{Q})$ be the subgroup which is trivial on $\mathfrak{n}(\bb{Q})$.
	
	We now apply the Poisson summation formula omitting the decomposition $N_0=N_1\oplus N_2$,
	\[
	\sum_{\nu \in N(\bb{Q})} f(x^{-1}\delta^{-1}\gamma \nu \delta x) \hat{\tau}_P(H_0(\delta x) - T),
	\]
	equals the sum of
	\begin{equation}\label{8.2}
		\Psi(0, \gamma, x\delta) \hat{\tau}_P(H_0(\delta x) - T),
	\end{equation} and
	\begin{equation}\label{8.3}
		\sum_{\substack{\xi \in X(\bb{Q})\\\xi \neq 0}}  \Psi(\xi, \gamma, \delta x) \hat{\tau}_P(H_0(\delta x) - T)
	\end{equation}
	where  \[
	\Psi(\xi, \gamma, y) = \int_{\mathfrak{n}(\bb{A})} f(y^{-1} \cdot \gamma \exp Y \cdot y) \exp(\xi(Y)) dY.
	\]
	We deal with (\ref{8.3}) first. Take the sum of the absolute value of (\ref{8.3}) over $\gamma \in M^\mathfrak{o}$ and $\delta \in P(\bb{Q}) \backslash G(\mathbb{Q})$, then integrate over $G(\bb{Q}) \backslash G(\bb{A})^1$, the result is bounded by
	\[
	\int_{G(\bb{Q}) \backslash G(\bb{A})^1} \sum_{\delta \in P(\bb{Q}) \backslash G(\bb{Q})} \sum_{\gamma \in M^\mathfrak{o}} \sum_{\xi \in X_\mathbb{Q} \atop \xi \neq 0} |\Psi(\xi, \gamma, \delta x) \hat{\tau}_P(H_0(\delta x) - T)| dx,
	\]
	and it equals
	\[
	\int_{G(\bb{Q}) \backslash G(\bb{A})^1} \sum_{\gamma \in M^\mathfrak{o}} \sum_{\xi \in X(\mathbb{Q}) \atop \xi \neq 0} |\Psi(\xi, \gamma, x) \hat{\tau}_P(H_0(x) - T)| dx.
	\]
	If $\omega$ is a relatively compact fundamental domain for $P(\bb{Q}) \backslash P(\mathbb{A})^1$ in $P(\bb{A})^1$, then the integral becomes
	\begin{equation}
		c_P \int_K \int_{\langle T,\varpi_{\alpha_1}\rangle}^{\infty} \cdots \int_{\langle T,\varpi_{\alpha_j}\rangle}^{\infty} \int_{\omega} \sum_{\gamma \in M^\mathfrak{o}(\bb{Q})} \sum_{\xi \in X(\bb{Q}) \atop \xi \neq 0} |\Psi(\xi, \gamma, \nu h_a k)| \cdot \exp(-\langle2\rho_P, a\rangle) \, d\nu \, dk \, da_1 ... da_k.
	\end{equation}
	
	We assume that $h_a\omega h_a^{-1}$ is contained in $\omega$ for every $a\in\mathfrak{a}^+$,
	then the above integral is bound by
	\[c_P\int_K\int_{\omega} \int_{\langle T,\varpi_{\alpha_1}\rangle}^{\infty} \cdots \int_{\langle T,\varpi_{\alpha_j}\rangle}^{\infty}  \sum_{\xi \in X(\bb{Q}) \atop \xi \neq 0}|\Psi(\xi,\gamma,\nu h_a k)|d\nu dk \, da_1 ... da_k.\]
	we denote
	\[\nu h_ak\]
	as
	\[
	h_a \cdot h_a^{-1} \cdot  \nu h_a k
	\]
	in this term.\\
	Now, it is easy to check that
	\[
	\Psi(\xi, \gamma, \nu h_a k) = \exp(\langle-2\rho_P, \sum_{k=1}^{j} a_k \varpi_{\alpha_k}\rangle) \Psi(\xi^a, \gamma, \nu k).
	\]
	$\Psi(\cdot, \gamma, \nu k)$ is the Fourier transform of Schwartz-Bruhat function and it is continuous in $\nu k$. By  Lemma \ref{lem 6.1}, we can see that there are only finitely many $\gamma \in M(\bb{Q})$, such that
	\[
	\Psi(\xi, \gamma, \nu k) \neq 0.
	\]
	for some $\xi \in X(\bb{A})$ and some $\nu k \in \omega \times \bK$. Thus, for any $N$, there exists a constant $\Gamma_N$ such that for any primitive $\xi \in X(\bb{A})$,
	\[
	\sum_{\gamma \in M(\bb{Q})} |\Psi(\xi, \gamma, \nu k)| \leq \Gamma_N \|\xi\|^{-N}, \quad \nu k \in \omega \times \bK.
	\]
	It follows that for every $N$, the integral above is bounded by
	\[c_P\Gamma_N\tau(M(\bb{Q}))   \int_{\langle T,\varpi_{\alpha_1}\rangle}^{\infty} \cdots \int_{\langle T,\varpi_{\alpha_j}\rangle}^{\infty} \exp(\langle-2\rho_P, \sum_{k=1}^{j} a_k \varpi_{\alpha_k}\rangle) \left( \sum_{\substack{\xi \in X_{\bb{Q}} \\ \xi \neq 0}} \|\xi^{-a}\|^{-N} \right) da_1 ... da_j ,\]
	which is in turn majorized by  \[c_P\Gamma_N\tau(M(\bb{Q}))   \int_{\langle T,\varpi_{\alpha_1}\rangle}^{\infty} \cdots \int_{\langle T,\varpi_{\alpha_j}\rangle}^{\infty} \exp(\langle-2\rho_P, \sum_{k=1}^{j} a_k \varpi_{\alpha_k}\rangle)\exp(\langle d,Na\rangle)da_1 ... da_j \left( \sum_{\substack{\xi \in X_{\bb{Q}} \\ \xi \neq 0}} \|\xi\|^{-N} \right) .\]
	For sufficiently large $N$, the last expression is finite and approaches 0 as $T$ approaches $\infty$.
	
	Next we shall deal with (\ref{8.2}). Summing over $\gamma\in M^\mathfrak{o}$ and $\delta\in P({\bb{Q}})\backslash G(\bb{Q})$, we have
	\begin{equation}\label{8.5}
		\sum_{\gamma\in M^\mathfrak{o}} \sum_{\delta\in P(\bb{Q})\backslash G(\bb{Q})} \Psi(0,\gamma,\delta x)\hat{\tau}_P(H_0(\delta x)-T). \end{equation} 
	For fixed $x$, there are only finitely many $\delta\in P(\bb{Q})\backslash G(\bb{Q})$ in (\ref{8.5}) such that  this term is not equal to
	zero.  Therefore, the inner sum above is finite. The outer sum is also finite by the same argument.
	
	Thus, the term $ J_{P_{21}}(f, x, T)$  is 
	\begin{equation}\label{8.6}
		\begin{split}
			&\sum_{\delta \in P_{21} (\bb{Q}) \backslash G(\bb{Q})} \sum_{\gamma \in M_{21}(\bb{Q})} \int_{N_{21} (\bb{A})} f(x^{-1} \delta^{-1} \gamma n \delta x) dn(\hat{\tau}_{P_{21}}(H_0(\delta x) - T))\\
			&+ \sum_{\delta \in P_{12}( \bb{Q)} \backslash G(\bb{Q})} \sum_{\gamma \in M_{12}(\bb{Q})} \int_{N_{12} (\bb{A})} f(x^{-1} \delta^{-1} \gamma n \delta x) dn(\hat{\tau}_{P_{12}}(H_0(\delta x) - T)).
		\end{split}
	\end{equation}
	Recall that  $K'_{P_{21}}(f, x, T) + K'_{P_{12}}(f, x, T)$ equals the sum of
	\[
	\frac{1}{4\pi i} \sum_{P_{21}( \bb{Q}) \backslash G(\bb{Q})} \sum_{\chi} \int_{i \mathfrak{a}_{G} \backslash i\mathfrak{a}_{21}} \left\{ \sum_{\beta \in \mathcal{B}_{P, \chi}} E_{P_{21}}^{c_{21}}(\mathcal{I}_{P_{21}}(\lambda, f) \Phi_{\beta}, \lambda, \delta x) \overline{E_{P_{21}}^{c_{21}}(\Phi_{\beta}, \lambda, \delta x)} \right\} d\lambda \hat{\tau}_{P_{21}}(H_0(\delta x) - T),
	\]
	\[
	\frac{1}{4\pi i} \sum_{P_{12}( \bb{Q}) \backslash G(\bb{Q})} \sum_{\chi} \int_{i \mathfrak{a}_{G} \backslash i\mathfrak{a}_{12}} \left\{ \sum_{\beta \in \mathcal{B}_{P, \chi}} E_{P_{12}}^{c_{12}}(\mathcal{I}_{P_{12}}(\lambda, f) \Phi_{\beta}, \lambda, \delta x) \overline{E_{P_{12}}^{c_{12}}(\Phi_{\beta}, \lambda, \delta x)} \right\} d\lambda \hat{\tau}_{P_{12}}(H_0(\delta x) - T),
	\]
	\[
	\frac{1}{4\pi i} \sum_{P_{21}( \bb{Q}) \backslash G(\bb{Q})} \sum_{\chi} \int_{i \mathfrak{a}_{G} \backslash i\mathfrak{a}_{21}} \left\{ \sum_{\beta \in \mathcal{B}_{P, \chi}} E_{P_{21}}^{c_{12}}(\mathcal{I}_{P_{21}}(\lambda, f) \Phi_{\beta}, \lambda, \delta x) \overline{E_{P_{21}}^{c_{12}}(\Phi_{\beta}, \lambda, \delta x)} \right\} d\lambda \hat{\tau}_{P_{21}}(H_0(\delta x) - T),
	\]
	and
	\[
	\frac{1}{4\pi i} \sum_{P_{12}( \bb{Q}) \backslash G(\bb{Q})} \sum_{\chi} \int_{i \mathfrak{a}_{G} \backslash i\mathfrak{a}_{12}} \left\{ \sum_{\beta \in \mathcal{B}_{P, \chi}} E_{P_{12}}^{c_{21}}(\mathcal{I}_{P_{12}}(\lambda, f) \Phi_{\beta}, \lambda, \delta x) \overline{E_{P_{12}}^{c_{21}}(\Phi_{\beta}, \lambda, \delta x)} \right\} d\lambda \hat{\tau}_{P_{12}}(H_0(\delta x) - T).
	\]
	
	This term is
	\[
	\sum_{\gamma \in M_{21}(\bb{Q})} \sum_{\delta \in P_{21}( \bb{Q}) \backslash G(\bb{Q})} \int_{N_{21}( \bb{A})} f(x^{-1} \delta^{-1} \gamma v \delta x) dn \cdot (\hat{\tau}_{P_{21}}(H_0(\delta x) - T))
	\]
	\[
	+ \sum_{\gamma \in M_{12}(\bb{Q})} \sum_{\delta \in P_{12}( \bb{Q}) \backslash G(\bb{Q})} \int_{N_{12} (\bb{A})} f(x^{-1} \delta^{-1} \gamma v \delta x) dn \cdot (\hat{\tau}_{P_{12}}(H_0(\delta x) - T)).
	\]
	Thus we have 
	\begin{lemma}The sum
		\[J_{P_{21}}(f,x,T)-K'_{P_{21}}(f,x,T)-K'_{P_{12}}(f,x,T)\]equals 0 as $T\rightarrow\infty$.
	\end{lemma}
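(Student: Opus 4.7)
The plan is to compare the two sides term by term by combining the Poisson summation analysis already carried out on the geometric side with a decomposition of the constant-term pairings on the spectral side into ``diagonal'' and ``off-diagonal'' parts.

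First, the earlier analysis in this section decomposes $J_{P_{21}}(f,x,T)$ as the zero-frequency contribution (\ref{8.6}) plus a sum over the nontrivial characters $\xi \in X(\bb{Q})$, which was shown to be locally integrable with integral vanishing as $T\to\infty$. Hence it suffices to prove that $K'_{P_{21}}(f,x,T)+K'_{P_{12}}(f,x,T)$ coincides with (\ref{8.6}) up to an error that tends to zero as $T\to\infty$.

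Second, I split each $K'_P(f,x,T)$, for $P\in\{P_{21},P_{12}\}$, according to whether $(P_1,P_2)=(P,P)$ (the diagonal term) or $P_1\neq P_2$ (the off-diagonal terms). On the diagonal piece the constant-term formula
\[
E_P^{c_P}(\Phi,\lambda,\delta x)=\sum_{s\in\Omega(\mathfrak{a},\mathfrak{a})}(M_P(s,\lambda)\Phi)(\delta x)\exp(\langle s\lambda+\rho_P,H_0(\delta x)\rangle)
\]
reduces to the single summand $\Phi(\delta x)\exp\langle\lambda+\rho_P,H_P(\delta x)\rangle$, since the only element of $\Omega(\mathfrak{a},\mathfrak{a})$ contributing is $s=1$. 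Then the inner sum
\[
\sum_{\beta\in\mathcal{B}_{P,\chi}}(\pi_P(\lambda,f)\Phi_\beta)(\delta x)\overline{\Phi_\beta(\delta x)}
\]
is the diagonal value of the kernel of $\pi_P(\lambda,f)$ on $\mathcal{H}_{P,\chi}$. Summing over $\chi$, integrating $\lambda$ over $i\mathfrak{a}_G\backslash i\mathfrak{a}$, and using Lemma \ref{lem 3.4} together with the decomposition $\mathcal{H}_{P,\text{cusp}}=\oplus_\chi \mathcal{H}_{P,\chi}$, I identify the resulting quantity (multiplied by $\hat{\tau}_P(H_0(\delta x)-T)$) with the geometric expression
\[
c_P\sum_{\gamma\in M(\bb{Q})}\int_{N(\bb{A})}f(x^{-1}\delta^{-1}\gamma n\delta x)\,dn\cdot\hat{\tau}_P(H_0(\delta x)-T),
\]
which is precisely the corresponding summand of (\ref{8.6}).

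Third, I treat the off-diagonal contributions $P_1\neq P_2$. There the constant-term formula produces intertwining operators $M_P(s,\lambda)$ and $M_P(s',\lambda)$ with $s\in\Omega(\mathfrak{a},\mathfrak{a}_1),\ s'\in\Omega(\mathfrak{a},\mathfrak{a}_2),\ s\neq s'$. After expansion, the resulting integrand has exactly the form $L(s,s',f,\delta x)\exp(-\langle 2\rho_P,H_0(\delta x)\rangle)\hat{\tau}_P(H_0(\delta x)-T)$ treated in the lemma immediately following Lemma \ref{lem 5.10}, whose conclusion is that the sum over $\delta\in P(\bb{Q})\backslash G(\bb{Q})$ of this expression is locally integrable over $G(\bb{Q})\backslash G(\bb{A})^1$ and its integral tends to zero as $T\to\infty$. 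Combining the three steps yields the claim. The main obstacle is the spectral identification in the second step: one must justify commuting the integration in $\lambda$ with the Iwasawa decomposition in Lemma \ref{lem 3.4}, and control the sum over $\chi$ uniformly in the truncation parameter $T$; Lemma \ref{lem 5.10} provides the absolute convergence needed to legitimize these manipulations, while the off-diagonal bound follows directly from its corollary.
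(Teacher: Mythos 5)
Your overall strategy matches the paper's: Poisson summation reduces $J_{P_{21}}(f,x,T)$ to its zero-frequency part (\ref{8.6}), the diagonal spectral contributions are identified with (\ref{8.6}) via Fourier inversion, and the remaining contributions are controlled by the lemma following Lemma~\ref{lem 5.10}. However, your split of $K'_P$ into $(P_1,P_2)=(P,P)$ versus $P_1\neq P_2$ is not exhaustive: with $\mathfrak{P}=\{P_{21},P_{12}\}$, the pair $(P_1,P_2)=(P',P')$ with $P'\neq P$ (giving terms $E_P^{c_{P'}}\overline{E_P^{c_{P'}}}$) falls in neither bucket. These are precisely the third and fourth expressions in the paper's four-term expansion of $K'_{P_{21}}+K'_{P_{12}}$; they involve the nontrivial Weyl element $s=(13)$ and, after the factor $\exp(2\rho_{P'}(H_{P'}(\delta x)))$ is accounted for, they do \emph{not} vanish -- they supply the remaining part of (\ref{8.6}). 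As written, your second step therefore only produces a portion of the geometric expression, so the claimed cancellation with $J_{P_{21}}(f,x,T)$ is incomplete.

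To close the gap, extend the diagonal bucket to all pairs with $P_1=P_2$, and for $P_1=P_2=P'\neq P$ use the unitarity of $M_P(s,\lambda)$ on $i\mathfrak{a}^*$ together with the intertwining relation $M_P(s,\lambda)\pi_P(\lambda,f)=\pi_{P'}(s\lambda,f)M_P(s,\lambda)$ -- exactly as in the proof of Lemma~\ref{lem 5.10} -- to rewrite
\[
\sum_{\beta}\bigl(M_P(s,\lambda)\pi_P(\lambda,f)\Phi_\beta\bigr)(\delta x)\,\overline{\bigl(M_P(s,\lambda)\Phi_\beta\bigr)(\delta x)}
\]
as the diagonal kernel of $\pi_{P'}(s\lambda,f)$ in the orthonormal basis $\{M_P(s,\lambda)\Phi_\beta\}$. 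Then the Fourier inversion argument you already gave applies verbatim, and together with the $(P,P)$ bucket recovers all of (\ref{8.6}). The genuinely off-diagonal terms $P_1\neq P_2$ are indeed handled by the lemma following Lemma~\ref{lem 5.10}, as you observed.
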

	
	\subsection{the second parabolic}
	Recall that $I_{\text{unr}}^{\mathfrak{o}_{21}}(f, x, T) $ equals
	\[
	\sum_{\gamma \in \{M_{21,t}^{\mathfrak{o}_{21}}\}} (n_{\gamma, M_{21}})^{-1} \sum_{\delta \in M_{21}(\gamma,\bb{Q}) \backslash G(\bb{Q})} f(x^{-1} \delta^{-1} \gamma \delta x) (1 - \hat{\tau}_{P_{21}}(H_0(\delta x) - T) - \hat{\tau}_{P_{12}}(H_0(\delta x) - T)).
	\]
	Now, the integral
	\[
	\int_{ G(\bb{Q})\backslash G(\bb{A})^1} |I_{\text{unr}}^{\mathfrak{o}_{21}}(f, x, T)| dx
	\]
	is bounded by
	\[
	\sum_{\gamma \in \{M_{21,t}^{\mathfrak{o}_{21}}\}} (n_{\gamma, M})^{-1} \int_{G(\bb{Q})\backslash G(\bb{A})^1} |f(x^{-1} \gamma x)| \cdot (1 - \hat{\tau}_{P_{21}}(H_0(x) - T) - \hat{\tau}_{P_{12}}(H_0(w_s x) - T)) dx.
	\]
	It equals
	\begin{align*}
		&c_{P_{21}} \sum_{\gamma \in \{M_{21,t}^{\mathfrak{o}_{21}}\}} (n_{\gamma, M})^{-1} \int_K \int_{A_{21}(\bb{R})^0 M_{21}(\gamma, \bb{Q})\backslash P_{21} (\bb{A})} \int_{Z(\bb{R})^0\backslash A_{21}(\bb{R})^0} |f(k^{-1} p^{-1} \gamma p k)| \\&
		\cdot (1 - \hat{\tau}_{P_{21}}(H_{P_{21}}(ap) - T) - \hat{\tau}_{P_{12}}(H_0(w_s a p) - T)) da  d_lp  dk.
	\end{align*}
	Then the integral becomes
	\begin{align*}
		&c_{P_{21}} \sum_{\gamma \in \{M_{21,t}^{\mathfrak{o}_{21}}\}} \tilde{\tau}(\gamma, M) \int_K \int_{M_{21}(\gamma,\bb{A}) \backslash P_{21} (\bb{A})} |f(k^{-1} p^{-1} \gamma p k)|\\&
		\cdot \int_{Z(\bb{R})^0 \backslash A_{21}(\bb{R})^0} (1 - \hat{\tau}_{P_{21}}(H_0(ap) - T) - \hat{\tau}_{P_{12}}(H_0(w_s a p) - T)) da \ d_lp \ dk.
	\end{align*}
	We have known the sum over $\gamma$ is finite by Lemma \ref{lem 6.1}. And, since the function
	\[
	f_K(p) = \int_K f(k^{-1} p k) dk, \quad p \in P_{21}(\bb{A}),
	\]
	has compact support,  the integral over $M_{21}(\gamma, \bb{A}) \backslash P_{21}( \bb{A})$, can be taken over a compact set by Lemma \ref{lem 6.2}. Finally, it is easy to see that for any $p$ the function
	\[
	a \longrightarrow 1 - \hat{\tau}_{P_{21}}(H_0(ap) - T) - \hat{\tau}_{P_{12}}(H_0(w_s a p) - T), \quad a \in Z(\bb{R})^0 \backslash A_{21}(\bb{R})^0
	\]
	has compact support. Therefore $I_{\text{unr}}^{\mathfrak{o}_{21}}(f, x, T)$ is integrable over $G(\bb{Q})\backslash G(\bb{A})^1$, and its integral is 
	\begin{align*}
		&c_{P_{21}} \sum_{\gamma \in \{M_{21}^{\mathfrak{o}_{21}}\}} \tilde{\tau}(\gamma, M) \int_K \int_{N_{21}( \bb{A})} \int_{M_{21}(\gamma, \mathbb{A}) \backslash M_{21}( \bb{A})} f(k^{-1} n^{-1} m^{-1} \gamma m n k)\\&\cdot \int_{Z(\bb{R})^0 \backslash A_{21}(\bb{R})^0} (1 - \hat{\tau}_{P_{21}}(H_0(amnk) - T) - \hat{\tau}_{P_{12}}(H_0(w_s amnk) - T)) da  dm  dn  dk.
	\end{align*}
	
	Now for fixed $k, m,n$, we can see 
	\[1 - \hat{\tau}_{P_{21}}(H_0(amnk) - T) - \hat{\tau}_{P_{12}}(H_0(w_s amnk) - T)\] is just the section
	\[[-\varpi_{\alpha}(T) - \varpi_{\beta}(H_0(m)) + \varpi_{\alpha}(H_0(w_s n)), \varpi_{\beta}(T) - \varpi_{\beta}(H_0(m))],\]here $w_s^{-1}\varpi_\alpha=-\varpi_\beta$.
	
	Then we have the integral is the sum of
	\begin{equation}\label{8.7}
		-c_{P_{12}} a_{P_{21}}\sum_{\gamma \in \{M_{21}^{\mathfrak{o}_{21}}\}} \tilde{\tau}(\gamma, M) \int_K \int_{N_{21}(\bb{A})} \int_{M_{21}(\gamma,\bb{A}) \backslash M_{21}(\bb{A})} f(k^{-1}n^{-1}m^{-1}\gamma mnk) \cdot \varpi_{\alpha}(H_0(w_{(13)}n)) dm  dn  dk,
	\end{equation}
	and\begin{equation}\label{8.8}
		\varpi_{\beta}(T) \cdot c_{P_{21}}a_{P_{21}} \sum_{\gamma \in \{M_{21}^{\mathfrak{o}_{21}}\}} \tilde{\tau}(\gamma, M) \int_K \int_{N_{21}(\bb{A})} \int_{M_{21}(\gamma,\bb{A}) \backslash M_{21}(\bb{A})} f(k^{-1}n^{-1}m^{-1}\gamma mnk) dm  dn  dk,
	\end{equation}
	and \begin{equation}\label{8.9}
		\varpi_\alpha(T) c_{P_{12}} a_{P_{12}}\sum_{\gamma \in \{M_{12}^{\mathfrak{o}_{21}}\}} \tilde{\tau}(\gamma, M) \int_K \int_{N_{12}(\bb{A})} \int_{M_{12}(\gamma,\bb{A}) \backslash M_{12}(\bb{A})} f(k^{-1}n^{-1}m^{-1}\gamma mnk)  dm  dn  dk.
	\end{equation}
	Then we change the variable of integration on $N(\bb{A})$ by Lemma \ref{lemma 2}, (\ref{8.8}) and (\ref{8.9}) become the sum of
	\begin{align*}
		\varpi_{\beta}(T) &\cdot c_{P_{21}}a_{P_{21}}
		\sum_{\gamma \in \{M_{21}^{\mathfrak{o}_{21}}\}} \tilde{\tau}(\gamma, M)
		\int_K  \int_{M_{21}(\gamma,\mathbb A) \backslash M_{21}(\mathbb A)} 
		\int_{N_{21}(\mathbb A)}f(k^{-1}m^{-1}\gamma nmk)\\
		&\cdot \exp\!\bigl(\langle-2\rho_{P_{21}},H_0(m)\rangle\bigr)\,dm\,dn\,dk,
	\end{align*}
	and
	\begin{align*}
		\varpi_\alpha(T) &\, c_{P_{12}} a_{P_{12}}
		\sum_{\gamma \in \{M_{12}^{\mathfrak{o}_{21}}\}} \tilde{\tau}(\gamma, M)
		\int_K  \int_{M_{12}(\gamma,\mathbb A) \backslash M_{12}(\mathbb A)}
		\int_{N_{12}(\mathbb A)} f(k^{-1}n^{-1}m^{-1}\gamma nmk)\\
		&\cdot \exp\!\bigl(\langle-2\rho_{P_{12}},H_0(m)\rangle\bigr)\,dm\,dn\,dk.
	\end{align*}
	Then it equals the sum of
	\begin{align*}
		\varpi_{\beta}(T) &\cdot c_{P_{21}}a_{P_{21}}
		\sum_{\gamma \in \{M_{21}^{\mathfrak{o}_{21}}\}} (n_{\gamma, M})^{-1}
		\int_K  \int_{M_{21}(\gamma,\mathbb A) \backslash M_{21}(\mathbb A)^1}
		\int_{N_{21}(\mathbb A)} f(k^{-1}m^{-1}\gamma nmk)\\
		&\cdot \exp\!\bigl(\langle-2\rho_{P_{21}},H_0(m)\rangle\bigr)\,dm\,dn\,dk,
	\end{align*}
	and
	\begin{align*}
		\varpi_\alpha(T) &\, c_{P_{12}} a_{P_{12}}
		\sum_{\gamma \in \{M_{12}^{\mathfrak{o}_{21}}\}} (n_{\gamma, M})^{-1}
		\int_K  \int_{M_{12}(\gamma,\mathbb A) \backslash M_{12}(\mathbb A)^1}
		\int_{N_{12}(\mathbb A)} f(k^{-1}n^{-1}m^{-1}\gamma nmk)\\
		&\cdot \exp\!\bigl(\langle-2\rho_{P_{12}},H_0(m)\rangle\bigr)\,dm\,dn\,dk,
	\end{align*}
	that is
	\begin{equation}\label{8.10}
		\begin{split}
			\varpi_{\beta}(T) \cdot c_{P_{21}}a_{P_{21}}  \int_K  \int_{M_{21}(\bb{Q}) \backslash M_{21}(\bb{A})^1}\sum_{\gamma \in M_{21}^{\mathfrak{o}_{21}}}\int_{N_{21}(\bb{A})} f(k^{-1}m^{-1}\gamma nmk)\\
			\cdot \exp(\langle-2\rho_{P_{21}},H_0(m)\rangle) dm \, dn \, dk,\end{split}
	\end{equation}and
	\begin{equation}\label{8.11}
		\begin{split}
			\varpi_{\alpha}(T) \cdot c_{P_{12}}a_{P_{12}}  \int_K  \int_{M_{12}(\bb{Q}) \backslash M_{12}(\bb{A})^1}\sum_{\gamma \in M_{12}^{\mathfrak{o}_{21}}}\int_{N_{12}(\bb{A})} f(k^{-1}m^{-1}\gamma nmk)\\
			\cdot \exp(\langle-2\rho_{P_{12}},H_0(m)\rangle) dm \, dn \, dk.	\end{split}
	\end{equation}

	\subsection{The third parabolic term.}
	In this section, we shall calculate the integral over $G(\bb{Q})\backslash G(\bb{A})^1$ of $-K_{P_{21}}''(f,x,T)-K_{P_{12}}''(f,x,T)$. This integral is 
	\[-\frac{1}{4\pi i} \sum_{\chi } \int_{i \mathfrak{a}_G \backslash i\mathfrak{a}_{21}} \int_{ G(\bb{Q}) \backslash G(\bb{A})^1} \sum_{\alpha,\beta\in\mathcal{B}_{P_{21},\chi}} E_{P_{21}}''^T(\Phi_{\alpha}, \lambda, x) \overline{E_{P_{21}}''^T(\Phi_{\beta}, \lambda, x)} dx \ d\lambda,\]
	\[
	-\frac{1}{4\pi i} \sum_{\chi} \int_{i \mathfrak{a}_G \backslash i\mathfrak{a}_{12}} \int_{ G(\bb{Q}) \backslash G(\bb{A})^1} \sum_{\alpha,\beta\in\mathcal{B}_{P_{12},\chi}} E_{P_{12}}''^T(\Phi_{\alpha}, \lambda, x) \overline{E_{P_{12}}''^T(\Phi_{\beta}, \lambda, x)} dx \ d\lambda.
	\]
	\begin{lemma}
		For $\alpha, \beta \in I_{P_{21}}$, and $\lambda$ a nonzero imaginary number in $i\mathfrak{a}_G\backslash \mathfrak{a}_{21}$, $s=(13)$, the inner product
		\[\int_{ G(\bb{Q}) \backslash G(\bb{A})^1}  E_{P_{21}}''^T(\Phi_{\alpha}, \lambda, x) \overline{E_{P_{21}}''^T(\Phi_{\beta}, \lambda, x)} dx ,\]is the sum of 
		\begin{equation}\label{8.12}
			2a_{P_{21}}\varpi_{\beta}(T)(\Phi_\alpha,\Phi_\beta),
		\end{equation} and
		\begin{equation}\label{8.13}
			-a_{P_{21}} \{  (M_{P_{21}}(s^{-1}, s\lambda) \cdot \frac{d}{d\lambda} M_{P_{21}}(s, \lambda) \Phi_{\alpha}, \Phi_{\beta}   )\}.
		\end{equation}
	\end{lemma}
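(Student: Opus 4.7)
The plan is to reduce the inner product to an instance of Arthur's Maass--Selberg identity for truncated Eisenstein series, and then to regularize the resulting rank-one expression. Since $\La\circ\La=\La$ and $\La$ is self-adjoint, and since $\La E_{P_{21}}'^T\equiv 0$ so that $\La E_{P_{21}}=E_{P_{21}}''^T$, we have
\[
(E_{P_{21}}''^T(\Phi_\alpha,\lambda),E_{P_{21}}''^T(\Phi_\beta,\lambda))=(\La E_{P_{21}}(\Phi_\alpha,\lambda),E_{P_{21}}(\Phi_\beta,\lambda)),
\]
which is the starting point for the computation.

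First, I would unfold this inner product. Expanding $\La$ as its defining alternating sum over standard parabolics $Q$, interchanging the $Q$-sum with the integration, and using the coset sum $\sum_{\delta\in Q(\bb{Q})\backslash G(\bb{Q})}$ to unfold the integral from $G(\bb{Q})\backslash G(\bb{A})^1$ onto $Q(\bb{Q})\backslash G(\bb{A})^1$, the inner $N_Q$-average converts $E_{P_{21}}(\Phi_\alpha,\lambda,\cdot)$ into its constant term $E_{P_{21}}^{c_Q}(\Phi_\alpha,\lambda,\cdot)$; a dual unfolding, using the $Q(\bb{Q})$-invariance of $E_{P_{21}}(\Phi_\beta,\lambda,\cdot)$, produces its constant term as well. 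One arrives at
\[
\sum_{Q}(-1)^{\dim(A_Q/Z)}\int_{Q(\bb{Q})\backslash G(\bb{A})^1}E_{P_{21}}^{c_Q}(\Phi_\alpha,\lambda,x)\overline{E_{P_{21}}^{c_Q}(\Phi_\beta,\lambda,x)}\hat{\tau}_Q(H_Q(x)-T)\,dx,
\]
in which only $Q$ associated to $P_{21}$ (namely $Q=P_{21},P_{12}$) contribute, since otherwise $\Omega(\mathfrak{a}_{21},\mathfrak{a}_Q)$ is empty. I would then substitute the explicit expansion
\[
E_{P_{21}}^{c_Q}(\Phi,\lambda,x)=\sum_{s\in\Omega(\mathfrak{a}_{21},\mathfrak{a}_Q)}(M_{P_{21}}(s,\lambda)\Phi)(x)\exp\langle s\lambda+\rho_Q,H_Q(x)\rangle,
\]
and apply the Iwasawa decomposition with the Haar measure factorization from Section~\ref{sec 2}. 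The integrals over $K$, $N_Q(\bb{A})$ and $M_Q(\bb{Q})\backslash M_Q(\bb{A})^1$ collapse, via orthonormality of $\{\Phi_\beta\}$ and the fact that $\{M_{P_{21}}(s,\lambda)\Phi_\beta\}$ is also an orthonormal basis of the relevant $\mathcal{H}_{Q,\chi}$, into Hermitian pairings $(M_{P_{21}}(s_1,\lambda)\Phi_\alpha,M_{P_{21}}(s_2,\lambda)\Phi_\beta)$. Since $\Omega(\mathfrak{a}_{21},\mathfrak{a}_Q)$ has exactly two elements (corresponding to $1$ and $s=(13)$), each $Q$ produces four $(s_1,s_2)$ contributions; the remaining one-dimensional truncated exponential integral is $\int_{\varpi_\beta(T)}^{\infty}\exp\langle(s_1-s_2)\lambda,a\varpi_\beta\rangle\,da$.

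Finally, I would assemble and regularize. The off-diagonal pairs $s_1\neq s_2$ produce oscillatory integrals whose contributions from $Q=P_{21}$ and $Q=P_{12}$ cancel against each other because of the sign $(-1)^{\dim(A_Q/Z)}$ and the unitarity relation $(M_{P_{21}}((13),\lambda)\Phi_\alpha,M_{P_{21}}((13),\lambda)\Phi_\beta)=(\Phi_\alpha,\Phi_\beta)$ for $\lambda\in i\mathfrak{a}$. The diagonal pairs $s_1=s_2$ produce a divergent integral $\int_{\varpi_\beta(T)}^\infty da$; the alternating-sign sum over $Q$ cancels the divergent part and leaves a finite remainder proportional to $\varpi_\beta(T)$, which is doubled to $2a_{P_{21}}\varpi_\beta(T)(\Phi_\alpha,\Phi_\beta)$ by the two diagonal choices $s_1=s_2\in\{1,(13)\}$. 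The subleading Taylor coefficient in the regularization of the diagonal piece associated to $s=(13)$ yields the derivative correction: differentiating the intertwining operator and using the identity $M_{P_{21}}(s^{-1},s\lambda)M_{P_{21}}(s,\lambda)=\mathrm{Id}$ converts the Taylor coefficient into $-a_{P_{21}}(M_{P_{21}}(s^{-1},s\lambda)\,\tfrac{d}{d\lambda}M_{P_{21}}(s,\lambda)\Phi_\alpha,\Phi_\beta)$, which is (\ref{8.13}).

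The main obstacle is the bookkeeping of this regularization. The individual integrals over $A_Q(\bb{R})^0/Z(\bb{R})^0$ are either divergent or oscillatory, and only after combining all $(Q,s_1,s_2)$ pieces with their signs and aligning the spectral parameters does one obtain a convergent polynomial-in-$T$ expression. Isolating the precise coefficient of $\tfrac{d}{d\lambda}M_{P_{21}}(s,\lambda)$ requires a careful Taylor expansion near the diagonal and consistent use of the functional equations $M_{P_{21}}(ts,\lambda)=M_{P_{21}}(t,s\lambda)M_{P_{21}}(s,\lambda)$ and $M_{P_{21}}(s,\lambda)^*=M_{P_{21}}(s^{-1},-s\overline{\lambda})$ collected in Section~\ref{sec 3}.
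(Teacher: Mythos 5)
Your overall strategy is the same as the paper's: start from the Maass--Selberg inner-product formula for truncated Eisenstein series and then regularize the degenerate diagonal $\lambda_1=\lambda_2$ by a limiting procedure. The paper simply quotes Langlands' inner-product identity (at spectral parameters $\lambda_1,\overline{\lambda}_2$ with distinct, suitably regular real parts), sets $\lambda_1-\lambda_2=a\varpi_\beta$, and applies L'H\^opital as $a\to0$. You propose instead to re-derive the Maass--Selberg identity by unfolding $\Lambda^T$ and then carry out the regularization. That is a legitimate route in principle, but the derivation as written contains concrete errors.

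First, the combinatorics of the Weyl sets is wrong. For $\GL(3)$, each of $\Omega(\mathfrak{a}_{21},\mathfrak{a}_{21})$ and $\Omega(\mathfrak{a}_{21},\mathfrak{a}_{12})$ is a \emph{singleton} (the identity restriction, respectively the restriction of $(13)$), not a two-element set. The two elements of $\bigcup_Q\Omega(\mathfrak{a}_{21},\mathfrak{a}_Q)$ are distributed one per $Q\in\{P_{21},P_{12}\}$. Consequently, when you plug the constant-term expansion into $E_{P_{21}}^{c_Q}\cdot\overline{E_{P_{21}}^{c_Q}}$, there is exactly one $(s_1,s_2)$ per $Q$, not four; there are no ``off-diagonal pairs $s_1\neq s_2$'' in your expansion. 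The off-diagonal terms that appear in the general Maass--Selberg statement come about differently, and in this rank-one case the final identity (as the paper records from Langlands) has exactly two terms, both of the form $(s,s)$.

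Second, the cancellation argument by sign is false. Both $P_{21}$ and $P_{12}$ are maximal parabolics, so $\dim(A_{P_{21}}/Z)=\dim(A_{P_{12}}/Z)=1$ and both carry the sign $(-1)^1$ in the expansion of $\Lambda^T$; they cannot cancel against one another. Moreover, the $Q=G$ term of the expansion is $E(\Phi_\alpha,\lambda,x)\overline{E(\Phi_\beta,\lambda,x)}$ itself, which is not absolutely integrable at purely imaginary $\lambda$; your unfolding silently suppresses this term. The resolution in Langlands' derivation (and in the paper's proof) is precisely to keep $\lambda_1\neq\lambda_2$ with regular real parts, where each unfolded integral converges, to verify the two-term identity in that range, and only then to analytically continue and pass to the diagonal via L'H\^opital. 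It is that perturbation-and-limit that produces the linear-in-$T$ term $2a_{P_{21}}\varpi_\beta(T)(\Phi_\alpha,\Phi_\beta)$ and the derivative term $-a_{P_{21}}(M_{P_{21}}(s^{-1},s\lambda)\tfrac{d}{d\lambda}M_{P_{21}}(s,\lambda)\Phi_\alpha,\Phi_\beta)$, not a cancellation of divergences between the two parabolics. If you replace the flawed cancellation reasoning with the paper's ``fix $\lambda_1-\lambda_2=a\varpi_\beta$, then take $a\to0$ by L'H\^opital, using unitarity of $M_{P_{21}}(s,\lambda)$ at imaginary $\lambda$'' step, the argument closes.
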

	\begin{proof}
		Suppose that $ \lambda_1, \overline{\lambda}_2 $ are different complex numbers in $ i \mathfrak{a}_G \backslash i \mathfrak{a}_{21} $, with real parts suitably
		regular. Then by the formula of the inner product which Langlands introduced in~\cite{L1},
		\[
		\int_{ G(\bb{Q}) \backslash G(\bb{A})^1 } E_{P_{21}}''^T(\Phi_{\alpha}, \lambda_1, x) \overline{E_{P_{21}}''^T(\Phi_{\beta}, \lambda_2, x)} dx
		\]
		\[
		= \frac{\exp(\langle \lambda_1 + \overline{\lambda}_2, T \rangle)}{\langle \lambda_1 + \overline{\lambda}_2, \beta \rangle} (\Phi_{\alpha}, \Phi_{\beta})
		+ \frac{\exp(\langle s \lambda_1 + s \overline{\lambda}_2, T \rangle)}{\langle s \lambda_1 + s \overline{\lambda}_2, \beta \rangle} (M_{P_{21}}(s, \lambda_1) \Phi_{\alpha}, M_{P_{21}}(s, \lambda_2) \Phi_{\beta}),
		\]
		
		We can see that this function is meromorphic in $ \lambda_1, \lambda_2 $.  Set $ \lambda_1 - \lambda_2 = a \varpi_{\beta} $, then we will let this term be the limit as $ a $ approaches 0 of
		\[
		\frac{\exp(\langle a \varpi_\alpha, T \rangle) (\Phi_{\alpha}, \Phi_{\beta}) + \exp(\langle a  \varpi_{\beta}, T \rangle) (M_{P_{21}}(s, (a + 1) \lambda_2) \Phi_{\alpha}, M_{P_{21}}(s, \lambda_2) \Phi_{\beta})}{\langle a \varpi_{\beta}, \alpha_2 \rangle}.
		\]
		Recall that $M_P(s,\lambda)$ is unitary if $\lambda$ is a pure imaginary number.
		Then by the L'H\^{o}pital's rule, we get the result.
	\end{proof}
	Similarly, we have lemma for $P_{12}$:
	\begin{lemma}
		For $\alpha, \beta \in I_{P_{12}}$, and $\lambda$ a nonzero imaginary number in $i\mathfrak{a}_G\backslash \mathfrak{a}_{12}$, $s=(13)$, the inner product
		\[\int_{ G(\bb{Q}) \backslash G(\bb{A})^1}  E_{P_{12}}''^T(\Phi_{\alpha}, \lambda, x) \overline{E_{P_{12}}''^T(\Phi_{\beta}, \lambda, x)} dx ,\]is the sum of 
		\begin{equation}\label{8.14}
			2a_{P_{12}}\varpi_{\alpha}(T)(\Phi_\alpha,\Phi_\beta),
		\end{equation} and
		\begin{equation}\label{8.15}
			-a_{P_{12}} \{ ( M_{P_{12}}(s^{-1}, s\lambda) \cdot \frac{d}{d\lambda} M_{P_{12}}(s, \lambda) \Phi_{\alpha}, \Phi_{\beta} )  \}.
		\end{equation}	
	\end{lemma}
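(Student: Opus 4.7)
The plan is to mirror the argument used for the $P_{21}$ case, with $P_{21}$ replaced by $P_{12}$ and the root $\beta$ replaced by the root $\alpha$. Since $P_{12}$ and $P_{21}$ form a single associated class and $\dim(\mathfrak{a}_G\backslash \mathfrak{a}_{12}) = 1$, the only elements of $\Omega(\mathfrak{a}_{12}, \mathfrak{a}_{12}')$ contributing to Langlands' inner product formula for truncated Eisenstein series are the identity and a non-trivial element represented by $s = (13)$. First I would take $\lambda_1, \lambda_2 \in \mathfrak{a}_{\bb{C}}$ with real parts in the region where Langlands' formula for $(E_{P_{12}}''^T(\Phi_\alpha,\lambda_1,\cdot), E_{P_{12}}''^T(\Phi_\beta,\lambda_2,\cdot))$ converges, and write the inner product as the sum
\[
\frac{\exp(\langle\lambda_1+\overline{\lambda}_2,T\rangle)}{\langle\lambda_1+\overline{\lambda}_2,\alpha\rangle}(\Phi_\alpha,\Phi_\beta)
+\frac{\exp(\langle s\lambda_1+s\overline{\lambda}_2,T\rangle)}{\langle s\lambda_1+s\overline{\lambda}_2,\alpha\rangle}(M_{P_{12}}(s,\lambda_1)\Phi_\alpha, M_{P_{12}}(s,\lambda_2)\Phi_\beta),
\]
up to the normalization constant $a_{P_{12}}$ coming from our Haar measure conventions on $\mathfrak{a}_{P_{12}}$.

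Next I would meromorphically continue in $(\lambda_1,\lambda_2)$ and specialize to $\lambda_2 = \lambda$ purely imaginary and $\lambda_1 - \lambda_2 = a\varpi_\alpha$, then let $a \to 0$. Because $\lambda$ is purely imaginary, $\overline{\lambda}_2 = -\lambda$, so $\lambda_1+\overline{\lambda}_2 = a\varpi_\alpha$ and $s\lambda_1+s\overline{\lambda}_2 = sa\varpi_\alpha$; after evaluating at $\alpha$, both denominators become scalar multiples of $a$, and we see that each term individually has a simple pole at $a=0$. Because $M_{P_{12}}(s,\lambda)$ is unitary for imaginary $\lambda$, the residues of the two terms are equal with opposite signs and cancel, so the combined expression has a removable singularity. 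L'Hopital's rule then evaluates the limit as a sum of two contributions: differentiating the exponential $\exp(\langle a\varpi_\alpha,T\rangle)\pm\exp(\langle sa\varpi_\alpha,T\rangle)$ produces the factor $2\varpi_\alpha(T)(\Phi_\alpha,\Phi_\beta)$ (using $s\varpi_\alpha(T)=-\varpi_\alpha(T)$ modulo $\mathfrak{a}_G$), while differentiating the operator $M_{P_{12}}(s,(a+1)\lambda)$ in $a$ and using $M_{P_{12}}(s,\lambda)^*=M_{P_{12}}(s^{-1},-s\overline{\lambda})=M_{P_{12}}(s^{-1},s\lambda)$ for imaginary $\lambda$ converts
\[
\Bigl(\tfrac{d}{d\lambda}M_{P_{12}}(s,\lambda)\Phi_\alpha,\, M_{P_{12}}(s,\lambda)\Phi_\beta\Bigr)
=\Bigl(M_{P_{12}}(s^{-1},s\lambda)\tfrac{d}{d\lambda}M_{P_{12}}(s,\lambda)\Phi_\alpha,\, \Phi_\beta\Bigr),
\]
yielding exactly the second stated expression once the overall minus sign from L'Hopital and the normalization $a_{P_{12}}$ are included.

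The main obstacle I anticipate is bookkeeping: tracking the signs and the precise action of $s=(13)$ on $\varpi_\alpha$ within $\mathfrak{a}_G\backslash\mathfrak{a}_{12}$, so that the two simple poles in $a$ genuinely cancel (forcing the first-order Taylor coefficient in $a$ to be what survives), and so that the final residue is organized into the intertwining-operator adjoint form $M_{P_{12}}(s^{-1},s\lambda)\frac{d}{d\lambda}M_{P_{12}}(s,\lambda)$ rather than a bare derivative. Once this algebraic cancellation is handled cleanly--exactly as in the $P_{21}$ case treated just above--the identification of the constant factors $2a_{P_{12}}\varpi_\alpha(T)$ and $-a_{P_{12}}$ follows from our normalization of Haar measures and reading off the surviving coefficient of the L'Hopital limit.
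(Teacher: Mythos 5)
Your proposal is correct and takes essentially the same approach as the paper: the paper proves the $P_{21}$ case via Langlands' inner-product formula for truncated Eisenstein series, the substitution $\lambda_1-\lambda_2=a\varpi$, unitarity of the intertwining operator for imaginary $\lambda$, and L'H\^opital, and then states the $P_{12}$ lemma with only the word ``Similarly.'' Your proof is precisely that ``similarly'' spelled out, with $P_{21}\to P_{12}$ and $\varpi_\beta\to\varpi_\alpha$, and it fills in the pole-cancellation and adjoint-rewriting steps in more detail than the paper does.
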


	Substituting (\ref{8.12}) and (\ref{8.14}) into $K_{P_{21}}''(f, x, T) + K_{P_{12}}''(f, x, T)$, we obtain
	\[
	\frac{\varpi_{\beta}(T)}{2\pi i}a_{P_{21}} \int_{i \mathfrak{a}_G \backslash i\mathfrak{a}_{21}} \Tr\, \mathcal{I}_{P_{21}}(\lambda, f) d\lambda,
	\]
	\[
	\frac{\varpi_{\alpha}(T)}{2\pi i} a_{P_{12}}\int_{i \mathfrak{a}_G \backslash i\mathfrak{a}_{12}} \Tr \,\mathcal{I}_{P_{12}}(\lambda, f) d\lambda.
	\]
	According to Lemma \ref{lem 3.4}, we can write this as
	\begin{align*}
		& c_{P_{21}} a_{P_{21}} \cdot \frac{\varpi_\beta(T)}{2 \pi i} \int_{i \mathfrak{a}_G \backslash i\mathfrak{a}_{21}} \int_{ M_{21}(\bb{Q}) \backslash M_{21}(\bb{A})^1} P_{P_{21}}(\lambda, f, mk, mk) dm  dk  d\lambda \\
		& + c_{P_{12}} a_{P_{12}} \cdot \frac{\varpi_\alpha(T)}{2 \pi i} \int_{i \mathfrak{a}_G \backslash i\mathfrak{a}_{12}} \int_{ M_{12}(\bb{Q}) \backslash M_{12}(\bb{A})^1} P_{P_{12}}(\lambda, f, mk, mk) dm  dk  d\lambda,
	\end{align*}
	by the continuity of $P_{P_{21}}$, which is the product of
	\[\exp(\langle\lambda+\rho_P,H_P(y)\rangle)\exp(\langle-\lambda-\rho_P,H_P(x)\rangle),\]
	and 
	\[\sum_{\gamma\in M(\bb{Q})}\int_{N(\bb{A})}\int_{\mathfrak{a}_G\backslash \mathfrak{a}}f(x^{-1}nh_a\gamma y)\exp(\langle-\lambda-\rho_P,a\rangle)dadn.\]
	
	We now apply the Fourier inversion formula to obtain
	\begin{align*}
		& c_{P_{21}} a_{P_{21}} \cdot \varpi_\beta(T) \int_K \int_{M_{21}(\bb{Q}) \backslash M_{21}(\bb{A})^1} \sum_{\gamma \in M_{21}(\bb{Q})} \int_{N_{21}(\bb{A})} f(k^{-1} m^{-1} \gamma nm k) \exp(-\langle2 \rho_{P_{21}}, H_0(m)\rangle) dn  dm  dk \\
		& + c_{P_{12}} a_{P_{12}} \cdot \varpi_\alpha(T) \int_K \int_{M_{12}(\bb{Q}) \backslash M_{12}(\bb{A})^1} \sum_{\gamma \in M_{12}(\bb{Q})} \int_{N_{12}(\bb{A})} f(k^{-1} m^{-1} \gamma nm k) \exp(-\langle2 \rho_{P_{12}}, H_0(m)\rangle) dn  dm  dk.
	\end{align*}
	
	Now the terms corresponding to (\ref{8.13}) and  (\ref{8.15}) are 
	\begin{equation}\label{8.16}
		\begin{split}
			&\frac{a_{P_{21}}}{4\pi i} \sum_{\chi} \int_{i \mathfrak{a}_G \backslash i \mathfrak{a}_{21}} \Tr\{ M_{P_{21}}(s^{-1}, s\lambda) \cdot \left( \frac{d}{d\lambda} M_{P_{21}}(s, \lambda) \right) \cdot \mathcal{I}_{P_{21}, \chi}(\lambda, f) \} d\lambda,\\
			&\frac{a_{P_{12}}}{4\pi i} \sum_{\chi} \int_{i \mathfrak{a}_G \backslash i \mathfrak{a}_{12}} \Tr\{ M_{P_{12}}(s^{-1}, s\lambda) \cdot \left( \frac{d}{d\lambda} M_{P_{12}}(s, \lambda) \right) \cdot \mathcal{I}_{P_{12}, \chi}(\lambda, f) \} d\lambda,
		\end{split}
	\end{equation}
	where $\mathcal{I}_{P_{21}, \chi}(\lambda, f)$ and $\mathcal{I}_{P_{12}, \chi}(\lambda, f)$ is the restriction of $\mathcal{I}_{P_{21}}(\lambda, f)$ and $\mathcal{I}_{P_{12}}(\lambda, f)$ to $\mathcal{H}_{P, \chi}$. This term is finite and can be deduced from that all of other terms are convergent associated to $T$.

	Now we can see that (\ref{8.10}), (\ref{8.11}),  can be canceled except these two terms:
	\begin{equation}\label{8.17}
		\begin{split}
			&c_{P_{21}} a_{P_{21}} \varpi_\beta(T)\int_K \int_{M_{21}(\mathbb{Q}) \backslash M_{21}(\mathbb{A})^1}
			\sum_{\gamma \in M_{21}(\mathbb{Q}) - M_{21}^{\mathfrak{o}_{21}} - M_{21}^{\mathfrak{o}_{111}^2}}
			\int_{N_{21}(\mathbb{A})}
			f(k^{-1} m^{-1} \gamma n m k) \\
			&\hspace{8em} \cdot\exp\!\bigl(-\langle2 \rho_{P_{21}}, H_0(m)\rangle\bigr)\, dn\, dm\, dk, \\[4pt]
			&c_{P_{12}} a_{P_{12}} \varpi_\alpha(T)\int_K \int_{M_{12}(\mathbb{Q}) \backslash M_{12}(\mathbb{A})^1}
			\sum_{\gamma \in M_{12}(\mathbb{Q}) - M_{12}^{\mathfrak{o}_{21}} - M_{12}^{\mathfrak{o}_{111}^2}}
			\int_{N_{12}(\mathbb{A})}
			f(k^{-1} m^{-1} \gamma n m k) \\
			&\hspace{8em}\cdot\exp\!\bigl(-\langle2 \rho_{P_{12}}, H_0(m)\rangle\bigr)\, dn\, dm\, dk.
		\end{split}
	\end{equation}
	However, for any unramified orbit  $\mathfrak{o} \neq \mathfrak{o}_{21}^0$  in  $M_{21}, \gamma \in M_{21}^{\mathfrak{o}}$,  suppose fix  $\chi$  and $ P = P_{21},$  the sum  $\sum_{\alpha, \beta \in \mathscr{B}_{P,\chi}} (\Phi_\alpha, \Phi_\beta)$  is the trace of  $H_{P,\chi}$  which is the finite dimensional subspace of cusp forms.
	
	Apply Lemma \ref{lemma 2}, we write the first integral in (\ref{8.17})  as
	\[
	c_{P_{21}} a_{P_{21}} \cdot \varpi_\beta(T) \int_K \int_{N_{21}(\mathbb{A})} \int_{ M_{21}(\mathbb{Q}) \backslash M_{21}(\mathbb{A})^1} \sum_{\gamma \in M_{21}^{\mathfrak{o}}} f(k^{-1} n^{-1} m^{-1} \gamma mn k) dmdndk.
	\]
	
	For fixed $k$ and $n$, we consider the inner integral
	\[
	\int_{ M_{21}(\mathbb{Q} )\backslash M_{21}(\mathbb{A})} \sum_{\gamma \in M_{21}^{\mathfrak{o}}} f(k^{-1} n^{-1} m^{-1} \gamma mn k) dm.
	\]
	According to \cite{H2}, $\gamma$ is the regular semisimple element but not elliptic  in $M_{21}$, $A(\bb{R})^0\backslash M(\gamma,\bb{A})$ is not compact, thus the orbital integral of this $\gamma$ equals 0. Therefore we only need to consider
	the elements in the ramified orbits:
	\begin{equation}\label{8.18}
		\begin{split}
			& \sum_{\text{ramified } \mathfrak{o}} c_{P_{21}} a_{P_{21}} \cdot \varpi_\beta(T) \int_K \int_{ M_{21}(\mathbb{Q}) \backslash M_{21}(\mathbb{A})^1} \sum_{\gamma \in M_{21}^{\mathfrak{o}}} \int_{N_{21}(\mathbb{A})} \\
			& \quad f(k^{-1} m^{-1} \gamma nm k) \exp(- \langle2 \rho_{P_{21}}, H_0(m)\rangle) dndmdk\\
			+ & \sum_{\text{ramified } \mathfrak{o}} c_{P_{12}} a_{P_{12}} \cdot \varpi_\alpha(T) \int_K \int_{ M_{12}(\mathbb{Q}) \backslash M_{12}(\mathbb{A})^1} \sum_{\gamma \in M_{12}^{\mathfrak{o}}} \int_{N_{12}(\mathbb{A})} \\
			& \quad f(k^{-1} m^{-1} \gamma nm k) \exp(- \langle2 \rho_{P_{12}}, H_0(m)\rangle) dndmdk.
		\end{split}
	\end{equation}
	We have 
	\begin{lemma}
		$I_{\text{unr}}^{\mathfrak{o}_{21}}(f,x,T)-K_{P_{21}}''(f,x,T)-K_{P_{12}}''(f,x,T)$ equals the sum of (\ref{8.9}), (\ref{8.16}) and (\ref{8.18}).
	\end{lemma}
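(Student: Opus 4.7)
The plan is to combine the two explicit calculations already performed in this section — the integral of $I_{\textup{unr}}^{\mathfrak{o}_{21}}(f,x,T)$ and the integral of $-K''_{P_{21}}(f,x,T)-K''_{P_{12}}(f,x,T)$ — and then identify which pieces cancel, leaving the asserted three summands.

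First, I recall that the integral of $I_{\textup{unr}}^{\mathfrak{o}_{21}}$ over $G(\bb{Q})\backslash G(\bb{A})^1$ has been written as the sum of three pieces, namely (\ref{8.7}), (\ref{8.8}), and (\ref{8.9}), where (\ref{8.7}) is the non-$T$-linear weighted orbital integral carrying the factor $\varpi_\alpha(H_0(w_{(13)}n))$, while (\ref{8.8}) and (\ref{8.9}) are the two $T$-linear contributions. Applying Lemma \ref{lemma 2} (the adelic change of variable on $N(\bb{A})$) rewrites (\ref{8.8}) and (\ref{8.9}) as (\ref{8.10}) and (\ref{8.11}) respectively; these are sums over $\gamma$ restricted to $M_{21}^{\mathfrak{o}_{21}}$ (resp.\ $M_{12}^{\mathfrak{o}_{21}}$).

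Next I evaluate the integral of $-K''_{P_{21}}-K''_{P_{12}}$. Using the two lemmas on the inner product of truncated Eisenstein series, each kernel's integral splits into a $T$-linear volume piece, (\ref{8.12}) and (\ref{8.14}), and an intertwining-derivative piece, (\ref{8.13}) and (\ref{8.15}). Summing the first type over the orthonormal basis and over $\chi$ reassembles $\varpi_\beta(T)\,a_{P_{21}}(2\pi i)^{-1}\int_{i\mathfrak{a}_G\backslash i\mathfrak{a}_{21}}\Tr\,\pi_{P_{21}}(\lambda,f)\,d\lambda$ (plus the $P_{12}$-analogue); invoking Lemma \ref{lem 3.4} together with Fourier inversion in $\lambda$ turns this into precisely the analogues of (\ref{8.10}) and (\ref{8.11}), but with the sum over $\gamma$ extended from the orbit $\mathfrak{o}_{21}$ to all of $M_{21}(\bb{Q})$ (resp.\ $M_{12}(\bb{Q})$). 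The second type assembles to (\ref{8.16}).

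Subtracting the two computations, the $\gamma\in M_{21}^{\mathfrak{o}_{21}}$-part (resp.\ $M_{12}^{\mathfrak{o}_{21}}$-part) of the trace expression exactly cancels (\ref{8.10}) and (\ref{8.11}). What survives are the non-$T$-linear piece (\ref{8.7}), the derivative term (\ref{8.16}), and the residual sum (\ref{8.17}) over $\gamma\in M_{21}(\bb{Q})-M_{21}^{\mathfrak{o}_{21}}-M_{21}^{\mathfrak{o}_{111}^2}$ together with its $P_{12}$-counterpart. The crucial step — and the one I expect to be the principal obstacle — is reducing (\ref{8.17}) to (\ref{8.18}). For any $\gamma$ in the residual set that is regular semisimple but not elliptic in $M_{21}$, the quotient $A(\bb{R})^0\backslash M_{21}(\gamma,\bb{A})$ is non-compact, so Harish-Chandra's vanishing of the orbital integral (cited from \cite{H2}) forces the inner integral over $M_{21}(\gamma,\bb{A})\backslash M_{21}(\bb{A})$ to vanish; only the contributions from ramified orbits survive, which is exactly (\ref{8.18}). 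Collecting these surviving terms yields the claim.
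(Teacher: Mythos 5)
Your proof follows the paper's own argument precisely: you combine the calculation of $\int I_{\text{unr}}^{\mathfrak{o}_{21}}$ (which the paper presents as (\ref{8.7})+(\ref{8.8})+(\ref{8.9}), with (\ref{8.8}) and (\ref{8.9}) rewritten as (\ref{8.10}) and (\ref{8.11})) with the spectral-side computation that decomposes $-K_{P_{21}}''-K_{P_{12}}''$ via the two inner-product lemmas into the $T$-linear traces and (\ref{8.16}), then cancel the $T$-linear pieces over $\gamma\in M_{21}^{\mathfrak{o}_{21}}$ (resp.\ $M_{12}^{\mathfrak{o}_{21}}$) against (\ref{8.10}), (\ref{8.11}), and finally invoke the Harish-Chandra vanishing of non-elliptic regular semisimple orbital integrals to reduce (\ref{8.17}) to (\ref{8.18}). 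Two remarks. First, your computation actually yields (\ref{8.7})+(\ref{8.16})+(\ref{8.18}): the surviving non-$T$-linear piece of $v_{M_{21}}$ is (\ref{8.7}), not (\ref{8.9}), which is a $T$-linear term absorbed in the cancellation. The lemma as printed in the paper references (\ref{8.9}); this is evidently a typographical slip in the paper (indeed the summary in Section~11 lists the surviving $P_{21}$-terms with the weight factor matching (\ref{8.7})), and you have implicitly corrected it. Second, like the paper, you pass from the ``sum over $\gamma\in M_{21}(\bb{Q})-M_{21}^{\mathfrak{o}_{21}}$'' to (\ref{8.17}), which also excises $M_{21}^{\mathfrak{o}_{111}^2}$; neither the paper nor your write-up explains that this excision is bookkeeping anticipating the cancellation with the $T$-linear piece (\ref{9.11}) of the $P_0$-contribution, and strictly speaking that cancellation lies outside the scope of the statement being proved. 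It is worth flagging this so the reader understands the $\mathfrak{o}_{111}^2$ terms are set aside by design rather than because they vanish.
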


	\section{The terms associated to $P_{0}$}\label{sec 9}\[\Omega(\mathfrak{a}_{0},\mathfrak{a}_{0})=S_3\]
	\subsection{The first parabolic term.} The first parabolic term is
	\[ J_{\text{unr}}^{\mathfrak{o}^0_{111}}(f, x, T) + \sum_{i\in\{1,2\}} J_{\text{ram}}^{\mathfrak{o}^i_{111}}(f, x, T) - K'_{P_{0}}(f, x, T). \]
	Recall that $ J_{\text{unr}}^{\mathfrak{o}^0_{111}}(f, x, T)$ equals
	\[\frac{1}{6} \sum_{\gamma \in \{ M_{0,t}^{\mathfrak{o}_{111}^0} \}} (n_{\gamma, M_{0}})^{-1} \sum_{\delta \in M_{0}(\gamma,\bb{Q}) \backslash G(\bb{Q})} f(x^{-1} \delta^{-1} \gamma \delta x) \left( \sum_{s \in \Omega(\mathfrak{a}_{0}, \mathfrak{a}_{0})} \hat{\tau}_{P_{0}}(H_0(w_s \delta x) - T) \right).
	\]
	It is easy to check,
	\[
	J_{\text{unr}}^{\mathfrak{o}_{111}^0}(f, x, T) = \sum_{\gamma \in \{ M_{0,t}^{\mathfrak{o}_{111}^0} \}} (n_{\gamma, M_{0}})^{-1} \sum_{\delta \in M_{0}(\gamma,\mathbb{Q}) \backslash G(\bb{Q})} f(x^{-1} \delta^{-1} \gamma \delta x) \hat{\tau}_{P_{0}}(H_0(\delta x) - T).
	\]
	If $\gamma \in M_{0}^{\mathfrak{o}_{111}^0}$, the group $N(\gamma_s)$ is trivial, then  by Lemma \ref{lemma:1}, we have $J_{\text{unr}}^{\mathfrak{o}_{111}^0}(f, x, T)$ equals\[
	\sum_{\gamma \in \{ M_{0,t}^{\mathfrak{o}_{111}^0} \}} (n_{\gamma, M_{0}})^{-1} \sum_{\delta \in N_{0} (\bb{Q}) M_{0}(\gamma, \bb{Q}) \backslash G(\bb{Q})} \sum_{\nu \in N_{0}(\bb{Q})} f(x^{-1} \delta^{-1} \gamma \nu \delta x) (\hat{\tau}_{P_{0}}(H_0(\delta x) - T)),
	\]
	which is
	\begin{equation}\label{9.1}
		\sum_{\delta \in P_{0} (\bb{Q}) \backslash G(\bb{Q})} \sum_{\gamma \in M_{0,t}^{\mathfrak{o}_{111}^0}} \sum_{\nu \in N_{0} (\bb{Q})} f(x^{-1} \delta^{-1} \gamma \nu \delta x) (\hat{\tau}_{P_{0}}(H_0(\delta x) - T)).
	\end{equation}
	Since the term associated to $\hat{\tau}_P$ with $P\neq P_0$ has been computed before, so we have 
	\begin{equation}\label{9.2}
		\sum_i J_{\text{ram}}^{\mathfrak{o}^i_{111}}(f, x, T)=\sum_i \sum_{\delta \in P_{0} (\bb{Q}) \backslash G(\bb{Q})} \sum_{\gamma \in M_{0}^{\mathfrak{o}_{111}^i}} \sum_{\nu \in N_{0} (\gamma_s,\bb{Q})} f(x^{-1} \delta^{-1} \gamma \nu \delta x)(\hat{\tau}_{P_{0}}(H_0(\delta x) - T)).
	\end{equation}
	Then 
	$$J_{P_0}(f,x,T)=\sum_{\delta\in P_0(\bb{Q})\backslash G(\bb{Q})}\sum_{\gamma\in M_0}\int_{N_{0}(\bb{A})}f(x^{-1} \delta^{-1} \gamma n \delta x)dn(\hat{\tau}_{P_{0}}(H_0(\delta x) - T)).$$
	
	Recall $K_{P_{0}}'(f, x, T)$ equals
	\[
	\frac{1}{24(\pi i)^2} \sum_{P_{0} (\bb{Q}) \backslash G(\bb{Q})} \sum_{\chi} \int_{i \mathfrak{a}_G \backslash i\mathfrak{a}_{0}} \{ \sum_{\beta \in \mathcal{B}_{P_{0}, \chi}} E_{P_{0}}^{c_{111}}(\mathcal{I}_{P_{0}}(\lambda, f) \Phi_{\beta}, \lambda, \delta x) \overline{E_{P_{0}}^{c_{111}}(\Phi_{\beta}, \lambda, \delta x)} \} d\lambda \hat{\tau}_{P_{0}}(H_0(\delta x) - T).
	\]
	This term is the sum of 
	\[
	\sum_{\gamma \in M_{0}(\bb{Q})} \sum_{\delta \in P_{0} (\bb{Q})\backslash G(\bb{Q})} \int_{N_{0} (\bb{A})} f(x^{-1} \delta^{-1} \gamma \delta x) dn \cdot \hat{\tau}_{P_{0}}(H_0(\delta x) - T),
	\]
	and the sum over $\delta\in P_{0} (\mathbb{Q}) \backslash G(\bb{Q})$ of 
	\begin{align*}
		\frac{1}{24(\pi i)^2} \sum_{s \neq t \in \Omega(\mathfrak{a}_{0}, \mathfrak{a}_{0})}  \sum_{\chi} \int_{i \mathfrak{a}_G \backslash \mathfrak{a}_{0}} \{ \sum_{\beta \in \mathcal{B}_{P_{0}, \chi}} (M(s, \lambda) \mathcal{I}_{P_{0}}(\lambda, f) \Phi_{\beta})(\delta x) \overline{\Phi_{\beta}(\delta x)} \} \\
		\cdot\exp(\langle-2\lambda, H_0(\delta x)\rangle ) d\lambda \hat{\tau}_{P_{0}}(H_0(\delta x) - T).
	\end{align*}
	but the second function's integral over $ G(\bb{Q}) \backslash G(\bb{A})^1$ approaches 0 as $T$ approaches $\infty$ by Lemma \ref{lem 5.9}. 
	\begin{lemma}The sum
		\[J_{P_{0}}(f,x,T)-K'_{P_{0}}(f,x,T)\]equals 0 as $T\rightarrow\infty$.
	\end{lemma}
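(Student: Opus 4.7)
The plan is to substitute the explicit formula for the constant term of the Eisenstein series,
\[E_{P_0}^{c_{111}}(\Phi,\lambda,x)=\sum_{s\in W_0}(M_{P_0}(s,\lambda)\Phi)(x)\exp(\langle s\lambda+\rho_{P_0},H_0(x)\rangle),\qquad W_0=\Omega(\mathfrak{a}_0,\mathfrak{a}_0),\]
into the definition of $K'_{P_0}(f,x,T)$, producing a double sum over $(s,t)\in W_0\times W_0$ inside the integrand. I would then split this double sum into the diagonal part $s=t$ and the off-diagonal part $s\neq t$, as indicated in the paragraph preceding the lemma.

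For the diagonal contribution, I would combine the intertwining identity $M_{P_0}(s,\lambda)\pi_{P_0}(\lambda,f)=\pi_{P_0}(s\lambda,f)M_{P_0}(s,\lambda)$ with the fact that $\{M_{P_0}(s,\lambda)\Phi_\beta\}_\beta$ is still an orthonormal basis of each $\mathcal{H}_{P_0,\chi}$. Performing the change of variables $\lambda\mapsto s^{-1}\lambda$ in the integral over $i\mathfrak{a}_G\backslash i\mathfrak{a}_0$ shows that each of the six values of $s\in W_0$ contributes the same quantity, which exactly cancels the factor $n(A_0)^{-1}=\frac{1}{6}$. Summing over $\chi$ and $\mathcal{B}_{P_0,\chi}$ then recovers the full kernel of $\pi_{P_0}(\lambda,f)$, and Fourier inversion via Lemma \ref{lem 3.4} converts the remaining $\lambda$-integral into the adèlic expression $\sum_{\gamma\in M_0(\bb{Q})}\int_{N_0(\bb{A})}f(x^{-1}\delta^{-1}\gamma n\delta x)\,dn$. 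Multiplying by $\hat{\tau}_{P_0}(H_0(\delta x)-T)$ and summing over $\delta\in P_0(\bb{Q})\backslash G(\bb{Q})$ reproduces $J_{P_0}(f,x,T)$ exactly.

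For the off-diagonal piece, the surviving exponential $\exp(\langle(s-t)\lambda,H_0(\delta x)\rangle)$ together with the distinct intertwining operators $M_{P_0}(s,\lambda)\neq M_{P_0}(t,\lambda)$ places this contribution under the convergence lemma established at the end of Section \ref{sec 5} for pairs $s_1\neq s_2$ in $\Omega(\mathfrak{a},\mathfrak{a}_1)$: shifting the contour in $\lambda$ into the region where the exponential decays shows that its integral over $G(\bb{Q})\backslash G(\bb{A})^1$ tends to $0$ as $T\to\infty$. Combined with the pointwise identification of the diagonal part with $J_{P_0}$, this proves the lemma. The main obstacle is the bookkeeping needed to check that $M_{P_0}(s,\lambda)$ stabilises each isotypic subspace $\mathcal{H}_{P_0,\chi}$ so that the change of basis and the interchange with the sum over $\chi$ are legitimate; this rests on the compatibility of the associate class $\mathfrak{P}_\chi$ with the Weyl-group action recorded in Section \ref{sec 3}.
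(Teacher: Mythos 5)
Your proposal follows the same route as the paper's own proof: after substituting the constant-term formula into $K'_{P_0}$, you split the resulting double Weyl sum into the diagonal piece $s=t$ and the off-diagonal piece $s\neq t$, identify the diagonal piece with $J_{P_0}(f,x,T)$ through the intertwining relation, unitarity of $M_{P_0}(s,\lambda)$, Weyl-group averaging (cancelling the factor $n(A_0)^{-1}=1/6$), and Fourier inversion via Lemma 3.4, and dispose of the off-diagonal piece by invoking the lemma in Section 5 about pairs $s_1\neq s_2$ whose contribution tends to $0$ as $T\to\infty$. The paper is terser (it simply asserts the split into the $J_{P_0}$ term plus an off-diagonal remainder), but the underlying computation is exactly what you spell out, so your argument is correct and essentially identical.
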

	\subsection{the second parabolic term} The second parabolic term is 
	\[
	I_{\text{unr}}^{\mathfrak{o}_{111}^0}(f, x, T) + \sum_i I_{\text{ram}}^{\mathfrak{o}_{111}^i}(f, x, T).
	\]
	
	Recall $I_{\text{unr}}^{\mathfrak{o}_{111}^0}(f, x, T)$ equals
	\[
	\frac{1}{6} \sum_{\gamma \in \{M_{0,t}^{\mathfrak{o}_{111}^0}\}} (n_{\gamma, M_{0}})^{-1} \sum_{\delta \in M_0(\gamma,\bb{Q}) \backslash G(\bb{Q})} f(x^{-1} \delta^{-1} \gamma \delta x)
	\]
	\[
	\cdot \left( 1 + \sum_{P \neq G} \sum_{s \in \Omega(\mathfrak{a}_{0}, P)} (-1)^{\text{dim } A/Z} \hat{\tau}_P(H_0(w_s \delta x) - T) \right).
	\]
	The integral $\int_{G(\bb{Q}) \backslash G(\bb{A})^1} |I_{\text{unr}}^{\mathfrak{o}_{111}^0}(f, x, T)| dx$ is bounded by
	\[
	\frac{1}{6} \sum_{\gamma \in \{M_{0,t}^{\mathfrak{o}_{111}^0}\}} (n_{\gamma, M})^{-1} \int_{ M_0(\gamma, \mathbb{Q}) \backslash G(\bb{A})^1} |f(x^{-1} \gamma x)|
	\]
	\[
	\cdot \left( 1 + \sum_{P \neq G} \sum_{s \in \Omega(\mathfrak{a}_{0}, P)} (-1)^{\text{dim } A/Z} \hat{\tau}_P(H_0(w_s \delta x) - T) \right) dx.
	\]
	It equals
	\[
	\frac{c_{P_{0}}}{6} \sum_{\gamma \in \{ M_{0,t}^{\mathfrak{o}_{111}^0} \}} (n_{\gamma, M})^{-1} \int_{\bK} \int_{  M_{0}(\gamma, \bb{Q}) \backslash P_0 (\mathbb{A})^1} \int_{Z(\bb{R})^0\backslash A_0(\bb{R})^0} |f(k^{-1} p^{-1} \gamma pk)|
	\]
	\[
	\cdot \left( 1 + \sum_{P \neq G} \sum_{s \in \Omega(\mathfrak{a}_{111}, P)} (-1)^{\text{dim } A/Z} \hat{\tau}_P(H_0(w_s pa) - T) \right) da \ dp \ dk.
	\]
	Then the integral becomes
	\[
	\frac{c_{P_{0}}}{6} \sum_{\gamma \in \{ M_{0,t}^{\mathfrak{o}_{111}^0} \}} (n_{\gamma, M})^{-1} \int_{\bK} \int_{  M_{0}(\gamma, \bb{Q}) \backslash P_0 (\bb{A})^1}|f(k^{-1} p^{-1} \gamma pk)| 
	\]
	\[
	\cdot \int_{Z(\bb{R})^0\backslash A_0(\bb{R})^0} \left( 1 + \sum_{P \neq G} \sum_{s \in \Omega(\mathfrak{a}_{111}, P)} (-1)^{\text{dim } A/Z} \hat{\tau}_P(H_0(w_s pa) - T) \right) da \ dp \ dk.
	\]
	
	The sum of over $\gamma$ is finite by Lemma \ref{lem 6.1}. Also, since the function
	\[
	f_K(p) = \int_K f(k^{-1} pk) dk, \quad p \in P_{0}( \bb{A}),
	\]
	has compact support on $M_0(\gamma, \bb{A}) \backslash P_0(\bb{A})$, by Lemma \ref{lem 6.2} it can be taken over a compact set. \\Finally, it is easy to see that for any $p\in M_0(\gamma, \bb{Q}) \backslash P_0 (\mathbb{A})^1$ the function
	\[
	a \longrightarrow 1 + \sum_{P \neq G} \sum_{s \in \Omega(\mathfrak{a}_{0}, P)} (-1)^{\text{dim } A/Z} \hat{\tau}_P(H_0(w_s pa) - T), \quad a \in Z(\bb{R})^0\backslash A_0(\bb{R})^0,
	\]has compact support. So $I_{\text{unr}}^{\mathfrak{o}_{111}^0}(f,x,T)$ is  integrable over $G(\bb{Q})\backslash G(\bb{A})^1$ and its integral is just
	\[\frac{c_{P_{0}}}{6} \sum_{\gamma \in \{ M_{0,t}^{\mathfrak{o}_{111}^0} \}} \tilde{\tau}(\gamma, M) \int_K \int_{N_0 (\mathbb{A})} \int_{M_0(\gamma, \bb{A}) \backslash M_0 (\bb{A})} f(k^{-1} n^{-1} m^{-1} \gamma mnk)
	\]
	\[
	\cdot \int_{Z(\bb{R})^0\backslash A_0(\bb{R})^0} \left( 1 + \sum_{P \neq G} \sum_{s \in \Omega(\mathfrak{a}_{0}, P)} (-1)^{\text{dim } A/Z} \hat{\tau}_P(H_0(w_s a mnk) - T) \right) da  dm  dn  dk.
	\]
	Then we can use the Arthur's $(G, M)$-family to see that the volume of
	\[\int_{Z(\bb{R})^0\backslash A_0(\bb{R})^0} \left( 1 + \sum_{P \neq G} \sum_{s \in \Omega(\mathfrak{a}_{0}, P)} (-1)^{\text{dim } A/Z} \hat{\tau}_P(H_0(w_s a mnk) - T) \right) da\]
	is
	\[
	\frac{a_{P_{0}}}{2} \sum_{P \in \mathfrak{P}(A_{0})} \frac{\langle \lambda, T_P - H_P(\delta x)\rangle^2}{\Pi_{\eta \in \Delta_P}\exp(\langle \lambda, \eta\rangle)},\quad
	\lambda \in \mathfrak{a}_G \backslash \mathfrak{a}_{0}.\]
	where $T_P$ and $H_P(\delta x)$ is the projection to $\mathfrak{a}_G \backslash \mathfrak{a}_0$, this sum is independent of $\lambda$. 
	
	Arthur write this value $v_{M_0}(x,T)$.
	Thus the integral equals
	\begin{equation}\label{9.3}
		\frac{c_{P_{0}} \cdot a_{P_{0}}}{12} \sum_{\gamma \in \{ M_{0}^{\mathfrak{o}_{111}^{0}} \}} \tilde{\tau}(\gamma, M) \int_K \int_{N_0( \bb{A})} \int_{M_0(\gamma, \mathbb{A})\backslash M_0 (\bb{A})} f(k^{-1} n^{-1} m^{-1} \gamma mnk) v_{M_0}(x, T) dmdn dk.
	\end{equation}
	According to the chapter 6 of \cite{A7}, we write $v_{M_{0}}(x,T) = (cd)_{M_{0}}$ where
	\[c_{M_{0}} =\lim_{\lambda\rightarrow0} \sum_{P \in \mathfrak{P}(A_{0})} \frac{\exp(\langle\lambda, X_P\rangle)}{\Pi_{\eta \in \Delta_P} \langle\lambda, \eta\rangle},\quad
	d_{M_{0}}=\lim_{\lambda\rightarrow0}\sum_{P \in \mathfrak{P}(A_{0})} \frac{\exp(\langle\lambda, Y_P\rangle)}{\Pi_{\eta \in \Delta_P} \langle\lambda, \eta\rangle},\quad
	X_{P}=-H_{P_0}(x),Y_P=T_P	.\]
	By corollary 6.5 in \cite{A7} , and the action of weyl group, we can write  $(cd)_{M_{0}}$ as
	\[c_{M_0}^{M_0}d_{M_0}+3c_{M_0}^{M_{21}}d_{M_{21}}+c_{M_0}^Gd_G.\]
	
	If we set $x=nak$, where 
	\[n=\begin{pmatrix}
		1&n_1&n_2\\&1&n_3\\& &1
	\end{pmatrix},\]
	then 
	\[c_{M_0}(x)=c_{M_0}(n)=\frac{a_{P_0}}{2}[2\log A\log B-(\log \frac{A}{D})^2-(\log \frac{B}{C})^2],\]
	where $A=\|(1,n_3,n_1n_3-n_2)\|$, $B=\|(1,n_1,n_2)\|$, $C=\|(1,n_1)\|$ and $D=\|(1,n_3)\|$.
	
	Notice that $c_{M_0}^{M_0}=d_G=1$, then put them into $I_{\text{unr}}^{\mathfrak{o}_{111}^0}(f,x,T)$, $d_{M_0}$, the term corresponding to $d_{M_0}$ is 
	\begin{equation}\label{9.4}
		\begin{split}
			&\sum_{s\in\Omega(\mathfrak{a}_0,\mathfrak{a})}\frac{c_Pa_P}{12}\frac{\langle s\lambda_0,T\rangle^2}{\Pi_{\eta\in\Delta_P}\langle s\lambda_0,\eta\rangle}\sum_{\gamma \in \{ M_{0,t}^{\mathfrak{o}_{111}^{0}} \}} \tilde{\tau}(\gamma, M)\\& \int_K \int_{N_0( \bb{A})} \int_{M_0(\gamma, \mathbb{A})\backslash M_0 (\bb{A})} f(k^{-1} n^{-1} m^{-1} \gamma mnk) dm \ dn \ dk.
		\end{split}
	\end{equation}
	The term corresponding to 	$c_{M_0}^G$  is
	\begin{equation}\label{9.5}
		\begin{split}
			&\sum_{s\in\Omega(\mathfrak{a}_0,\mathfrak{a}_0)}\frac{c_Pa_P}{12}\sum_{\gamma \in \{ M_{0,t}^{\mathfrak{o}_{111}^{0}} \}} \tilde{\tau}(\gamma, M) \int_K \int_{N_0( \bb{A})} \\
			&\int_{M_0(\gamma, \mathbb{A})\backslash M_0 (\bb{A})} f(k^{-1} n^{-1} m^{-1} \gamma mnk) \frac{\langle \lambda_0,-s^{-1}H_0(w_smnk)\rangle^2}{\Pi_{\eta\in\Delta_P}\langle \lambda_0,s^{-1}\eta\rangle}dm \ dn \ dk.
		\end{split}
	\end{equation}
	And since $d_{M_{21}}$  equals $\varpi_\alpha(T)+\varpi_\beta(T)$. Then we can write the integrals correspond to them as the integrals associated to $ P_{21}$, $P_{12}$
	by Lemma \ref{lem 5.6}.
	Thus the terms  corresponding to  $c_{M_0}^{M_{21}}d_{M_{21}}+c_{M_0}^{M_{12}}d_{{M_{12}}}$ is the sum of 
	\begin{equation}\label{9.6}
		\begin{split}
			&3\varpi_\beta(T) c_{P_{21}}a_{P_{21}}\sum_{\gamma\in \{M_{21,t}^{\mathfrak{o}_{111}^0}\}}(n_{\gamma,M})^{-1}\int_{\bK}\int_{M_{21}(\bb{Q})\backslash M_{21}(\bb{A})^1}\int_{N_{21}(\bb{A})} \\&f(k^{-1} m^{-1} \gamma nmk)\int_{Z(\bb{R})^0\backslash A_{21}(\bb{R})^0}1-\hat{\tau}_{P_{21}}(H_0(amn))-\hat{\tau}_{P_{21}}(H_0(w_{13}amn))dadmdndk
		\end{split}
	\end{equation}
	and \begin{equation}\label{9.7}
		\begin{split}
			&3\varpi_\alpha(T) c_{P_{12}}a_{P_{21}}\sum_{\gamma\in \{M_{12,t}^{\mathfrak{o}_{111}^0}\}}(n_{\gamma,M})^{-1}\int_{\bK}\int_{M_{12}(\bb{Q})\backslash M_{12}(\bb{A})^1}\int_{N_{12}(\bb{A})}\\&f(k^{-1} m^{-1} \gamma nmk) \int_{Z(\bb{R})^0\backslash A_{12}(\bb{R})^0}1-\hat{\tau}_{P_{12}}(H_0(amn))-\hat{\tau}_{P_{12}}(H_0(w_{13}amn))dadmdndk.
		\end{split}
	\end{equation}
	
	Now we can see that (\ref{9.6}) and (\ref{9.7}) can be combined with the second parabolic terms of $P_{21}$ and $P_{12}$.
	If we write \[w_{13}nam\] as\[w_{13}aw_{13}^{-1}w_{13}mnk,\]where $w_{13}m\in M_{21}^1$, we can see the integral over $Z(\bb{R})^0\backslash A_{12}(\bb{R})^0$ equals zero. Then this term equals zero.

	By Lemma \ref{lemma 2}, we change the variable of integration on $N_0(\bb{A})$, (\ref{9.4}) becomes
	\[\sum_{s\in\Omega(\mathfrak{a}_0,\mathfrak{a}_0)}\frac{c_Pa_P}{12}\frac{\langle s\lambda_0,T\rangle^2}{\Pi_{\eta\in\Delta_P}\langle s\lambda_0,\eta\rangle}\sum_{\gamma \in \{ M_{0,t}^{\mathfrak{o}_{111}^{0}} \}} \tilde{\tau}(\gamma, M)
	\]\[ \int_K \int_{N_0( \bb{A})} \int_{M_0(\gamma,\bb{A})\backslash M_0(\bb{A})}f(k^{-1}  m^{-1} \gamma nmk) \exp(\langle-2\rho_{P_{0}},H_{P_0}(m)\rangle) dm  dn dk,\] that is
	\begin{equation}
		\begin{split}
			&\sum_{s\in\Omega(\mathfrak{a}_0,\mathfrak{a}_0)}\frac{c_Pa_P}{12}\frac{\langle s\lambda_0,T\rangle^2}{\Pi_{\eta\in\Delta_P}\langle s\lambda_0,\eta\rangle}\sum_{\gamma \in \{ M_{0,t}^{\mathfrak{o}_{111}^{0}} \}} 
			\\& \int_K\int_{N_0( \bb{A})}  \int_{A(\bb{R})^0M_0(\bb{Q})\backslash M_{0,t}(\bb{A})^1}f(k^{-1}  m^{-1} \gamma nmk) \exp(\langle-2\rho_{P_{0}},H_{0}(m)\rangle) dm  dn  dk.
		\end{split}
	\end{equation}
	
	Next, we deal with the ramified orbits $\mathfrak{o}_{111}^i$. Similarly as our previous statement, for $\gamma\in M_{\mathfrak{o}_{111}^i}$ we have the integral of $\sum_{i}I_{\text{ram}}^{\mathfrak{o}_{111}^i}$ equals the sum of 
	\begin{equation}\label{9.9}
		\lim_{\lambda\rightarrow0}\int_{G(\bb{Q}) \backslash G(\bb{A})^1}D_\lambda\{\lambda \mu_{\mathfrak{o}_{111}^3}(\lambda, f,x)\}dx,
	\end{equation}
	\begin{equation}\label{9.10}
		\begin{split}
			&\sum_{P \in \mathfrak{P}_{0}} \frac{c_Pa_P}{2}   \frac{\langle\lambda_0, T\rangle^2}{\prod_{\eta \in \Delta_P} \langle\lambda_0, \eta\rangle} \sum_{\gamma \in \{M_{0,n}^{\mathfrak{o}_{111}^i}\}} (n_{\gamma, M})^{-1} \\
			&\int_K \int_{N(\bb{A})} \int_{ M_0(\gamma,\mathbb{Q}) \backslash M_{0}(\bb{A})^1} f(k^{-1} m^{-1} \gamma nmk) \exp(\langle-2 \rho_P, H_0(m)\rangle) dn dm  dk,
		\end{split}
	\end{equation}
	and the terms  obtained  by Lemma \ref{lem 7.4} and Lemma \ref{lem 7.6}, they are the sum of 
	\begin{equation}\label{9.11}
		\begin{split}
			&\varpi_\beta(T) \sum_i a_{P_{21}} c_{P_{21}} \int_K  \int_{M_{21} (\bb{Q}) \backslash M_{21} (\bb{A})^1} \sum_{\gamma \in M_{21}^{\mathfrak{o}_{111}^i}} \int_{N_{21}(\gamma, \bb{A})} \\
			&\quad f(k^{-1} m^{-1} \gamma nmk) \exp(\langle-2\rho_{P_{21}}, H_0(m)\rangle) dn \, dm \, dk,
		\end{split}
	\end{equation}
	\begin{equation}\label{9.12}
		\begin{split}
			&\varpi_\alpha(T) \sum_i a_{P_{12}} c_{P_{12}} \int_K  \int_{M_{12} (\bb{Q}) \backslash M_{12} (\bb{A})^1} \sum_{\gamma \in M_{12}^{\mathfrak{o}_{111}^i}} \int_{N_{12}(\gamma \bb{A})} \\
			&\quad f(k^{-1} m^{-1} \gamma nmk) \exp(\langle-2\rho_{P_{12}}, H_0(m)\rangle) dn \, dm \, dk,
		\end{split}
	\end{equation}
	\begin{equation}\label{9.13}
		\begin{split}
			&c_{P_{\{\mathfrak{o}_{111}^2 \}}} a_{P_{\{\mathfrak{o}_{111}^2 \}}} \sum_{\gamma\in M_{t,\{\mathfrak{o}_{111}^2 \}}^{\mathfrak{o}_{111}^2 }}\tilde{\tau}(\gamma,M)\\&
			\cdot\int_{\bK}\int_{N_{\{\mathfrak{o}_{111}^2 \}}(\bb{A})}\int_{M_{\{\mathfrak{o}_{111}^2 \}}(\gamma,\bb{A})\backslash M_{\{\mathfrak{o}_{111}^2 \}}(\bb{A})}f(k^{-1}n^{-1}m^{-1}\gamma mnk)v_{M_{\{\mathfrak{o}_{111}^2 \}}}(m) dm  dn  dk.
		\end{split}
	\end{equation}

	\subsection{the third parabolic term} In this section, we calculate the integral over $G(\bb{Q})\backslash G(\bb{A})^1$ of $-K_{P_0}''(f,x,T)$. This integral is
	\[-\frac{1}{24(\pi i)^2} \sum_{\alpha, \beta \in I_{P_{0}}} \int_{i \mathfrak{a}_G \backslash i\mathfrak{a}_{0}} \int_{G(\bb{Q}) \backslash G(\bb{A})^1} E_{P_{0}}''^T (\Phi_{\alpha}, \lambda, x) \overline{E_{P_{0}}''^T (\Phi_{\beta}, \lambda, x)} dx \ d\lambda.\]
	\begin{lemma}
		For $\alpha, \beta \in I_{P_{0}}$ and $\lambda$ a nonzero imaginary number in $i \mathfrak{a}_G \backslash i\mathfrak{a}_{0}$, the inner product
		\[
		\int_{G(\bb{Q}) \backslash G(\bb{A})^1} E_{P_{0}}''^T (\Phi_{\alpha}, \lambda, x) \overline{E_{P_{0}}''^T (\Phi_{\beta}, \lambda, x)} dx
		\]
		is just\begin{equation}\label{9.15}
			\frac{a_{P_{0}}}{2} \sum_{s \in \Omega(\mathfrak{a}_{0}, \mathfrak{a}_{0})} \frac{\langle \lambda_0, s^{-1} T\rangle^2}{\Pi_{\eta \in \Delta_P} \langle \lambda_0, s^{-1} \eta\rangle} (\Phi_{\alpha}, \Phi_{\beta})
		\end{equation}
		\begin{equation}\label{9.16}
			+ \frac{a_{P_{0}}}{2} \sum_{t \in \Omega(\mathfrak{a}_{0}, \mathfrak{a}_{0})} \{ (M_{P_0}(t^{-1}, t \lambda) D_\lambda M_{P_0}(t, \lambda) \Phi_{\alpha}, \Phi_{\beta}) \}
		\end{equation}
		
		\begin{equation}\label{9.17}
			+ \frac{a_{P_{0}}}{2} \sum_{s \neq t \in \Omega(\mathfrak{a}_{0}, \mathfrak{a}_{0})} \frac{\exp(\langle t\lambda - s\lambda, T\rangle (M_{P_0}(t, \lambda) \Phi_{\alpha}), M_{P_0}(s, \lambda) \Phi_{\beta})}{\Pi_{\eta \in \Delta_P} \langle t\lambda - s\lambda, \eta\rangle}.
		\end{equation}
	\end{lemma}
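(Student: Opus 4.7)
The plan is to apply Langlands' inner product formula for truncated Eisenstein series and then carefully analyze a degenerate limit. Since $\La E_{P_0}(\Phi,\lambda,x)=E_{P_0}''^T(\Phi,\lambda,x)$, I begin with the meromorphic identity
\[
\int_{G(\bb{Q})\backslash G(\bb{A})^1}\La E_{P_0}(\Phi_\alpha,\lambda_1,x)\overline{\La E_{P_0}(\Phi_\beta,\lambda_2,x)}\,dx=\sum_{s,t\in\Omega(\mathfrak{a}_0,\mathfrak{a}_0)}\frac{a_{P_0}\exp\langle s\lambda_1+t\overline{\lambda}_2,T\rangle}{\prod_{\eta\in\Delta_{P_0}}\langle s\lambda_1+t\overline{\lambda}_2,\eta\rangle}\bigl(M_{P_0}(s,\lambda_1)\Phi_\alpha,M_{P_0}(t,\lambda_2)\Phi_\beta\bigr),
\]
valid after analytic continuation, following \cite{L1}. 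The goal is to specialize to $\lambda_1=\lambda_2=\lambda$ with $\lambda$ a nonzero pure imaginary element of $i\mathfrak{a}_G\backslash i\mathfrak{a}_0$, so that $\overline{\lambda}=-\lambda$, and to split the double sum according to whether $s=t$ or $s\neq t$; these two blocks correspond to the regular and singular contributions of the formula, respectively.

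For the off-diagonal block $s\neq t$, the vector $(s-t)\lambda$ is nonzero away from a lower-dimensional union of walls, so each summand is regular at $\lambda_1=\lambda_2=\lambda$ and direct substitution yields exactly (\ref{9.17}). For the diagonal block $s=t$, the denominator acquires an order-two zero, so I parameterize $\lambda_2=\lambda+\mu\lambda_0$ for a fixed regular $\lambda_0\in\mathfrak{a}_0$ and pass to the limit $\mu\to 0$. The diagonal summand becomes
\[
\frac{a_{P_0}\exp(-\mu\langle s\lambda_0,T\rangle)}{\mu^2\prod_{\eta\in\Delta_{P_0}}\langle s\lambda_0,\eta\rangle}\bigl(M_{P_0}(s,\lambda)\Phi_\alpha,M_{P_0}(s,\lambda+\mu\lambda_0)\Phi_\beta\bigr),
\]
which I Taylor-expand in $\mu$ up to order $\mu^2$. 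The unitarity relation $M_{P_0}(s,\lambda)^*=M_{P_0}(s^{-1},s\lambda)$ on the imaginary axis normalizes the zeroth-order inner product to $(\Phi_\alpha,\Phi_\beta)$ and lets me transport each $\mu$-derivative of $M_{P_0}(s,\cdot)$ across the pairing.

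The coefficients at $\mu^{-2}$ and $\mu^{-1}$ must vanish. This can be justified directly by the Arthur convex-hull identities
\[
\sum_{s\in\Omega(\mathfrak{a}_0,\mathfrak{a}_0)}\frac{\langle s\lambda_0,T\rangle^k}{\prod_{\eta\in\Delta_{P_0}}\langle s\lambda_0,\eta\rangle}=0,\qquad k\in\{0,1\},
\]
which follow from the fact that $v_{M_0}(\cdot,T)$ in (\ref{7.15}) is a polynomial in $T$, or alternatively by invoking the finiteness of the full truncated inner product, which forces all singular coefficients to cancel. The finite $\mu^0$ coefficient then breaks into two pieces. The piece arising from $\tfrac{1}{2}\langle s\lambda_0,T\rangle^2(\Phi_\alpha,\Phi_\beta)$ contributes, after the reindexing $\langle s\lambda_0,\cdot\rangle=\langle\lambda_0,s^{-1}\cdot\rangle$, precisely (\ref{9.15}). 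The remaining $\mu^0$ pieces, which mix $\langle s\lambda_0,T\rangle\,\partial_\mu M_{P_0}(s,\lambda+\mu\lambda_0)|_{\mu=0}$ with the pure second derivative $\partial_\mu^2 M_{P_0}(s,\lambda+\mu\lambda_0)|_{\mu=0}$, assemble into (\ref{9.16}) after the unitarity rewrite, with $D_\lambda$ interpreted in the same regularized sense as for $D_\lambda\{\lambda\mu_\mathfrak{o}\}$ in Section \ref{sec 7}.

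The principal obstacle is this last collapse: verifying that the surviving cross terms and the second-derivative contributions combine into the single compact form $M_{P_0}(t^{-1},t\lambda)D_\lambda M_{P_0}(t,\lambda)$ of (\ref{9.16}), rather than leaving a residue that depends on the auxiliary direction $\lambda_0$. This $\lambda_0$-independence is the rank-two analog of the L'H\^opital step used in the $P_{21}$ computation to extract (\ref{8.13}); once it is established, adding back (\ref{9.17}) yields the three terms (\ref{9.15})--(\ref{9.17}) as claimed.
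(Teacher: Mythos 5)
Your approach matches the paper's: start from Langlands' inner-product formula for truncated Eisenstein series, separate the diagonal $s=t$ terms from the off-diagonal $s\neq t$ terms, substitute directly in the off-diagonal block to obtain (\ref{9.17}), and take an L'H\^opital/Taylor limit on the diagonal block to extract (\ref{9.15}) and (\ref{9.16}); this is exactly what the paper's proof does, just in fewer words. Two remarks on the details. First, the convex-hull identities you invoke for $k\in\{0,1\}$ do kill the $\mu^{-2}$ coefficient and the $T$-proportional part of the $\mu^{-1}$ coefficient, but the $\mu^{-1}$ coefficient also carries a cross term of the shape
\[
\sum_{s}\frac{\bigl(\Phi_\alpha,\, M_{P_0}(s^{-1},s\lambda)\,\partial_{\lambda_0}M_{P_0}(s,\lambda)\Phi_\beta\bigr)}{\prod_{\eta\in\Delta_{P_0}}\langle s\lambda_0,\eta\rangle},
\]
and its cancellation is a statement about the $(G,M_0)$-family formed by the normalized intertwining operators themselves, not merely about the exponential factors; your \emph{alternative} justification via finiteness of the truncated inner product covers this cleanly and is the one to prefer. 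Second, the $\lambda_0$-independence you flag as the ``principal obstacle'' is precisely the content the paper compresses into ``use L'H\^opital's rule twice,'' so on that point you are in the same position as the source.
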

	\begin{proof}
		Suppose that $\lambda_1,\overline{\lambda}$ are different complex numbers in $i \mathfrak{a}_G \backslash i\mathfrak{a}_{0}$, and their real parts are regular. Then due to the work of Langlands, we have 
		\[\int_{G(\bb{Q})\backslash G(\bb{A})^1} E_{P_{0}}''^T (\Phi_{\alpha}, \lambda_1, x) \overline{E_{P_{0}}''^T (\Phi_{\beta}, \lambda, x)} dx\]
		\[=\frac{a_{P_{0}}}{2} \sum_{t \in \Omega(\mathfrak{a}_{0}, \mathfrak{a}_{0})} \sum_{s \in \Omega(\mathfrak{a}_{0}, \mathfrak{a}_{0})} \frac{\exp(\langle t\lambda_1 + s\overline{\lambda}, T\rangle)}{\Pi_{\eta \in \Delta_P} \langle t\lambda_1 + s\overline{\lambda}, \eta\rangle} (M_{P_0}(t, \lambda_1) \Phi_{\alpha}; M_{P_0}(s, \lambda) \Phi_{\beta})
		\]
		This function is meromorphic in $\lambda_1,\overline{\lambda}$. We need to calculate the limit of this term as $\lambda_1\rightarrow \lambda$, so that we replace $\lambda_1 -\lambda$ by $a\lambda_0$, and let $a$ approach 0. We decompose it into two terms: $t = s$ and $t \neq s$. 
		
		We deal with the term of $t = s$ similarly as we did in previous sections, that is, use the L'H\^{o}pital's rule (while now we use twice). The result is just (\ref{9.15}) and (\ref{9.16}). Then when $t \neq s$, It is not necessary to use L'H\^{o}pital's rule; we can directly let $a$ approach 0 to obtain (\ref{9.17}).
	\end{proof}
	Now the term correspond to (\ref{9.16}) is
	\begin{equation}\label{9.18}
		-\frac{a_{P_{0}}}{48(\pi i)^2} \sum_{s \in \Omega(\mathfrak{a}_{0},\mathfrak{a}_{0})} \sum_{\chi} \int_{i\mathfrak{a}_G \backslash i\mathfrak{a}_{0}} \Tr\{M_{P_0}(s^{-1},s\lambda) \cdot (D_\lambda M_{P_0}(s,\lambda)) \cdot \pi_{P_{0},\chi}(\lambda,f)\} d\lambda.
	\end{equation}
	We have proved this term is finite. Then we substitute (\ref{9.15}) into $K''_{P_{0}}(f,x,T)$, it equals
	\[
	\frac{a_{P_{0}}}{2} \sum_{s \in \Omega(\mathfrak{a}_{0},\mathfrak{a})} \frac{\langle \lambda_0,s^{-1}T\rangle^2}{\prod_{\eta \in \Delta_P} \langle \lambda_0,s^{-1}\eta\rangle} \int_{i\mathfrak{a}_G \backslash i\mathfrak{a}_{0}} \Tr(\pi_{P_{0}}(\lambda,f)) d\lambda.
	\]
	By the continuity of $P_{P_{0}}$,  it equals \[
	c_{P_{0}} \cdot \frac{a_{P_{0}}}{24(\pi i)^2} \sum_{s \in \Omega(\mathfrak{a}_{0},\mathfrak{a})} \frac{\langle \lambda_0,s^{-1}T\rangle^2}{\prod_{\eta \in \Delta_P} \langle \lambda_0,\eta\rangle}
	\int_{i\mathfrak{a}_G \backslash i\mathfrak{a}_{0}} \int_{ M_0(\bb{Q}) \backslash M_0(\bb{A})^1} P_{P_{0}}(\lambda,f,mk,mk) dm \, dk \, d\lambda,
	\]
	And by the Fourier inversion formula,  we have
	\[
	\frac{c_{P_{0}} \cdot a_{P_{0}}}{6} \sum_{s \in \Omega(\mathfrak{a}_{0},\mathfrak{a})} \frac{\langle \lambda_0,s^{-1}T\rangle^2}{\prod_{\eta \in \Delta_P}\langle \lambda_0,\eta\rangle}
	\]
	\[
	\int_K \int_{ M_0(\mathbb{Q}) \backslash M_0(\bb{A})^1} \sum_{\gamma \in M_0(\bb{Q})} \int_{N_0(\bb{A})} f(k^{-1}m^{-1}\gamma nmk) \exp(\langle2\rho_{P_{0}},H_{P_{0}}(m)\rangle) dn \; dm \; dk.
	\]
	So the terms of (\ref{9.4}) and (\ref{9.10}) can be canceled.
	
	As for the term (\ref{9.17}), we put it into $K''_{P_{0}}(f,x,T)$ and rewrite it as 
	\begin{equation}
		\frac{a_{P_{0}}}{24(\pi i)^2} \sum_{\xi} \int_{i \mathfrak{a}_G\backslash i\mathfrak{a}_0}\sum_{s\neq t\in\Omega(\mathfrak{a_0},\mathfrak{a}_0)}\frac{\exp(\langle t\lambda-s\lambda,T\rangle)(M_{P_0}(t,\lambda)\Phi_\alpha,M_{P_0}(s,\lambda)\Phi_\beta)}{\Pi_{\eta\in\Delta_P}\langle t\lambda-s\lambda,\eta\rangle}d\lambda.
	\end{equation}
	We have known that for every term above, the sum over $\beta$ is finite. 
	
	Thus (\ref{9.17}) equals the residue of
	\begin{equation} \label{9.20}
		\begin{split}
			&	-\sum_{P\in\mathfrak{P}_0}\frac{a_P}{24(\pi i)^2}
			\sum_{s\neq t\in\Omega(\mathfrak{a}_0,\mathfrak{a})}\sum_{\alpha, \beta \in I_{P_0}} \\&\int_{i \mathfrak{a}_G \backslash i\mathfrak{a}} \frac{\exp(\langle t\lambda - s\lambda, T\rangle (M_{P_0}(t, \lambda) \Phi_{\alpha}), M_{P_0}(s, \lambda) \Phi_{\beta})}{\prod_{\eta \in \Delta_{P_0}} \langle t\lambda - s\lambda, \eta\rangle} d\lambda.
		\end{split}
	\end{equation} 
	
	at $(0,0)$, the result has no $T$.
	
	\begin{lemma}
		The  sum over $G(\bb{Q})\backslash G(\bb{A})$ of
		\[I_{unr}^{\mathfrak{o}_{111}^0}(f,x,T)+\sum_{k}I_{unr}^{\mathfrak{o}_{111}^k}(f,x,T)\]equals the sum of $(\ref{9.5})$, $(\ref{9.9})$,  $(\ref{9.18})$, $(\ref{9.20})$.
	\end{lemma}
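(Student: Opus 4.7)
The plan is to expand each summand on the left-hand side according to the $(G,M)$-family decomposition and the Laurent expansions already assembled in this section, then verify that after cancelling the $T^2$ and linear-in-$T$ contributions among themselves the surviving pieces are exactly (\ref{9.5}), (\ref{9.9}), (\ref{9.18}), and (\ref{9.20}). First, for the unramified orbit $\mathfrak{o}_{111}^0$, I would substitute the $(G,M)$-family identity
\[
v_{M_0}(x,T)=(cd)_{M_0}=c_{M_0}^{M_0}d_{M_0}+3\,c_{M_0}^{M_{21}}d_{M_{21}}+c_{M_0}^{G}d_{G}
\]
(Corollary 6.5 of \cite{A7}) into the integral (\ref{9.3}). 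This splits the $\mathfrak{o}_{111}^0$-contribution into three pieces: the purely $T^2$-term (\ref{9.4}), the $T$-independent residual (\ref{9.5}), and the $d_{M_{21}}$ cross terms (\ref{9.6})--(\ref{9.7}).

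Next, for each ramified orbit $\mathfrak{o}_{111}^{k}$, I would invoke Lemma \ref{lem 7.4} to decompose its integral into the four pieces (\ref{7.11})--(\ref{7.14}). Summed over $k$, these produce the regularised limit (\ref{9.9}), the $T^2$-piece (\ref{9.10}), the two linear-in-$T$ pieces (\ref{9.11})--(\ref{9.12}) supported at the proper Levi components $M_{21}, M_{12}$, and the weighted $\mathfrak{o}_{111}^2$-integral (\ref{9.13}). At this stage the total sum on the left-hand side is expressed as the direct sum of (\ref{9.4}), (\ref{9.5}), (\ref{9.6})+(\ref{9.7}), (\ref{9.9})--(\ref{9.13}).

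Now I would perform the cancellations already flagged in the text. The cross terms (\ref{9.6}), (\ref{9.7}) vanish via the change of variables $w_{13}amn=(w_{13}aw_{13}^{-1})(w_{13}mn)$, which makes the inner one-dimensional integral over $Z(\mathbb{R})^0\backslash A_{21}(\mathbb{R})^0$ zero. The $T^2$-pieces (\ref{9.4}) and (\ref{9.10}) are combined and, via Fourier inversion (Lemma \ref{lem 3.4}) applied to the trace $\int\Tr\pi_{P_0}(\lambda,f)\,d\lambda$, shown to coincide with the diagonal spectral contribution arising from (\ref{9.15}); this leaves only the L'Hopital derivative piece (\ref{9.18}) and the off-diagonal residue (\ref{9.20}). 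The linear-in-$T$ pieces (\ref{9.11})--(\ref{9.13}) are regrouped with the second parabolic contributions (\ref{8.17})--(\ref{8.18}) from Section \ref{sec 8}, where they complete the orbit sums over ramified elements in $M_{21}, M_{12}$ and so vanish from the present accounting.

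The main obstacle is the precise bookkeeping in the $T^2$ cancellation: the prefactor $\tfrac{1}{2}$ in $v_{M_0}(x,T)$, the symmetric sum over $s\in\Omega(\mathfrak{a}_0,\mathfrak{a}_0)$ appearing in both (\ref{9.4}) and (\ref{9.15}), and the normalisation constants $a_{P_0}$, $c_{P_0}$, $|\Omega(\mathfrak{a}_0,\mathfrak{a}_0)|^{-1}=\tfrac{1}{6}$ must be tracked simultaneously so that the geometric and spectral $T^2$-coefficients agree exactly. Once this verification is complete, the only surviving pieces are (\ref{9.5}), (\ref{9.9}), (\ref{9.18}), (\ref{9.20}), which yields the claimed identity.
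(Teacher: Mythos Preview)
Your proposal is correct and follows essentially the same route as the paper: the lemma is a summary of the computations carried out in \S\ref{sec 9}.2--\S\ref{sec 9}.3, and your plan recaps precisely those steps (the $(G,M)$-family splitting of $v_{M_0}(x,T)$, Lemma~\ref{lem 7.4} for the ramified pieces, the vanishing of (\ref{9.6})--(\ref{9.7}), and the $T^2$-matching via Lemma~\ref{lem 3.4} and Fourier inversion against (\ref{9.15})). One small inaccuracy: (\ref{9.13}) is not linear in $T$ --- it carries the weight $v_{M_{\{\mathfrak{o}_{111}^2\}}}(m)$ and is $T$-independent --- so it does not group with (\ref{9.11})--(\ref{9.12}); it is simply a surviving geometric term that the paper records separately in Theorem~\ref{thm 11.1} rather than in this lemma.
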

	
	Now, all the second parabolic terms associated to different parabolic subgroups contain $P_0$ can be canceled by the third parabolic terms.
	
	We have finished the calculation of the integral over $G(\bb{Q})\backslash G(\bb{A})^1$ of $K(x,x)-K_1(x,x)$. We have proved that the first parabolic terms approaches 0 as $T$ approaches $\infty$, the second and the third parabolic terms are left.
	
	If we denote $J_{\text{geo}}(f,x)$ the geometric side of kernel, $J_{\text{spec}}(f,x)$ the spectral side respectively. We can write
	\[J_{\text{geo}}(f,x)=J_{\text{spec}}(f,x)\]
	as \[J_{\text{geo}}^{d}(f,x)+J_{\text{geo}}^{c}(f,x)=J_{\text{spec}}^d(f,x)+J_{\text{spec}}^c(f,x),\]
	where the superscript of $J^d$ means the divergent part of each side, and superscript of $J^c$ means the convergent part respectively.\newline	
	Then we have one of  our main theorem:
	\begin{theorem}\label{thm 9.3}
		For any $f\in C_c^\infty(G(\bb{A})^1)$,
		\[J_{\text{geo}}^{d}(f,x)=J_{\text{spec}}^{d}(f,x).\]
	\end{theorem}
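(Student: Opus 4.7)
The plan is to take the kernel identity \eqref{1.1}, subtract off the $\R_1$-part whose trace is $\int K_1(x,x)\,dx$, and integrate the decomposition \eqref{5.9}--\eqref{5.12} of $K(x,x)-K_1(x,x)$ over $G(\bb{Q})\backslash G(\bb{A})^1$. The left hand side integrates to $J_{\text{geo}}^d(f)+J_{\text{geo}}^c(f)$ and the right hand side to $J_{\text{spec}}^d(f)+J_{\text{spec}}^c(f)$. What must be shown is that the parts that genuinely depend on the truncation parameter $T$ (i.e.\ the $T$-polynomial divergent contributions on either side, which we package as $J^d_{\text{geo}}(f)$ and $J^d_{\text{spec}}(f)$) match term-by-term, so that after subtracting them the remaining two $T$-independent quantities $J^c_{\text{geo}}(f)$ and $J^c_{\text{spec}}(f)$ are equal.

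First I would organize the bookkeeping: from Sections~\ref{sec 8} and \ref{sec 9} the integral of (\ref{5.9})--(\ref{5.12}) against $dx$ splits, for each associated class $\mathfrak{P}$, into a \emph{first parabolic} piece $J^{\mathfrak{o}}_{\text{unr}}+J^{\mathfrak{o}}_{\text{ram}}-K'$, a \emph{second parabolic} piece $I^{\mathfrak{o}}_{\text{unr}}+I^{\mathfrak{o}}_{\text{ram}}$, and a \emph{third parabolic} piece $-K''$. The first parabolic piece has already been shown (end of Section~\ref{sec 8} for $P_{21}$ and Section~\ref{sec 9} for $P_0$) to be locally integrable and to tend to $0$ as $T\to\infty$; hence it contributes nothing to either $J^c$ or $J^d$.

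Next I would match the second parabolic and third parabolic pieces explicitly. For the associated class of $P_{21}$, the $T$-dependent outputs of $I_{\text{unr}}^{\mathfrak{o}_{21}}$ are (\ref{8.8}) and (\ref{8.9}), together with the ramified contributions (\ref{7.12})--(\ref{7.13}) that enter through the integrals over $P_{21}$, $P_{12}$; for the third parabolic piece, the residue computation using the inner product formula for $E''^T_{P_{21}}$, $E''^T_{P_{12}}$ produces exactly the coefficients $2a_{P_{21}}\varpi_\beta(T)$, $2a_{P_{12}}\varpi_\alpha(T)$ in (\ref{8.12}), (\ref{8.14}) applied to $\Tr \pi_{P_{21}}(\lambda,f)$, $\Tr\pi_{P_{12}}(\lambda,f)$. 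Using Lemma~\ref{lem 3.4} to rewrite the spectral trace via $P_P(\lambda,f,mk,mk)$ and Fourier inversion on $\lambda$, these two $T$-multiples reduce to the same expressions (\ref{8.10}), (\ref{8.11}) appearing on the geometric side, and cancellation is exact once one also folds in the extra ramified contributions (\ref{8.18}). The same procedure applied to $\mathfrak{P}=\{P_0\}$ uses the decomposition $v_{M_0}(x,T)=(cd)_{M_0}=c_{M_0}^{M_0}d_{M_0}+3c_{M_0}^{M_{21}}d_{M_{21}}+c_{M_0}^Gd_G$ from \cite{A7}, so that the $d_{M_0}$ term matches the $T$-quadratic contribution of (\ref{9.15}) and the $d_{M_{21}}$, $d_{M_{12}}$ terms feed into the $P_{21}$, $P_{12}$ cancellations already made, while the cross residues (\ref{9.17}) and (\ref{9.18}) are $T$-independent and land in $J^c_{\text{spec}}(f)$.

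The main obstacle is to verify that the matching in the previous step is actually exhaustive: one must track every $T$-polynomial (including linear and quadratic in the coordinates $\varpi_\alpha(T),\varpi_\beta(T)$, as well as the residual terms produced by differentiating intertwining operators) and show nothing is left over. Concretely, I would collect the $T$-dependent contributions of $I_{\text{unr}}^{\mathfrak{o}}$ and $I_{\text{ram}}^{\mathfrak{o}}$ for each orbit class (ramified orbits contributing via Lemma~\ref{lem 7.4}) and group them by the weight $\langle\lambda_0,s^{-1}T\rangle^k$ that arises; on the spectral side, similarly collect the pieces produced by expanding $v_M(x,T)$ and by the Laurent expansions used in the proofs of the $K''$-computations. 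After carrying out this accounting, every pure $T$-polynomial contribution appears identically on both sides with the same coefficient, so their difference vanishes, which is precisely the identity $J_{\text{geo}}^d(f)=J_{\text{spec}}^d(f)$. Subtracting this from the integrated identity $J_{\text{geo}}^d(f)+J_{\text{geo}}^c(f)=J_{\text{spec}}^d(f)+J_{\text{spec}}^c(f)$ (which itself follows from integrating \eqref{1.1} once the $\R_1$ part is transposed and after invoking the convergence statements of Sections~\ref{sec 6}--\ref{sec 9}) yields $J^c_{\text{geo}}(f)=J^c_{\text{spec}}(f)$, and hence the theorem.
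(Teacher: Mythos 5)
Your proof sketch follows the same route as the paper: integrate the decomposition (\ref{5.9})--(\ref{5.12}) of $K(x,x)-K_1(x,x)$, invoke the Section \ref{sec 8}--\ref{sec 9} lemmas that the first parabolic pieces vanish as $T\to\infty$, and then exhibit the term-by-term cancellation of the $T$-polynomial contributions from the second and third parabolic terms so that only the $T$-independent identity $J^c_{\text{geo}}(f)=J^c_{\text{spec}}(f)$ survives; this is precisely how the paper arrives at Theorem \ref{thm 9.3} as a summary of those sections. One small misattribution: $v_{M_0}(x,T)$ and its $(c,d)$-decomposition live on the \emph{geometric} side (they are the weight in the second parabolic term $I^{\mathfrak{o}}_{\text{unr}}$), not the spectral side as you write; the spectral $T$-dependence comes only from the Laurent/residue expansions of the inner-product formulas for $E_P''^T$ in the $K''$-terms, and moreover the paper shows the $c_{M_0}^{M_{21}}d_{M_{21}}$ and $c_{M_0}^{M_{12}}d_{M_{12}}$ contributions (\ref{9.6})--(\ref{9.7}) actually vanish rather than feeding into the $P_{21}$, $P_{12}$ cancellation.
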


	\begin{theorem}
		For $f\in C_c^\infty(G(\bb{A})^1)$, the trace of $\R_0(f)$ is the sum of: \\
		G-elliptic term
		\[\sum_{\gamma\in G_e}\tilde{\tau}(\gamma,G)\int_{G(\gamma,\bb{A})\backslash G(\bb{A})}f(x^{-1} \gamma x)dx,\]
		the terms from  $P_{21}$ are the sum of
		\begin{align*}
			&	c_{P_{21}} \sum_{\gamma \in \{M_{21,t}^{\mathfrak{o}_{111}^{2}}\}} \tilde{\tau}(\gamma, M) \\&\cdot\int_K \int_{N_{21}(\bb{A})} \int_{M_{21}(\gamma,\bb{A}) \backslash M_{21}(\bb{A})} f(k^{-1}n^{-1}m^{-1}\gamma mnk) \cdot v_{M_{21}}(n)dm \, dn \, dk,
		\end{align*}
		\begin{align*}
			&c_{P_{21}} \sum_{\gamma \in \{M_{21,t}^{\mathfrak{o}_{21}}\}} \tilde{\tau}(\gamma, M)\\&\cdot \int_K \int_{N_{21}(\bb{A})} \int_{M_{21}(\gamma,\bb{A}) \backslash M_{21}(\bb{A})} f(k^{-1}n^{-1}m^{-1}\gamma mnk) \cdot v_{M_{21}}(n) dm \, dn \, dk,
		\end{align*}
		\begin{equation}\label{12.1}
			\frac{a_{P_{21}}}{2\pi i} \sum_{\chi} \int_{i \mathfrak{a}_G \backslash i\mathfrak{a}_{21}} \Tr\{M_{P_{21}}((12)^{-1}, (12)\lambda) \cdot \left( \frac{d}{d\lambda}M_{P_{21}}((12), \lambda) \right) \cdot \mathcal{I}_{P_{21}, \chi}(\lambda, f) \} d\lambda,
		\end{equation}
		\begin{equation}\label{12.2}
			\frac{a_{P_{12}}}{2\pi i} \sum_{\chi} \int_{i \mathfrak{a}_G \backslash i\mathfrak{a}_{12}} \Tr\{ M_{P_{12}}((12)^{-1}, (12)\lambda) \cdot \left( \frac{d}{d\lambda} M_{P_{12}}((12), \lambda) \right) \cdot \mathcal{I}_{P_{12}, \chi}(\lambda, f) \} d\lambda,
		\end{equation}
		the terms from $P_{0}$ are the sum of
		\[\sum_{s\in\Omega(\mathfrak{a}_0,\mathfrak{a}_0)}\frac{c_P}{6}\sum_{\gamma \in \{ M_{0,t}^{\mathfrak{o}_{111}^{0}} \}} \tilde{\tau}(\gamma, M) \int_K \int_{N_0( \bb{A})}\] \[
		\cdot\int_{M_0(\gamma, \mathbb{A})\backslash M_0 (\bb{A})} f(k^{-1} n^{-1} m^{-1} \gamma mnk)v_{M_0}(n)dm \ dn \ dk,
		\]
		\[	\lim_{\lambda\rightarrow0}\int_{G(\bb{Q}) \backslash G(\bb{A})^1}D_\lambda\{\lambda \mu_{\mathfrak{o}_{111}^3}(\lambda, f,x)\}dx,\]
		\begin{equation}\label{12.3}
			-\frac{a_{P_{0}}}{48(\pi i)^2} \sum_{s \in \Omega(\mathfrak{a}_{0},\mathfrak{a}_{0})} \sum_{\chi} \int_{i\mathfrak{a}_G \backslash i\mathfrak{a}_{0}} \Tr\{M_{P_0}(s^{-1},s\lambda) \cdot (D_\lambda M_{P_0}(s,\lambda)) \cdot \mathcal{I}_{P_{0},\chi}(\lambda,f)\} d\lambda,
		\end{equation}
		and
		\begin{equation}\label{12.4}
			-\frac{a_{P_0}}{24(\pi i)^2}
			\sum_{s\neq t\in\Omega(\mathfrak{a}_0,\mathfrak{a}_0)}\sum_{\alpha, \beta \in I_{P_0}} \int_{i a_G \backslash i\mathfrak{a}_0} \frac{\exp(\langle t\lambda - s\lambda, T\rangle) (M_{P_0}(t, \lambda) \Phi_{\alpha}, M_{P_0}(s, \lambda) \Phi_{\beta})}{\prod_{\eta \in \Delta_{P_0}} \langle t\lambda - s\lambda, \eta\rangle} d\lambda.
		\end{equation}
	\end{theorem}

	\section{The weighted orbital integral}\label{sec 10}
	We will present the convergence part on the geometric side by the weighted orbital integral $J_M(\gamma,f)$.
	Note that Arthur introduces a parameter $T$ to control the divergent part, so that  $J^T(f)$ is a polynomial in $T$. 	Denote by $J(f)$  the value at $T = T_0$ of a certain polynomial $J^T(f)$, where $T_0$ satisfies $H_{P_0}(w_s^{-1}) = T_0 - s^{-1} T_0$. In the case of $\mathrm{GL}(n)$, we have $T_0 = 0$.  In our discussion of section  \ref{sec 8} and \ref{sec 9}, we can see the convergent part of  every distribution does not depend on the parameter $T$, which can therefore be regarded as $T=0$.
	
	If $\mathfrak{o}$ is unramified, then the convex hull gives rise to a weight function in the  distribution of  unramified orbits.
	
	Let $S$ be a finite set of places of $\bb{Q}$ that contains $v_\infty$, and let $M$ be a Levi subgroup of $G$. Recall the definition of weighted orbital integrals induced by Arthur in \cite{A5}, which we shall denote by $J_M^T(\gamma,f)^A$, for $\gamma \in M(\bb{Q})$ and $f \in C_c^\infty(G(\bb{Q}_S))$. Fix a $G(\bb{Q}_S)$-invariant measure on $G(\gamma,\bb{Q}_S) \backslash G(\bb{Q}_S)$ and define the function $D(\gamma)$ as
	\[D(\gamma)=
	\det \left(1 - \mathrm{Ad}(\sigma)\right)_{\mathfrak{g}/\mathfrak{g}(\sigma)}\in \bb{Q},
	\]
	where $\sigma$ is the semisimple part of $\gamma$. In the case where $G(\gamma) \subset M$, the function in \eqref{7.15} is left $G(\gamma,\mathbb{Q}_S)$-invariant. Then, the weighted orbital integral is defined as
	\[J_M^T(\gamma,f)^A=|D(\gamma)|_S^{\frac{1}{2}}\int_{G(\gamma,\bb{Q}_S)\backslash G(\bb{Q}_S)}f(g^{-1}\gamma g)v_M(g,T)dg.\]

	If we take $S$ large enough and $T=0$, $|D(\gamma)|_S$ equals 1 by the product formula. Therefore, the weighted orbital integral coincides with $J_{\text{unr}}(f)$ when $\gamma$ is unramified.
	
	Arthur addresses general distributions  in \cite{A5}, by approximating ramified orbits with unramified orbits. When $G(\gamma) \nsubseteq M$, choose an element $a \in A_M(\bb{Q}_S)$    such that $G(a \gamma)$ is contained in $M$ and close to the identity. He obtains the formula
	\begin{equation}\label{7.16}
		\lim_{a \to 1} \sum_{L\in\mathcal{L}(M)} r_M^L(a, \gamma) J_L(a\gamma, f)^A, 
	\end{equation}
	to define the weighted orbital integral, and proves that this limit exists.
	
	We will calculate $J_{M_0}(1,f)$ as an example. Suppose $S$ contains only the Archimedean place $v_\infty$, and take $a = \mathrm{diag}(t_1, t_2, t_3)$ for pairwise distinct positive real numbers $t_1, t_2, t_3$. Then, we have
	\[ J_M(a,f)^A=D(a)^{\frac{1}{2}}\int_K\int_{N_0(\bb{R})}f(k^{-1}n^{-1}ank)v_{M_0}(n)dndk.\tag{10.2}\label{7.17}\]\setcounter{section}{10}  % 设置章节号为7
	\setcounter{equation}{2}
	where the normalizing factor
	\[
	D(a) = D^G(a) 
	\]
	is the generalized Weyl discriminant
	\[
	\det \left(1 - \mathrm{Ad}(a)\right)_{\mathfrak{g}/\mathfrak{g}(a)},
	\]
	and $\mathfrak{g}({a})$  is the Lie algebra of  $G({a})$.
	
	Suppose \[n=\begin{pmatrix}
		1&n_1&n_2\\0&1&n_3\\& &1
	\end{pmatrix},\]
	then  by  (\ref{7.15}), we have 
	\[v_{M_0}(n)=\frac{a_{P_0}}{2}[2\ln A\ln B-(\ln \frac{A}{D})^2-(\ln \frac{B}{C})^2],\]
	where $A=\|(1,n_3,n_1n_3-n_2)\|$, $B=\|(1,n_1,n_2)\|$, $C=\|(1,n_1)\|$ and $D=\|(1,n_3)\|$.
	
	By Lemma \ref{lemma:1}, we perform a standard change of variables. 
	\[n\longrightarrow\nu=a^{-1}n^{-1}an=\begin{pmatrix}
		1&v_1&v_2\\&1&v_3\\&&1
	\end{pmatrix}=\begin{pmatrix}
		1& (1-\frac{t_2}{t_1}n_1)& n_2-\frac{t_2}{t_1}n_1n_3-\frac{t_3}{t_1}(n_2-n_1n_3)\\
		&1&(1-\frac{t_2}{t_3}n_3)\\& &1
	\end{pmatrix}.\]
	Then (\ref{7.17}) becomes
	\[J_M(a,f)^A=|D(a)^{\frac{1}{2}}|\int_K\int_{N_0(\bb{R})}f(k^{-1}a\nu k)v_{M_0}(\nu)d\nu dk,\]
	where $v_{M_0}(\nu)$ has the same form as above, with
	\[n_1=\frac{t_1v_1}{t_1-t_2},\quad n_2=\frac{t_1}{t_1t_3}v_2+\frac{t_1t_2}{(t_1-t_3)(t_1-t_2)v_1v_3},\quad n_3=\frac{t_2}{t_2-t_3}v_3.\]
	These variables blow up as $a \to 1$, but we can modify them by adding logarithmic factors. Note that $\mathrm{GL}(3)$ has five Levi subgroups: 
	\[M_0,M_{21}, (13)M_{21}(13)=M_{12}, (23)M_{21}(23)=M^\ast, G.\] 
	Since $r_G^G$ is trivial, we define
	\[
	r_{M_{0}}^{M_{21}}(\gamma, a) = \lim_{\lambda \rightarrow 0} \biggl(  r_P(\lambda, 1, a) \theta_P(\lambda)^{-1} + r_{\bar{P}}(\lambda, 1, a) \theta_{\bar{P}}(\lambda)^{-1} \biggr),
	\]
	which equals 
	\[\log|\alpha(a)-\alpha(a)^{-1}|.\]
	The same method applies to $r_{M_0}^{M^\ast}$ and $r_{M_{0}}^{M_{12}}$.
	
	As for the case  $M=M_0$, let
	\[
	r_{\beta }(\Lambda, u , a ) = \bigl| a ^{\beta } - a ^{-\beta } \bigr|^{\rho(\beta , u ) \Lambda(\beta )}, \qquad \Lambda = \frac{1}{2} \lambda,
	\]
	where the constants $\rho(\beta,1)$ are equal to $1$ for $u=1$ and $G=\GL(3)$.\\	
	Set
	\[
	r_P(\lambda, 1, a) =  \prod_{\beta } r_\beta \biggl( \frac{1}{2} \lambda, 1 , a  \biggr), \qquad \lambda \in i \mathfrak{a}_M^*, 
	\]where $\beta$ ranges over the positive roots of $(P, A)$. Then  
	\[\{r_P(\lambda, 1, a)\mid P\in\mathcal{P}(M_0)\}\] is a $(G,M)$-family.
	We define
	\[
	r_M^G(1, a) = \lim_{\lambda \rightarrow 0} \biggl( \sum_{P \in \mathcal{P}(M)} r_P(\lambda, 1, a) \theta_P(\lambda)^{-1} \biggr).
	\]
	Arthur proved in \cite{A5} that the limit in \eqref{7.16} exists. This  can be understood as adding suitable poles to make the limit well‑defined.
	
	We now have the following lemmas.
	\begin{lemma}\label{lem 7.6}
		If $S$ contains only the  Archimedean
		place $v_\infty$, then the weighted orbital integral  
		\[J_{M_0}(1,f)^{A}=\int_\bK\int_{\bb{R}}\int_{\bb{R}}f(k^{-1}\begin{pmatrix}
			1&v_1&0 \\ &1 &v_3\\ & &1
		\end{pmatrix}k)g(v_1,v_3)dv_1dv_3dk,\]
		where $g(v_1,v_3)=\frac{1}{2}((\log|v_1|)^2+(\log|v_3|)^2)+2\log|v_1|\log|v_3|+3\log2\log|v_1v_3|+\frac{3}{2}\log^22$. 
	\end{lemma}

	For $M=M_{21}$ we obtain a similar expression.
	\begin{lemma}\label{lem 7.7}
		If $S$ contains only the  Archimedean
		place $v_\infty$, then the weighted orbital integral  
		\[	J_{M_{21}}(1,f)^{A}=\int_\mathbf{K}\int_{\bb{R}}\int_{\bb{R}}f(k^{-1}\begin{pmatrix}
			1& 0&v_2 \\ &1 &v_3\\ & &1
		\end{pmatrix}k)g_1(v_2,v_3) dv_2 dv_3dk ,\]
		where $g_1(v_2,v_3)=\frac{1}{2}\log4(v_2^2+v_3^2)$.
	\end{lemma}
	
	As for the cases of $M = M_{12}$ and $M = M^\ast$, note that $M_{12}$ and $M^\ast$ can be obtained via the Weyl group. The integral over $n \in N_{21}$ also transfers to the corresponding unipotent group, so the integrals for $M_{12}$ and $M^\ast$ are the same as for $M_{21}$.
	
	In general, for sufficiently large $S$, one can define \[
	r_P(\lambda, u, a) = \prod_{v\in S} \prod_{\beta_v } r_{\beta_v} \biggl( \frac{1}{2} \lambda, u_v , a_v  \biggr), \qquad \lambda \in i \mathfrak{a}_M^*, 
	\]
	where	\[
	r_{\beta_v }(\Lambda, u_v , a_v ) = \bigl| a_v ^{\beta_v } - a_v ^{-\beta_v } \bigr|_v^{\rho(\beta_v , u_v ) \Lambda(\beta_v )}, \qquad \Lambda = \frac{1}{2} \lambda.
	\]The indices $\beta_v$ then range over the reduced roots of $(P_v,A_{M_v})$, and each $\rho(\beta_v , u_v)$ is a positive constant.

	There is also an alternative method, introduced by Hoffmann and Wakatsuki in \cite{HW}, for eliminating the poles. Their description depends on the choice of a parabolic subgroup $P_1 \in \mathcal{P}(M)$. In the case where $G(a\gamma) \subset M$, they decompose the integral defining $J_M(a\gamma, f)$ by writing $g = nmk$ with $n \in N_{P_1}(\mathbb{Q}_S)$, $m \in M(\gamma, \mathbb{Q}_S) \setminus M(\mathbb{Q}_S)$, and $k \in \mathbf{K}_S = \prod_{v \in S} \mathbf{K}_v$. Then $\mu = m^{-1} \gamma m$ runs through the $M(\bb{Q}_S)$-orbit $\mathrm{O}_\gamma(M(\mathbb{Q}_S))$ of $\gamma$, and they apply the further substitution $n^{-1} a \mu n = a \mu \nu$.
	Then \[J_M^T(a\gamma,f)=|D^M(a\gamma)|_S^{1/2}\delta_{P_1}(a\gamma)^{1/2}\int_{\bK}\int_{\mathrm{O}_\gamma(M(\bb{Q}_S))}\int_{N_{P_1}(\bb{Q}_S)}f(k^{-1}a\mu\nu k)v_M(n,T)d\nu d\mu dk.\] 
	
	They define a modified $(G,M)$-family by
	\[
	w_P(\lambda, a, \mu\nu, T) = v_P(\lambda, n, T) \prod_{\beta} r_{\beta}(\lambda, \gamma, a),
	\] where $\beta$ range over the reduced roots of $(N_P/(N_P \cap N_{P_1}), A_M)$ and
	\[
	r_{\beta}(\lambda, \gamma, a) = \left| (a^{\beta} - 1)(1 - a^{-\beta}) \right|_S^{\lambda(\beta_\gamma)/2}.
	\]Here $\beta_\gamma$ is a root scaled by a nonnegative factor so that the limits of $w_P(\lambda,a, \mu\nu,T)$ as $a \to 1$ exist and are nonzero for generic $\lambda$ and $\nu$. With the corresponding function
	\[w_M( 1, \mu\nu,T)=\lim_{\lambda\rightarrow0}\sum_{P\in \mathcal{P}(M)}\frac{w_P(\lambda, 1, \mu\nu, T)}{\theta_P(\lambda)},\]
	they obtain the following expression for the weighted orbital integral:
	\[J_M^T(\gamma,f)^{HW}=|D^M|_S^{\frac{1}{2}}\delta_{P_1}(\gamma)^{\frac{1}{2}}\int_{\mathbf{K}_S}\int_{\mathrm{O}_\gamma(M(\bb{Q}_S))}\int_{N_{P_1}(\bb{Q}_S)}f(k^{-1}\mu\nu k)w_M( 1, \mu\nu,T)d \nu d \mu d k.\]

	\begin{theorem}\label{thm 7.8}
		If $S$ is large enough,  for  $G=GL(3)$ and $f\in C_c^\infty(G(\bb{Q}_S)^1)$,  then
		\[ 	J_{M}(u,f)^A=J_M(u,f)^{HW} \]
		holds 	for all $u\in\mathcal{U}_G(\bb{Q})$ and $M\in\mathcal{L}(M_0).$ 
	\end{theorem}  
	\begin{proof}
		We shall only  take the example of $\gamma=1$.
		In fact, we may take $T = 0$ in $J_M^T(u, f)^{HW}$, and choose $S$ to be a sufficiently large finite set of places that contains the prime $2$ and the Archimedean place. Using the product formula and properties of the height function $|\cdot|$, we shall get expressions without  $\log 2$ in Lemma \ref{lem 7.6} and \ref{lem 7.7}, which the result coincides with the expression of \parencite[Proposition 3 and 4, Appendix A]{HW}. 
	\end{proof}

	For the remainder of this paper, we write $J_M(u,f)$  to denote both $J_M(u,f)^A$ and $J_M(u,f)^{HW}$.
	
	Suppose $\mathfrak{o} = \mathfrak{o}_{111}^3$; this orbit consists entirely of unipotent elements. Let $\mathcal{U}_G$ denote the closed variety of unipotent elements in $G$, so that $\mathfrak{o} = \mathcal{U}_G(\mathbb{Q})$. Fix a finite set $S$ of places large enough.  Given two elements $\gamma_1, \gamma_2 \in \mathcal{U}_G$, we say that $\gamma_1$ and $\gamma_2$ are $(G,S)$-\textit{equivalent} if they are $G(\mathbb{Q}_S)$-conjugate. The associated set $(\mathcal{U}_G(\mathbb{Q}))_{(G,S)}$ of equivalence classes is then finite.
	
	Let \[n'=\begin{pmatrix}
		1&1&\\&1&\\&&1
	\end{pmatrix},\qquad n''=\begin{pmatrix}
		1&1&\\&1&1\\&&1
	\end{pmatrix}.\]
	Then we have $(\mathcal{U}_{M_{21}}(\bb{Q}))_{M_{21},S}=\{1,n'\}$ and  $(\mathcal{U}_G(\bb{Q}))_{G,S}=\{1,n',n''\}$.
	
	The next theorem  is due to Arthur:
	\begin{theorem}{\textup{(Arthur~\cite{A16})}}
		For any $S$ large enough, and for any  $f\in C_c^\infty(G(\bb{Q})^1)$ there are uniquely determined coefficients $a^{M}(S,u)$, where $M\in\mathcal{L}(M_0)$, $u\in\left(\mathcal{U}_M(\bb{Q})\right)_{M,S}$ and 
		$a^{M(S,1)}=\vol({M(\bb{Q})\backslash M(\bb{A})^1}),$ such that 
		\[J_{\mathfrak{o}}^T(f)=\sum_{M\in\mathcal{L}(M_0)}|W_0^M||W_0^G|^{-1}\sum_{u\in\left(\mathcal{U}_M(\bb{Q})\right)_{M,S}}a^{M}(S,u)J^T_M(u,f).\]
	\end{theorem}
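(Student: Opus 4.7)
The plan is to deduce the statement from the explicit computation of $J_{\mathfrak{o}}^T(f)$ already carried out in Sections \ref{sec 7} and \ref{sec 9}, and then to reinterpret each piece as a weighted unipotent orbital integral $J_M^T(u,f)$. First I would tabulate the $(M,S)$-equivalence classes in $\mathcal{U}_M(\mathbb{Q})$ for each $M\in\mathcal{L}(M_0)$: for $M=M_0$ only the identity $u=1$; for $M=M_{21}$ the two classes $u=1$ and $u=zn(1,0,0)$ (and symmetrically for $M_{12}$); and for $M=G$ the three classes $u=1$, the subregular $zn(0,1,0)$, and the regular $zn(1,0,1)$. By Lemma \ref{lem 7.4} together with the analysis of (\ref{9.9})--(\ref{9.13}), the distribution $J_{\mathfrak{o}}^T(f)$ has been written as an explicit finite sum whose building blocks are (i) the limit $\lim_{\lambda\to 0}\int_{G(\mathbb{Q})\backslash G(\mathbb{A})^1}D_{\lambda}\{\lambda\mu_{\mathfrak{o}}(\lambda,f,x)\}\,dx$, (ii) weighted integrals over proper Levi subgroups with weights $v_{M_{21}}$, $v_{M_{12}}$, $v_{M_0}$, and (iii) the pure volume contribution.

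Second, I would match these building blocks to the weighted orbital integrals. The contribution labelled (iii) is $\operatorname{vol}(M(\mathbb{Q})\backslash M(\mathbb{A})^1)\cdot J_M^T(1,f)$ for each Levi $M$, yielding $a^M(S,1)=\operatorname{vol}(M(\mathbb{Q})\backslash M(\mathbb{A})^1)$. For $M=M_{21}$ with $u=zn(1,0,0)$, the term (\ref{9.11}) has the exact shape of the weighted integral $J_{M_{21}}^T(u,f)$ computed in Lemma \ref{lem 7.7}; comparing coefficients forces the value of $a^{M_{21}}(S,zn(1,0,0))$, which after extracting the $S$-dependence from the zeta integral on the orbit of $u$ gives the claimed formula with $\mathfrak{c}_{\mathbb Q}(S)$ and $c_{\mathbb Q}^S$. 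For $M=G$, the limit term (\ref{9.9}) has a pole structure at $\lambda=0$ whose principal and constant parts in the Laurent expansion, when matched against $J_G^T(zn(0,1,0),f)$ (subregular, weight $1$) and $J_G^T(zn(1,0,1),f)$ (regular, weight given by Lemma \ref{lem 7.6}), determines $a^G(S,zn(0,1,0))$ and $a^G(S,zn(1,0,1))$ in terms of $\zeta_{\mathbb Q}^S$ and $\mathfrak{c}_{\mathbb Q}(S)$. The computations of Lemmas \ref{lem 7.6} and \ref{lem 7.7} (the modifying logarithmic factors $r_M^L$) supply exactly the correction terms $-\ln 2$ and $\mathfrak{c}'_{\mathbb Q}(S)c_{\mathbb Q}^S+C$.

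Third, I would establish uniqueness. The distributions $f\mapsto J_M^T(u,f)$, as $(M,u)$ ranges over pairs with $M\in\mathcal{L}(M_0)$ and $u$ a representative of an $(M,S)$-class in $\mathcal{U}_M(\mathbb{Q})$, are linearly independent: choosing $f$ supported in a small neighbourhood of a given $u_0$ in a given Levi $M_0$, and using Harish-Chandra's germ expansion together with the distinct transformation behaviours of the weights $v_M$ under $A_M(\mathbb{R})^0$, one isolates each coefficient successively. This identifies the coefficients $a^M(S,u)$ uniquely and forces the values obtained in the previous step.

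The hard part will be the limit term (\ref{9.9}). The function $\mu_{\mathfrak{o}}(\lambda,f,x)$ has a double pole at $\lambda=0$ coming from the two zeta-type integrals over $N_0(\gamma_s,\mathbb{A})$ after Poisson summation (as in (\ref{7.51})), and extracting $D_{\lambda}\{\lambda\mu_{\mathfrak{o}}\}$ requires a careful Laurent expansion of the product of local zeta integrals against the global partial $L$-function $\zeta_{\mathbb{Q}}^S$. Showing that the leading, subleading, and constant Laurent coefficients reassemble exactly into the weighted orbital integrals $J_G^T(zn(0,1,0),f)$ and $J_G^T(zn(1,0,1),f)$ with the claimed coefficients, and in particular pinning down the constant $C$ arising from the $S$-regularised derivative $\frac{d}{ds}\zeta_{\mathbb{Q}}^S(s)|_{s=2}/\zeta_{\mathbb{Q}}^S(2)$, is the principal technical step. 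Everything else is bookkeeping organised by the Levi decomposition $\mathcal{L}(M_0)=\{M_0,M_{21},M_{12},G\}$ and the Weyl-group factors $|W_0^M||W_0^G|^{-1}$.
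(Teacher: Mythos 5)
This statement is not proved in the paper at all: it is quoted verbatim as Arthur's Theorem~8.1 from \cite{A16}, and the sentence immediately after it says so explicitly. Arthur's own argument is an inductive existence (and uniqueness) proof that works for an arbitrary connected reductive group, and it does not compute any of the coefficients beyond $a^M(S,1)$. Your proposal, by contrast, is not a reconstruction of that argument; it is a direct, $\GL(3)$-specific computation that derives the identity by explicitly evaluating $J_{\mathfrak{o}}^T(f)$ and matching the pieces against the weighted orbital integrals. That is essentially the route the paper takes later, not for this theorem but for the stronger Theorem~\ref{thm 10.8} (proved via Theorems~\ref{thm 10.4} and~\ref{thm 10.7}), which pins down all the coefficients explicitly. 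So you have genuinely substituted a computation for Arthur's abstract induction: the trade-off is that your argument only covers $\GL(3)$ but yields more (explicit coefficients), whereas Arthur's covers all groups but only asserts existence.

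Two cautions before you can call your sketch a proof. First, the uniqueness claim needs real work: you wave at ``Harish-Chandra's germ expansion together with the distinct transformation behaviours of the weights $v_M$'', but to isolate the coefficients you must show that the distributions $J_M^T(u,f)$, as $(M,u)$ varies, are linearly independent as $T$-polynomial-valued distributions, and this is not obvious when two pairs $(M,u)$ and $(M',u')$ give rise to $G(\bb{Q}_S)$-conjugate orbits with comparable weight growth. Second, the ``hard part'' you flag --- extracting the Laurent data of $D_{\lambda}\{\lambda\mu_{\mathfrak{o}}(\lambda,f,x)\}$ and reassembling it into $J_G^T(zn(0,1,0),f)$ and $J_G^T(zn(1,0,1),f)$ with the stated $\zeta^S$- and $\mathfrak{c}_{\bb{Q}}(S)$-coefficients --- is precisely the content of Theorems~\ref{thm 10.4} and~\ref{thm 10.7}, which in turn rest on the modified zeta-integral machinery (the Hoffmann--Wakatsuki Lemmas and the change of variables $\kappa$). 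If you are allowed to quote those results your plan closes, but if the exercise is to prove the theorem \emph{without} Section~\ref{sec 10}, the plan as written is circular: the matching in step two presupposes the very coefficient identities that Theorem~\ref{thm 10.8} establishes.
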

	Arthur  proves the existence of these coefficients, while the computation of certain coefficients has been studied by Flicker \cite{Fl}, Matz \cite{Ma},  Hoffmann and  Wakatsuki \cite{HW}. For $G = \mathrm{GL}(n)$, more information can be found in the work of Chaudouard \cite{PHC1}\cite{PHC2}. Since the  expression for  weighted orbital integrals are the same, we can apply their coefficients.
	
	We now summarize known results on the coefficients appearing in the expansion. For each finite place $v$, denote $p_v$ as a prime element of $\mathbb{Q}_v$, and let $q_v$ denote the cardinality of the residue field of $\mathbb{Q}_v$. If $\chi = \prod_v \chi_v$ is a character on $\mathbb{Q}^\ast \backslash \mathbb{A}^1$, where $\chi_v$ is a character on $\mathbb{Q}_v$, we define the local $L$-function as follows:
	\[L_v(s, \chi_v) = L_{F,v}(s, \chi_v) = 
	\begin{cases} 
		(1 - \chi_v(\pi_v) q_v^{-s})^{-1} & \text{if } v < \infty \text{ and } \chi_v \text{ is unramified,} \\
		1 & \text{if } v < \infty \text{ and } \chi_v \text{ is ramified,}
	\end{cases}\]
	The global $L$-function is
	\[
	L(s, \chi) = L_{F}(s, \chi) = \prod_{v \neq \infty} L_v(s, \chi_v),\quad L^S(s, \chi) = L^S_{F}(s, \chi) = \prod_{v \notin S} L_v(s, \chi_v).
	\]
	It is well-known that $L^S(s, \chi)$ is absolutely convergent and holomorphic for $\Re(s) > 1$, and can be meromorphically continued to the entire complex plane. Let $\mathbf{1}$ denote the trivial character. We set $\zeta^S(s) = L^S(s, \mathbf{1})$ and $\zeta(s) = L(s, \mathbf{1})$. It is known that $\zeta^S(s)$ has a simple pole at $s = 1$. Denote $c^S$ as the residue of $\zeta^S(s)$ at $s = 1$, $\mathfrak{c}_S$ as the constant term in the Laurent expansion of $\zeta^S(s)$ at $s = 1$, and $\mathfrak{c}'_S$ as the coefficient of $s - 1$ in this expansion.

	After defining the relevant notations for the $L$-functions, we can proceed with the calculation of the coefficients in the expansion of the orbital integrals for $\mathrm{GL}(3)$. From \cite{Fl}, \cite{Ma}, and  \cite{HW}. we have the following results for the coefficients of the weighted orbital integrals.
	We begin by noting the values of the coefficients for specific elements of $\GL(3)$:
	\[a^{\GL(3)}(S,n')=\vol_{M_{21}}\frac{\frac{\mathrm{d}}{\mathrm{d}s}\zeta^S(s)|_{s=1}}{\zeta^S(2)} \quad \text{and} \quad a^{\GL(3)}(S,n'')=\frac{\vol_{M_0}}{3}\{\mathfrak{c}_S^2+\mathfrak{c}_S'c^S\}.
	\]
	\begin{theorem}\textup{(Hoffmann,  Wakatsuki\cite{HW})}\label{lem 7.10}
		Suppose $\gamma_{s}=z\in Z(\bb{Q}^\ast)$, $f\in C_c^\infty(G(\bb{Q}_S))$.  The distribution \[\begin{split}
			J_\mathfrak{o}(f)&=\frac{\vol_{M_0}}{6}J_{M_0}(z,f)+\vol_{M_{0}}J_{M_{21}}(z,f)+\frac{\vol_{M_{21}}}{2}\mathfrak{c}_SJ_{M_{21}}(zn',f)\\
			&+\vol_Gf(z)+\vol_{M_{21}}\frac{\frac{\mathrm{d}}{\mathrm{d}s}\zeta^S(s)|_{s=1}}{\zeta^S(2)}J_G(zn',f)+\frac{\vol_{M_0}}{3}\{\mathfrak{c}_S^2+\mathfrak{c}_S'c^S\}J_{G}(zn'',f).
		\end{split}\]
	\end{theorem}

	Next we consider the orbit $\mathfrak{o}=\mathfrak{o}_{111}^2$, which is of mixed type.  We fix a finite set $S$ of places, large enough, and let $\sigma = \diag\{t_1, t_1, t_2\}$, and  $t_1\neq t_2$.  Using the results from \cite{A10}, the mixed case can be reduced to the unipotent orbits of Levi subgroups:
	\begin{equation}\label{7.18}
		J_{\mathfrak{o}}(f) = \int_{G_{\sigma}(\mathbb{A}) \setminus G(\mathbb{A})} \sum_{R \in \mathcal{F}^\sigma} |W_0^{M_R}||W_0^{G{(\sigma)}}|^{-1} J_{\text{unip}}^{M_R}(\Phi_{R,y}^\sigma) dy,
	\end{equation}
	where $\mathcal{F}^\sigma$ denotes the set of all parabolic subgroups in $G{(\sigma)}$ containing $M_0$, and the function $\Phi_{R,y}^\sigma : M_R(\bb{A}) \to \mathbb{C}$ is defined by
	
	\begin{equation}\label{10.4}
		\Phi_{R,y}^\sigma(m) = \delta_R(m)^{\frac{1}{2}} \int_{\mathbf{K}(\sigma)} \int_{N_R(\mathbb{A})} f(y^{-1}k^{-1}\sigma mnky) \nu_R'(ky, T_0) dn  dk,
	\end{equation}
	with $T_0=0$ in the  case of $\GL(3)$ and $\delta_R(m)$ is  the modular function of $R$. 
	Since we have defined $v_Q(\lambda,x,T)$ for parabolic subgroup $Q$, then $v_Q'(x, T)$ is associated with $v_Q(\lambda, x, T)$ as explained in \cite{A7}. Specifically, we have:\[
	v_Q'(x, T) = \int_{\mathfrak{a}_Q^G} \Gamma_Q^G(X, -H_Q(x) + T) \, dX,
	\]
	where the function $\Gamma_Q^G(X, H)$ is given by
	\[
	\sum_{R: Q \subseteq R} (-1)^{\dim \mathfrak{a}_Q^R} \tau_Q^R(X) \hat{\tau}_R(X - H).
	\]
	Then
	\[v'_R(ky,T)=\sum_{Q \in \mathcal{F}_R^0(T)} v_Q'(ky, T).  \]
	Regarding the connection between \eqref{7.18} and weighted orbital integrals, Arthur proved the following lemma in \cite{A10}:
	\begin{lemma}[\text{Arthur\cite{A16}}]
		There is a finite set $S$ of places of $\bb{Q}$	which contains the Archimedean places, such that for any $f \in C_c^\infty\left(G(\bb{Q}_S)^1\right)$, 
		\[	J_{\mathfrak{o}}(f) = \sum_{M \in \mathcal{L}_0^\sigma(M_0)} \left| W_0^{M(\sigma)} \right| \left| W_0^{G(\sigma)} \right|^{-1} \\
		\times \sum_{n \in \left(\mathcal{U}_{M(\sigma)}(\bb{Q})\right)_{M,S}} a^{M_\sigma}(S, n) J_M(\sigma n, f).\]
	\end{lemma}
	Denote \[B_1=\begin{pmatrix}
		\ast&\ast&\\&\ast&\\&&\ast
	\end{pmatrix},\quad N_1=\begin{pmatrix}
		1&\ast&\\&n&\\&&1
	\end{pmatrix},\quad  \bar{B}_1=\begin{pmatrix}
		\ast&&\\\ast&\ast&\\&&\ast
	\end{pmatrix}. \]
	Since we have fixed $\sigma$, we have 	 \[\mathcal{L}_0^\sigma(M_0)=\{M_0,M_{21}\}\quad \text{and}\quad \mathcal{F}^\sigma=\{B_1, \bar{B}_1,M_{21}\}. \]
	From \cite{A5}, it follows that \[J_M(\sigma n, f)= \left| D^G(\sigma) \right|_S^{1/2} \int_{G(\sigma,\bb{Q}_S) \setminus G(\bb{Q}_S)} \left( \sum_{R \in \mathcal{F}^\sigma(M_0)} J_{M(\sigma)}^{M_R}(n, \Phi_{R,y,T_1}) \right) dy.\] 
	If $M=M_0$, the only unipotent class in $M_0$ is the trivial one. Thus, we have $J_{M_0}(\sigma,f)=$
	\[
	\int_{G(\sigma,\bb{Q}_S) \setminus G(\bb{Q}_S)}\Phi^\sigma_{B_1,y}(1)+\Phi^\sigma_{\bar{B_1},y}(1)+\int_{\bK(\sigma)}\int_{N_1(\bb{Q}_S)}f(y^{-1}k^{-1}\sigma  nky)v'_{M_{21}}(ky,T_0)\log\|n\| dndk dy.
	\]
	By definition, $v'_{M_{21}}(ky, T_0) = v_{M_{21}}(ky, T_0)$, and $v'_{B_1}(ky, T_0)$ and $v'_{\bar{B_1}}(ky, T_0)$ have been computed in \cite{Ma}.
	If $M=M_{21}$,  $(\mathcal{U}_{M_{21}}(\bb{Q}))_{M_{21},S}=\{1,n'\}$, since $M(\sigma)=G(\sigma)$, then we can get them  by definition.
	
	From the above discussion,
	\begin{theorem}\label{thm 10.7}
		Suppose $\sigma=\diag\{t_1,t_1,t_2\}$, where $t_i\in\bb{Q}^\ast$ and $t_1\neq t_2$ and  $S$ is large enough, then for any  $f \in C_c^\infty\left(G(\bb{Q}_S)^1\right)$, 
		\[J_{\mathfrak{o}_{111}^2}(f)=\frac{1}{2}\vol_{M_0}J_{M_0}(\sigma,f)+\vol_{M_{21}}J_{M_{21}}(\sigma,f)+\frac{\vol_{M_0}\mathfrak{c}_S}{2}J_{M_{21}}(\sigma n,f).\]
	\end{theorem}
	
	Thus, every term in the convergent part of the geometric side can be expressed as a weighted orbital integral. Since each weighted orbital integral can be viewed as a limit of distributions for unramified orbits, it follows that the distribution for a ramified orbit is itself such a limit, in agreement with Lemma \ref{lem 7.4}.
	
	\section{Normalization of the intertwining  operator}\label{sec 11}

	For the spectral side,  we shall  give a normalization of the operator $M_P(s,\lambda)$ like Arthur did  in \cite{A11}. For local field and global field $F$. Take $S$ be a finite set of place which contains the infinite place associate to $F$.
	
	For fixed $Q\in\mathcal{P}(L)$, if $R\in \mathcal{P}^L(M)$, define $Q(R)$ to be the unique group in $\mathcal{P}(M)$ such that $Q(R)\subset Q$ and $Q(R)\cap L=R$.
	
	For fixed parabolic subgroup $P$, $P'\in \mathcal{P}(M)$ and  $(\pi, V_\pi)\in\Pi(M_S)$, $\lambda\in \mathfrak{a}_{M,\mathbb{C}}^*$, the representation \[\pi_\lambda(m)=\pi(m)\exp(\langle\lambda,H_P(m)\rangle),\quad m\in M_S\]
	is also admissible. For each $P\in\mathcal{P}(M)$, let $\mathcal{I}_P(\pi_\lambda)$ denote the associated induced representation. This representation acts on the space $\mathcal{H}_P(\pi)$ of $\bK$-finite functions $\phi:\bK\rightarrow V_\pi$ such that 
	\[\phi(nmk)=\pi(m)\phi(k),\quad n\in N_{P}(F_S)\cap \bK, m\in M_P(F_S)\cap \bK,k\in \bK.\]
	
	The operator $\mathcal{I}_P(\pi_\lambda,f)$ is defined by \[(\mathcal{I}_P(\pi_\lambda,f)\phi)(k)=\int_{G(F_S)}f(y)\pi(M_P(K_P(ky)))\phi(ky)\exp(\langle\lambda+\rho_P,H_P(ky)\rangle)dy,\]
	where \[x=N_P(x)M_P(x)K_P(x),\quad N_P(x)\in N_P(F_S),\,M_P(x)\in M(F_S),\, K_P(x)\in \bK.\]
	
	There is an element $w_s$ in Weyl group of $M$, such that $P'=w_s^{-1}Pw_s$, define  a unitary intertwining operator
	\[(M_{P'|P}(\pi_\lambda)\Phi)(x)=(M_P(s,\pi_\lambda)\Phi)(w_sx),\quad \Phi\in\mathcal{H}_P(\pi).\]
	$M_{P'|P}(\pi_\lambda)$ is a map from $\mathcal{H}_P(\pi)$ to $\mathcal{H}_{P'}(\pi)$, which is independent of $P_0$. 
	
	Write $\Pi(M_S)$ as the set of equivalence classes of irreducible (admissible) representations of $M_S$.

	Define $l(w)$ to be the map from $\mathcal{H}_P(\pi)$ to $\mathcal{H}_{wPw^{-1}}(w\pi)$ by \[(l(w)\phi)(k)=\phi(w^{-1}k).\]
	
	By  \cite{A11}, we have the following theorem:
	\begin{theorem}[Arthur]
		There exist meromorphic, scalar valued functions\[r_{P'|P}(\pi_\lambda),\quad P,P'\in\mathcal{P}(M),\pi\in\Pi(M_S),\]such that the normalized operators\[R_{P'|P}(\pi_\lambda)=r_{P'|P}(\pi_\lambda)^{-1}M_{P'|P}(\pi_\lambda)\]
		have analytic continuation as meromorphic functions of $\lambda\in\mathfrak{a}_{M,\mathbb{C}}^*$, and such that the following properties hold:
		\begin{itemize}
			\item $R_{P'|P}(\pi_\lambda)\mathcal{I}_{P'P}(\pi_\lambda,f)=\mathcal{I}_{P'P}(\pi_\lambda,f)R_{P'|P}(\pi_\lambda)$.
			\item $R_{P''|P}(\pi_\lambda)=R_{P''|P'}(\pi_\lambda)R_{P'|P}(\pi_\lambda)$, for any $P,P'$ and $P''$ in $\mathcal{P}(M)$.
			\item $(R_{P'|P}(\pi_\lambda)\phi)_k=R_{R'|R}(\pi_\lambda)\phi_k$, $\phi\in\mathcal{H}_P(\pi)$, $k\in \bK$, for $P=Q(R)$, $P'\in Q(R')$, with $R,R'\in \mathcal{P}^L(M)$ and $Q\in \mathcal{P}(L)$.
			\item If $\pi$ is unitary, then \[R_{P'|P}(\pi_\lambda)^*=R_{P|P'}(\pi_{-\overline{\lambda}}).\]
			\item $l(w)R_{P'|P}(\pi_\lambda)l(w)^{-1}=R_{wP'w^{-1}|wPw^{-1}}((w\pi)_{w\lambda})$ for any $w\in \bK$.
			\item If $S$ contains  only Archimedean place, $R_{P'|P}(\pi_\lambda)$ is a rational function of $\{\lambda(\hat{\alpha}):\alpha\in\Phi_P\}$; if $F$ is a local field with residue field of order $q$, $R_{P'|P}(\pi_\lambda)$ is a rational function of $\{q^{-\lambda(\hat{\alpha})}:\alpha\in\Phi_P\}$.
			\item If $\pi$ is tempered, $r_{P'|P}(\pi_\lambda)$ has neither zeros nor poles with the real part of $\lambda$ in the positive chamber attached to $P$.
			\item If $S$ contains only  $p$-adic place, that $G$ and $\pi$ are unramified, and that $\bK$ is hyperspecial. Then if $\phi\in\mathcal{H}_P(\pi)$ is fixed by $\bK$, the function $R_{P'|P}(\pi_\lambda)\phi$ is independent of $\lambda$.
		\end{itemize}
	\end{theorem}
	
	If $P\in \mathcal{P}(M)$, and $P'=w_s^{-1}Pw_s$, \[(M_{P'|P}(\lambda)\Phi)(x)=(M_P(s,\lambda)\Phi)(w_sx),\quad \Phi\in \mathscr{H}_P.\]
	For $\rm{Re}\ \lambda\in\rho_P+\mathfrak{a}_P^+$,
	\begin{eqnarray*}
		(M_{P'|P}(\lambda)\Phi)(x)=\int_{N(\mathbb{A})\cap N'(\mathbb{A})\backslash N'(\mathbb{A})}\Phi(nx)\exp(\langle\lambda+\rho_P,H_P(nx)\rangle)\\
		\cdot\exp(-\langle\lambda+\rho_{P'},H_{P'}(x)\rangle)dn.
	\end{eqnarray*}
	Recall that we have the decomposition\[\mathcal{I}_P(\lambda)=\oplus_l\otimes_v\mathcal{I}_P(\pi_{v,\lambda}^l).\]
	If $\Phi_v$ is a smooth vector in $\mathscr{H}_P(\pi_v^l)$ and $\rm{Re}\ \lambda\in\rho_P+\mathfrak{a}_P^+$,
	\[\begin{split}
		(M_{P'|P}(\pi_{v,\lambda}^l)\Phi_v)(x)&=\int_{N({F_v})\cap N'({F_v})\backslash N'({F_v})}\Phi_v(nx)\exp(\langle\lambda+\rho_P,H_P(nx)\rangle)\\&	\cdot \exp(-\langle\lambda+\rho_{P'},H_{P'}(x)\rangle)dn,\quad x\in G(F_v).
	\end{split}
	\]
	We then have \[M_{P'|P}(\lambda)=\oplus_l\otimes_vM_{P'|P}(\pi_{v,\lambda}^l).\]
	Arthur in \cite{A12} gives the explicit construction of the scalar valued function. We shall calculate the result of the trace formula by the normalized operator, we write it as $R_{P'|P}(\lambda)$, similarly define $r_{P'|P}(\lambda)$.
	
	And we have \[R_{P'|P}(\lambda)=\oplus_l\otimes_v R_{P'|P}(\pi_{v,\lambda}^l),\]and \[r_{P'|P}(\lambda)=\oplus_l\otimes_v r_{P'|P}(\pi_{v,\lambda}^l).\]

	We now state the construction of the function $r_{P'|P}(\pi_\lambda)$ (\cite{A12}).
	
	For any connected group $G$ over the local field $F$. Define $\Phi(G)$ for the set of $\hat{G}$-orbits of (semisimple, continuous, $G$-relevant) $L$-homomorphisms\[\phi:L_F\rightarrow ^LG,\]where $L_F$ is the local Langlands group and $\Pi(G)$ for the set of equivalence classes of irreducible representations of $G_F$. (\cite{B2}.) These sets come with 
	\begin{eqnarray*}
		\Phi_{2,\rm{bdd}}(G)\subset\Phi_{\rm{bbd}}(G)\subset\Phi(G)
	\end{eqnarray*}
	and 
	\begin{eqnarray*}
		\Pi_{2,\rm{temp}}(G)\subset\Pi_{\rm{temp}}(G)\subset\Pi(G).
	\end{eqnarray*}
	In the second chain, $\Pi_{\rm{temp}}(G)$ is the set of tempered representations in $\Pi(G)$. Let $\Pi_2(G)$ be the set of representations in $\Pi(G)$ that are essentially square integrable, in the sense that after tensoring with the appropriate positive character on $G_F$, they are square integrable modulo the centre of $G_F$. And $\Pi_{2,\rm{temp}}(G)$ is the intersection of $\Pi_2(G)$ and $\Pi_{\rm{temp}}(G)$. 
	
	In the first chain, $\Phi_{\rm{bdd}}(G)$ denotes the set of $\phi\in\Phi(G)$ whose image in $^LG$ projects onto a relatively compact subset of $\hat{G}$. Let $\Phi_2(G)$ be the set of parameters $\phi\in \Phi(G)$ whose image does not lie in any proper parabolic subgroup $^LP$ of $^LG$. And $\Phi_{2,\rm{bdd}}(G)$ is the intersection of $\Phi_2(G)$ and $\Phi_{\rm{bdd}}(G)$.
	
	For $G=\rm{GL}(n)$, write $\Phi(n)=\Phi(\rm{GL}(n))$, and $\Pi(n)=\Pi(\rm{GL}(n))$. Then $\Phi(n)$ can be viewed as the set of equivalence classes of (semisimple, continuous) $n$-dimensional representation of $L_F$. The subset $\Phi_{\rm{sim}}(n)=\Phi_2(n)$ consists of irreducible representations, and $\Phi_{\rm{bdd}}(n)$ consists of unitary representations.
	
	On the other hand, the set $\Pi_{\rm{unit}}(n)$ of unitary representations in $\Pi(n)$ properly contains $\Pi_{\rm{temp}}(n)$, if $n\ge 2$. Observe that \[\Pi_{2,\rm{temp}}(n)=\Pi_2(n)\cap\Pi_{\rm{unit}}(n)=\Pi_{2,\rm{unit}}(n),\]thus the subscript tempered and unitary are the same for square integrable representations.
	
	If $F$ is $p$-adic, we can write $\Pi_{\rm{scusp,temp}}(n)$(resp. $\Pi_{\rm{scusp}}(n)$) for the set of supercuspidal representations in $\Pi_{2,\rm{temp}}(n)$(resp. $\Pi_2(n)$). We can also write $\Phi_{\rm{scup,bdd}}(n)$ (resp. $\Phi_{\rm{scusp}}(n)$) for the set of $\phi\in\Phi_{\rm{sim,bdd}}(n)$ (resp. $\Phi_{\rm{sim}}(n)$) that are trivial on the second factor $SU(2)$ of $L_F$. 
	
	If $F$ is Archimedean, we can take $\Pi_{\rm{scusp,temp}}(n)$ and $\Pi_{\rm{scusp}}(n)$ to be empty unless $\rm{GL(n,F)}$ is compact modulo its center, in which case we can take them to be the corresponding sets $\Pi_{2,\rm{temp}}(\rm{GL}(1))=\Pi_{\rm{temp}}(\rm{GL}(1))$ and $\Pi_2(\rm{GL}(1))=\Pi(\rm{GL}(1))$. Thus we have 
	\begin{eqnarray}\label{chain1}
		\Phi_{\rm{scusp,bdd}}(n)\subset\Phi_{\rm{sim,bdd}}(n)\subset\Phi_{\rm{bdd}}(n)\subset\Phi(n),
	\end{eqnarray} 
	and 
	\begin{eqnarray}\label{chain2}
		\Pi_{\rm{scusp,temp}}(n)\subset\Pi_{\rm{2,temp}}(n)\subset\Pi_{\rm{temp}}(n)\subset\Pi(n).
	\end{eqnarray}
	
	Recall that for a given finite dimensional (semisimple, continuous) representation $\phi$ of $L_F$, we can form the local $L$-function, a meromorphic function of $s\in\mathbb{C}$. And then we can form the local $\epsilon$-factor $\epsilon(s,\phi,\psi_F)$, a monomial of the form $ab^{-s}$ which also depends on a nontrivial additive character $\psi_F$ of $F$. If $F$ is Archimedean, the definition can be found in \cite{T1}. If $F$ is $p$-adic, extend $\phi$ analytically to a representation of the product of $W_F$ with the complexification $\rm{SL}(2,\mathbb{C})$ of the subgroup $\rm{SU}(2)$ of $L_F$. Then we can form the representation 
	\begin{eqnarray*}
		\chi_\phi(w)=\phi\left(w,\left(\begin{pmatrix}
			|w|^{\frac{1}{2}}&0\\
			0&|w|^{\frac{1}{2}}
		\end{pmatrix}\right)\right),\quad w\in W_F,
	\end{eqnarray*}
	of $W_F$, where $|w|$ is the absolutely value on $W_F$, and the nilpotent matrix 
	\begin{eqnarray*}
		N_\phi=\rm{log}\,\phi\left(1,\begin{pmatrix}
			1&1\\
			0&1
		\end{pmatrix}\right).
	\end{eqnarray*}
	The pair $V_\phi=(\chi_\phi,N_\phi)$ gives a representation of the Weil-Deligne group \cite{T1}, for which we can define an $L$-function\[L(s,\phi)=Z(V_\phi,q_F^{-s})\] and $\epsilon$-factor \[\epsilon(s,\phi,\psi)=\epsilon(V_\phi,q_F^{-s}).\]
	Also, we have 
	\[L(s,\phi_1\times\phi_2)=L(s,\phi_1\otimes\phi_2)\]and $\epsilon$-factor\[\epsilon(s,\phi_1\times\phi_2,\psi_F)=\epsilon(s,\phi_1\otimes\phi_2,\psi),\]attached to any pair of representations $\phi_1$ and $\phi_2$ of $L_F$.

	Now we introduce the local classification for $G=\rm{GL}(n)$.
	\begin{theorem}\textup{(Langlands\cite{L4}, Harris-Taylor\cite{HT}, Henniart\cite{He})}
		There is a unique bijective correspondence $\phi\rightarrow\pi$ from $\Phi(n)$ onto $\Pi(n)$ such that 
		\begin{itemize}
			\item $\phi\otimes\chi\rightarrow\pi\otimes(\chi\circ\rm{det})$,\\
			for any character $\chi$ in the group $\Phi(1)=\Pi(1)$,
			\item $\rm{det}\circ\phi\rightarrow\eta_\pi$,\\
			for the central character $\eta_\pi$ of $\pi$, and 
			\item $\phi^\vee\rightarrow\pi^\vee$,\\
			for the contragredient involutions $\vee$ on $\Phi(n)$ and $\Pi(n)$, and such that if \[\phi_i\rightarrow\pi_i,\quad \phi_i\in \Phi(n_i),i=1,2,\]
			then 
			\item $L(s,\pi_1\times\pi_2)=L(s,\phi_1\times\phi_2)$ \\
			and 
			\item $\epsilon(s,\pi_1\times\pi_2,\psi_F)=\epsilon(s,\phi_1\times\phi_2,\psi_F)$.\\
			Furthermore, the bijection is compatible with the two chains $(\ref{chain1})$ and $(\ref{chain2})$, in the sense that it maps each subset in $(\ref{chain1})$ onto its counterpart in $(\ref{chain2})$.
		\end{itemize}
	\end{theorem}
	We then construct the function $r_{P'|P}(\lambda)$.
	For a $n$-dimensional representation of the local Langlands group $\phi$, then we can form the local $L$-function $L(s,\phi)$ and $\epsilon$-factor $\epsilon(s,\phi,\psi_F)$.  If $F$ is $p$-adic, it has the general form\[\epsilon(s,\phi,\psi_F)=\epsilon(\phi,\psi_F)q_F^{-n'(s-\frac{1}{2})},\] for a nonzero complex number \[
	\epsilon(\phi,\psi_F)=\epsilon(\frac{1}{2},\phi,\psi_F),\] and an integer $n'=n(\phi,\psi_F)$. Define the quotient 
	\begin{eqnarray}\label{quotient1}
		\delta(\phi,\psi_F)=\epsilon(0,\phi,\psi_F)\epsilon(\frac{1}{2},\phi,\psi_F)^{-1}=(q_F)^{\frac{n'}{2}}.
	\end{eqnarray}
	Since the Levi subgroup $M$ of $G=\rm{GL}(n)$ have the decomposition\[M={\GL}(n_1)\times{\GL}(n_2)\times...\times{\GL}(n_l),\quad n_1,n_2,...,n_l\in \mathbb{N}.\] 
	
	Suppose $\phi\in\Phi(M)$ is a Langlands parameter for $M$. The normalizing factors of $\phi$ include special values of local $L$-functions, which are only defined for parameters in general position. Let $\lambda\in\mathfrak{a}_{M,\mathbb{C}}^*$. The associated twist\[\phi_\lambda(w)=\phi(w)|w|^\lambda,\quad w\in L_F,\]of $\phi$ is then in general position. \\
	Define $\rho_{P'|P}$ to be the adjoint representation of $^LM$ on the quotient $\hat{\mathfrak{n}}_{P'}\cap \hat{\mathfrak{n}}_{P}\backslash\hat{\mathfrak{n}}_{P'}$, where $\hat{\mathfrak{n}}_{P'}$ denotes the Lie algebra of the unipotent radical of $\hat{P}'$. 
	\\Then the composition 
	\begin{eqnarray}
		\rho_{P'|P}^\vee\circ\phi_\lambda
	\end{eqnarray}
	of $\phi_\lambda$ with the contragredient of $\rho_{P'|P}$ is a finite dimensional representation of $L_F$. Define a corresponding local normalizing factor \[r_{P'|P}(\phi_\lambda)=r_{P'|P}(\phi_\lambda,\psi_F)\] as the quotient
	\begin{eqnarray}
		L(0,\rho_{P'|P}^\vee\circ\phi_\lambda)\delta(\rho_{P'|P}^\vee\circ\phi_\lambda,\psi_F)^{-1}L(1,\rho_{P'|P}^\vee\circ\phi_\lambda)^{-1}.
	\end{eqnarray}
	Assume that any $\pi\in\Pi(M)$ belongs to a unique $L$-packet $\Pi_\phi$, and write 
	\begin{eqnarray}
		r_{P'|P}(\pi_\lambda)=r_{P'|P}(\phi_\lambda).
	\end{eqnarray}
	If $F$ is Archimedean, the general $\epsilon$-factors are independent of $s$. We have \[\delta(\phi,\psi_F)=1.\]Therefore, \[r_{P'|P}(\pi_\lambda)=r_{P'|P}(\phi_\lambda)=r_{P'|P}(\phi_\lambda,\psi_F)\] is 
	\[L(0,\rho_{P'|P}^\vee\circ\phi_\lambda)L(1,\rho_{P'|P}^\vee\circ\phi_\lambda)^{-1}.\]
	For $G=\rm{GL}(n)$, we now consider the case that $F$ is global. We can form global $L$-function.
	
	Now we recall a property of certain Rankin-Selberg $L$-function. For $\pi\in\mathscr{A}_{\text{cusp}}(n)$ is a cuspidal automorphic representation of $\GL(n)$. Then there are two formal products,
	\begin{eqnarray}\label{product1}
		L(s,\pi\times\pi)=L(s,\pi,S^2)L(s,\pi,\Lambda^2)
	\end{eqnarray}and 
	\begin{eqnarray}\label{product2}
		\epsilon(s,\pi\times\pi,\psi_F)=\epsilon(s,\pi,S^2,\psi_F)\epsilon(s,\pi,\Lambda^2,\psi_F),
	\end{eqnarray}
	where $S^2(\rm{resp.}\,\Lambda^2)$ is the representation of $\rm{GL}(n,\mathbb{C})$ on the space of symmetric (resp. skew-symmetric) $(n\times n)$-complex matrices.
	
	However, the local $L$-functions and $\epsilon$-factors have been constructed so that the formal products (\ref{product1}) and (\ref{product2}) become actual products. And so that the global $L$-functions have analytic continuation with functional equation.	Then the intertwining operator can be normalized by $L$-function.
	\begin{theorem}\textup{(Arthur)}
		The global normalizing factors have an expression
		\[r_{P'|P}(\pi_\lambda)=L(0,\pi_\lambda,\rho_{P'|P}^\vee)\delta(\pi_\lambda,\rho_{P'|P}^\vee)^{-1}L(1,\pi_\lambda,\rho_{P'|P}^\vee)^{-1},\] in terms of global $L$-functions and $\delta$-factors\[\delta(\pi_\lambda,\rho_{P'|P}^\vee)=\epsilon(0,\pi_\lambda,\rho_{P'|P}^\vee)\epsilon(\frac{1}{2},\pi_\lambda,\rho_{P'|P}^\vee)^{-1}.\]
	\end{theorem}
	
	Then the identity \[r_{P''|P}(\pi_\lambda)=r_{P''|P'}(\pi_\lambda)r_{P'|P}(\pi_\lambda)\]holds. Therefore \[R_{P''|P}(\pi_\lambda)=R_{P''|P'}(\pi_\lambda)R_{P'|P}(\pi_\lambda).\]

	We want to write the spectral expansion in parallel form of geometric side by define	suitable weighted characters $J_M(\pi,f)$\cite{A13}. Suppose $\pi\in\Pi_{\rm{unit}}(M(\bb{A})^1)$, $\pi$ can be identified  with an orbit $\pi_\lambda$, where $\lambda\in i\mathfrak{a}_{M}$.	
	We now introduce  three $(G,M)$-families.\\ For $ P'\in\mathcal{P}(M), \Lambda\in i\mathfrak{a}_M^\ast$, set
	\[\mathcal{R}_{P'}(\Lambda,\pi_\lambda,P)=R_{P'|P}(\pi_\lambda)^{-1}R_{P'|P}(\pi_{\lambda+\Lambda}), \]
	\[r_{P'}(\Lambda,\pi_\lambda,P)=r_{P'|P}(\pi_\lambda)^{-1}r_{P'|P}(\pi_{\lambda+\Lambda}), \]
	and \[\mathcal{M}_{P'}(\Lambda, \lambda, P) = M_{P'|P}(\lambda)^{-1} M_{P'|P}(\lambda + \Lambda).\]		
	Then \[\mathcal{R}_M(\pi_\lambda,P)=\lim_{\Lambda\rightarrow0}\sum_{P'\in\mathcal{P}(M)}\mathcal{R}_{P'}(\Lambda,\pi_\lambda,P)\theta_{P'}(\Lambda)^{-1},\] $r_M(\pi_\lambda,P)$ and
	$\mathcal{M}_M(\lambda,P)$ can be defined similarly.
	Then for $L\in\mathcal{L}(M)$,  we have \cite{A17}
	\begin{equation}\label{11.4}
		\mathcal{M}_L(\lambda, P)_{\chi‘,\pi} = \sum_{Q \in \mathcal{F}(L)} r_L^Q(\pi_\lambda) \mathcal{R}_Q(\lambda, P)_{\chi',\pi},
	\end{equation}
	where $\mathcal{R}_Q(\lambda, P)_{\chi',\pi}$ denotes the restriction of $\mathcal{R}_Q(\lambda, P)$ to $\mathcal{H}_{P,\chi',\pi}$, the subspace of $\mathcal{H}_{P,\chi'}$ .
	
When $P$ is fixed, the spectral sum over $\chi$ is transformed into a sum over  $\chi'$ 	 by summing over  the additional $\bK$-type structure.
	For any $L\in\mathcal{L}(M)$, let $W^L(M)$ be the set of all $s\in W^L$ such that $s$ is an isomorphism of  $\mathfrak{a}_M$. Denote \[W^L(M)_{\mathrm{reg}}=\{s\in W^L(M)\mid\ker s=\mathfrak{a}_L\}.
	\]
	
	\begin{theorem}\textup{(Arthur\cite{A17})}
		For any $f \in C_c^\infty(G(\bb{A})^1)$, the linear form $J_{\chi'}(f)$ equals the sum over $M \in \mathcal{L}$, $L \in \mathcal{L}(M)$, $\pi \in \Pi_{\mathrm{unit}}(M(\mathbb{A})^1)$, and $s \in W^L(M)_{\mathrm{reg}}$ of the product of
		\[	\left| W_0^M \right| \left| W_0^G \right|^{-1} \left| \det(s - 1)_{\mathfrak{a}_M^G} \right|^{-1}
		\]
		with
		\begin{equation}\label{11.5}
			\int_{i\mathfrak{a}_G\backslash i\mathfrak{a}_L} \Tr\left( \mathcal{M}_L(\lambda, P) M_P(s, 0) \mathcal{I}_{P,\chi',\pi}(\lambda, f) \right) d\lambda.
		\end{equation}
	\end{theorem}
	
	By the normalizing factor and (\ref{11.4}),  (\ref{11.5}) can be written as 
	\[\sum_{Q \in \mathcal{F}(L)} \int_{i\mathfrak{a}_G\backslash i\mathfrak{a}_L} r_L^Q(\pi_\lambda) \Tr\left( \mathcal{R}_Q(\lambda, P) M_P(s, 0) \mathcal{I}_{P,\chi',\pi}(\lambda, f) \right) d\lambda.\]
	
	Define
	\[J_M(\pi_\lambda,\tilde{f})=\Tr(\mathcal{R}_M(\pi_\lambda,P)\mathcal{I}_P(\pi_\lambda,\tilde{f})),     \qquad  \tilde{f}\in C_c^\infty(G(\bb{A})),\]
	We then set 
	\[J_M(\pi,f)=\int_{i \mathfrak{a}_M}J_M(\pi_\lambda,\tilde{f})d\lambda,      \qquad     f\in C_c^\infty(G(\bb{A})^1,\]
	where  $\tilde{f}$ is any function in $C_c^\infty(G(\bb{A})$
	whose restriction to $G(\bb{A})^1$ equals $f$.

	For each Archimedean place $v $, define $\mathfrak{h}_v = i\mathfrak{b}_v \oplus \mathfrak{a}_0$ (where $\mathfrak{b}_v$ is a Cartan subalgebra of the intersection of $K_v $ and $ M_0(F_v)$). Set $\mathfrak{h} = \mathfrak{h}_\infty = \bigoplus_{v \in S_\infty} \mathfrak{h}_v$. The complex Weyl group $W$ of $G(F_\infty)$ acts on $\mathfrak{h}$.
	
	A representation $\pi \in \Pi_{\mathrm{unit}}(M(\mathbb{A})^1)$ has an Archimedean infinitesimal character, consisting of a $W$-orbit of points $\nu_\pi = X_\pi + iY_\pi$ in $\mathfrak{h}_\mathbb{C} / i\mathfrak{a}_G$. The the
	imaginary part $Y_\pi$ lies in $\mathfrak{h}$ is determined by $\pi$ only  modulo $\mathfrak{a}_P$.
	However, $Y_\pi$ can be identified with the unique representative within its coset that minimizes the norm $\|Y_\pi\|$.
	
	We then define
	\[
	\Pi_{t,\mathrm{unit}}(M(\mathbb{A})^1) = \left\{ \pi \in \Pi_{\mathrm{unit}}(M(\mathbb{A})^1) \mid \| \mathrm{Im}(\nu_\pi) \| = \| Y_\pi \| = t \right\},
	\]
	for any non-negative real number $t$.\\
	Recall that a class $\chi'$ is a $W_0$-orbit of pairs $(M_1, \pi_1)$, with $\pi_1$ being a cuspidal automorphic representation of $M_1(\mathbb{A})^1$. 
	Setting $\nu_{\chi'} = \nu_{\pi_1}$, we define a linear form
	\[
	J_t(f) = \sum_{\{\chi' \in \mathcal{T} : \| \mathrm{Im}(\nu_\chi') \| = t\}} J_{\chi'}(f), \qquad t \geq 0,\ f \in C_c^\infty(G(\bb{A}),
	\]
	in which the sum is taken over a finite set. Then
	\[
	\sum_{\chi}J_\chi(f)=\sum_{\chi'}J_{\chi'}(f)= \sum_{t \geq 0} J_t(f).
	\]
	We also write $\mathcal{I}_{P,t}(\lambda, f)$ for the restriction of the operator $\mathcal{I}_P(\lambda, f)$ to the invariant subspace
	\[
	\mathcal{H}_{P,t} = \bigoplus_{\{(\chi',\pi): \|\mathrm{Im}(\nu_{\chi'})\|=t\}} \mathcal{H}_{P,\chi',\pi},
	\]
	of $\mathcal{H}_P$. The representation $\mathcal{I}_{P,t}(\lambda)$ is equivalent to a direct sum of induced representations of the form $\mathcal{I}_P(\pi_\lambda)$, for $\pi \in \Pi_{t,\mathrm{unit}}(M(\mathbb{A})^1)$. 
	\begin{corollary}
		For any $f \in C_c^\infty(G(\bb{A})$, the spectral side  $\sum_{\chi}J_\chi(f)$ equals the sum over $t\geq0$, $M\in\mathcal{L}$, and $L\in\mathcal{L}(M)$ of
		\[\sum_{s\in W^L(M)_\text{reg}}\left| W_0^M \right| \left| W_0^G \right|^{-1} \left| \det(s - 1)_{\mathfrak{a}_M^G} \right|^{-1}	\int_{i\mathfrak{a}_G^*\backslash i\mathfrak{a}_L^*} \mathrm{tr}\left( \mathcal{M}_L(\lambda, P) M_P(s, 0) \mathcal{I}_{P,t}(\lambda, f) \right) d\lambda.\]
	\end{corollary}
	
	Further more, the discrete part of the  spectral expansion attached to
	any $t$ equals the linear form
	\begin{equation}
		I_{t,\mathrm{disc}}(f) = \sum_{M \in \mathcal{L}} \left| W_0^M \right| \left| W_0^G \right|^{-1} \sum_{s \in W(M)} \left| \det(s - 1)_{\mathfrak{a}_M^G} \right|^{-1} \Tr\left( M_P(s, 0) \mathcal{I}_{P,t}(0, f) \right).
	\end{equation}
	The convergence of the sum over $t$ has been studied by W. M\"{u}ller\cite{Mu} for $G=\GL(n)$.
	
	Let $\Pi_{t,\text{disc}}$ be the
	subset of irreducible constituents of induced representations
	\[\sigma_\lambda^G, \qquad M \in \mathcal{L},\ \sigma \in \Pi_{t,\mathrm{unit}}(M(\mathbb{A})^1),\ \lambda \in i\mathfrak{a}_G\backslash i\mathfrak{a}_M,\]
	of $G(\mathbb{A})^1$, where the representation $\sigma_\lambda$ of $M(\mathbb{A}) \cap G(\mathbb{A})^1$ satisfies the two conditions.
	\begin{enumerate}
		\item $a_{\mathrm{disc}}^M(\sigma) \neq 0$.
		\item there is an element $s \in W^G(\mathfrak{a}_M)_{\mathrm{reg}}$ such that $s\sigma_\lambda = \sigma_\lambda$.
	\end{enumerate}
	
	We define a set
	\[
	\Pi_t(G) = \left\{ \pi_\lambda^G \mid \ M \in \mathcal{L},\ \pi \in \Pi_{t,\mathrm{disc}}(M),\ \lambda \in i\mathfrak{a}_G\backslash i\mathfrak{a}_M \right\},\]
	equipped with the measure $d\pi_\lambda^G$ for which
	\[
	\int_{\Pi_t(G)} \phi(\pi_\lambda^G) d\pi_\lambda^G = \sum_{M \in \mathcal{L}} \left| W_0^M \right| \left| W_0^G \right|^{-1} \sum_{\pi \in \Pi_{t,\mathrm{disc}}(M)} \int_{i\mathfrak{a}_G\backslash i\mathfrak{a}_M} \phi(\pi_\lambda^G) d\lambda,\]for any reasonable function $\phi$.
	Define $\Pi(M)^T$ to be the union over $T\geq t\geq0$ of $\Pi_t(M)$.
	We have
	\begin{equation}\label{11.10}
		\sum_{\chi}J_\chi(f)=\lim_{T\rightarrow\infty}\sum_{M\in\mathcal{L}}| W_0^M | | W_0^G |^{-1}\int_{\Pi(M)^T}a^M(\pi)J_M(\pi,f)d\pi.
	\end{equation}
	The coefficients in (\ref{11.10}) are defined by (3.18) in \cite{A18}.

	\section{summary}
	Finally, the convergent parts on each side can be summarized in the following theorems.
	\begin{lemma} Suppose $f\in C_c^\infty(G(\bb{Q})^1)$, combine the geometric  side together,
		\[\begin{split}
			&\int_{ G(\bb{Q}) \backslash G(\bb{A})^1}J_{\text{geo}}(f,x)dx=\\&\sum_{\gamma\text{ unramified}}J_M(\gamma,f)+\frac{1}{2}\vol_{M_0}J_{M_0}(\sigma,f)+\vol_{M_{21}}J_{M_{21}}(\sigma,f)+\frac{\vol_{M_0}\mathfrak{c}_S}{2}J_{M_{21}}(\sigma n,f)\\&+\frac{\vol_{M_0}}{6}J_{M_0}(z,f)+\vol_{M_{0}}J_{M_{21}}(z,f)+\frac{\vol_{M_{21}}}{2}\mathfrak{c}_SJ_{M_{21}}(zn',f)\\
			&+\vol_Gf(z)+\vol_{M_{21}}\frac{\frac{\mathrm{d}}{\mathrm{d}s}\zeta^S(s)|_{s=1}}{\zeta^S(2)}J_G(zn',f)+\frac{\vol_{M_0}}{3}\{\mathfrak{c}_S^2+\mathfrak{c}_S'c^S\}J_{G}(zn'',f)
		\end{split}\]
	\end{lemma}	
	Combine the spectral side together, and normalize the intertwining operator,
	\begin{theorem}\label{thm 12.1}
		Let $m_\text{cusp}(\pi)$ be the multiplicity of $\pi$ in the representation $\R_{M,\text{cusp}}$.  Fix $P=P_{21}$ in $\chi$,   we have (\ref{12.1}) equals
		\[	\sum_\chi J_\chi(f)=\sum_\chi m_\text{cusp}(\pi)J_{M_{21}}(\pi,f).\]
		Similarly, (\ref{12.2}) equals
		\[\sum_\chi J_\chi(f)=\sum_\chi m_\text{cusp}(\pi)J_{M_{12}}(\pi,f).\]
		Combining (\ref{12.3}) 
		and  (\ref{12.4}), fix $P=P_{0}$ in $\chi$, 
		\[\sum_{\chi}J_\chi(f)=\sum_{\chi}\sum_{L\in\mathcal{L}(M_0)} \sum_{\pi_\lambda\in \Pi_\text{disc}(M_0)} \int_{i\mathfrak{a}_G\backslash i\mathfrak{a}_L} a^L(\pi)J_L(\pi_\lambda,f)d\lambda.\]	
	\end{theorem}

\end{document}